\theoremstyle{plain}
\numberwithin{equation}{section}
\newcommand{\ind}{\operatorname{ind}}
\newcommand\GL{{\mathrm{GL}}}
\newcommand{\Q}{\mathbb{Q}}
\newcommand{\Z}{\mathbb{Z}}
\newcommand{\F}{\mathbb{F}}
\newcommand\Sym{{\mathrm {Sym}}}
\newcommand\iso{{\> \simeq \>}}
\newcommand\linee{{
        {\begin{tiny}
        \begin{xymatrix}{
         \bullet  \ar@{-}[d] \\ 
         \bullet  \ar@{-}[d] \\
         \bullet} 
       \end{xymatrix}
       \end{tiny}} }}  
\newcommand\diamondd{{
        \begin{tiny}
        \begin{xymatrix}{
         & \bullet  \ar@{-}[dl] \ar@{-}[dr] &  \\ 
         \bullet  \ar@{-}[dr] &   &   \bullet  \ar@{-}[dl] \\
         & \bullet & } 
       \end{xymatrix}
       \end{tiny} }}
\newtheorem{thm}{Theorem}[section]
\newtheorem{theorem}[thm]{Theorem}
\newtheorem{cor}[thm]{Corollary}
\newtheorem{prop}[thm]{Proposition}
\newtheorem{lemma}[thm]{Lemma}
\theoremstyle{definition}
\theoremstyle{remark}
\newtheorem{remark}[thm]{Remark}
\theoremstyle{remark}
\begin{document}

\title[Reductions of Galois Representations]{Reductions of Galois representations of Slope $\frac{3}{2}$}

\author[E. Ghate]{Eknath Ghate} 
\address{School of Mathematics, Tata Institute of Fundamental Research, Homi Bhabha Road, Mumbai-5, India}
\email{eghate@math.tifr.res.in}

\author[V. Rai]{Vivek Rai}
\address{School of Mathematics, Indian Institute of Science Education and Research, Pashan, Pune-8, India}
\email{vvkrai@iiserpune.ac.in}

\begin{abstract}
We prove a zig-zag conjecture describing the reductions
of irreducible crystalline two-dimensional representations of $G_{\Q_p}$ of 
slope $\frac{3}{2}$ and exceptional weights. 
This along with previous works completes the description of the reduction 
for all slopes less than $2$. The proof involves computing the reductions of the
Banach spaces attached by the $p$-adic LLC to these representations, followed by 
an application of the mod $p$ LLC to recover the reductions of these representations.
\end{abstract}

\subjclass[2010]{Primary: 11F80} 
\keywords{Reductions of crystalline Galois representations, mod $p$ Local Langlands Correspondence.}
\maketitle



\section{Introduction}
\label{sectionintro}

Let $p$ be an odd prime. This paper is concerned with computing the
reductions of certain crystalline two-dimensional representations of the
local Galois group $G_{\Q_p}$. This problem is classical and important in view of 
its applications to Galois representations attached to modular forms.  

\subsection{Main Result} Let $k \geq 2$ be an integer and let $a_p$ lie in a finite extension $E$ of $\Q_p$. Assume that $v(a_p) > 0$,
where $v$ is the $p$-adic valuation of $\Q_p$, normalized so that $v(p) = 1$. 
Let $V_{k,a_p}$ be the irreducible two-dimensional crystalline representation of $G_{\Q_p}$ defined over $E$ of weight $k \geq 2$ and positive slope 
$v(a_p)$.
Let $\bar{V}_{k,a_p}$ be the {\it semisimplification} of the 
reduction of $V_{k,a_p}$ modulo the maximal ideal of the ring of integers of $E$. It is a two-dimensional semisimple representation of
$G_{{\mathbb Q}_p}$ defined over $\bar{\mathbb F}_p$, and is independent of the choice of the lattice used to define the reduction.  

For simplicity, we often write $v$ for the slope $v(a_p)$. 
The reduction $\bar{V}_{k,a_p}$ is known by classical work of Fontaine and 
Edixhoven \cite{Edixhoven92} and its subsequent extension by Breuil \cite{Br03} for small weights $k \leq 2p+1$, and for all large slopes 
$v > \lfloor \frac{k-1}{p-1} \rfloor$
by Berger-Li-Zhu \cite{BLZ}. There has been a spate of recent work  computing the reduction $\bar{V}_{k,a_p}$ for small slopes. 
Buzzard-Gee \cite{BG09}, \cite{BG13} treated the case of slopes $v$ in $(0,1)$.
The case of slopes $v$ in $(1,2)$ was treated in \cite{GG15}, \cite{Bhattacharya-Ghate}, 
but only under an assumption when $v = \frac{3}{2}$. The missing case of slope $v = 1$ 
was treated subsequently in \cite{BGR18}.  
The general goal of this paper is to give a complete treatment of the case of slope $v = \frac{3}{2}$, 
thereby filling a gap in the literature computing the reduction for all small slopes less than 2. 

More specifically, let us say that a weight $k$ is {\it exceptional} for a particular half-integral 
(and possibly integral) slope $v \in \frac{1}{2} \Z$ if 
$$k \equiv 2v+2 \mod (p-1).$$ 
These weights turn out to be the hardest to treat in the sense that the reduction seems to take on
more possibilities rather than just the generic answer for that slope. 
In \cite[Conjecture 1.1]{G19}, a general zig-zag conjecture was made describing the 
reduction $\bar{V}_{k,a_p}$ for all exceptional weights for all half-integral slopes $0 < v \leq \frac{p-1}{2}$. 
The conjecture specializes
to known theorems when $v = \frac{1}{2}$ \cite{BG13} and $v = 1$ \cite{BGR18}.
In this paper, we shall prove the zig-zag conjecture for slope $v = \frac{3}{2}$. In particular, this removes
an assumption made for exceptional weights in \cite{Bhattacharya-Ghate} when $v = \frac{3}{2}$, giving
a complete description of the reduction for slope $v = \frac{3}{2}$.

In order to state the main theorem, let us recall some recent history on the reduction problem for exceptional 
weights. To do this we introduce some standard notation. 
Let $\omega$ and $\omega_2$
be the mod $p$ fundamental characters of levels $1$ and $2$. Let $\ind(\omega_2^c)$ be the (irreducible) mod $p$ 
representation of $G_{\Q_p}$ obtained
by inducing the $c$-th power of $\omega_2$ from the index 2 subgroup $G_{\Q_{p^2}}$ of $G_{\Q_p}$ to $G_{\Q_p}$ (for $p+1 \nmid c$). 
Let $\mu_\lambda$ be the
unramified character of $G_{{\mathbb Q}_p}$ mapping a (geometric) Frobenius at $p$ to $\lambda \in \bar{\mathbb F}_p$. 
Let $r := k-2$ and let $b \in \{1,2, \cdots, p-1\}$ represent the congruence class of $r$ 
mod $(p-1)$. Then $b = 2v$ is a representative for the exceptional congruence class of weights $r$ mod $(p-1)$. In particular,
$b = 1, 2, 3, \cdots$, represent the exceptional congruence classes of weights $r$ mod $(p-1)$ for 
the half-integral slopes $v = \frac{1}{2}, 1, \frac{3}{2}, \cdots$, respectively.  

In \cite{BG09}, Buzzard-Gee showed that 
the reduction $\bar{V}_{k,a_p}$ is always {\it irreducible} for slopes $v$ in $(0,1)$ (and isomorphic to  $\ind (\omega_2^{b+1})$), except possibly in the exceptional case $v = \frac{1}{2}$ and $b = 1$. This case was only treated later in \cite{BG13}, where the 
authors show that when a certain parameter, which we call $\tau$, 
is larger than another parameter, which we call $t$, a reducible possibility (namely $\omega \oplus \omega$ on inertia) occurs instead. 
More precisely, setting 
\begin{eqnarray*}
  \tau & = & v \left( \frac{a_p^2 - r p}{p a_p} \right), \\
    t  & = & v (1-r), 
\end{eqnarray*}
it is shown in \cite[Theorem A]{BG13} that in the exceptional case $v = \frac{1}{2}$ and $b = 1$, there is a {\it dichotomy}:
\begin{eqnarray*}
  \bar{V}_{k,a_p} & \sim & 
  \begin{cases}
    \mathrm{ind}(\omega_2^{b+1}),  & \text{if }  \tau < t  \\
	\mu_{\lambda} \cdot \omega^b \,\oplus\,\mu_{\lambda^{-1}} \cdot \omega, & \text{if }  \tau \geq t,
  \end{cases}  
\end{eqnarray*}
for $r > 1$, where $\lambda$ is a root of the quadratic equation  
\begin{eqnarray*}
  \lambda + \frac{1}{\lambda} & = & \overline{\frac{1}{1-r} \cdot \frac{a_p^2 - r p}{p a_p}}.
\end{eqnarray*} 

In \cite{BGR18}, the reduction $\bar{V}_{k,a_p}$ was completely determined on the boundary $v = 1$ of the annulus $(0,1)$, 
and was shown to be generically {\it reducible} instead. In the difficult 
exceptional case $v=1$ and $b = 2$, the authors established, 
for $r > 2$, the following {\it trichotomy}:

%
%
%
%
\begin{eqnarray*}
  \bar{V}_{k, a_p} & \sim & 
  \begin{cases}
    \mathrm{ind}(\omega_2^{b+1}),  & \text{if } \tau < t \\
    \mu_{\lambda} \cdot \omega^b\,\oplus\,\mu_{\lambda^{-1}} \cdot \omega, & \text{if } \tau = t \\ 
    \mathrm{ind}(\omega_2^{b+p}), & \text{if } \tau > t,       
\end{cases} 
\end{eqnarray*}
where
\begin{eqnarray*}
  \tau & = & v \left( \frac{a_p^2 - \binom{r}{2} p^2}{p a_p} \right), \\
    t  & = & v (2-r), 
\end{eqnarray*}
and $\lambda$ is a constant given by
\begin{eqnarray*}
    \lambda & = &  \overline{\dfrac{2}{2-r} \cdot \dfrac{a_p^2 -\binom{r}{2} p^2}{pa_p} }.
\end{eqnarray*}

Based on these results for slopes $\frac{1}{2}$ and $1$, and some  computations in \cite{Roz16} for a few
larger half-integral slopes, one might guess that in the general exceptional case $b = 2v$ for half-integral slope $v \leq \frac{p-1}{2}$,
there are $b+1$ possibilities for
$\bar{V}_{k,a_p}$, with various irreducible and reducible cases occurring alternately. A more concrete version of 
this guess (and several subtleties related to it) were outlined by one of the authors in a conjecture called the zig-zag conjecture,
see Conjecture 1.1 of \cite{G19}.
That this conjecture is indeed true for slope $v = \frac{3}{2}$ is the main theorem of this paper:

\begin{theorem} 
  \label{maintheorem}
  If $v = \frac{3}{2}$, then the zig-zag conjecture \cite{G19} is true.  
  More precisely, say $p \geq 5$, the slope $v = \frac{3}{2}$ and $b = 3$, so that the congruence class of $r$ mod $(p-1)$ is exceptional.
  If $r > b$, 
\begin{eqnarray*}
    c & = &  \frac{a_p^2 - (r-2) \binom{r-1}{2} p^3}{p a_p}, 
  \end{eqnarray*}
  and
  \begin{eqnarray*}
  \tau & = & v \left( c \right), \\
      t  & = & v (b-r),
  \end{eqnarray*}
then  the reduction $\bar{V}_{k,a_p}$ satisfies the following {\it tetrachotomy}:
\begin{eqnarray*}
\bar{V}_{k, a_p} & \sim & 
  \begin{cases}
    \mathrm{ind}(\omega_2^{b+1}),  & \text{if } \tau < t \\
    \mu_{\lambda_1} \cdot \omega^b\,\oplus\,\mu_{\lambda_1^{-1}} \cdot \omega, & \text{if } \tau = t \\ 
    \mathrm{ind}(\omega_2^{b+p}), & \text{if } t <  \tau < t+1 \\       
    \mu_{\lambda_2} \cdot \omega^{b-1}\,\oplus\,\mu_{\lambda_2^{-1}} \cdot \omega^2, & \text{if } \tau \geq t+1, 
    \end{cases}
\end{eqnarray*}
where the $\lambda_i$, for $i = 1$, $2$, are constants given by
\begin{eqnarray*}
  \lambda_1 &  = & \overline{\dfrac{b}{b-r} \cdot c } \\
  \lambda_2 + \frac{1}{\lambda_2} & = &  \overline{\dfrac{b-1}{(b-1-r)(b-r)} \cdot \dfrac{c}{p}}.
\end{eqnarray*}
\end{theorem}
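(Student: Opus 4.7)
The plan is to follow the well-established strategy of passing through the $p$-adic local Langlands correspondence, as in \cite{BG09}, \cite{BG13}, \cite{Bhattacharya-Ghate}, \cite{BGR18}. One realizes $V_{k,a_p}$ as the Galois parameter of a unitary Banach completion $\Pi_{k,a_p}$ of a locally algebraic principal series, which can in turn be presented as the quotient
\[
\ind_{KZ}^{\GL_2(\Q_p)} \Sym^r E^2 / (T - a_p)
\]
of a compact induction from $KZ$ by the usual Hecke operator. Reducing modulo $\m_E$ and semisimplifying produces a subquotient of $\ind_{KZ}^{\GL_2(\Q_p)} \Sym^r \bar{\F}_p^2 / (T - \bar a_p)$, and Breuil's mod $p$ local Langlands correspondence then converts the Jordan-H\"older constituents of this subquotient (principal series versus supersingular modules) into precisely the four candidates for $\bar V_{k,a_p}$ appearing in the tetrachotomy.

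The computational core is the analysis of $T$ acting on a suitable integral lattice in $\ind_{KZ}^{\GL_2(\Q_p)} \Sym^r \O_E^2$ with enough $p$-adic precision to resolve slope $\tfrac{3}{2}$, which in practice means keeping terms modulo $p^3$. The quantity $c = (a_p^2 - (r-2)\binom{r-1}{2} p^3)/(pa_p)$ in the theorem is designed to isolate the next nontrivial contribution once the obvious cancellations forced by $v(a_p) = \tfrac{3}{2}$ and the exceptional congruence $r \equiv b = 3 \pmod{p-1}$ have been carried out. Following the template of \cite{Bhattacharya-Ghate} and \cite{BGR18}, I would expand elements of the universal module in the basis $[g_n,\, X^{r-i}Y^i]$, peel off known contributions by dividing out by $T$-eigenvectors in the natural filtered submodules of $\Sym^r$, and then reduce the equation $Tx \equiv a_p x$ successively modulo $p$, $p^2$ and $p^3$. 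The filtrations on $\Sym^r$ and the polynomial operators of \cite{Bhattacharya-Ghate} cut out specific principal-series constituents of the mod $p$ quotient, and the bulk of the work is in proving that, depending on where $\tau$ lies relative to $t$ and $t+1$, all but one of these constituents must vanish in $\bar \Pi_{k,a_p}$.

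The four regimes are then distinguished by which constituent survives: when $\tau < t$ both available principal-series quotients die and only the supersingular $\ind(\omega_2^{b+1})$ remains; at $\tau = t$ the principal series with Satake parameter $\lambda_1 = \overline{bc/(b-r)}$ persists, yielding $\mu_{\lambda_1}\omega^b \oplus \mu_{\lambda_1^{-1}}\omega$ after applying mod $p$ LLC; in the window $t < \tau < t+1$ the second supersingular $\ind(\omega_2^{b+p})$ takes over; and for $\tau \geq t+1$ the principal series of weights $(b-1,2)$ survives with trace $\lambda_2 + \lambda_2^{-1}$ given by the displayed formula. The alternation between irreducible and reducible is exactly the zig-zag pattern predicted in \cite{G19}. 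The principal obstacle is the $p$-adic precision required: whereas the $v = 1$ analysis of \cite{BGR18} worked essentially modulo $p^2$, at $v = \tfrac{3}{2}$ the calculations must be pushed to mod $p^3$, with the attendant combinatorial explosion in the number of monomials that need to be tracked, and there are now \emph{two} distinct families of principal-series quotients (with weights $(b,1)$ and $(b-1,2)$) which must both be analyzed and either killed or identified. Verifying the precise formula for $\lambda_2$, which rests on a second-order expansion invisible at lower slopes, is where I expect the bookkeeping to be most delicate.
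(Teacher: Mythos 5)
Your high-level strategy is exactly the paper's: pass through the $p$-adic and mod $p$ LLC, reduce the problem to computing $\bar\Theta_{k,a_p}^{ss}$ as a $G$-module, and read off the answer from which JH factors survive the Hecke relation. You also correctly identify the four regimes, the two pairs of ``dual'' principal series that appear in the reducible cases, and the fact that the alternation reproduces the zig-zag pattern.

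However, there is a serious gap in your understanding of where the difficulty lies. You write that the calculations ``must be pushed to mod $p^3$'' (and that BGR18 ``worked essentially modulo $p^2$''), which is incorrect in an essential way. The precision required is not a fixed power of $p$: it is $p^{t + c}$ for various small constants $c$, where $t = v(r - b)$ is \emph{unbounded}. This is precisely what separates this result from the earlier one of Bhattacharya--Ghate, whose hypothesis $(\star)$ amounted to requiring $t = 0$ (equivalently $\tau = \tfrac{1}{2}$), i.e.\ the numerator of $c$ has minimal valuation $3$, and the whole point of the present paper is to remove that restriction. Working modulo a fixed power of $p$ with functions supported near the origin of the tree would only reprove Bhattacharya--Ghate. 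To handle arbitrary $t$ one needs the ``telescoping lemmas'' of Section~4: functions like $\chi = \sum_{n=0}^{t} a_p^n[g^0_{n,0}, Y^r - X^{r-3}Y^3]$ supported on line segments of the Bruhat--Tits tree of length roughly $t$, which under $T - a_p$ collapse by a telescoping cancellation to expressions supported in radii $0$, $1$, $2$ with $p$-adic precision tracking $t$. You do not mention these, and without them the argument does not close. Similarly, BGR18 needed the same device at slope $1$ for arbitrary $t = v(r-2)$; it was not a bounded-precision computation either.

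A smaller but still real omission is the organizational framework. The paper reduces $V_r$ to the bounded-length module $Q = V_r/(X_{r-1} + V_r^{**})$ with JH factors $J_1 = V_{p-4}\otimes D^3$, $J_2 = V_1\otimes D$, $J_3 = V_{p-2}\otimes D^2$, defines the corresponding subquotients $F_1, F_2, F_3$ of $\bar\Theta_{k,a_p}$, and then proves a nine-part proposition describing exactly when each $F_i$ vanishes or contributes and with which Satake parameter. This proposition is the actual engine driving the tetrachotomy --- in particular, both $J_2$ and $J_3$ contribute the same supersingular $\ind(\omega_2^{b+p})$ in the middle window, and $J_3$ is self-dual, contributing twice when $\tau \geq t+1$ and producing the quadratic $T^2 - \bar{d}T + 1$ that yields $\lambda_2$. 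Your proposal is aware of the right ingredients but underestimates what is new: the mechanism that controls the $p$-adic precision as $t \to \infty$, and the case analysis encoded in the nine-part proposition.
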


Theorem~\ref{maintheorem} was proved in \cite{Br03} for $r = p + 2$. Moreover, it was proved 
in \cite[Theorem 1]{Bhattacharya-Ghate} under a relatively strong assumption (in the present context), 
namely $\tau = v(c) =  \frac{1}{2}$ is as small as possible.
Indeed, condition ($\star$) imposed in \cite[Theorem 1.1]{Bhattacharya-Ghate} 
exactly says that the numerator of $c$ above has minimal possible valuation $3$. While the authors of \cite{Bhattacharya-Ghate} suspected 
that removing condition $(\star)$ would require new ideas, they did not perhaps appreciate how difficult removing this condition
would turn out to be.  An important psychological step was taken in \cite{BGR18}.
Indeed, many of the techniques used in this paper
to prove the tetrachotomy above for $v = \frac{3}{2}$ substantially develop techniques used in \cite{BGR18} to prove
the trichotomy for $v = 1$, although there are many additional complications.
In any case, since $\bar{V}_{k,a_p}$ was determined in \cite{Bhattacharya-Ghate} for all other slopes $v$ in $(1,2)$
(even for $p \geq 3$), we can finally state the following corollary.

\begin{cor}
  If $p \geq 5$, then the reduction $\bar{V}_{k,a_p}$ is known for all slopes $v = v(a_p)$ less than $2$. 
\end{cor}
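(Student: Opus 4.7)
The plan is a short synthesis: partition the range of positive slopes less than $2$ and cite the known result in each sub-range, with Theorem~\ref{maintheorem} filling the last remaining gap.

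First I would dispose of the generic ranges. For $v \in (0,1)$ with non-exceptional weight, Buzzard--Gee \cite{BG09} determine $\bar{V}_{k,a_p} \simeq \mathrm{ind}(\omega_2^{b+1})$; the lone exceptional case $v = \tfrac{1}{2}$, $b = 1$, was subsequently handled by the dichotomy of \cite[Theorem~A]{BG13} recalled in the introduction. The boundary slope $v = 1$, including the difficult exceptional case $b = 2$, is completely settled by the trichotomy of \cite{BGR18}. For slopes $v \in (1,2)$, the results of \cite{GG15} and \cite{Bhattacharya-Ghate} cover $\bar{V}_{k,a_p}$ for all weights, except under the outstanding hypothesis $(\star)$ of \cite[Theorem~1.1]{Bhattacharya-Ghate}, which is active only when $v = \tfrac{3}{2}$ and the weight is exceptional (so $b = 3$). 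The classical results of Fontaine and Edixhoven \cite{Edixhoven92} together with Breuil \cite{Br03} cover the small weights $k \leq 2p+1$, and Berger--Li--Zhu \cite{BLZ} cover the large-slope range $v > \lfloor (k-1)/(p-1) \rfloor$, so these serve as useful cross-checks at the boundaries.

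The only remaining cases are therefore the exceptional weights at slope $v = \tfrac{3}{2}$ for which $(\star)$ fails. Unwinding the definitions, $(\star)$ says precisely that the numerator of $c$ has valuation exactly $3$, equivalently that $\tau = v(c) = \tfrac{1}{2}$; the failure of $(\star)$ is the complementary range $\tau > \tfrac{1}{2}$. Theorem~\ref{maintheorem} handles the entire exceptional case $b = 3$ uniformly via the stated tetrachotomy, in particular subsuming \cite[Theorem~1.1]{Bhattacharya-Ghate} on the overlap $\tau = \tfrac{1}{2}$ and supplying the answer in the previously unknown range $\tau > \tfrac{1}{2}$. Assembling these pieces completes the description of $\bar{V}_{k,a_p}$ for every $v(a_p) < 2$ when $p \geq 5$.

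The only conceivable obstacle is bookkeeping: checking that the sub-ranges listed above genuinely partition the parameter space, that each cited result is valid for $p \geq 5$ (this is where the hypothesis $p \geq 5$ in the corollary comes from, since Theorem~\ref{maintheorem} requires it), and that no exceptional boundary between ranges is inadvertently omitted. No new mathematical input beyond Theorem~\ref{maintheorem} is required.
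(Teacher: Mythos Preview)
Your proposal is correct and follows essentially the same approach as the paper: the corollary is not given a standalone proof there but is stated as an immediate consequence of the preceding discussion, which likewise partitions the slope range $(0,2)$ into the pieces handled by \cite{BG09}, \cite{BG13}, \cite{BGR18}, \cite{GG15}, \cite{Bhattacharya-Ghate}, and identifies Theorem~\ref{maintheorem} as removing the residual hypothesis~$(\star)$ at $v=\tfrac32$. Your write-up is in fact more explicit than the paper's, but the underlying argument is identical.
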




\subsection{Proof of Theorem~\ref{maintheorem}} 
\label{sectionintroproof}

The proof  of Theorem~\ref{maintheorem} uses the compatibility of the $p$-adic and mod
$p$ Local Langlands Correspondences \cite{Br03}, with respect to the process of reduction \cite{B10}. This compatibility allows one to 
reduce the problem of computing $\bar{V}_{k,a_p}$ to a representation theoretic one, namely, to computing
the reduction of a $\mathrm{GL}_2(\Q_p)$-stable lattice in a 
certain unitary $\mathrm{GL}_2(\Q_p)$-Banach space. The key ingredients in the argument have been recalled several times in earlier 
works, so let us only recall the main steps here. 

Let $G =  \mathrm{GL}_2(\Q_p)$ and let $B(V_{k,a_p})$ be the unitary $G$-Banach space associated
to $V_{k,a_p}$ by the $p$-adic Local Langlands Correspondence. 
The reduction $\overline{B(V_{k,a_p})}^{ss}$ of a lattice 
in this Banach space coincides with the image of $\bar{V}_{k,a_p}$
under the (semisimple) mod $p$ Local Langlands Correspondence defined in \cite{Br03}. 
Since the mod $p$ correspondence is by definition injective, to compute $\bar{V}_{k,a_p}$ it suffices to compute the reduction 
$\overline{B(V_{k,a_p})}^{ss}$.

Let $K = \mathrm{GL}_2(\Z_p)$, a maximal compact open subgroup of $G =
\mathrm{GL}_2(\Q_p)$, and let $Z= \Q_p^\times$ be the center of $G$.  Let $X =
KZ \backslash G$ be the (vertices of the) Bruhat-Tits tree associated to
$G$.  The module $\mathrm{Sym}^r \bar\Q_p^2$, for $r = k-2$, carries a
natural action of $KZ$, and the projection

$$
KZ \backslash ( G \times   {\mathrm {Sym}}^{k-2} \bar\Q_p^2 ) \rightarrow KZ \backslash G = X
$$
defines a  local system on $X$. The space 
\begin{eqnarray*}  
   {\mathrm{ind}}_{KZ}^{G} \> {\mathrm {Sym}}^{k-2} \bar\Q_p^2
\end{eqnarray*}
consisting of all sections $f : G \rightarrow \mathrm{Sym}^{k-2}
\bar\Q_p^2$ of this local system which are compactly supported mod $KZ$ form a representation space for $G$,
which is equipped with a $G$-equivariant Hecke operator $T$. 
Let $\Pi_{k,a_p}$ be the locally algebraic representation of
$G$ defined by taking the cokernel of $T-a_p$ acting on the above space of sections.
Let $\Theta_{k,a_p}$ be the image of the integral sections
${\mathrm{ind}}_{KZ}^{G} \> {\mathrm {Sym}}^{k-2} \bar\Z_p^2$ in
$\Pi_{k,a_p}$. 
Then $B(V_{k,a_p})$ is the completion $\hat{\Pi}_{k,a_p}$ of
$\Pi_{k,a_p}$ with respect to the lattice $\Theta_{k,a_p}$.
The completion $\hat{\Theta}_{k,a_p}$, and sometimes by
abuse of notation $\Theta_{k,a_p}$ itself, is called the standard lattice
in $B(V_{k,a_p})$.  We have $\overline{B(V_{k,a_p})}^{ss} \cong
\bar{\hat{\Theta}}_{k,a_p}^{ss} \cong \bar\Theta_{k,a_p}^{ss}$.

Thus, to compute $\bar{V}^{}_{k,a_p}$, it suffices to compute the reduction 
$\bar\Theta_{k,a_p}^{ss}$ of $\Theta_{k,a_p}$.
In order to do this, we need a bit more notation. 
Let $V_r$ denote the $(r+1)$-dimensional $\bar{\mathbb{F}}_p$-vector space of 
homogeneous polynomials $P(X,Y)$ in two variables $X$ and $Y$ of degree $r$ over
$\bar{\mathbb{F}}_p$. 
The group $\Gamma=\mathrm{GL}_2(\mathbb{F}_p)$
acts on $V_r$ by the formula 
$\left(\begin{smallmatrix} a & b\\
c & d
\end{smallmatrix}\right)\cdot P(X,Y)=P(aX+cY, bX+dY)$, and $KZ$ acts on $V_r$ via projection
to $\Gamma$, with $\left( \begin{smallmatrix} p & 0 \\ 0 & p \end{smallmatrix} \right) \in
Z$ acting trivially. By definition of the lattice $\Theta_{k,a_p}$, there is a surjection 
${\mathrm{ind}}_{KZ}^{G} \> {\mathrm {Sym}}^{k-2} \bar\Z_p^2 \twoheadrightarrow \Theta_{k,a_p}$,
which induces a surjective map
\begin{eqnarray}
   \label{bartheta}
   \mathrm{ind}_{KZ}^G V_r \twoheadrightarrow\bar{\Theta}_{k,a_p},
\end{eqnarray}
for $r = k-2$.
Thus to compute $\bar\Theta_{k,a_p}$  one needs to understand the kernel of the map \eqref{bartheta}.
Let $\theta(X,Y)=X^pY-XY^p$.
The action of $\Gamma$ on $\theta$ is via the determinant $D : \Gamma \rightarrow \mathbb{F}_p^\times$.
Define the following $\Gamma$- (hence $KZ$-) submodules of $V_r$. First, let 
$V_r^{**} = \{P(X,Y) \in V_r :  \theta^2  | P \}$ be the $\Gamma$-submodule of $V_r$ consisting of all polynomials divisible by $\theta^2$.
Second, let $X_{r-1}$ be the $\Gamma$-submodule of $V_r$ generated by $X^{r-1}Y$. 
These submodules are important in computing the kernel of \eqref{bartheta} because of the following two useful facts
\cite[Remark 4.4]{BG09}: if the slope $v < 2$, then $\mathrm{ind}_{KZ}^G V_r^{**}$ lies in
the kernel of \eqref{bartheta}, and if the slope $v > 1$ and $r \geq 2p+1$,
then $\mathrm{ind}_{KZ}^G X_{r-1}$ lies in
the kernel of \eqref{bartheta}. Thus, if $1 < v < 2$ and $r \geq 2p+1$ (this is not a restriction, since 
as mentioned above, $r = p+2$ was treated in \cite{Br03}),  the surjection \eqref{bartheta}
factors through the 
map $$\mathrm{ind}_{KZ}^G \> Q \twoheadrightarrow\bar{\Theta}_{k,a_p},$$
where
\begin{align*}
Q = \frac{V_r}{X_{r-1} + V_r^{**}},
\end{align*}
for $r = k-2$. The module $Q$ was studied in some detail in \cite{Bhattacharya-Ghate}, and it is known that $Q$ has at most three Jordan-H\"older (JH) factors, which we call $J_1$, $J_2$ and $J_3$ (though these factors were called $J_2$, $J_0$ and $J_1$, respectively, in 
\cite{Bhattacharya-Ghate}).

Now, the mod $p$ Local Langlands Correspondence (mod $p$ LLC) 
says (roughly) that {\it irreducible} Galois representations 
$\bar{V}_{k,a_p}$ correspond to supersingular representations of the form $\frac{\ind_{KZ}^G J}{T}$, for some irreducible 
$\Gamma$-module $J$, whereas {\it reducible} $\bar{V}_{k,a_p}$ correspond (generically) to a sum of two principal series representations of
the form $\frac{\ind_{KZ}^G J}{T-\lambda}$ and $\frac{\ind_{KZ}^G J'}{T-\lambda^{-1}}$, for some (possibly equal) `dual' 
irreducible $\Gamma$-modules $J$, $J'$,
the sum of whose dimensions is $p-1$ mod $(p-1)$, and some $\lambda \in \bar\F_p^\times$.  

Thus,  exactly one, or possibly exactly two, of the above JH factors $J_i$, $i = 1$, $2$, $3$, contribute to $\bar\Theta_{k,a_p}$. 
Now,
the sum of the dimensions of $J_{2}$ and $J_{3}$ is $p+1$, and by the mod $p$ LLC, each of them gives the same 
irreducible Galois representation $\bar{V}_{k,a_p}$ when it occurs as the sole contributing factor 
to a supersingular $\bar\Theta_{k,a_p}$. Also,
the sum of the dimensions of the pair $(J_{1}, J_{2})$ is $p-1$ 
so a potential `duality' occurs and these two JH factors may contribute together to $\bar\Theta_{k,a_p}$ giving a reducible Galois representation 
$\bar{V}_{k,a_p}$. Finally, it turns out that the last JH factor $J_3 = V_{p-2} \otimes D^2$ is potentially `self-dual', noting 
that twice its dimension is still $p-1$ mod $(p-1)$, so that again it may give rise to a reducible Galois representation $\bar{V}_{k,a_p}$.

We are now ready to make some key observations.
We claim that as $\tau$ varies through the rational line, the JH factors that contribute to $\bar\Theta_{k,a_p}$ actually 
occur in the following order: first $J_1$ contributes, then both $J_1$ and $J_2$ contribute together,
then only $J_2$ contributes, then only $J_3$  contributes, then finally $J_3$ contributes together with itself in a self-dual way. 
Moreover, we claim that the jumps when two JH factors contribute as in the two cases described above
occur when $\tau$ takes specific integral values, namely, $\tau = t$ and $\tau = t+1$, respectively.
These claims are remarkable considering that {\it a priori} there is 
no reason to expect that there should be any patterns in the way $\bar\Theta_{k,a_p}$ `selects' JH factors. Indeed this
selection of these JH factors seems to be the beginning of a more general conjectural zig-zag pattern among the JH factors 
(see \cite[Conjecture 1.1]{G19}). In the present case, it explains
why the reduction $\bar{V}_{k,a_p}$ alternates between irreducible and reducible possibilities, with the reducible possibilities occurring 
exactly at the integer $\tau = t$ and for $\tau \geq t+1$.

All of this is best summarized with a picture. Let $F_i$ be the subquotient of $\bar\Theta_{k,a_p}$ occurring
as the image of $\ind_{KZ}^G J_i$, for $i = 1, 2, 3$. Then we prove that all the subquotients $F_i$ vanish in $\bar\Theta_{k,a_p}$, {\it except} for the $F_i$ occurring for $\tau$  in the following regions:

%
%

\vskip 0.5 cm

\begin{tikzpicture}[xscale = 1.8, auto=center][extra thick]
\draw [latex-latex] (0,0) -- (8,0);
\foreach \x in {1.8,3.8,5.8}
\draw[shift={(\x,0)},color=black] (0pt,3.5pt) -- (0pt,-3.5pt);
\node at(.7,0.5) {$F_1$};
\node at (1.8,0.5) {$(F_1,F_2)$};
\node at (2.9,0.5){$F_2$};
\node at (4.7,0.5) {$F_3$};
\node at (5.8,0.5) {$(F_3,F_3)$};
\node at (7,0.5){$(F_3,F_3)$};
\node at (1.8,-0.5) {$t$};
\node at (3.8, -0.5){$t+ \frac{1}{2}$};
\node at (5.8,-0.5){$t+1$};
\node at (8.3, 0) {$\tau$};
\end{tikzpicture}

%
\noindent More formally, we prove the following key symmetric nine-part proposition:
\begin{prop}
  \label{prop-nine-part}
  Assume $v = \frac{3}{2}$ and $b = 3$. Then the subquotients $F_i$ of $\bar\Theta_{k,a_p}$ satisfy the following 
  \begin{enumerate}
  \item Around $t$:
  \begin{itemize}
    \item $\tau > t \implies F_1 = 0$
    \item $\tau = t \implies F_1 \twoheadleftarrow \dfrac{\ind J_1}{T - \lambda_1^{-1}}$ and 
                                      $F_2 \twoheadleftarrow \dfrac{\ind J_2}{T - \lambda_1}$,
                                       with $\lambda_1  = \overline{\dfrac{b}{b-r} \cdot c }$
    \item $\tau < t \implies F_2 = 0$,
 \end{itemize}
 \item Around $t + \frac{1}{2}$:
 \begin{itemize}
    \item $\tau > t + \frac{1}{2} \implies F_2 = 0$
    \item $\tau = t + \frac{1}{2} \implies F_2 \twoheadleftarrow \dfrac{\ind J_2}{T}$ and $F_3 \twoheadleftarrow \dfrac{\ind J_3}{T}$
    \item $\tau < t + \frac{1}{2} \implies F_3 = 0,$\footnote{In fact, we can only prove this for $\tau \leq t$, but this suffices.}
  \end{itemize}  
  \item Around $t + 1$:
  \begin{itemize}
    \item $\tau > t + 1 \implies  F_3 \twoheadleftarrow \dfrac{\ind J_3}{T^2+1}$
    \item $\tau = t + 1 \implies  F_3 \twoheadleftarrow \dfrac{\ind J_3}{T^2-dt+1}$,
                                               with 
                                                      $d =   \overline{\dfrac{b-1}{(b-1-r)(b-r)} \cdot \dfrac{c}{p}}$.   
    \item $\tau < t + 1 \implies F_3  \twoheadleftarrow \dfrac{\ind J_3}{T}$,
  \end{itemize}  
  \end{enumerate} 
  where $\ind = \ind_{KZ}^G$.                                                                 
\end{prop}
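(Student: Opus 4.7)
The plan is to establish each of the nine sub-claims by producing, for each region of $\tau$, an explicit function $f \in \ind_{KZ}^G \Sym^{k-2}\bar{\Z}_p^2$ whose reduction detects the relevant Jordan--Hölder factor. The defining identity $(T - a_p) f = 0$ in $\Theta_{k,a_p}$, reduced mod $p$, gives a linear relation in $\ind_{KZ}^G V_r$ modulo $X_{r-1} + V_r^{**}$; projecting this relation onto each $\ind J_i$ then forces either a vanishing $F_i = 0$ or a specific Hecke eigenvalue, respectively quadratic Hecke relation, on the generator.

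I would begin with the bookkeeping. Using the tree basis $[g_n, P]$ with $g_n = \matr{p^n}{0}{0}{1}$, the Hecke operator splits as $T = T^+ + T^-$: here $T^+$ sums over the $p$ neighbors extending away from the origin, coming from expansions of $(jX + pY)^r$ for $j \in \F_p$, while $T^-$ moves one vertex back. Because $v = \tfrac{3}{2}$, all binomial expansions must be tracked modulo $p^3$; the $p^3$-coefficient involves $\binom{r}{3}$, which after normalization produces the quantity $(r-2)\binom{r-1}{2}$ appearing in the numerator of $c$. The crucial arithmetic input is the identity $a_p^2 - (r-2)\binom{r-1}{2}p^3 = p a_p c$, which lets one replace $a_p^2$ by a combination with controlled valuation whenever $T$ is applied twice.

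For each region the strategy is the same. To show $F_i = 0$, I exhibit a generator of $J_i$ as an element of $(T-a_p)(\text{integral function}) + X_{r-1} + V_r^{**}$, whose leading-order analysis, after dividing out the appropriate power of $p$, forces the generator to vanish in $\bar\Theta_{k,a_p}$. To show $F_i \twoheadleftarrow \ind J_i/(T - \lambda)$, the same construction now produces a finite Hecke relation instead of a vanishing. Concretely, around $t$ the relation comes from a single application of $T$: the $(b-r)$-contribution of the $T^-$-correction and the $c$-contribution from the $T^+$-expansion have equal valuation precisely at $\tau = t$, yielding $\lambda_1 = \overline{\tfrac{b}{b-r} \cdot c}$. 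Around $t + \tfrac{1}{2}$ both leading coefficients vanish in the relevant quotient, forcing supersingular $T = 0$ on $J_2$ and $J_3$. Around $t + 1$ a second iteration of $T$ is unavoidable, since $J_3 = V_{p-2} \otimes D^2$ is potentially self-dual and the Galois reduction requires the quadratic relation to live at the level of $T^2$.

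The main obstacle will be the extraction of the quadratic relation at $\tau = t+1$. One must apply $T$ twice and carefully track all combinatorial cross-terms, ensuring both that the constant coefficient is exactly $1$ (reflecting the self-duality of $J_3$) and that the linear coefficient matches $d = \overline{\tfrac{b-1}{(b-1-r)(b-r)} \cdot c/p}$ without spurious higher-order corrections. A secondary difficulty is bootstrapping the vanishing claims: generically the $J_i$-generator is produced after several iterations along the tree, each introducing $p$-adic error terms that must be absorbed into $X_{r-1} + V_r^{**}$ or $(T-a_p)(\text{integral})$. These difficulties substantially amplify those of \cite{BGR18}, whose $v = 1$ case required only $p^2$-order expansions and no quadratic Hecke relation; the passage from linear to quadratic relations, combined with the third-order binomial bookkeeping, is where the genuinely new work is concentrated.
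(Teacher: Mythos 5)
There is a genuine gap in the mechanism you describe. You frame the threshold at $\tau = t$ as arising from ``a single application of $T$'' balancing a $T^-$-correction against a $T^+$-expansion, and the threshold at $\tau = t+1$ as arising from ``applying $T$ twice.'' Neither is right. An elementary function $[1,v]$ and its first few $T$-images never see the quantity $t = v(r-3)$ in a way that can compete with $\tau = v(c)$, because $t$ is unbounded while a fixed number of applications of $T$ produces only boundedly many $p$-adic terms. The paper's critical device (Section~4, the ``telescoping lemmas'') is that the auxiliary functions $\chi,\phi,\xi,\psi$ are supported on line segments of length $t$ or $2t$ along the Bruhat--Tits tree, with geometrically scaled coefficients like $a_p^n$ or $(a_p/p)^n$. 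After an interior cancellation, $(T-a_p)$ applied to such a long function is concentrated near the origin with an error term of valuation $\approx t+1$ (Lemma~\ref{chi}) or $\approx t+\tfrac{1}{2}$ (Lemmas~\ref{lemma 0.18}, \ref{Lemma 0.17.}). It is exactly this $t$-dependent error, when compared against $c$ of valuation $\tau$, that makes $\tau = t$ and $\tau = t+1$ the critical values; the constants $\frac{b}{b-r}$ and $\frac{b-1}{(b-1-r)(b-r)}$ enter through the normalization needed to make the telescoping function integral, not through a low-order $T^\pm$ expansion. Without this ingredient your construction cannot reach the intended thresholds.

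Two smaller but real omissions: first, not all nine clauses are attacked head-on in the paper's proof. The $\tau = t + \tfrac12$ statement (part five) is deduced from the mod $p$ LLC together with the $\tau < t+1$ statement (part nine), and the $\tau < t+\tfrac12 \Rightarrow F_3 = 0$ claim is established only for $\tau \le t$, which is all that is needed. Your uniform ``produce a function for each sub-claim'' plan would founder on part five unless you also make this observation. Second, the vanishing of $F_2$ for $\tau > t + \tfrac12$ (Proposition~\ref{F_1 when tau > t + 0.5}) requires showing that the function $\sum_\lambda [g^0_{1,[\lambda]}, Y]$ does not lie in $T(\ind_{KZ}^G J_2)$ (Lemma~\ref{Technical lemma about F_1}): it is not enough to exhibit the generator in the image of $T$ modulo integral errors, one must also check the relation obtained is not already trivial in $\ind J_2/T$. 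Your proposal only describes the ``exhibit the generator'' half of this step.
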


The main theorem of this paper, Theorem~\ref{maintheorem}, follows immediately from Proposition~\ref{prop-nine-part} and the fact that
$\bar\Theta_{k,a_p}$ corresponds to $\bar{V}_{k,a_p}$ under the mod $p$ LLC. Checking each of the nine statements in the proposition requires a substantial amount of work\footnote{The fifth statement actually follows easily from the mod $p$ LLC and the ninth statement.} 
involving explicit computations
with the Hecke operator on various polynomial valued functions on $G$. The hardest part of the argument is coming up with the
function in the first place. Once the functions are found, it is an elementary though lengthy process to cross-check that the computations
involving the Hecke operator are correct.
In this ({\tt arXiv}) version of the paper (where space is less of a constraint), we have decided to provide complete details 
for the benefit of the interested reader.

We end with an outline of the paper.  In Section~\ref{sectionbasics} we recall some basics facts.
In Section~\ref{sectioncomb} we prove 
some combinatorial identities and in Section~\ref{sectiontelescope} we prove several
useful telescoping lemmas in order to deal with the action of the Hecke operator at `infinity'. With these tools in hand,
the statements about $F_1$, $F_2$ and $F_3$ in the proposition above are proved in 
Sections~\ref{sectionF1}, \ref{sectionF2}  and \ref{sectionF3}, respectively. This then completes the proof of Proposition~\ref{prop-nine-part}
and hence the proof of Theorem~\ref{maintheorem}.

\section{Basics}
\label{sectionbasics}

In this section, we recall some notation and well-known facts, see \cite{Br03}, \cite{Bhattacharya-Ghate}, \cite{BGR18}.

\subsection{Hecke operator $T$ }
  \label{Hecke}

Let $G = \mathrm{GL}_2(\Q_p)$, $K = \mathrm{GL}_2(\Z_p)$ be the standard
maximal compact subgroup of $G$ and $Z = \Q_p^\times$ be the center of $G$.
Let $R$ be a $\Z_p$-algebra and let $V = \Sym^r R^2\otimes D^s$ be the
usual symmetric power representation of $KZ$ twisted by a power of the
determinant character $D$, modeled on homogeneous polynomials of degree
$r$ in the variables $X$, $Y$ over $R$. We require that $p \in Z$ 
acts trivially.
We will denote $\mathrm{ind}_{KZ}^{G}$ 
to mean compact induction. Thus $\mathrm{ind}_{KZ}^{G} V$ consists of functions
$f : G \rightarrow V$ such that $f(hg) = h \cdot f(g)$, for all $h \in KZ$
and $g \in G$, and $f$ is compactly supported mod $KZ$. Recall that $G$ acts on such
functions by right translation: $(g' \cdot f)(g) = f(gg')$, for $g$, $g' \in G$. 
For $g \in G$, $v \in V$, let
$[g,v] \in \mathrm{ind}_{KZ}^{G} V$ be the function with support in
${KZ}g^{-1}$ given by 
  $$g' \mapsto
     \begin{cases}
         g'g \cdot v,  \ & \text{ if } g' \in {KZ}g^{-1} \\
         0,                  & \text{ otherwise.}
      \end{cases}$$ 
One checks that $g' \cdot [g,v] = [g'g, v]$ and $[gh,v] = [g, h \cdot v]$, for all $g$, $g' \in G$, $h \in KZ$, $v \in V$.      
Any function in $\mathrm{ind}_{KZ}^G V$ is a finite linear combination of functions of the form $[g,v]$, for $g \in G$ and $v\in V$.  
The Hecke operator $T$ is defined by its action on these elementary functions via

\begin{equation}\label{T} T([g,v(X,Y)])=\underset{\lambda\in\F_p}\sum\left[g\left(\begin{smallmatrix} p & [\lambda]\\
                                                 0 & 1
                                                \end{smallmatrix}\right),\:v\left(X, -[\lambda]X+pY\right)\right]+\left[g\left(\begin{smallmatrix} 1 & 0\\
                                                                                                                                                   0 & p
                                                \end{smallmatrix}\right),\:v(pX,Y)\right],\end{equation}
 where $[\lambda]$ denotes the Teichm\"uller representative of $\lambda\in\F_p$.                                               
We may write $T = T^+ + T^-$, where 
\begin{eqnarray}
  T^+([g,v(X,Y)]) & = & \underset{\lambda\in\F_p}\sum\left[g\left(\begin{smallmatrix} p & [\lambda]\\
                                                                                                                                   0 & 1
                                                \end{smallmatrix}\right),\:v\left(X, -[\lambda]X+pY\right)\right],  \label{T^+} \\                                              
  T^-([g,v(X,Y)]) & = &    \left[g\left(\begin{smallmatrix} 1 & 0\\
                                                                                      0 & p
                                                \end{smallmatrix}\right),\:v(pX,Y)\right]. \label{T^-}
\end{eqnarray}

For $m = 0$, set $I_0 = \{0\}$, and 
for $m >0$, let
$I_m = \{ [\lambda_0] + [\lambda_1] p + \cdots + [\lambda_{m-1}]p^{m-1}  \> : \>  \lambda_i \in \F_p \} 
              \subset \Z_p$,
where the square brackets denote Teichm\"uller representatives. For $m \geq 1$, 
there is a truncation map
$[\quad]_{m-1}: I_{m} \rightarrow I_{m-1}$ given by taking the first $m-1$ terms in the $p$-adic expansion above;
for $m = 1$, $[\quad]_{m-1}$ is the $0$-map.
Let $\alpha =  \left( \begin{smallmatrix} 1 & 0 \\ 0  & p \end{smallmatrix} \right)$. 
For $m \geq 0$ and $\lambda \in I_m$, let
\begin{eqnarray*}
  g^0_{m, \lambda} =  \left( \begin{smallmatrix} p^m & \lambda \\ 0 & 1 \end{smallmatrix} \right) & \quad \text{and} \quad 
  g^1_{m, \lambda} = \left( \begin{smallmatrix} 1 & 0  \\ p \lambda  & p^{m+1} \end{smallmatrix} \right),
\end{eqnarray*}
noting that $g^0_{0,0}=\mathrm{Id}$ is the identity matrix and $g_{0,0}^1=\alpha$ in $G$. 
Recall the decomposition
\begin{eqnarray*}
    G & = & \coprod_{\substack{m\geq 0,\,\lambda \in I_m,\\ i\in\{0,1\}}} {KZ} (g^i_{m, \lambda})^{-1}.
\end{eqnarray*}
Thus a general element in $\mathrm{ind}_{KZ}^G V$ is a finite sum of functions of the form $[g,v]$, 
with $g=g_{m,\lambda}^0$ or $\,g_{m,\lambda}^1$, for some $\lambda\in I_m$ and $v\in V$.

\subsection{The mod $p$ Local Langlands Correspondence}
\label{subsectionLLC}

For $0 \leq r \leq p-1$, $\lambda \in \bar{\F}_p$ and $\eta : \Q_p^\times
\rightarrow \bar\F_p^\times$ a smooth character, let
\begin{eqnarray*}
  \pi(r, \lambda, \eta) & := & \frac{\mathrm{ind}_{KZ}^{G} \:\Sym^r\bar\F_p^2}{T-\lambda} \otimes (\eta\circ \mathrm{det})
\end{eqnarray*}
be the smooth admissible representation of ${G}$, 
known to be irreducible unless $(r,\lambda)=(0,\pm 1)$ or $(p-1,\pm 1)$, by the classification of irreducible  representations  of $G$ in characteristic $p$ in \cite{BL94, BL95, Breuil03a}.
Breuil's semisimple mod $p$ Local Langlands Correspondence 
\cite[Def. 1.1]{Br03} 
is given by:
\begin{itemize} 
  \item  $\lambda = 0$: \quad
             $\mathrm{ind}(\omega_2^{r+1}) \otimes \eta \:\:\overset{LL}\longmapsto\:\: \pi(r,0,\eta)$,
  \item $\lambda \neq 0$: \quad
             $\left( \mu_\lambda\cdot \omega^{r+1}  \oplus \mu_{\lambda^{-1}} \right) \otimes \eta 
                   \:\:\overset{LL}\longmapsto\:\:  \pi(r, \lambda, \eta)^{ss} \oplus  \pi([p-3-r], \lambda^{-1}, \eta \omega^{r+1})^{ss}$, 
\end{itemize}
where $\{0,1, \ldots, p-2 \} \ni [p-3-r] \equiv p-3-r \mod (p-1)$.  For a more functorial description, see \cite{Col}.

Recall that there is a locally algebraic representation of $G$ given by 
\begin{eqnarray*} 
\Pi_{k, a_p} = \frac{ \mathrm{ind}_{{KZ}}^{G}
\Sym^r \bar\Q_p^2 }{T-a_p}, 
\end{eqnarray*} 
where $r=k-2 \geq 0$ and $T$ is the
Hecke operator. Consider the standard lattice in $\Pi_{k,a_p}$ given by
\begin{equation} 
\label{definetheta}  
\Theta=\Theta_{k, a_p} :=
\mathrm{image} \left( \mathrm{ind}_{KZ}^{G} \Sym^r \bar\Z_p^2 \rightarrow
\Pi_{k, a_p} \right) \iso \frac{ \mathrm{ind}_{KZ}^{G} \Sym^r \bar\Z_p^2
}{(T-a_p)(\mathrm{ind}_{KZ}^{G} \Sym^r \bar\Q_p^2) \cap
\mathrm{ind}_{KZ}^{G} \Sym^r \bar\Z_p^2 }.  
\end{equation} 
It is known  \cite{B10}
that the semisimplification of the reduction of this lattice satisfies
$\bar\Theta_{k,a_p}^\mathrm{ss} \iso LL(\bar{V}_{k,a_p}^{ss})$, where $LL$ is
the (semisimple) mod $p$ Local Langlands Correspondence above.
Since the map ${LL}$ is clearly injective, it is enough to know ${LL}(\bar V_{k,a_p}^{ss})$ to determine 
$\bar{V}_{k,a_p}^{ss}$.

\subsection{The structure of the quotient $Q$}

%
%

Let $V_r =$ Sym$^r\bar{\mathbb{F}}_p^2$ be the usual symmetric power representation of $\Gamma:=$ GL$_2(\mathbb{F}_p)$ (hence $KZ$, with $p\in Z$ acting trivially).

It follows directly from the definition of $\Theta_{k,a_p}$ in \eqref{definetheta}, that there is a surjection 
ind$_{KZ}^G$Sym$^r\bar{\mathbb{Z}}_p^2 \twoheadrightarrow \Theta_{k,a_p}$, for $r =  k -2$, whence a surjective map 
\begin{align}
\text{ind}_{KZ}^G V_r \twoheadrightarrow \bar{\Theta}_{k,a_p}, \label{surjection from indV_r to Theta}
\end{align}
where $\bar{\Theta}_{k,a_p} = \Theta_{k,a_p} \otimes_{\bar{\mathbb{Z}}_p} \bar{\mathbb{F}}_p$.

Write $X_{k,a_p}$ for the kernel. A model for $V_r$ is the space of all homogeneous polynomials of degree $r$ in two variables $X$ and $Y$ over $\bar{\mathbb{F}}_p$ with the standard action of $\Gamma$, recalled in Section~\ref{sectionintroproof}. 
Let $X_{r-1} \subset V_r$ be the $\Gamma$- (hence $KZ$-) submodule generated by $X^{r-1}Y$. Let $V_r^*$ and $V_r^{**}$ be the submodules of $V_r$ consisting of polynomials divisible by $\theta$ and $\theta^2$ respectively, for $\theta = X^pY - XY^p$. If $r \geq 2p +1$, then Buzzard-Gee have shown [\cite{BG09}, Remark 4.4]:
\begin{itemize}
\item $v(a_p) > 1 \implies $ ind$_{KZ}^GX_{r-1} \subset X_{k,a_p}$,
\item $v(a_p) < 2 \implies $ ind$_{KZ}^GV_r^{**} \subset X_{k,a_p}$.
\end{itemize}
It follows that when $1<v(a_p)<2$, the map \eqref{surjection from indV_r to Theta} induces a surjective map ind$_{KZ}^G Q\twoheadrightarrow \bar{\Theta}_{k,a_p}$, where 
\begin{align*}
Q:= \frac{V_r}{X_{r-1} + V_r^{**}}.
\end{align*}
Note that unlike $V_r$ the length of $Q$ as a $\Gamma$-module is bounded independently of $r$. 
The $\Gamma$-module structure of $Q$ has been derived in \cite[Proposition 6.4]{Bhattacharya-Ghate}. We have:

\begin{prop}\label{Structure of Q}
 Let $p\geq 5$, $r \geq 2p+1$ and $r\equiv 3~\emph{mod}~(p-1)$. 
\begin{enumerate}[label = (\arabic{*})]
\item If $r \not\equiv 3$ \emph{mod} $p$, then there is an exact sequence
\begin{align}
0 \longrightarrow V_{p-2}\otimes D^2 \longrightarrow Q \longrightarrow V_{p-4}\otimes D^3\longrightarrow 0, \label{exact sequence t = 0}
\end{align}
and moreover the exact sequence above is $\Gamma$-split.
\item If $r\equiv 3$ \emph{mod} $p$, then
\begin{align}
0\longrightarrow V_r^*/V_r^{**}\longrightarrow Q \longrightarrow V_{p-4}\otimes D^3 \longrightarrow 0, \label{exact sequence t >= 1}
\end{align}
where $ V_r^*/V_r^{**}$ is the non-trivial extension of $V_{p-2}\otimes D^2$ by $V_1\otimes D$.
\end{enumerate}
\end{prop}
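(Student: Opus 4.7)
The plan is to compute $Q = V_r/(X_{r-1}+V_r^{**})$ by filtering $V_r$ by the canonical chain $V_r^{**} \subset V_r^* \subset V_r$ and analysing each graded piece separately modulo $X_{r-1}$. This induces a short exact sequence
\begin{equation*}
 0 \longrightarrow \frac{V_r^*}{V_r^{**} + (V_r^* \cap X_{r-1})} \longrightarrow Q \longrightarrow \frac{V_r}{V_r^* + X_{r-1}} \longrightarrow 0,
\end{equation*}
and the task reduces to identifying the outer terms and deciding whether this sequence splits.

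For the top term, $V_r/V_r^*$ has a well-understood $\Gamma$-module structure whose Jordan-H\"older factors are pinned down by the base-$p$ digits of $r$ (the explicit description can be extracted from \cite{BG09}). Under the standing hypotheses $r \equiv 3 \pmod{p-1}$ and $r \geq 2p+1$, the submodule $X_{r-1}$ kills the lower JH constituent of $V_r/V_r^*$, leaving $V_{p-4}\otimes D^3$ as the outer quotient.

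For the bottom term, I would first establish the intrinsic structure of $V_r^*/V_r^{**}$ independently of $X_{r-1}$. Multiplication by $\theta$ is a $\Gamma$-equivariant isomorphism $V_{r-p-1}\otimes D \xrightarrow{\sim} V_r^*$ sending $V_{r-p-1}^*\otimes D$ onto $V_r^{**}$, since a direct check shows $g\cdot\theta=(\det g)\,\theta$; so $V_r^*/V_r^{**} \cong (V_{r-p-1}/V_{r-p-1}^*)\otimes D$, and the right-hand side has exactly two JH factors, $V_1\otimes D$ (as socle) and $V_{p-2}\otimes D^2$ (as cosocle). Whether this length-two module splits is governed by the second base-$p$ digit of $r$, and a direct basis computation shows that the extension splits precisely when $r\not\equiv 3\pmod{p}$. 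It remains to compute the contribution of $V_r^* \cap X_{r-1}$: for $r\not\equiv 3\pmod p$ it captures the $V_1\otimes D$ summand and kills it, leaving $V_{p-2}\otimes D^2$; for $r\equiv 3\pmod p$ it lands inside $V_r^{**}$, so the bottom term is the full non-split module $V_r^*/V_r^{**}$, giving \eqref{exact sequence t >= 1}.

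The main obstacle is the splitting of the outer extension \eqref{exact sequence t = 0} in case~(1). The central characters of $V_{p-2}\otimes D^2$ and $V_{p-4}\otimes D^3$ coincide, so a cheap central-character argument is unavailable. Instead, I would exhibit a $\Gamma$-equivariant section by producing an explicit highest-weight vector in $Q$ lifting a generator of $V_{p-4}\otimes D^3$: a candidate of the form $X^r$ plus carefully chosen lower-order corrections can be shown to be killed by the upper-triangular unipotent radical modulo $X_{r-1}+V_r^{**}$, and one checks that its $\Gamma$-span meets the bottom piece trivially. Verifying this last condition is the bulk of the work and reduces to a congruence among binomial coefficients modulo $p$ that crucially exploits the hypothesis $r\not\equiv 3\pmod p$; this is precisely the place where the dichotomy between the two cases becomes visible.
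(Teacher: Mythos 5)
The paper does not actually prove this statement; it appeals directly to \cite[Proposition 6.4]{Bhattacharya-Ghate}, so there is no in-paper argument to compare against. That said, your filtration strategy is the natural one and the short exact sequence you write down is correct (modularity of the lattice of submodules, applied with $V_r^{**}\subseteq V_r^*$, gives $V_r^*\cap(V_r^{**}+X_{r-1})=V_r^{**}+(V_r^*\cap X_{r-1})$). The multiplication-by-$\theta$ isomorphism $V_{r-p-1}\otimes D\xrightarrow{\sim}V_r^*$ carrying $V_{r-p-1}^*\otimes D$ onto $V_r^{**}$ is also the right reduction, and $r\equiv 3\pmod{p-1}$ does force the two JH constituents of $V_r^*/V_r^{**}$ to be $V_1\otimes D$ and $V_{p-2}\otimes D^2$.

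However, the proposal reads as a plan rather than a proof, and two of its load-bearing claims are either misstated or unsupported. First, the dichotomy in the proposition is in $r\bmod p$, i.e.\ the \emph{first} base-$p$ digit of $r$ (equivalently, the first digit of $s=r-p-1$, since $s\equiv r-1\pmod p$); your reference to ``the second base-$p$ digit'' is not consistent with the condition $r\equiv 3\pmod p$ you then quote. Moreover, the splitting of $V_r^*/V_r^{**}$ is not actually what the proposition hinges on in case (1): the statement only asserts that $Q$ has two JH factors and splits, which is a statement about the image of $V_r^*\cap X_{r-1}$, not about whether $V_r^*/V_r^{**}$ itself is semisimple. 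Whether $V_r^*/V_r^{**}$ is split or non-split when $r\not\equiv 3\pmod p$, the socle always contains $V_1\otimes D$; what has to be shown is that $V_r^*\cap X_{r-1}$ surjects onto precisely that socle in case (1) and lies in $V_r^{**}$ in case (2). You state both facts, but give no computation; this is exactly the content of Bhattacharya--Ghate's argument, so asserting it is not a proof.

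Second, the candidate highest-weight vector ``of the form $X^r$ plus lower-order corrections'' cannot work as written: $\left(\begin{smallmatrix}1&1\\0&1\end{smallmatrix}\right)\cdot X^{r-1}Y - X^{r-1}Y = X^r$, so $X^r\in X_{r-1}$ and hence $X^r=0$ in $Q$. The leading term of any vector representing a generator of $V_{p-4}\otimes D^3$ must lie further down; compare Lemma~\ref{generator}(i), which shows that it is $X^{r-3}Y^3$, not $X^r$, that maps to $X^{p-4}$ in $J_1$. The splitting argument therefore needs to be rebuilt around a vector with that leading term, and the binomial-coefficient congruence controlling whether its $\Gamma$-span meets the bottom piece should then visibly involve $r\bmod p$, recovering the case distinction.
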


Let us now set some important notation. Let us denote the JH factors of $Q$ above as follows:
\begin{eqnarray*}
  J_1 & = & V_{p-4}\otimes D^3, \\
  J_2  & = & V_{1} \otimes D,  \\
  J_3 & = & V_{p-2}\otimes D^2. 
\end{eqnarray*}  
(These JH factors of $Q$ were called $J_2$, $J_0$, $J_1$, respectively, in \cite{Bhattacharya-Ghate} and \cite{BGR18}.) 

We now define some important subquotients of $\bar\Theta_{k,a_p}$. Let 
\begin{itemize}
  \item $F_2$  denote the image of ind$_{KZ}^GJ_2$ inside $\bar{\Theta}_{k,a_p}$, 
  \item $F_{2,3}$ denote the image of ind$_{KZ}^G(V_r^*/V_r^{**})$ inside $\bar{\Theta}_{k,a_p}$,  
  \item $F_3:= F_{2,3}/F_2$, and, 
  \item $F_1 := \bar{\Theta}_{k,a_p}/ F_{2,3}$.   
\end{itemize}
So we have a short exact sequence $$0 \rightarrow (F_2 \rightarrow F_{2,3} \rightarrow F_3)  \rightarrow \bar\Theta_{k,a_p}  \rightarrow F_1 \rightarrow 0. $$

Thus, to study the structure of 
$\bar\Theta_{k,a_p}$ up to semisimplification  it suffices to study the  
surjections  ind$_{KZ}^GJ_i \twoheadrightarrow F_i$
for $i = 1$, $2$, $3$.  This will be done in detail
in Sections~\ref{sectionF1},  \ref{sectionF2}, \ref{sectionF3}, respectively.
Note that even though $\bar{\Theta}_{k,a_p}$ is not necessarily semisimple, $F_1$, $F_2$ and 
$F_3$ are semisimple.

We end with the following useful lemma.

\begin{lemma}
 \label{generator}
Let $p\geq 3$, $r\geq 2p+1$, $ r\equiv b \mod (p-1)$, with $b = 3$.
 \begin{enumerate}

   \item[(i)] 
    The image of 
$X^{r-i}Y^{i}$ in $Q$ maps to 
$0 \in J_1$, 
for $0\leq i\leq b-1$, whereas the image of 
$X^{r-b}Y^{b}$ in $Q$ maps to 
$X^{p-b-1}$ in $J_1$.

   \item[(ii)] 
The image of 
$\theta X^{r-p-1}$, $\theta Y^{r-p-1}$ in $Q$ correspond to
$X^{b-2}$, $Y^{b-2}$, respectively, in $J_2$, so both map to $0$ in $J_3$.  

   \item[(iii)] 
The image of 
$\theta X^{r-p-b+1}Y^{b-2}$ in $Q$ maps to
$X^{p-b+1} \in J_3$. 

  \end{enumerate}

%
%
%
%

%
%
%
%
%
%
%
\end{lemma}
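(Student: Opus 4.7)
The plan is to combine a torus-character (highest-weight vector) analysis with the structure of $Q$ from Proposition~\ref{Structure of Q} and the $\Gamma$-equivariant division-by-$\theta$ isomorphism
$$
V_r^*/V_r^{**}\;\xrightarrow{\;\theta^{-1}\;}\;(V_m/V_m^*)\otimes D, \qquad m := r-p-1.
$$
Since each of $J_1$, $J_2$, $J_3$ is irreducible, a $\Gamma$-equivariant surjection onto it is determined by its value on a highest-weight vector, and the image of a monomial is pinned down by matching torus characters.

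For (i), $X^{r-1}Y\in X_{r-1}$ by definition, and $\bigl(\begin{smallmatrix}1&1\\0&1\end{smallmatrix}\bigr)\cdot(X^{r-1}Y)-X^{r-1}Y=X^r$ places $X^r$ in $X_{r-1}$ as well. For $i=2$, the character $(r-2,2)\equiv(1,2)\pmod{p-1}$ of $X^{r-2}Y^2$ cannot be matched by any weight $(a+3,b+3)$ of $J_1=V_{p-4}\otimes D^3$ (with $a+b=p-4$ and $0\le a,b\le p-4$), so its image in $J_1$ is zero. For $i=b=3$, the character $(0,3)\pmod{p-1}$ is matched uniquely by $X^{p-4}=X^{p-b-1}\in J_1$, identifying the image up to the normalisation implicit in Proposition~\ref{Structure of Q}.

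For (ii) and (iii), since $m\equiv b-2=1\pmod{p-1}$, the module $V_m/V_m^*$ fits into $0\to V_1\to V_m/V_m^*\to V_{p-2}\otimes D\to 0$, which after the $D$-twist is the extension $0\to J_2\to V_r^*/V_r^{**}\to J_3\to 0$ of Proposition~\ref{Structure of Q}. The key point is that this $V_1$-submodule contains the monomials $X^m$ and $Y^m$ as its highest- and lowest-weight vectors: the $\Gamma$-equivariance of $V_1\hookrightarrow V_m/V_m^*$, $X\mapsto X^m$, $Y\mapsto Y^m$, reduces to the identity $(aX+cY)^m\equiv aX^m+cY^m\pmod{V_m^*}$, which follows from a pairing of middle binomials $\binom{m}{k}+\binom{m}{k+(p-1)}\equiv 0\pmod p$ combined with the $\theta$-moves $X^{p+i}Y^{j+1}\equiv X^{i+1}Y^{j+p}$ that identify $X^{m-k}Y^k$ with $X^{m-k-(p-1)}Y^{k+(p-1)}$ in $V_m/V_m^*$. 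Granting this, $\theta X^{r-p-1}$ and $\theta Y^{r-p-1}$ correspond to $X^m=X^{b-2}$ and $Y^m=Y^{b-2}$ in $J_2$, proving (ii); both die in $J_3$ since $J_2$ is the kernel of the projection. For (iii), $\theta X^{r-p-b+1}Y^{b-2}$ divides to $X^{m-1}Y\in V_m/V_m^*$, whose character $(m-1,1)\equiv(0,1)\pmod{p-1}$ becomes $(1,2)$ after the $D$-twist, matching the weight of $X^{p-2}=X^{p-b+1}\in J_3=V_{p-2}\otimes D^2$; iterated $\theta$-moves reduce $X^{m-1}Y$ to $X^{p-1}Y$, which is linearly independent of $Y^m\in V_1$, so its image in the quotient $V_{p-2}\otimes D$ is nonzero and by weight equals $X^{p-2}$.

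The main technical obstacle is the combinatorial pairing identity $\binom{m}{k}+\binom{m}{k+(p-1)}\equiv 0\pmod p$ for $1\le k\le m-p$, which can be verified via Lucas' theorem and forms part of the toolkit of Sections~\ref{sectioncomb}--\ref{sectiontelescope}; once this is in hand, $\Gamma$-equivariance of $V_1\hookrightarrow V_m/V_m^*$ via $X\mapsto X^m$ (and symmetrically $Y\mapsto Y^m$) follows, and the rest of the proof is character bookkeeping. A secondary subtlety is the case distinction of Proposition~\ref{Structure of Q}: in Case~(1) ($r\not\equiv b\pmod p$) the $J_2$-layer is absorbed into $X_{r-1}+V_r^{**}$ and vanishes in $Q$, so statement (ii) is then to be read as the vanishing of both images in $Q$ itself, consistent with the lemma.
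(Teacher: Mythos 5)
Your approach is genuinely different from the paper's, which simply cites the explicit map formulas of Glover and Breuil; you instead push everything through the $\Gamma$-equivariant division-by-$\theta$ isomorphism $V_r^*/V_r^{**}\cong (V_m/V_m^*)\otimes D$ with $m=r-p-1$, and argue by torus-weight matching and the $\theta$-move structure of $V_m/V_m^*$. That route is more conceptual and the weight arguments in (i) and (iii) are correct as far as they go, modulo the point that they only determine images up to scalar; since the surjections $Q\twoheadrightarrow J_i$ are themselves only defined up to scalar and these scalars cancel in the later applications (e.g.\ the constants $\lambda_1$, $d$), the deferral to the normalisation implicit in Proposition~\ref{Structure of Q} is acceptable.

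There is however a genuine gap in the argument for (ii). The claimed identity $\binom{m}{k}+\binom{m}{k+(p-1)}\equiv 0 \pmod p$ for $1\le k\le m-p$ is false in general: with $p=5$ and $m=13$ (so $m\equiv 1\pmod 4$) one has $\binom{13}{2}\equiv 3$ and $\binom{13}{6}\equiv 1 \pmod 5$, and $3+1\not\equiv 0$. The pairwise cancellation you invoke therefore does not establish the $\Gamma$-equivariance of $X\mapsto X^m$, $Y\mapsto Y^m$ into $V_m/V_m^*$. What is true, and what your argument actually needs, is vanishing of the \emph{full} chain sum: the $\theta$-moves identify all monomials $X^{m-k'}Y^{k'}$ with $k'\equiv k\pmod{p-1}$ and $1\le k'\le m-1$ into a single class, so the coefficient of that class in $(X+cY)^m$ (for $c\in\mathbb{F}_p$) is $c^k\sum_{k'\equiv k,\,1\le k'\le m-1}\binom{m}{k'}$. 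That this vanishes mod $p$ for $m\equiv 1\pmod{p-1}$ follows from the roots-of-unity device \eqref{sum of roots of 1}: one gets $\sum_{l\equiv k,\,0\le l\le m}\binom{m}{l}\equiv 1\pmod p$ when $k\equiv 0$ or $k\equiv 1\pmod{p-1}$ and $\equiv 0$ otherwise, and subtracting the boundary terms $\binom{m}{0}$, $\binom{m}{m}$ kills the two nonzero cases. So your conclusion $(aX+cY)^m\equiv aX^m+cY^m\pmod{V_m^*}$ is right and the repair is very much in the spirit of Section~\ref{sectioncomb}, but the pairwise identity is the wrong ingredient. Note also that since (iii) relies on knowing $V_1=\langle X^m,Y^m\rangle$ inside $V_m/V_m^*$, it inherits this gap. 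A minor slip in (iii): the $\theta$-moves reduce $X^{m-1}Y$ to $X^{p-1}Y^{m-p+1}$, not to $X^{p-1}Y$; the conclusion that its class is independent of $X^m$, $Y^m$ is unaffected.
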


\begin{proof}
This is a special case of \cite[Lemma 8.5]{Bhattacharya-Ghate} and \cite[Lemma 3.4]{BGR18}, though the JH factors $J_1$, $J_2$, $J_3$ were called $J_2$, $J_0$, $J_1$, respectively in these papers.
The proof is elementary and consists of explicit calculations with the maps given in \cite[(4.2)]{G78} 
and \cite[Lem. 5.3]{Br03}.
\end{proof}

\section{Combinatorial Identities}
\label{sectioncomb}

In this section, we state several combinatorial identities. 

We first state three lemmas on the congruence properties of binomial coefficients, which will later be useful in working
with the operator $T^-$, and to some extent with the operator $T^+$.

\begin{lemma}
\label{lemma 0.1.} 
Let $p>3.$ If $r\equiv 3~\emph{mod}~(p-1)$ and $t=v(r-3)$, 
then $p^i\binom{r}{i} \equiv 0~\emph{mod}~p^{t+4}$, for $i\geq 4$. 
\end{lemma}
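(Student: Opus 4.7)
The plan is to bound $v(p^i\binom{r}{i})$ from below by extracting the factor $r-3$ from the falling product representation of $\binom{r}{i}$ and then controlling $v(i!)$ via Legendre's formula.

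First I would write
\begin{equation*}
p^i \binom{r}{i} \;=\; \frac{p^i \, r(r-1)(r-2)(r-3)(r-4)\cdots(r-i+1)}{i!},
\end{equation*}
noting that because $i \geq 4$, the numerator explicitly contains the factor $r-3$, which has $p$-adic valuation exactly $t$. Since all the other factors in the numerator are integers, this yields
\begin{equation*}
v\!\left(p^i \binom{r}{i}\right) \;\geq\; i + t - v(i!).
\end{equation*}
Of course, $\binom{r}{i}$ is also an integer, which gives the trivial bound $v(p^i\binom{r}{i}) \geq i$. I will use whichever of the two inequalities is stronger in each range.

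Next I would invoke Legendre's formula $v(i!) = (i - s_p(i))/(p-1) \leq (i-1)/(p-1)$ and split into two cases. In the first case, $v(i!) \leq t$: here the refined bound applies, and it suffices to check $i - v(i!) \geq 4$ for $i \geq 4$ and $p \geq 5$. From Legendre,
\begin{equation*}
i - v(i!) \;=\; \frac{i(p-2) + s_p(i)}{p-1} \;\geq\; \frac{4(p-2)+1}{p-1} \;>\; 3,
\end{equation*}
and since $i - v(i!)$ is a non-negative integer, it is $\geq 4$. In the second case, $v(i!) \geq t+1$: Legendre gives $i \geq 1 + (p-1)(t+1) = p + (p-1)t$, which for $p \geq 5$ easily yields $i \geq t+4$, so the trivial bound already suffices.

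There is no real obstacle here; the only subtle point is remembering to use both bounds (the integrality of $\binom{r}{i}$ and the factor $r-3$) in complementary ranges of $i$, rather than trying to push one bound over all of $i \geq 4$. The congruence hypothesis $r \equiv 3 \bmod (p-1)$ is not actually used in the inequality itself; it only enters in giving meaning to $t$ as an ``exceptional'' parameter elsewhere in the paper.
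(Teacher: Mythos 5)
Your proof is correct and rests on the same key inequality as the paper, $v(p^i\binom{r}{i}) \geq i + t - v(i!)$, obtained by extracting the factor $r-3$ from the falling product in the numerator of $\binom{r}{i}$. The only difference in execution is how you control $v(i!)$: the paper checks $i = 4$ and $i = 5$ explicitly and only invokes the crude geometric-series estimate $v(i!) < i/(p-1)$ for $i \geq 6$, whereas you use Legendre's digit-sum identity to get $i - v(i!) = (i(p-2) + s_p(i))/(p-1) \geq (4(p-2)+1)/(p-1) > 3$, hence $\geq 4$ by integrality, uniformly for all $i \geq 4$ and $p \geq 5$. This is slightly tidier. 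One small remark, though: because that inequality holds unconditionally for all $i \geq 4$, your case split on $v(i!) \leq t$ versus $v(i!) \geq t+1$ is in fact superfluous --- the refined bound already yields $v(p^i\binom{r}{i}) \geq t + 4$ throughout, and the trivial integrality bound $v(p^i\binom{r}{i}) \geq i$ is never needed.
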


\begin{proof}
We need to show that $v(p^i\binom{r}{i}) \geq t+4$ for all $i \geq 4$. 
Now $$v(p^i\binom{r}{i})=i + v(r(r-1)\cdots(r-i+1))-v(i!).$$ Since $i\geq 4$, we get that $v(p^i\binom{r}{i})\geq i + t -v(i!)$. 

For $i =4$, we have $v(p^4\binom{r}{4})\geq 4+t-v(4!)$. Since $p\geq 5$, we get that $v\left(p^4\binom{r}{4}\right)\geq t+4$.
Similarly for $i=5$,  $v(p^5\binom{r}{5})\geq 5+t -v(5!)$. Since $v(5!)\leq 1$  for $p\geq 5$,  we get that $v\left(p^5\binom{r}{5}\right)\geq t+4$. 
Now assume $i\geq 6$. The difference $i-v(i!)\geq i-\left(\frac{i}{p}+\frac{i}{p^2}+\cdots\right) =
i\left(1-\frac{1}{p-1}\right) 
\geq 6\left(1-\frac{1}{4}\right)\geq 4$, because $i\geq 6$ and $p\geq 5$. Therefore again $i+t-v(i!)\geq t+4$.
\end{proof}

\begin{lemma}\label{lemma 0.2}
Let $p>3.$ If $r\equiv 2~\emph{mod}~(p-1)$ and $t=v(r-2)$, then $p^i\binom{r}{i} \equiv 0~\emph{mod} ~p^{t+3},$ for $i \geq 3.$
\end{lemma}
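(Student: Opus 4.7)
The plan is to mimic the proof of Lemma~\ref{lemma 0.1.} essentially verbatim, with the exponent $t+4$ replaced by $t+3$ and the index range $i \geq 4$ replaced by $i \geq 3$. The key algebraic identity remains
$$v\!\left(p^i\binom{r}{i}\right) = i + v\bigl(r(r-1)\cdots(r-i+1)\bigr) - v(i!),$$
and since $i \geq 3$, the product $r(r-1)\cdots(r-i+1)$ now contains the factor $(r-2)$ of valuation $t$, so the product has valuation at least $t$. Hence it suffices to establish the purely arithmetic bound $i - v(i!) \geq 3$ for all $i \geq 3$, given $p \geq 5$.

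I would then dispose of the small cases by hand: for $i = 3$ and $i = 4$, one has $v(i!) = 0$ since $p \geq 5$, so $i - v(i!) = i \geq 3$. For $i = 5$, the estimate $v(5!) \leq 1$ for $p \geq 5$ gives $5 - v(5!) \geq 4 \geq 3$. For $i \geq 6$, Legendre's formula yields $v(i!) \leq \frac{i}{p-1} \leq \frac{i}{4}$, whence
$$i - v(i!) \geq i\left(1 - \frac{1}{p-1}\right) \geq 6 \cdot \frac{3}{4} = \frac{9}{2} \geq 3.$$
Combining these three ranges completes the verification.

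There is no real obstacle here; the argument is a direct parallel to Lemma~\ref{lemma 0.1.}, just with the boundary case shifted down by one (the hypothesis weakens from $r \equiv 3$ to $r \equiv 2$ mod $(p-1)$, and correspondingly the conclusion weakens from $t+4$ to $t+3$). The only thing worth double-checking is the $i = 5$ case when $p = 5$, where $v(5!) = 1$, but even there the inequality is satisfied with room to spare.
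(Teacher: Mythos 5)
Your argument is correct, but it differs from the paper's treatment in an essential way: the paper does not reprove this lemma at all, instead simply citing it as Lemma~2.6 of \cite{BGR18}. Your proof is a self-contained direct verification that closely mirrors the adjacent Lemma~\ref{lemma 0.1.}. To check the details: the valuation identity $v(p^i\binom{r}{i}) = i + v(r(r-1)\cdots(r-i+1)) - v(i!)$ is correct, and since $i \geq 3$ forces the factor $r-2$ (of valuation $t$) to appear in the product, you get $v(p^i\binom{r}{i}) \geq i + t - v(i!)$ and it suffices to show $i - v(i!) \geq 3$ for $i \geq 3$, $p \geq 5$. Your case split is accurate: $v(3!) = v(4!) = 0$ for $p \geq 5$, so $i = 3, 4$ give $i - v(i!) = i \geq 3$; $v(5!) \leq 1$ gives $5 - v(5!) \geq 4$; and for $i \geq 6$, Legendre's bound $v(i!) \leq i/(p-1) \leq i/4$ gives $i - v(i!) \geq 3i/4 \geq 9/2$. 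The advantage of your approach is that it makes the present paper's arithmetic input fully self-contained, keeping the three sibling Lemmas \ref{lemma 0.1.}, \ref{lemma 0.2}, \ref{lemma 0.3.} on the same footing; the cost is a few lines of redundancy with the reference. Either route is legitimate.
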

\begin{proof}
This is Lemma 2.6 in \cite{BGR18}.
\end{proof}

\begin{lemma}\label{lemma 0.3.}
Let $p \geq 3.$ If $r\equiv 1~\emph{mod}~(p-1)$ and $t=v(s-1)$, then $p^i\binom{r}{i}\equiv 0 ~\emph{mod}~p^{t+2},$ for $i \geq 2.$
\end{lemma}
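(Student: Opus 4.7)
The plan is to mimic the proofs of Lemma~\ref{lemma 0.1.} and Lemma~\ref{lemma 0.2}, checking carefully that the argument still survives the weaker hypothesis $p \geq 3$ (rather than $p > 3$). The starting identity is
\begin{equation*}
  v\!\left(p^i\binom{r}{i}\right) = i + v\bigl(r(r-1)\cdots(r-i+1)\bigr) - v(i!).
\end{equation*}
Since $i \geq 2$, the factor $r-1$ appears in the product, so $v(r(r-1)\cdots(r-i+1)) \geq v(r-1) = t$ (reading $t = v(r-1)$, as the $s$ in the statement is surely a typo for $r$). Hence it suffices to show $i - v(i!) \geq 2$ for all $i \geq 2$ in the ranges encountered, with equality allowed.

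I would then dispose of small $i$ by hand. For $i = 2$: $v(2!) = 0$ since $p \geq 3$, so $v(p^2\binom{r}{2}) \geq 2 + t$. For $i = 3$: if $p \geq 5$ then $v(3!) = 0$ and the bound $t + 3 \geq t + 2$ is immediate; if $p = 3$ then $v(3!) = 1$ and we still get $v(p^3\binom{r}{3}) \geq 3 + t - 1 = t + 2$. This is the step that needs $p = 3$ verified separately, since the earlier lemmas excluded that prime.

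For $i \geq 4$, I would apply Legendre's formula in the form $v(i!) \leq \frac{i-1}{p-1} \leq \frac{i}{p-1}$, giving
\begin{equation*}
  i - v(i!) \;\geq\; i\left(1 - \tfrac{1}{p-1}\right) \;=\; i \cdot \tfrac{p-2}{p-1}.
\end{equation*}
For $p \geq 3$ this is $\geq i/2 \geq 2$, completing the bound. The only genuine obstacle is the case $p = 3$, which one must simply check at $i = 3$ (where $v(3!) = 1$ nearly eats the gain from the factor $p^i$) and confirm does not propagate: for $i \geq 4$ and $p = 3$, Legendre gives $v(i!) \leq (i-1)/2$, so $i - v(i!) \geq (i+1)/2 \geq 5/2 > 2$, so the inequality is safe. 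This mirrors the structure of Lemmas~\ref{lemma 0.1.} and~\ref{lemma 0.2} essentially verbatim, with $b = 2v = 1$ responsible for the weaker threshold $t+2$ (compared with $t+3$ and $t+4$ in those lemmas).
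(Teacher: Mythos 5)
Your proof is correct. For the record, the paper itself does not prove this lemma; it simply cites Lemma~2.1 of \cite{BG13}, which is stated there for $p \geq 3$. Your self-contained argument mirrors exactly the technique the paper uses for Lemmas~\ref{lemma 0.1.} and~\ref{lemma 0.2}: write $v\bigl(p^i\binom{r}{i}\bigr) = i + v\bigl(r(r-1)\cdots(r-i+1)\bigr) - v(i!) \geq i + t - v(i!)$ by extracting the factor $r-1$ (available once $i \geq 2$), and then bound $i - v(i!) \geq 2$. Your case analysis is tight where it needs to be: at $i = 3$, $p = 3$ the inequality is an equality ($3 - v_3(3!) = 3 - 1 = 2$), and this is precisely the spot the earlier lemmas sidestep by requiring $p > 3$; your explicit check of $p = 3$ there, together with the Legendre estimate $v(i!) \leq i/(p-1)$ giving $i - v(i!) \geq i(p-2)/(p-1) \geq i/2 \geq 2$ for $i \geq 4$, closes the argument at the weaker hypothesis $p \geq 3$. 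The only thing I would add is a remark that the lemma is vacuous (or trivially true) when $r = 1$, so $t = v(r-1)$ is finite in all cases of interest; and you are right that $s$ in the statement is a typo for $r$.
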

\begin{proof}
This is Lemma 2.1 in \cite{BG13}.
\end{proof}

Second, we state several propositions
on the congruence properties of sums of products of binomial coefficients. These will be useful in computing with the 
operator $T^+$. The proofs are similar to the proofs of similar results in \cite{BGR18}, though we provide complete details 
of all new results here.

The first proposition will be used in the proof of Proposition~\ref{F_0 when tau >= t}.

\begin{prop}\label{proposition 0.4.}
Let $p>3.$ If $r = 3+ n(p-1)p^t$, with $t=v(r-3)$ and $n>0$, then we have:
\begin{enumerate}[label =(\arabic{*})]
\item $\begin{aligned}[t]
\sum_{\substack{{0<j<r} \\{j\equiv 3~\emph{mod}~(p-1)}}}\binom{r}{j} \equiv & \> \> \frac{3-r}{6(1-p)}\left(6p^2+5p-3\binom{2p+1}{p-1}\right)\\&+ \> \frac{1}{6}\left(\frac{3-r}{1-p}\right)^2\left(-3p^2-3p+3\binom{2p+1}{p-1}\right)\emph{mod}~p^{t+3}.
\end{aligned}$
\item $\begin{aligned}[t]
\sum_{\substack{{0<j<r} \\{j\equiv 3~\emph{mod}~(p-1)}}}j\binom{r}{j} &\equiv \frac{pr(3-r)}{2}~\emph{mod}~p^{t+2}.
\end{aligned}$
\item $\begin{aligned}[t]
\sum_{\substack{{0<j<r} \\{j\equiv 3~\emph{mod}~(p-1)}}}\binom{j}{2}\binom{r}{j} &\equiv 0 ~\emph{mod}~p^{t+1}.
\end{aligned}$
\item $\begin{aligned}[t]
\sum_{\substack{{0<j<r} \\{j\equiv 3~\emph{mod}~(p-1)}}}\binom{j}{3}\binom{r}{3} &\equiv \frac{\binom{r}{3}}{1-p}~\emph{mod}~p^t.
\end{aligned}$
\item $\begin{aligned}[t]
p^i\sum_{\substack{{0<j<r} \\{j\equiv 3~\emph{mod}~(p-1)}}}\binom{j}{i}\binom{r}{j} &\equiv 0 ~\emph{mod}~p^{t+4}, \forall i \geq 4.
\end{aligned}$
\end{enumerate}
\end{prop}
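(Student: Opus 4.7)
All five identities share a common approach. Writing
\[
S_i := \sum_{\substack{0 < j < r \\ j\equiv 3~\emph{mod}~(p-1)}} \binom{j}{i}\binom{r}{j},
\]
the first step is to apply the absorption identity $\binom{j}{i}\binom{r}{j} = \binom{r}{i}\binom{r-i}{j-i}$ and substitute $m = j - i$ to reduce to
\[
S_i = \binom{r}{i}\sum_{\substack{0 \leq m < r - i \\ m \equiv 3 - i~\emph{mod}~(p-1)}}\binom{r-i}{m},
\]
after isolating the endpoint $m = r - i$ (which lies in the residue class $3 - i$ since $r - i \equiv 3 - i~\emph{mod}~(p-1)$). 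Next apply the standard $(p-1)$-th roots-of-unity filter
\[
\sum_{\substack{0 \leq m \leq r - i \\ m \equiv c~\emph{mod}~(p-1)}}\binom{r-i}{m} = \frac{1}{p-1}\sum_{\omega \in \mu_{p-1}(\Z_p)} \omega^{-c}(1+\omega)^{r-i},
\]
summing over Teichm\"uller lifts of $\F_p^\times$ in $\Z_p^\times$.

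For each root $\omega$ I analyze $(1+\omega)^{r-i}$ using the decomposition $r - i = (3 - i) + n(p-1)p^t$. The root $\omega = -1$ contributes zero; at $\omega = 1$ one expands $2^{r-i} = 2^{3-i}(2^{p-1})^{np^t}$ via Fermat's $2^{p-1} = 1 + pq$ (for some $q \in \Z_p$) and the binomial theorem; for $\omega \in \mu_{p-1}\setminus\{\pm 1\}$, since $(1 + \bar\omega)^{p-1} = 1$ in $\F_p^\times$, one writes $(1+\omega)^{p-1} = 1 + pA_\omega$ with $A_\omega \in \Z_p$ and expands
\[
(1+\omega)^{r-i} = (1+\omega)^{3-i}(1 + pA_\omega)^{np^t}
\]
to the required precision. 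Collecting terms via the orthogonality $\sum_{\omega \in \mu_{p-1}(\Z_p)} \omega^k = (p-1)$ if $(p-1)\mid k$ and $0$ otherwise collapses everything into concrete small binomial sums.

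Parts (2)--(5) require only the leading order $(1 + pA_\omega)^{np^t} \equiv 1~\emph{mod}~p^{t+1}$ and so are essentially mechanical. In particular, part (5) follows instantly from Lemma~\ref{lemma 0.1.} ($p^i\binom{r}{i} \equiv 0~\emph{mod}~p^{t+4}$ for $i \geq 4$) combined with the absorption identity, since the residual character sum lies in $\Z_p$.

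The main obstacle is part (1), which requires precision \emph{mod} $p^{t+3}$, forcing one to retain \emph{both} the linear term $np^{t+1}A_\omega$ and the quadratic term $\binom{np^t}{2}(pA_\omega)^2$ in the expansion of $(1 + pA_\omega)^{np^t}$, together with an analogous second-order expansion at $\omega = 1$. The quadratic dependence on $(3-r)/(1-p) = -np^t/(1-p)$ in the stated identity encodes this second-order structure precisely, while the $\binom{2p+1}{p-1}$ terms appear (via the symmetry $\binom{2p+1}{p+2} = \binom{2p+1}{p-1}$) from evaluating small character sums such as $\sum_\omega \omega^{-3}(1+\omega)^{2p+1} = (p-1)\bigl(\binom{2p+1}{3} + \binom{2p+1}{p-1} + 1\bigr)$. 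Once these expansions are in hand, matching them against the stated closed forms is a routine but tedious algebraic verification.
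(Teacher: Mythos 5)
Your proposal is correct and follows essentially the same approach as the paper: both apply the $(p-1)$-th roots-of-unity filter, reduce to $\binom{r}{i}\sum_{\omega\in\mu'}\omega^{i-3}(1+\omega)^{r-i}$ (the paper obtains this via the closed form of $g_{i,r}(x)=\tfrac{x^{i-3}}{i!}f_r^{(i)}(x)$ rather than your absorption-plus-shift, but the intermediate sum is identical), write $(1+\omega)^{p-1}=1+pz_\omega$, and expand in powers of $np^t$ to second order for part (1). The one piece you state a bit tersely is how the unknown quantities $p\sum_{\omega}(1+\omega^{-1})^3 z_\omega$ and $p^2\sum_{\omega}(1+\omega^{-1})^3 z_\omega^2$ are actually evaluated: the paper determines them by specializing the general congruence at $(n,t)=(1,0)$ (i.e.\ $r=p+2$) and then $(n,t)=(2,0)$ (i.e.\ $r=2p+1$), where the left-hand sides are finite explicit binomial sums — your remark about evaluating $\sum_\omega\omega^{-3}(1+\omega)^{2p+1}$ is exactly this bootstrap, so the argument goes through as you sketch it.
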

\begin{proof}
Let $0 \leq i < r$. Let $S_{i,r} = \sum\limits_{\substack{{0<j<r} \\{j\equiv 3~\text{mod}~(p-1)}}}\binom{j}{i}\binom{r}{j}$. Let $\sum_{i,r}= (p-1)\sum\limits_{\substack{{j\geq 0} \\{j\equiv 3~\text{mod}~(p-1)}}}\binom{j}{i}\binom{r}{j}$, then $\sum_{i,r} = (p-1)\left(S_{i,r} + \binom{r}{i}\right)$. Let $f_r(x)= (1+x)^r= \sum\limits_{ j\geq 0}\binom{r}{j}x^j$, which we consider as a function from $\mathbb{Z}_p$ to $\mathbb{Z}_p$. Let
\begin{align*}
g_{i,r}(x) &= \frac{x^{i-3}}{i!}f_r^{(i)}(x)\\
&= \sum_{j\geq 0}\binom{j}{i}\binom{r}{j}x^{j-3}. 
\end{align*}
Let $\mu_{p-1}$ be the set of $(p-1)$-st roots of unity. Summing $g_{i,r}(x)$ as $x$ varies over all $(p-1)$-st roots of unity we get
\begin{align*}
\sum_{\xi \in \mu_{p-1}}g_{i,r}(\xi) &= \sum_{\xi \in \mu_{p-1}}\sum_{j \geq 0}\binom{j}{i}\binom{r}{j}\xi^{j-3}\\
&= \sum_{\xi \in \mu_{p-1}}\left(\sum_{j\equiv 3 ~\text{mod}~(p-1)}\binom{j}{i}\binom{r}{j}\xi^{j-3} + \sum_{j\not\equiv 3~\text{mod}~(p-1)}\binom{j}{i}\binom{r}{j}\xi^{j-3}\right).
\end{align*}

We will be using the following easy fact quite frequently:
\begin{align} \label{sum of roots of 1}
\sum\limits_{\lambda\in\mathbb{F}_p^\times}[\lambda]^i =
\begin{cases}
p-1 &\text{if}~p-1~\text{divides}~i,\\
0 &\text{if}~(p-1)\nmid i,
\end{cases}
\end{align}
where $[\lambda]$ is the Teichm{\"u}ller representative of $\lambda\in\mathbb{F}_p$.

Using the above fact we get that $$\sum_{\xi\in\mu_{p-1}}g_{i,r}(\xi) = (p-1)\sum\limits_{\substack{{j\geq 0}\\{j\equiv 3~\text{mod}~(p-1)}}}\binom{j}{i}\binom{r}{j},$$ which is equal to $\sum_{i,r}$. Let $\mu'$ be the set of $(p-1)$-st roots of unity except $-1$. Since  $g_{i,r}(x)= x^{i-3}\binom{r}{i}(1+x)^{r-i}$, 
\begin{align*}
\textstyle\sum_{i,r} &= \sum_{\xi\in\mu_{p-1}}\binom{r}{i}\xi^{i-3}(1+\xi)^{r-i}\\
&= \binom{r}{i}\sum_{\xi\in\mu'}\xi^{i-3}(1+\xi)^{r-i}, ~\text{as}~ r-i > 0.
\end{align*}
Let $\xi\in\mu'$, clearly $1+\xi \in\mathbb{Z}_p$. Since $\xi$ is the unique Teichm{\"u}ler lift of some element of $\mathbb{F}_p^\times$ and the lift of $-1$ mod $p$ is $-1$, we see that $\overline{\xi} \not\equiv -1$ mod $p$. Since $1+\xi \not\in p\mathbb{Z}_p$, we have $1+\xi \in \mathbb{Z}_p^\times$. As $\mathbb{Z}_p^\times \cong \mu_{p-1} \times (1+p\mathbb{Z}_p)$, we can write $(1+\xi)^{p-1} = 1+pz_{\xi}$, for some $z_\xi \in \mathbb{Z}_p$. 

Computing $S_{0,r}$ mod $p^{t+3}$:
\begin{align*}
\textstyle\sum_{0,r} &= \sum_{\xi\in\mu'}\xi^{-3}(1+\xi)^r\\
&= \sum_{\xi\in\mu'}\xi^{-3}(1+\xi)^3(1+\xi)^{n(p-1)p^t}\\
&= \sum_{\xi\in\mu'}(1+\xi^{-1})^3(1+pz_\xi)^{np^t}.
\end{align*}
 Computing $\sum_{0,r}$ mod $p^{t+3}$ we get
 \begin{align}
\textstyle\sum_{0,r} &\equiv \sum_{\xi\in\mu'}(1+\xi^{-1})^3\left(1+ np^{t+1}z_{\xi}+np^{t+2}z_{\xi}^2\left(\frac{np^t-1}{2}\right)\right)~\text{mod}~p^{t+3}. \label{sum0r}
\end{align}

Now $\sum\limits_{\xi\in\mu'}(1+\xi^{-1})^3= \sum\limits_{\xi\in\mu'}(1+3\xi^{-1}+3\xi^{-2} + \xi^{-3})= (p-2) + 3 -3 +1= p-1$.

Specializing equation \eqref{sum0r} at $n=1,t=0$, so $r = p+2$, we get
\begin{align*}
\textstyle\sum_{0,p+2} &\equiv \sum_{\xi\in\mu'}(1+\xi^{-1})^3(1+pz_\xi)~\text{mod}~p^3\\
(p-1)\left(\binom{p+2}{3} + 1\right)&\equiv p-1 + p\sum_{\xi\in\mu'}(1+\xi^{-1})^3z_\xi~\text{mod}~ p^3\\
\frac{p(p^2-1)(p+2)}{6} &\equiv p\sum_{\xi\in\mu'}(1+\xi^{-1})^3z_{\xi}~\text{mod}~p^3.
\end{align*}
Simplifying, we get that
\begin{align}
p\sum\limits_{\xi\in\mu'}(1+\xi^{-1})^3z_{\xi}\equiv -\frac{p(p+2)}{6}~\text{mod}~p^3. \label{sum r,0 (a)}
\end{align}

Again specializing equation \eqref{sum0r} at $n=2,t=0$, so $r =2p+1$, we get
\begin{align*}
\textstyle\sum_{0,2p+1} &\equiv \sum_{\xi\in\mu'}(1+\xi^{-1})^3(1+2pz_{\xi} + p^2z_{\xi}^2)~\text{mod}~p^3\\
(p-1)\left(\binom{2p+1}{3}+\binom{2p+1}{p+2} + 1\right) &\equiv p-1 + 2p\sum_{\xi\in\mu'}(1+\xi^{-1})^3z_{\xi} + p^2\sum_{\xi\in\mu'}(1+\xi^{-1})^3z_\xi^2 ~\text{mod}~ p^3.
\end{align*}
Using equation \eqref{sum r,0 (a)} in the above expression, we get
\begin{align*}
(p-1)\binom{2p+1}{3} +(p-1)\binom{2p+1}{p+2} &\equiv \frac{-p(p+2)}{3} +p^2\sum_{\xi\in\mu'}(1+\xi^{-1})^3z_{\xi}^2~\text{mod}~ p^3.
\end{align*}
Simplifying above we get that
\begin{align} 
p^2\sum\limits_{\xi\in\mu'}(1+\xi^{-1})^3z_{\xi}^2 \equiv p + (p-1)\binom{2p+1}{p+2}~\text{mod}~p^3. \label{sum r,0 (b)}
\end{align}

Now using equations \eqref{sum r,0 (a)} and \eqref{sum r,0 (b)} in equation \eqref{sum0r}, we get
\begin{align*}
  (p-1)(1+S_{0,r}) \equiv & \> p-1 +np^t\left(\frac{-p(p+2)}{6}\right)\\
  & + \frac{np^t(np^t-1)}{2}\left(p +(p-1)\binom{2p+1}{p+2}\right) ~\text{mod}~p^{t+3}. 
\end{align*}
So
\begin{align*}
(p-1)S_{0,r} &\equiv \frac{-np^{t+1}(p+2)}{6} + \left(\frac{n^2p^{2t}-np^t}{2}\right)\left(p+ (p-1)\binom{2p+1}{p+2}\right)~\text{mod}~p^{t+3}.
\end{align*}
Replacing $np^t$ by $\frac{r-3}{p-1}$, we get
\begin{align*}
(p-1)S_{0,r} \equiv & \> -\frac{1}{6}\left(\frac{r-3}{p-1}\right)\left(p^2+ 5p+ 3(p-1)\binom{2p+1}{p+2}\right)\\
& +\frac{1}{2}\left(\frac{r-3}{p-1}\right)^2\left(p+ (p-1)\binom{2p+1}{p+2}\right)~\text{mod}~p^{t+3}.
\end{align*}
Finally, dividing by $(p-1)$, we get
\begin{align*}
S_{0,r} &\equiv \frac{1}{6}\left(\frac{3-r}{1-p}\right)\left(5p + 6p^2 -3\binom{2p+1}{p+2}\right)\\
& +\frac{1}{6}\left(\frac{3-r}{1-p}\right)^2\left(-3p-3p^2 + 3\binom{2p+1}{p+2}\right)~\text{mod}~p^{t+3},
\end{align*}
proving part \textit{(1)}.

Computing $S_{1,r}$ mod $p^{t+2}$:
\begin{align*}
\textstyle\sum_{1,r} &= \binom{r}{1}\sum_{\xi\in\mu'}\xi^{-2}(1+\xi)^{r-1}\\
&= r\sum_{\xi\in\mu'}\xi^{-2}(1+\xi)^2(1+\xi)^{n(p-1)p^t}\\
&= r\sum_{\xi\in\mu'}(1+\xi^{-1})^2(1+pz_\xi)^{np^t}.
\end{align*}
Computing $\sum_{1,r}$ mod $p^{t+2}$, we get
\begin{align} \textstyle\sum_{1,r} &\equiv r\sum\limits_{\xi\in\mu'}(1+\xi^{-1})^2(1+np^{t+1}z_{\xi})~\text{mod}~p^{t+2}. \label{sum1r}
\end{align}

Now $\sum\limits_{\xi\in\mu'}(1+\xi^{-1})^2= \sum\limits_{\xi\in\mu'}(1+2\xi^{-1}+ \xi^{-2}) =(p-2) +2 -1= p-1$.

Specializing equation \eqref{sum1r} at $n=1$, $t= 0$, so $r =p+2$, we get
 \begin{align*}
\textstyle\sum_{1,p+2} &\equiv (p+2)\left(\sum_{\xi\in\mu'}(1+\xi^{-1})^2+p\sum_{\xi\in\mu'}(1+\xi^{-1})^2z_{\xi}\right)~\text{mod}~p^2\\
 (p-1)\left(3\binom{p+2}{3} + p+2\right) &\equiv (p+2)\left(p-1 + p\sum_{\xi\in\mu'}(1+\xi^{-1})^2z_{\xi}\right)~\text{mod}~p^2.
 \end{align*}
Simplifying, we get
\begin{align}
 p\sum\limits_{\xi\in\mu'}(1+\xi^{-1})^2z_{\xi}\equiv \frac{-p}{2}~\text{mod}~p^2. \label{sum r,1 (a)}
\end{align}
Now using \eqref{sum r,1 (a)} in equation \eqref{sum1r}, we get
\begin{align*}
(p-1)(S_{1,r}+r) &\equiv r(p-1)-\frac{nrp^{t+1}}{2}~\text{mod}~p^{t+2}, 
\end{align*}
which is the same as $S_{1,r} \equiv \frac{pr(3-r)}{2}~\text{mod}~p^{t+2}$,
proving part \textit{(2)}.

Computing $S_{2,r}$ mod $p^{t+1}$:
\begin{align*}
\textstyle\sum_{2,r} &= \binom{r}{2}\sum_{\xi\in\mu'}\xi^{-1}(1+\xi)^{r-2}\\
&= \binom{r}{2}\sum_{\xi\in\mu'}(1+\xi^{-1})(1+\xi)^{n(p-1)p^t}\\
&= \binom{r}{2}\sum_{\xi\in\mu'}(1+\xi^{-1})(1+pz_\xi)^{np^t}.
\end{align*}
 Computing $\sum_{2,r}$ mod $p^{t+1}$, we get
 \begin{align} \textstyle\sum_{2,r} &\equiv \binom{r}{2}\sum_{\xi\in\mu'}(1+\xi^{-1})~\text{mod}~p^{t+1}. \label{sum2r}
 \end{align}
 
 Now $\sum\limits_{\xi\in\mu'}(1+\xi^{-1})= p-1$, so using this in equation \eqref{sum2r}, we get
\begin{align*}
(p-1)\left(S_{2,r} + \binom{r}{2}\right) & \equiv \binom{r}{2}(p-1)~\text{mod}~p^{t+1}, 
\end{align*}
which is the same as $S_{2,r}\equiv 0 ~\text{mod}~p^{t+1}$, proving part \textit{(3)}.

Computing $S_{3,r}$ mod $p^{t}$:

Since $n>0$, $r>3$, hence $r-i >0$, so
\begin{align*}
\textstyle\sum_{3,r} &= \binom{r}{3}\sum_{\xi\in\mu'}(1+\xi)^{r-3}\\
&= \binom{r}{3}\sum_{\xi\in\mu'}(1+ pz_{\xi})^{np^t}.
\end{align*}
Computing $\sum_{3,r}$ mod $p^t$, we get that $\sum_{3,r}\equiv \binom{r}{3}(p-2)$ mod $p^t$. Therefore we get 
\begin{align*}
(p-1)\left(S_{3,r} + \binom{r}{3}\right) &\equiv \binom{r}{3}(p-2)~\text{mod}~p^t,
\end{align*}
which is the same as $S_{3,r} \equiv \frac{\binom{r}{3}}{1-p}~\text{mod}~p^t$,
proving part \textit{(4)}.

Computing $p^iS_{i,r}$ mod $p^{t+4}$, for $i\geq 4$:

Note that $\binom{r}{i}$ divides $\binom{j}{i}\binom{r}{j}$, since $\binom{j}{i}\binom{r}{j} = \binom{r}{i}\binom{r-i}{j-i}$. By Lemma \ref{lemma 0.1.} we see that if $i\geq 4$, then
\begin{align*}
p^i\binom{r}{i} &\equiv 0 ~\text{mod}~p^{t+4},\\
\therefore \sum_{\substack{{0<j<r}\\{j\equiv 3~\text{mod}~(p-1)}}} p^i\binom{j}{i}\binom{r}{j}&\equiv 0~\text{mod}~p^{t+4}.
\end{align*}
Hence part \textit{(5)} holds.
\end{proof}


The next two propositions are used in the proof of Proposition~\ref{F_1 when tau <= t}.

\begin{prop}\label{propositon 0.5}
Let $p>3$ and write $s= 1+n(p-1)p^t,$ with $t\geq 0$ and $n > 0.$ Then we have:
\begin{enumerate}[label=(\arabic{*})]
\item $\begin{aligned}[t]
\sum_{j\equiv 1~\emph{mod}~(p-1)}\binom{s}{j} &\equiv 1+ np^{t+1}~\emph{mod}~p^{t+2}.
\end{aligned}$
\item $\begin{aligned}[t]
\sum_{j\equiv 1 ~\emph{mod}~(p-1)}j\binom{s}{j} &\equiv \frac{s(p-2)}{p-1}-snp^{t+1}~\emph{mod}~p^{t+2}.
\end{aligned}$
\item$\begin{aligned}[t]
\text{For all}~i\geq 2, p^i\sum_{j\equiv 1~\emph{mod}~(p-1)}\binom{j}{i}\binom{s}{j} &\equiv 0~\emph{mod}~p^{t+2}.
\end{aligned}$
\end{enumerate}
\end{prop}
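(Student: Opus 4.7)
The plan is to mirror the character-sum method from the proof of Proposition~\ref{proposition 0.4.}, adapted now to isolate the residue class $j \equiv 1 \bmod (p-1)$. With $f_s(x) = (1+x)^s$, set
\[
g_{i,s}(x) := \frac{x^{i-1}}{i!}\, f_s^{(i)}(x) = \binom{s}{i}\, x^{i-1}(1+x)^{s-i} = \sum_{j \geq 0} \binom{j}{i}\binom{s}{j}\, x^{j-1}.
\]
The orthogonality relation \eqref{sum of roots of 1} then picks out the target sum:
\[
(p-1) \sum_{j \equiv 1 \bmod (p-1)} \binom{j}{i}\binom{s}{j} \;=\; \sum_{\xi \in \mu_{p-1}} g_{i,s}(\xi).
\]
Since $s - i > 0$ for $i \in \{0,1\}$, the value at $\xi = -1$ vanishes and the sum reduces to one over $\mu' := \mu_{p-1} \setminus \{-1\}$. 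For $\xi \in \mu'$ write $(1+\xi)^{p-1} = 1 + pz_\xi$ with $z_\xi \in \Z_p$; then $(1+\xi)^{s-i} = (1+\xi)^{1-i}(1+pz_\xi)^{np^t}$, and since $v\bigl(\binom{np^t}{2} p^2\bigr) \geq t+2$ for $p \geq 5$, the expansion
\[
(1+pz_\xi)^{np^t} \equiv 1 + np^{t+1} z_\xi \pmod{p^{t+2}}
\]
suffices.

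For part \textit{(2)}, one has $g_{1,s}(\xi) = s(1+pz_\xi)^{np^t}$, so only $\sum_{\xi \in \mu'} 1 = p-2$ and $\sum_{\xi \in \mu'} z_\xi$ enter. The latter is computed \emph{exactly}: $p z_\xi = (1+\xi)^{p-1} - 1$, and expanding $(1+\xi)^{p-1}$ binomially and summing over $\xi$ via \eqref{sum of roots of 1} (only $k = 0$ and $k = p-1$ contribute) gives $\sum_{\xi \in \mu'}(1+\xi)^{p-1} = 2(p-1)$ and hence $\sum_{\xi \in \mu'} z_\xi = 1$. Simplifying $\frac{np^{t+1}}{p-1} \equiv -np^{t+1} \pmod{p^{t+2}}$ then yields the formula. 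For part \textit{(1)}, we have $g_{0,s}(\xi) = (1+\xi^{-1})(1+pz_\xi)^{np^t}$, and the two auxiliary sums $\sum_{\xi \in \mu'}(1+\xi^{-1})$ and $\sum_{\xi \in \mu'}(1+\xi^{-1})z_\xi$ are again evaluated exactly via \eqref{sum of roots of 1}, the second using $p z_\xi = (1+\xi)^{p-1} - 1$ as before.

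Part \textit{(3)} requires no character sums. The identity $\binom{j}{i}\binom{s}{j} = \binom{s}{i}\binom{s-i}{j-i}$ exhibits $\binom{s}{i}$ as a common divisor of every term; since $s \equiv 1 \bmod (p-1)$ with $v(s-1) \geq t$, Lemma~\ref{lemma 0.3.} gives $p^i \binom{s}{i} \equiv 0 \pmod{p^{t+2}}$ for $i \geq 2$, and the claim follows termwise. The main effort throughout is precision bookkeeping, but since the modulus $p^{t+2}$ here is relatively low, all relevant character sums can be evaluated exactly directly from \eqref{sum of roots of 1}, with no need for the specialization at small $(n,t)$ that drove the proof of Proposition~\ref{proposition 0.4.}. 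This should make the proof notably shorter and more direct than that of Proposition~\ref{proposition 0.4.}.
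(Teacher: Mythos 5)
Your proof is correct. The paper itself does not prove this proposition --- it simply cites Proposition~2.9 of \cite{BGR18} --- but your argument is precisely the character-sum method that the paper deploys in the proofs of Propositions~\ref{proposition 0.4.}, \ref{proposition 0.6}, \ref{Proposition 0.10} and \ref{Proposition 0.11}, and your treatment of part~\textit{(3)} via the identity $\binom{j}{i}\binom{s}{j}=\binom{s}{i}\binom{s-i}{j-i}$ together with Lemma~\ref{lemma 0.3.} mirrors exactly how part~\textit{(5)} is handled in those propositions. Your observation that at modulus $p^{t+2}$ one only needs the first-order data, which can be read off directly from $pz_\xi=(1+\xi)^{p-1}-1$ and \eqref{sum of roots of 1} (giving $\sum_{\xi\in\mu'}z_\xi=1$ and $\sum_{\xi\in\mu'}(1+\xi^{-1})z_\xi=p-1$), so that the specialization at small $(n,t)$ used in Proposition~\ref{proposition 0.4.} is not needed, is also valid; dividing the resulting totals $(p-1)(1+np^{t+1})$ and $s(p-2)+snp^{t+1}$ by $p-1$ and using $\tfrac{1}{p-1}\equiv -1 \pmod{p}$ recovers the stated congruences. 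In short, it is a correct proof in the style of the paper, slightly more self-contained than Proposition~\ref{proposition 0.4.} because the lower precision removes the need for the $z_\xi^2$-term and its attendant bookkeeping.
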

\begin{proof}
This is Proposition 2.9 in \cite{BGR18}.
\end{proof}


\begin{prop}\label{proposition 0.6}
Let $p>3$. If $r =3+n(p-1)p^t$, with $t=v(r-3)$ and $n>0$, then we have:
\begin{enumerate}[label=(\arabic{*})]
\item $\begin{aligned}[t]
\sum_{\substack{{1<j\leq r-2}\\{j \equiv 1~\emph{mod}~(p-1)}}}\binom{r}{j} &\equiv 3-r ~\emph{mod}~p^{t+1}.
\end{aligned}$
\item $\begin{aligned}[t]
\sum_{\substack{{1<j\leq r-2}\\{j \equiv 1~\emph{mod}~(p-1)}}}j\binom{r}{j} &\equiv 0 ~\emph{mod}~p^{t+1}.
\end{aligned} $
\item $\begin{aligned}[t]
\sum_{\substack{{1<j\leq r-2}\\{j \equiv 1~\emph{mod}~(p-1)}}}\binom{j}{2}\binom{r}{j} &\equiv 0~\emph{mod}~p^t.
\end{aligned}$
\item $\begin{aligned}[t]
\sum_{\substack{{1<j\leq r-2}\\{j \equiv 1~\emph{mod}~(p-1)}}}\binom{j}{3}\binom{r}{j} &\equiv \frac{\binom{r}{3}}{1-p}~\emph{mod}~p^t.
\end{aligned}$
\item $\begin{aligned}[t]
p^i\sum_{\substack{{1<j\leq r-2}\\{j \equiv 1~\emph{mod}~(p-1)}}}\binom{j}{i}\binom{r}{j} &\equiv 0 ~\emph{mod}~p^{t+4}, \forall i \geq 4.
\end{aligned}$
\end{enumerate} 
\end{prop}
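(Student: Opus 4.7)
The plan is to follow the strategy of Proposition~\ref{proposition 0.4.} — a roots-of-unity filter to extract the residue class $j \equiv 1 \pmod{p-1}$ — but the target precisions here are only $p^{t+1}$ for parts (1)--(2) and $p^t$ for parts (3)--(4), so the argument should be substantially shorter: only the crudest estimate $(1+pz_\xi)^{np^t} \equiv 1 \pmod{p^{t+1}}$ will be needed, and none of the delicate second-order Teichm\"uller-sum computations, nor the specializations at $r = p+2,\,2p+1$ that were the main technical input there. Setting $f_r(x) = (1+x)^r$ and, for $0 \le i \le 3$,
\[
  g_{i,r}(x) \;=\; \frac{x^{i-1}}{i!}\,f_r^{(i)}(x) \;=\; \binom{r}{i}\, x^{i-1}(1+x)^{r-i},
\]
identity \eqref{sum of roots of 1} gives
\[
  U_{i,r} \;:=\; (p-1) \sum_{\substack{j \geq 0 \\ j \equiv 1 \,(p-1)}} \binom{j}{i}\binom{r}{j} \;=\; \sum_{\xi \in \mu_{p-1}} g_{i,r}(\xi),
\]
and since $r - i > 0$ the term at $\xi = -1$ vanishes, so the sum runs over $\mu' := \mu_{p-1} \setminus \{-1\}$. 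The restricted sum of the proposition is $T_{i,r} = U_{i,r}/(p-1) - \binom{1}{i}\binom{r}{1}$: indeed $p \ge 5$ together with $r \equiv 3 \pmod{p-1}$ rules out $r-1 \equiv 2$ and $r \equiv 3$ from the filter class, and $\binom{r}{j} = 0$ for $j > r$, so only the $j = 1$ correction is needed.

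For $\xi \in \mu'$ one has $1+\xi \in \Z_p^\times$; writing $(1+\xi)^{p-1} = 1 + pz_\xi$ with $z_\xi \in \Z_p$ gives $(1+\xi)^{r-i} = (1+\xi)^{3-i}(1+pz_\xi)^{np^t}$. A binomial expansion shows
\[
  (1+pz_\xi)^{np^t} \;\equiv\; 1 \pmod{p^{t+1}},
\]
since the first-order term $np^{t+1}z_\xi$ already has valuation $\ge t+1$ and each higher term carries at least $p^{t+2}$ (using $p \ge 5$ to absorb the factorials). This is the substantive simplification over Proposition~\ref{proposition 0.4.}: every Teichm\"uller sum of the shape $\sum_{\mu'}(\cdots)\,z_\xi$ that obstructed that argument is now multiplied by $p^{t+1}$ and drops out modulo the required precision. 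The computation thus collapses to evaluating $\sum_{\xi \in \mu'} \xi^{i-1}(1+\xi)^{3-i}$ via the elementary identity $\sum_{\mu'}\xi^k = (p-1)\,\mathbf{1}_{(p-1)\mid k} - (-1)^k$.

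Running through the four cases: (i) for $i = 0$, $\sum_{\mu'}(\xi^{-1} + 3 + 3\xi + \xi^2) = 1 + 3(p-2) + 3 - 1 = 3(p-1)$, giving $T_{0,r} \equiv 3 - r \pmod{p^{t+1}}$; (ii) for $i = 1$, $\sum_{\mu'}(1+\xi)^2 = (p-2) + 2 - 1 = p-1$, giving $T_{1,r} \equiv r - r = 0 \pmod{p^{t+1}}$; (iii) for $i = 2$, $\sum_{\mu'}\xi(1+\xi) = 1 - 1 = 0$, giving $T_{2,r} \equiv 0 \pmod{p^t}$; (iv) for $i = 3$, $\sum_{\mu'}\xi^2 = -1$, so $U_{3,r} \equiv -\binom{r}{3} \pmod{p^t}$ and $T_{3,r} \equiv \binom{r}{3}/(1-p) \pmod{p^t}$. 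Finally, for part (5), I factor $\binom{r}{i}$ out of every term via $\binom{j}{i}\binom{r}{j} = \binom{r}{i}\binom{r-i}{j-i}$ and invoke Lemma~\ref{lemma 0.1.}, which yields $v(p^i\binom{r}{i}) \ge t+4$ for $i \ge 4$. The only care required anywhere is the bookkeeping around which $j \in \{r-1, r\}$ might lie in the residue class (they do not, because $p \ge 5$); beyond that I expect no serious obstacle, and the whole proof should be noticeably shorter than that of Proposition~\ref{proposition 0.4.}.
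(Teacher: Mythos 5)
Your argument is correct and is essentially the paper's own proof: the same roots-of-unity filter via $g_{i,r}(x)=\binom{r}{i}x^{i-1}(1+x)^{r-i}$, the same reduction $(1+pz_\xi)^{np^t}\equiv 1\pmod{p^{t+1}}$, the same elementary Teichm\"uller sums for $i=0,1,2,3$, and the same factorization $\binom{j}{i}\binom{r}{j}=\binom{r}{i}\binom{r-i}{j-i}$ with Lemma~\ref{lemma 0.1.} for part (5). The only difference is cosmetic: you package the $j=1$ correction as $T_{i,r}=U_{i,r}/(p-1)-\binom{1}{i}\binom{r}{1}$, while the paper writes $\Sigma_{i,r}=(p-1)(r\binom{1}{i}+S_{i,r})$, and you point out explicitly — correctly — that the second-order expansion and the specializations at $r=p+2,\,2p+1$ used in Proposition~\ref{proposition 0.4.} are not needed at the lower precision required here.
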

\begin{proof}
Let $0\leq i<r$. Let $S_{i,r}= \sum\limits_{\substack{{1<j\leq r-2}\\{j\equiv 1~\text{mod}~(p-1)}}}\binom{j}{i}\binom{r}{j}$. Let $\sum_{i,r}= (p-1)\sum\limits_{\substack{{j\geq 0}\\{j\equiv 1~\text{mod}~(p-1)}}}\binom{j}{i}\binom{r}{j}$, so we get
\begin{align*}
\textstyle\sum_{i,r} = (p-1)\left(r\binom{1}{i} + S_{i,r}\right).
\end{align*}
 Let $f_r(x)=(1+x)^r=\sum\limits_{j\geq 0}\binom{r}{j}x^j$, which we consider as a function from $\mathbb{Z}_p$ to $\mathbb{Z}_p$. Let \begin{align*}
 g_{i,r}(x) &= \frac{x^{i-1}}{i!}f_{r}^{(i)}(x)\\
 &= \sum_{j\geq 0}\binom{j}{i}\binom{r}{j}x^{j-1}.
\end{align*}  
 Let $\mu_{p-1}$ be the set of $(p-1)$-st roots of unity. Summing $g_{i,r}(x)$ as $x$ varies over the $(p-1)$-st roots of unity we get
\begin{align*}
\sum_{\xi\in\mu_{p-1}}g_{i,r}(\xi) &= \sum_{\xi\in\mu_{p-1}}\sum_{j\geq 0}\binom{j}{i}\binom{r}{j}\xi^{j-1}\\
&= \sum_{\xi\in\mu_{p-1}}\left(\sum_{j\equiv 1~\text{mod}~(p-1)}\binom{j}{i}\binom{r}{j}\xi^{j-1} + \sum_{j\not\equiv 1~\text{mod}~(p-1)}\binom{j}{i}\binom{r}{j}\xi^{j-1}\right).
\end{align*}
Using \eqref{sum of roots of 1}, we get
\begin{align*}
\sum_{\xi\in\mu_{p-1}}g_{i,r}(\xi) = (p-1)\sum_{j\equiv 1~\text{mod}~(p-1)}\binom{j}{i}\binom{r}{j},
\end{align*}
which is equal to $\sum_{i,r}$. Let $\mu'$ be the set of $(p-1)$-st roots of unity except $-1$. Since $g_{i,r}(x) =\binom{r}{i}x^{i-1}(1+x)^{r-i}$, we have
\begin{align}
\textstyle\sum_{i,r} &= \sum_{\xi\in\mu_{p-1}}\binom{r}{i}\xi^{i-1}(1+\xi)^{r-i}\nonumber \\
&= \binom{r}{i}\sum_{\xi\in\mu'}\xi^{i-1}(1+\xi)^{r-i}, ~\text{as}~ r-i >0. \label{sum r, j=1}
\end{align} 
Also for $\xi\in\mu'$, we can write $(1+\xi)^{p-1}=1+pz_\xi$, for some $z_\xi\in \mathbb{Z}_p$.

Computing $S_{0,r}$ mod $p^{t}$:
\begin{align}
\textstyle\sum_{0,r} &= \sum_{\xi\in\mu'}\xi^{-1}(1+\xi)^r \nonumber \\
&= \sum_{\xi\in\mu'}\xi^{-1}(1+\xi)^3(1+\xi)^{r-3}\nonumber \\
&= \sum_{\xi\in\mu'}\xi^{-1}(1+\xi)^3(1+pz_\xi)^{np^t}. \label{sum r,i=0,j=1}
\end{align}
Now $\sum\limits_{\xi\in\mu'}\xi^{-1}(1+\xi)^3=\sum\limits_{\xi\in\mu'}(\xi^{-1}+3 +3\xi +\xi^2)= 1 +3(p-2) + 3 -1= 3(p-1)$, so using this in equation \eqref{sum r,i=0,j=1}, we get
\begin{align*}
(p-1)(r+S_{0,r}) &\equiv 3(p-1)~\text{mod}~p^{t+1},
\end{align*} 
which is the same as $S_{0,r}\equiv 3-r ~\text{mod}~p^{t+1}$, proving part \textit{(1)}.

Computing $S_{1,r}$ mod $p^{t+1}$:
\begin{align}
\textstyle\sum_{1,r} &= \binom{r}{1}\sum_{\xi\in\mu'}(1+\xi)^{r-1} \nonumber\\
&= r\sum_{\xi\in\mu'}(1+\xi)^2(1+pz_\xi)^{np^t} \nonumber \\
&\equiv r\sum_{\xi\in\mu'}(1+\xi)^2~\text{mod}~p^{t+1}. \label{sum r,i=1,j=1}
\end{align}
Now $\sum\limits_{\xi\in\mu'}(1+\xi)^2 = \sum\limits_{\xi\in\mu'}(1+2\xi + \xi^2)=p-2 + 2 -1= p-1$, so using this in equation \eqref{sum r,i=1,j=1}, we get
\begin{align*}
(p-1)(r+S_{1,r}) &\equiv r(p-1)~\text{mod}~p^{t+1},\\
S_{1,r} &\equiv 0~\text{mod}~p^{t+1},
\end{align*} 
proving part \textit{(2)}.

Computing $S_{2,r}$ mod $p^t$:
\begin{align}
\textstyle\sum_{2,r} &= \binom{r}{2}\sum_{\xi\in\mu'}\xi(1+\xi)^{r-2} \nonumber\\
&= \binom{r}{2}\sum_{\xi\in\mu'}\xi(1+\xi)(1+\xi)^{r-3} \nonumber\\
&= \binom{r}{2}\sum_{\xi\in\mu'}(\xi+\xi^2)(1+pz_\xi)^{np^t}. \label{sum r,i=2,j=1}
\end{align}
Now $\sum\limits_{\xi\in\mu'}(\xi+\xi^2)= 1-1=0$, so using this in equation \eqref{sum r,i=2,j=1}, we get
\begin{align*}
(p-1)S_{2,r} &\equiv 0~\text{mod}~p^t\\
S_{2,r} &\equiv 0~\text{mod}~p^t,
\end{align*}
proving part \textit{(3)}.

Computing $S_{3,r}$ mod $p^t$:

Since $n>0$, $r>3$, hence $r-i>0$, so
\begin{align}
\textstyle\sum_{3,r} &= \binom{r}{3}\sum_{\xi\in\mu'}\xi^2(1+\xi)^{r-3} \nonumber \\
&= \binom{r}{3}\sum_{\xi\in\mu'}\xi^2(1+pz_\xi)^{np^t}. \label{sum r,i=3,j=1}
\end{align} 
Now $\sum\limits_{\xi\in\mu'}\xi^2 = -1$. Using this in equation \eqref{sum r,i=3,j=1}, we get
\begin{align*}
\textstyle\sum_{3,r} &\equiv \binom{r}{3}\sum_{\xi\in\mu'}\xi^2~\text{mod}~p^t\\
(p-1)S_{3,r} &\equiv -\binom{r}{3}~\text{mod}~p^{t}\\
S_{3,r} &\equiv \frac{\binom{r}{3}}{1-p}~\text{mod}~p^t,
\end{align*}
proving part \textit{(4)}.

Computing $p^iS_{i,r}$ mod $p^{t+4}$, for $i\geq 4$:

Note that $\binom{r}{i}$ divides $\binom{j}{i}\binom{r}{j},$ since $\binom{j}{i}\binom{r}{j}= \binom{r}{i}\binom{r-i}{j-i}.$ By Lemma \ref{lemma 0.1.}, we get that for $i \geq 4$,
\begin{align*}
p^i\binom{r}{i}&\equiv 0~\text{mod}~p^{t+4},\\
\therefore\sum_{\substack{{1<j\leq r-2}\\{j\equiv 1~\text{mod}~(p-1)}}}p^i\binom{j}{i}\binom{r}{j} &\equiv 0~\text{mod}~p^{t+4},
\end{align*}
proving part \textit{(5)}.
\end{proof}


The next proposition is used in the proofs of Proposition~\ref{F_1 when tau > t + 0.5}, Proposition~\ref{F_2 when tau < t + 1} and 
Proposition~\ref{F_2 when tau = > t + 1}.

\begin{prop}\label{Proposition 0.11} Let $p>3$ and write $r= 2+ n(p-1)p^t$, with $t= v(r-2)$ and $n >0$. Then:
\begin{enumerate}[label = (\arabic{*})]
\item $\begin{aligned}[t]
\sum\limits_{\substack{{0<j<r}\\{j\equiv 2~\emph{mod}~(p-1)}}}\binom{r}{j}  \equiv \frac{p(2-r)}{2} + \frac{3p^2(2-r)}{2} -\frac{p^2(2-r)^2}{2}~\text{mod}~p^{t+3}.
\end{aligned}$
\item $\begin{aligned}[t]
\sum\limits_{\substack{{0<j<r}\\{j\equiv 2~\emph{mod}~(p-1)}}}j\binom{r}{j} \equiv \frac{pr(2-r)}{1-p}~\emph{mod}~p^{t+2}.
\end{aligned}$
\item $\begin{aligned}[t]
\sum\limits_{\substack{{0<j<r}\\{j\equiv 2~\emph{mod}~(p-1)}}}\binom{j}{2}\binom{r}{j}\equiv \frac{\binom{r}{2}}{1-p}~\emph{mod}~p^{t+1}.
\end{aligned}$
\item $\begin{aligned}[t]
p^i\sum\limits_{\substack{{0<j<r}\\{j\equiv 2~\emph{mod}~(p-1)}}}\binom{j}{i}\binom{r}{j} \equiv 0~\emph{mod}~p^{t+3}, ~\text{for all}~ i\geq 3.
\end{aligned}$
\end{enumerate}
\end{prop}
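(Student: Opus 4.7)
The plan is to follow the same generating-function and roots-of-unity filtering strategy used in the proof of Proposition~\ref{proposition 0.4.}, now adapted to the residue class $2 \bmod (p-1)$. Set
\begin{align*}
S_{i,r} := \sum_{\substack{0 < j < r \\ j \equiv 2 \bmod (p-1)}} \binom{j}{i}\binom{r}{j}, \qquad \Sigma_{i,r} := (p-1)\sum_{\substack{j \geq 0 \\ j \equiv 2 \bmod (p-1)}} \binom{j}{i}\binom{r}{j}.
\end{align*}
Since $p > 3$ excludes $j = 0$ from the sum and the $j = r$ term (which occurs because $r \equiv 2 \bmod (p-1)$) contributes $\binom{r}{i}$, we have $\Sigma_{i,r} = (p-1)(S_{i,r} + \binom{r}{i})$. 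Consider
$$g_{i,r}(x) := \frac{x^{i-2}}{i!} f_r^{(i)}(x) = \binom{r}{i}\, x^{i-2}(1+x)^{r-i} = \sum_{j \geq 0} \binom{j}{i}\binom{r}{j}\, x^{j-2},$$
where $f_r(x) = (1+x)^r$. Summing $g_{i,r}(\xi)$ over $\xi \in \mu_{p-1}$ filters out the terms with $j \equiv 2 \bmod (p-1)$ via \eqref{sum of roots of 1}. Since $r - i > 0$ for $i \in \{0,1,2\}$, the $\xi = -1$ contribution vanishes and one arrives at
$$\Sigma_{i,r} = \binom{r}{i} \sum_{\xi \in \mu'} \xi^{i-2}(1+\xi)^{r-i},$$
where $\mu' := \mu_{p-1} \setminus \{-1\}$.

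Writing $(1+\xi)^{p-1} = 1 + p z_\xi$ for $z_\xi \in \Z_p$, we have $(1+\xi)^{r-i} = (1+\xi)^{2-i}(1+pz_\xi)^{np^t}$, and we expand the second factor in powers of $pz_\xi$ to the needed precision. For $p \geq 5$ and $i \leq 2$, only the constant, linear, and quadratic terms contribute modulo $p^{t+3}$. The leading-order sums
$$\sum_{\xi \in \mu'} 1 = p - 2, \qquad \sum_{\xi \in \mu'}(1 + \xi^{-1}) = p - 1, \qquad \sum_{\xi \in \mu'}(1 + \xi^{-1})^2 = p - 1$$
supply the constant pieces. Part~(3) requires nothing more: $(1+pz_\xi)^{np^t} \equiv 1 \bmod p^{t+1}$ yields $(p-1)(S_{2,r} + \binom{r}{2}) \equiv (p-2)\binom{r}{2} \bmod p^{t+1}$, hence $S_{2,r} \equiv \binom{r}{2}/(1-p) \bmod p^{t+1}$.

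For parts~(1) and~(2), the expansion introduces auxiliary unknowns of the form $p\sum_{\xi \in \mu'}(1+\xi^{-1})^k z_\xi$ and $p^2\sum_{\xi \in \mu'}(1+\xi^{-1})^2 z_\xi^2$. These are pinned down by specialization to the minimal exceptional weights $r = p+1$ ($n = 1, t = 0$) and $r = 2p$ ($n = 2, t = 0$), where the left-hand side $S_{i,r}$ is a short explicit binomial sum (for instance $S_{0,p+1} = \binom{p+1}{2}$ and $S_{0,2p} = \binom{2p}{2} + \binom{2p}{p+1}$, with similar expressions for $i = 1$). Substituting the resulting linear relations back into the expansion for general $r$ and using $np^{t+1} \equiv p(2-r)/(1-p) \bmod p^{t+2}$ (which comes from $np^t(p-1) = r-2$) produces parts~(1) and~(2). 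Because part~(2) is a statement modulo $p^{t+2}$, only the $r = p+1$ specialization is needed there. Part~(4) is immediate from $\binom{j}{i}\binom{r}{j} = \binom{r}{i}\binom{r-i}{j-i}$ combined with Lemma~\ref{lemma 0.2}, which gives $p^i\binom{r}{i} \equiv 0 \bmod p^{t+3}$ for $i \geq 3$.

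The main obstacle is the bookkeeping for part~(1): one must expand $(1+pz_\xi)^{np^t}$ through the quadratic term, handle $\binom{np^t}{2} = \tfrac{1}{2}(n^2 p^{2t} - n p^t)$ correctly (it contributes both a linear and a quadratic shift in $np^t$), and accurately evaluate binomials such as $\binom{2p}{p+1} \bmod p^3$ when reading off the quadratic auxiliary sum from the $r = 2p$ specialization. Beyond this accounting, the derivation runs parallel to that of Proposition~\ref{proposition 0.4.}, with the residue class shifted from $3$ to $2$ and all precision bounds reduced by one power of $p$.
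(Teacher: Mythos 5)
Your proposal is correct and uses the same generating-function-plus-roots-of-unity filtering argument as the paper, including the same two specializations $r=p+1$ and $r=2p$ to pin down the auxiliary sums $p\sum_{\xi\in\mu'}(1+\xi^{-1})^2z_\xi$ and $p^2\sum_{\xi\in\mu'}(1+\xi^{-1})^2z_\xi^2$ for part (1). The only (cosmetic) difference is that the paper carries out this computation only for part (1) and cites \cite{BGR18}, Proposition 2.8 for parts (2)--(4), whereas you derive (2)--(4) by the identical method; your reductions of (2) to the single $r=p+1$ specialization, (3) to the constant term of $(1+pz_\xi)^{np^t}$, and (4) to Lemma~\ref{lemma 0.2} via $\binom{j}{i}\binom{r}{j}=\binom{r}{i}\binom{r-i}{j-i}$ are all sound.
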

\begin{proof}
We prove part \textit{(1)} of the proposition. Parts \textit{(2)}, \textit{(3)} and \textit{(4)} are in  \cite{BGR18}, Proposition~2.8.

Let $S_r = \sum\limits_{\substack{{2\leq j < r}\\{j\equiv 2 ~\text{mod}~(p-1)}}}\binom{r}{j}$. Let $\sum_r = (p-1)\sum\limits_{\substack{{2\leq j \leq r}\\{j\equiv 2 ~\text{mod}~(p-1)}}}\binom{r}{j}$, then $\sum_r = (p-1)\left(S_r + 1\right)$. Let 
\begin{align}
g_r(x) = x^{-2}(1+x)^r = \sum_{j\geq 0}\binom{r}{j}x^{j-2} \label{g_r(x)}.
\end{align}
Let $\mu_{p-1}$ be the set of $(p-1)$-st roots of unity. Summing $g_r(x)$ as $x$ varies over all $(p-1)$-st roots of unity we get
\begin{align*}
\sum_{\xi\in\mu_{p-1}}g_r(\xi) &= \sum_{\xi\in\mu_{p-1}}\sum_{j\geq 0}\binom{r}{j}\xi^{j-2}\\
&= \sum_{\xi\in\mu_{p-1}}\left(\sum_{j\equiv 2~\text{mod}~(p-1)}\binom{r}{j}\xi ^{j-2} + \sum_{j\not\equiv 2~\text{mod}~(p-1)}\binom{r}{j}\xi ^{j-2}  \right)\\
&= (p-1)\sum_{\substack{{2\leq j \leq r}\\{j \equiv 2~\text{mod}~(p-1)}}}\binom{r}{j},
\end{align*}
by \eqref{sum of roots of 1}. So $\sum\limits_{\xi\in\mu_{p-1}}g_r(x) = \sum_r$. Let $\mu'$ be the set of $(p-1)$-st roots of unity except $-1$. By \eqref{g_r(x)}, we have
\begin{align}
\textstyle\sum_r &= \sum_{\xi\in\mu'}\xi^{-2}(1 + \xi)^r. \label{sum_r} 
\end{align}
For $\xi\in\mu'$, we can write $(1+ \xi)^{p-1} = 1 + pz_{\xi}$, for some $z_{\xi}\in\mathbb{Z}_p$. From \eqref{sum_r}, we get
\begin{align*}
\textstyle\sum_r &= \sum_{\xi\in\mu'}\xi^{-2}(1 + \xi)^{2 + n(p-1)p^t} \\
& = \sum_{\xi\in\mu'}(1 + \xi^{-1})^2(1 + pz_{\xi})^{np^t}.
\end{align*}
Computing $\sum_r$ mod $p^{t+3}$, we get
\begin{align}
\textstyle\sum_r &\equiv  \sum_{\xi\in\mu'}(1 + \xi^{-1})^2\left(1 + np^{t+1}z_\xi + np^{t+2}\left(\frac{np^t-1}{2}\right)z^2_{\xi} \right)~\text{mod}~p^{t+3}. \label{Sum_r}
\end{align}
Now $\sum_{\xi\in\mu'}(1 + \xi^{-1})^2 = \sum_{\xi\in\mu'}(1 + 2\xi^{-1} + \xi^{-2}) = p-2 + 2 - 1 = p-1 $.

Specializing equation \eqref{Sum_r} at $n=1$, $t=0$, so $r = p + 1$, we get 
\begin{align*}
\textstyle\sum_{p+1} &\equiv \sum_{\xi\in\mu'}(1 + \xi^{-1})^2(1 + pz_\xi)~\text{mod}~ p^3\\
(p-1)\left(\binom{p+1}{2} + \binom{p+1}{p+1}\right) &\equiv p-1 + p\sum_{\xi\in\mu'}(1 + \xi^{-1})^2z_{\xi} ~\text{mod}~p^3.
\end{align*}
Simplifying we get 
\begin{align}
p\sum_{\xi\in\mu'}(1 + \xi^{-1})^2z_{\xi} \equiv \frac{-p}{2}~\text{mod}~p^3. \label{Sum_(p+1)}
\end{align}
Again specializing \eqref{Sum_r} at $n=2$, $t = 0$, so $r = 2p$, we get
\begin{align*}
\textstyle\sum_{2p} &\equiv \sum_{\xi\in\mu'}(1 + \xi^{-1})^{2}(1 + 2pz_{\xi} + p^2z^2_{\xi})~\text{mod}~p^3\\
(p-1)\left(\binom{2p}{2} + \binom{2p}{p+1} + \binom{2p}{2p}\right) &\equiv p-1 + 2p\sum_{\xi\in\mu'}(1 + \xi^{-1})^2z_{\xi} + p^2 \sum_{\xi\in\mu'}(1 + \xi^{-1})^2z^2_{\xi}~\text{mod}~p^3. 
\end{align*}
By using \eqref{Sum_(p+1)} in the above expression and simplifying further we get 
\begin{align}
p^2\sum_{\xi\in\mu'}(1 + \xi^{-1})^2z^2_{\xi} &\equiv 2p - 3p^2 + (p-1)\binom{2p}{p+1}~\text{mod}~p^3. \label{Sum_(2p)} 
\end{align}
Now using equations \eqref{Sum_(p+1)} and \eqref{Sum_(2p)} in \eqref{Sum_r}, we get 
\begin{align*}
(p-1)(S_r + 1) &\equiv p-1 + np^t\left(\frac{-p}{2}\right) + np^t\left(\frac{np^t-1}{2}\right)\left(2p - 3p^2 + (p-1)\binom{2p}{p+1}\right)~\text{mod}~p^{t+3}.
\end{align*}
So
\begin{align*}
S_r &\equiv \frac{-np^{t+1}}{2(p-1)} + \frac{np^t(np^t - 1)}{2(p-1)}\left(2p - 3p^2 + (p-1)\binom{2p}{p+1}\right)~\text{mod}~p^{t+3}. 
\end{align*}
As $\frac{1}{p^2}(2p - 3p^2 + (p-1)\binom{2p}{p+1}) \equiv 1$ mod $p$, we get
\begin{align*}
S_r  &\equiv \frac{np^{t+1}}{2(1-p)} -\frac{np^{t+2}(np^t-1)}{2(1-p)}~\text{mod}~p^{t+3}\\
&\equiv \frac{np^{t+1}}{2} + \frac{np^{t+2}}{2} + \frac{np^{t+2}}{2} -\frac{n^2p^{2t+2}}{2} ~\text{mod}~p^{t+3}\\
&= \frac{np^{t+1}}{2} + np^{t+2} -\frac{n^2p^{2t+2}}{2}~\text{mod}~p^{t+3}.
\end{align*}
Replacing $np^{t}$ by $\frac{2-r}{1-p}$, we get
\begin{align*}
S_r &\equiv \frac{(2-r)p}{2(1-p)} + \frac{(2-r)p^2}{1-p} - \frac{p^2}{2}\left(\frac{2-r}{1-p}\right)^2~\text{mod}~p^{t+3}\\
&\equiv \frac{(2-r)p}{2} + \frac{(2-r)p^2}{2} + (2-r)p^2  -\frac{p^2(2-r)^2}{2}~\text{mod}~p^{t+3}\\
&= \frac{(2-r)p}{2} + \frac{3(2-r)}{2} -\frac{p^2(2-r)^2}{2}~\text{mod}~p^{t+3},
\end{align*}
proving part \textit{(1)} of the proposition.

Note in particular that $S_r \equiv \frac{(2-r)p}{2}$ mod $p^{t+2}$, which matches with part \textit{(1)} of Proposition~2.8 in \cite{BGR18}.
\end{proof}

The next proposition is used in the proofs of Proposition~\ref{F_2 when tau =< t}, Proposition~\ref{F_2 when tau < t + 1} and
Proposition~\ref{F_2 when tau = > t + 1}.

\begin{prop}{\label{Proposition 0.10}}
Let $p> 3$. If $r = 3+ n(p-1)p^t$, with $t=v(r-3)$ and $n>0$, then we have:
\begin{enumerate}[label = (\arabic{*})]
\item $\begin{aligned}[t] 
\sum\limits_{\substack{{2<j\leq r-1}\\{j\equiv 2~\emph{mod}~(p-1)}}}\binom{r}{j} \equiv 3-\binom{r}{2} + \frac{5np^{t+1}}{2} ~\emph{mod}~p^{t+2}.
\end{aligned}$
\item $\begin{aligned}[t]
\sum\limits_{\substack{{2<j\leq r-1}\\{j\equiv 2~\emph{mod}~(p-1)}}}j\binom{r}{j} \equiv r(3-r)~\emph{mod}~p^{t+1}.
\end{aligned}$
\item $\begin{aligned}[t]
\sum\limits_{\substack{{2<j\leq r-1}\\{j\equiv 2~\emph{mod}~(p-1)}}}\binom{j}{2}\binom{r}{j}\equiv 0~\emph{mod}~p^{t+1}.
\end{aligned}$
\item $\begin{aligned}[t]
\sum\limits_{\substack{{2<j\leq r-1}\\{j\equiv 2~\emph{mod}~(p-1)}}}\binom{j}{3}\binom{r}{j} \equiv \frac{\binom{r}{3}}{p-1}~\emph{mod}~p^{t+1}.
\end{aligned}$
\item $\begin{aligned}[t]
p^i\sum\limits_{\substack{{2<j\leq r-1}\\{j\equiv 2~\emph{mod}~(p-1)}}}\binom{j}{i}\binom{r}{j} \equiv 0 ~\emph{mod}~p^{t+4}, \forall i \geq 4.
\end{aligned}$
\end{enumerate}
\end{prop}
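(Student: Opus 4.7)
The proof will follow the generating-function plus roots-of-unity method used in Propositions~\ref{proposition 0.4.} and \ref{proposition 0.6}. First, I would observe that since $r \equiv 3 \bmod (p-1)$, the index $j = r$ does not contribute to any sum over $j \equiv 2 \bmod (p-1)$, and all $j > r$ have $\binom{r}{j} = 0$. Thus the extended sum $\tilde S_{i,r} := \sum_{j \geq 0,\, j \equiv 2 \bmod (p-1)} \binom{j}{i}\binom{r}{j}$ differs from $S_{i,r}$ only by the single endpoint contribution at $j = 2$, giving $\tilde S_{i,r} = \binom{2}{i}\binom{r}{2} + S_{i,r}$.

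Next, set $g_{i,r}(x) := \binom{r}{i} x^{i-2}(1+x)^{r-i} = \frac{x^{i-2}}{i!} f_r^{(i)}(x)$ where $f_r(x) = (1+x)^r$. Summing $g_{i,r}(\xi)$ over $\xi \in \mu_{p-1}$ and invoking \eqref{sum of roots of 1} yields $(p-1)\tilde S_{i,r} = \binom{r}{i}\sum_{\xi \in \mu_{p-1}} \xi^{i-2}(1+\xi)^{r-i}$. For $i \leq 3$, we have $r - i > 0$ (as $n > 0$), so the $\xi = -1$ term vanishes and the sum collapses to one over $\mu' := \mu_{p-1}\setminus\{-1\}$. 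Writing $(1+\xi)^{p-1} = 1 + pz_\xi$ with $z_\xi \in \mathbb{Z}_p$, I would factor $(1+\xi)^{r-i} = (1+\xi)^{3-i} \cdot (1 + pz_\xi)^{np^t}$.

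For each of $i = 1, 2, 3$, the target precision is only $p^{t+1}$, at which $(1 + pz_\xi)^{np^t} \equiv 1$, so the problem reduces to computing the explicit constant sum $\sum_{\xi \in \mu'} \xi^{i-2}(1+\xi)^{3-i}$ via the elementary identities $\sum_{\xi \in \mu'}\xi^k = -(-1)^k$ when $(p-1) \nmid k$ and $= p-2$ otherwise. These should yield parts (2), (3), (4) directly after dividing through by $p-1$ and subtracting the endpoint $\binom{2}{i}\binom{r}{2}$. For part (1), where the target is $p^{t+2}$, I would retain the first-order correction $(1 + pz_\xi)^{np^t} \equiv 1 + np^{t+1} z_\xi$, compute the leading constant $\sum_{\xi \in \mu'} \xi^{-2}(1+\xi)^3 = 3(p-1)$, and pin down the remaining Teichm\"uller sum $\sum_{\xi \in \mu'} \xi^{-2}(1+\xi)^3 z_\xi \bmod p$ by specializing the master identity at $r = p + 2$ (i.e.\ $n = 1$, $t = 0$), where $\tilde S_{0, p+2}$ is computable directly from binomial coefficients. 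Finally, for $i \geq 4$, I would apply $\binom{j}{i}\binom{r}{j} = \binom{r}{i}\binom{r-i}{j-i}$ together with Lemma~\ref{lemma 0.1.} to conclude each summand is divisible by $p^{t+4}$.

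The only technical hurdle lies in part (1): after the specialization at $r = p+2$ one obtains $p\sum_{\xi \in \mu'}\xi^{-2}(1+\xi)^3 z_\xi \equiv -\tfrac{5p}{2} \bmod p^2$, and extracting the final coefficient $+\tfrac{5np^{t+1}}{2}$ requires the correct handling of $\frac{1}{p-1} \equiv -1 \bmod p$ when dividing the congruence $(p-1)(\binom{r}{2} + S_{0,r}) \equiv 3(p-1) - \tfrac{5np^{t+1}}{2} \bmod p^{t+2}$ by $p-1$. All other parts are routine once the constant sums $\sum_{\xi \in \mu'} \xi^k$ are in hand.
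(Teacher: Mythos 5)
Your proposal is correct and follows essentially the same route as the paper: the identification $\tilde S_{i,r} = \binom{2}{i}\binom{r}{2} + S_{i,r}$, the roots-of-unity filter applied to $g_{i,r}(x) = \binom{r}{i}x^{i-2}(1+x)^{r-i}$, the factorization $(1+\xi)^{r-i} = (1+\xi)^{3-i}(1+pz_\xi)^{np^t}$, the specialization at $r = p+2$ to pin down $p\sum_{\xi\in\mu'}\xi^{-2}(1+\xi)^3 z_\xi \equiv -\tfrac{5p}{2} \bmod p^2$ for part (1), and the use of $\binom{j}{i}\binom{r}{j} = \binom{r}{i}\binom{r-i}{j-i}$ with Lemma~\ref{lemma 0.1.} for part (5). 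The only imprecision is at the end of part (5), where it is $p^i$ times each summand (not the summand itself) that Lemma~\ref{lemma 0.1.} shows is divisible by $p^{t+4}$, but your intent is clear.
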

\begin{proof}
Let $0 \leq i < r$. Let $S_{i,r}= \sum\limits_{\substack{{2<j\leq r-1}\\{j\equiv 2~\text{mod}~(p-1)}}}\binom{j}{i}\binom{r}{j}$. Let $\sum_{i,r} = (p-1)\sum\limits_{\substack{{j \geq 0}\\{j\equiv 2~\text{mod}~(p-1)}}}\binom{j}{i}\binom{r}{j}$, then $\sum_{i,r} = (p-1)\left(\binom{2}{i}\binom{r}{2} + S_{i,r}  \right)$. Let $f_r(x)= (1+x)^r = \sum\limits_{j \geq 0}\binom{r}{j}x^j$, which we consider as a function from $\mathbb{Z}_p$ to $\mathbb{Z}_p$. Let 
\begin{align*}
g_{i,r}(x) &= \frac{x^{i-2}}{i!}f_r^{(i)}(x)\\
& = \sum\limits_{j\geq 0}\binom{j}{i}\binom{r}{j}x^{j-2}.
\end{align*}
Let $\mu_{p-1}$ be the $(p-1)$-st roots of unity. Summing $g_{i,r}(x)$ as $x$ varies over all $(p-1)$-st roots of unity we get
\begin{align*}
\sum_{\xi\in\mu_{p-1}} g_{i,r}(\xi) &= \sum_{\xi\in\mu_{p-1}}\sum_{j\geq 0}\binom{j}{i}\binom{r}{j}\xi^{j-2}\\
&= \sum_{\xi\in\mu_{p-1}}\left(\sum_{j\equiv 2~\text{mod}~(p-1)}\binom{j}{i}\binom{r}{j}\xi^{j-2}+ \sum_{j\not\equiv 2~\text{mod}~(p-1)}\binom{j}{i}\binom{r}{j}\xi^{j-2}\right).
\end{align*}
Using \eqref{sum of roots of 1}, we get 
\begin{align*}
\sum_{\xi\in\mu_{p-1}} g_{i,r}(\xi) = (p-1)\sum_{\substack{{j\equiv 2~\text{mod}~(p-1)}}}\binom{j}{i}\binom{r}{j},
\end{align*}
which is equal to $\sum_{i,r}$. Let $\mu'$ be the set of $(p-1)$-st roots of unity except $-1$. Since $g_{i,r}(x)= x^{i-2}\binom{r}{i}(1+x)^{r-i}$, we have
\begin{align*}
\textstyle\sum_{i,r} &= \sum_{\xi\in\mu_{p-1}} \binom{r}{i}\xi^{i-2}(1+\xi)^{r-i}\\
&= \binom{r}{i}\sum_{\xi\in\mu'}\xi^{i-2}(1+\xi)^{r-i}, ~\text{as}~ r-i >0.
\end{align*}
Also for $\xi\in\mu'$, write $(1+\xi)^{p-1}= 1+ pz_\xi$, for some $z_\xi \in\mathbb{Z}_p$.

Computing $S_{0,r}$ mod $p^{t+2}$:
\begin{align*}
\textstyle\sum_{0,r} &= \sum_{\xi\in\mu'}\xi^{-2}(1+\xi)^r\\
&= \sum_{\xi\in\mu'}\xi^{-2}(1+ \xi)^3(1+\xi)^{n(p-1)p^t}\\
&= \sum_{\xi\in\mu'}\xi^{-2}(1+\xi )^3(1+pz_{\xi})^{np^t}.
\end{align*}
Computing $\sum_{0,r}$ mod $p^{t+2}$ we get 
\begin{align}
\textstyle\sum_{0,r} &\equiv \sum_{\xi\in\mu'}\xi^{-2}(1+\xi)^3(1+ np^{t+1}z_\xi) ~\text{mod}~p^{t+2}. \label{i=0, j=2, sum_{0,r}}
\end{align}
Now $\sum\limits_{\xi\in\mu'}\xi^{-2}(1+\xi)^3 = \sum\limits_{\xi\in\mu'}(\xi^{-2} + 3\xi^{-1} + 3 + \xi) = -1 + 3 + 3(p-2) + 1= 3(p-1)$.

Specializing  equation \eqref{i=0, j=2, sum_{0,r}} at $t=0$, $n=1$, so $r = p+2$, we get  
\begin{align*}
\textstyle\sum_{0,p+2} &\equiv \sum_{\xi\in\mu'}\xi^{-2}(1+\xi)^3(1+ pz_{\xi})~\text{mod}~p^{2}\\
(p-1)\left(\binom{p+2}{2}+ \binom{p+2}{p+1}\right) &\equiv 3(p-1) + p\sum_{\xi\in\mu'}\xi^{-2}(1+\xi)^3z_\xi ~\text{mod}~p^2.
\end{align*}
Simplifying we get
\begin{align}
 p\sum_{\xi\in\mu'}\xi^{-2}(1+\xi)^3z_{\xi} \equiv -\frac{5p}{2}~\text{mod}~p^2. \label{i=0,j=2,r=p+2, sum_{0,r}}
\end{align}
Using equation \eqref{i=0,j=2,r=p+2, sum_{0,r}} in equation \eqref{i=0, j=2, sum_{0,r}}, we get
\begin{align*}
(p-1)\left(\binom{r}{2} + S_{0,r}\right) &\equiv 3(p-1) + np^t\left(\frac{-5p}{2}\right)~\text{mod}~p^{t+2}\\
\binom{r}{2} + S_{0,r} &\equiv 3-\frac{5np^{t+1}}{2(p-1)}~\text{mod}~p^{t+2}\\
S_{0,r} &\equiv 3-\binom{r}{2} + \frac{5np^{t+1}}{2}~\text{mod}~p^{t+2},
\end{align*}
proving part \textit{(1)}.

Computing $S_{1,r}$ mod $p^{t+1}$:
\begin{align*}
\textstyle\sum_{1,r} &= \binom{r}{1}\sum_{\xi\in\mu'}\xi^{-1}(1+\xi)^{r-1}\\
&= r\sum_{\xi\in\mu'}\xi^{-1}(1+ \xi)^2(1+pz_\xi)^{np^t}.
\end{align*}
Computing $\sum_{1,r}$ mod $p^{t+1}$ we get
\begin{align}
\textstyle\sum_{1,r} &\equiv r\sum_{\xi\in\mu'}\xi^{-1}(1+\xi)^2 ~\text{mod}~p^{t+1}. \label{i=1,j=2, sum_{1,j}}
\end{align}
Now $\sum\limits_{\xi\in\mu'}\xi^{-1}(1+\xi)^2 = \sum\limits_{\xi\in\mu'}(\xi^{-1} + 2 + \xi)= 1+2(p-2) + 1 = 1= 2(p-1)$.
Using this in equation \eqref{i=1,j=2, sum_{1,j}}, we get
 \begin{align*}
 (p-1)\left(2\binom{r}{2} + S_{1,r}\right) &\equiv 2r(p-1)~\text{mod}~p^{t+1}.
\end{align*}
So
\begin{align*}
 S_{1,r}& \equiv r(3-r)~\text{mod}~p^{t+1},
\end{align*}
proving part \textit{(2)}.

Computing $S_{2,r}$ mod $p^{t+1}$: 
\begin{align*}
\textstyle\sum_{2,r} &= \binom{r}{2}\sum_{\xi\in\mu'}(1+\xi)^{r-2}\\
&= \binom{r}{2}\sum_{\xi\in\mu'}(1+\xi)(1+\xi)^{n(p-1)p^t}\\
&= \binom{r}{2}\sum_{\xi\in\mu'}(1+\xi)(1+pz_\xi)^{np^t}.
\end{align*}
Computing $\sum_{2,r}$ mod $p^{t+1}$ we get
\begin{align}
\textstyle\sum_{2,r} &\equiv \binom{r}{2}\sum_{\xi\in\mu'}(1+\xi)~\text{mod}~p^{t+1}. \label{i=2,j=2,sum_{2,j}}
\end{align}
Now $\sum\limits_{\xi\in\mu'}(1+\xi) = (p-2) +1 = p-1$.
Using this in equation \eqref{i=2,j=2,sum_{2,j}}, we get
\begin{align*}
\textstyle\sum_{2,r} & \equiv \binom{r}{2}(p-1)~\text{mod}~p^{t+1}\\
(p-1)\left(\binom{r}{2} + S_{2,r}\right) & \equiv \binom{r}{2}(p-1)~\text{mod}~p^{t+1},
\end{align*}
which is the same as $S_{2,r} \equiv 0~\text{mod}~p^{t+1}$, proving part \textit{(3)}.

Computing $S_{3,r}$ mod $p^{t+1}$:

Since $n>0$, $r>3$, hence $r-i >0$, so 
\begin{align*}
\sum_{3,r} &= \binom{r}{3}\sum_{\xi\in\mu'}\xi(1+\xi)^{r-3}\\
&= \binom{r}{3} \sum_{\xi\in\mu'}\xi(1+ pz_\xi)^{np^t}.
\end{align*}
Computing $\sum_{3,r}$ mod $p^{t+1}$ we get
\begin{align}
\textstyle\sum_{3,r} &\equiv \binom{r}{3}\sum_{\xi\in\mu'}\xi~\text{mod}~p^{t+1}. \label{i=3,j=2,sum_{3,r}}
\end{align}
Now $\sum\limits_{\xi\in\mu'}\xi =1$. Using this in equation \eqref{i=3,j=2,sum_{3,r}}, we get
\begin{align*}
\textstyle\sum_{3,r} &\equiv \binom{r}{3}~\text{mod}~p^{t+1}\\
(p-1)S_{3,r} & \equiv \binom{r}{3}~\text{mod}~p^{t+1},
\end{align*}
which is the same as $S_{3,r} \equiv \frac{\binom{r}{3}}{p-1}$, proving part \textit{(4)}.

Computing $p^iS_{i,r}$ mod $p^{t+4}$ for $i\geq 4$:  

As before $\binom{r}{i}$ divides $\binom{j}{i}\binom{r}{j}$. From Lemma \ref{lemma 0.1.} we get that for $i \geq 4$
\begin{align*}
p^i\binom{r}{i}&\equiv 0 ~\text{mod}~p^{t+4},\\
\therefore p^i\sum\limits_{\substack{{2<j\leq r-1}\\{j\equiv 2~\text{mod}~(p-1)}}}\binom{j}{i}\binom{r}{j}& \equiv 0~\text{mod}~p^{t+4},
\end{align*}
proving part \textit{(5)}.
\end{proof}

The final proposition is a slight variant of the previous proposition. It is also used in the proof of Proposition~\ref{F_2 when tau =< t}.

\begin{prop}\label{beta}Let $p>3$. If $r=3+n(p-1)p^t$, with $t = v(r-3)$ and $n>0$, then one can choose integers $\beta_j$ for all $j\equiv 2~\emph{mod}~(p-1),$ with $2\leq j < r-1$, satisfying:
\begin{enumerate}[label=(\arabic{*})]
\item $\begin{aligned}[t]
\beta_j \equiv \binom{r}{j}~\emph{mod}~p^t ~\text{for all j's above.}\\
\end{aligned}$
\item $\begin{aligned}[t]
\sum\limits_{\substack{{2\leq j<r-1}\\{j\equiv 2~\emph{mod}~(p-1)}}}\binom{j}{i}\beta_j \equiv 0 ~\emph{mod}~p^{t+2-i} ~\text{for} ~i =0,1~\text{and}~2.
\end{aligned}$
\item $\begin{aligned}[t]
p^3\sum\limits_{\substack{{2\leq j < r-1}\\{j\equiv 2~\emph{mod}~(p-1)}}}\binom{j}{3}\beta_j \equiv \frac{p^3\binom{r}{3}}{p-1}~\emph{mod}~p^{t+3}.
\end{aligned}$
\item $\begin{aligned}[t] \text{For}~i\geq 4, ~\text{we have}~ p^i\sum\limits_{\substack{{j\equiv 2~\emph{mod}~(p-1)}\\{2\leq j < r-1}}}\binom{j}{i}\beta_j \equiv 0~\emph{mod}~p^{t+3}.
\end{aligned}$
\end{enumerate}
\end{prop}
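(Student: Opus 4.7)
I would set $\beta_j = \binom{r}{j} + p^t \delta_j$ for integers $\delta_j$; this makes (1) automatic. To invoke Proposition~\ref{Proposition 0.10} I would first convert the sums over $\{2 \leq j < r-1,\ j\equiv 2 \pmod{p-1}\}$ into sums over $\{2 < j \leq r-1,\ j\equiv 2 \pmod{p-1}\}$ by adding the $j = 2$ boundary term $\binom{2}{i}\binom{r}{2}$ and subtracting the $j = r-1$ term $\binom{r-1}{i} r$ (note $r-1 \equiv 2 \pmod{p-1}$). The boundary corrections that actually appear are $r(r-1)(3-r)/2$ for $i = 2$ and $-\binom{r}{3}(r-3)$ for $i = 3$; both have $p$-adic valuation at least $t$ because $v(r-3) = t$. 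Combined with Proposition~\ref{Proposition 0.10} parts (3)--(4) this immediately yields conditions (2)$_{i=2}$ and (3) for the naive choice $\beta_j = \binom{r}{j}$. For $i \geq 4$ the boundary term $p^i \binom{r-1}{i} r$ has valuation at least $t+i \geq t+4$, so Proposition~\ref{Proposition 0.10}(5) gives (4). The modification $p^t\delta_j$ contributes $p^{t+i}\binom{j}{i}\delta_j$, harmless for (2)$_{i=2}$, (3), and (4) since $i \geq 2$ there.

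It remains to arrange (2)$_{i=0}$ and (2)$_{i=1}$. Combining Proposition~\ref{Proposition 0.10}(1)--(2) with the boundary corrections $\binom{r}{2} - r$ and $2\binom{r}{2} - (r-1)r$ (the latter cancelling entirely) gives after a short computation
\begin{align*}
  \textstyle\sum_{j} \binom{r}{j} &\equiv \tfrac{n p^t (3p+2)}{2} \pmod{p^{t+2}}, \\
  \textstyle\sum_{j} j\binom{r}{j} &\equiv r n p^t \pmod{p^{t+1}},
\end{align*}
where the sums are over the range $\{2 \leq j < r-1,\ j\equiv 2 \pmod{p-1}\}$. So the $\delta_j$ must satisfy $\sum_j \delta_j \equiv -n(3p+2)/2 \pmod{p^2}$ and $\sum_j j\,\delta_j \equiv -rn \pmod p$. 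I would then support $\delta_j$ on two values $j_1 = 2$ and $j_2 = p+1$; both lie in the range once $r > p+2$, while the degenerate case $r = p+2$ (i.e.\ $n=1,\ t=0$) collapses to a single congruence on $\delta_2$ which one verifies is consistent. The resulting $2\times 2$ linear system has Vandermonde determinant $j_2 - j_1 = p-1$, a unit modulo $p$, so an integer solution exists.

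The main obstacle is bookkeeping rather than substance: one must coordinate three precision levels $p^{t+2}$, $p^{t+1}$, $p^t$ for the three parts of condition (2), and verify in each case that the new boundary term $\binom{r-1}{i} r$, which is not present in Proposition~\ref{Proposition 0.10}, has exactly the $p$-adic valuation needed to be absorbed into the target modulus. This is where the hypothesis $v(r-3) = t$ is indispensable, together with $\binom{r}{3}/(p-1) \in \Z_p$ (which uses $p \neq 2, 3$).
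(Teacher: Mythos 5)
Your approach is correct in substance and outcome, but follows a genuinely different route from the paper's. The paper takes $\beta_j = \binom{r}{j}$ except for $j = 2$ and $j = 2p$, which it \emph{redefines} from scratch (using a multiplicative inverse $b'$ of $2$) so that $\sum_j \beta_j$ vanishes identically; it must then \emph{verify} that $\beta_2 \equiv \binom{r}{2}$ and $\beta_{2p} \equiv \binom{r}{2p} \pmod{p^t}$ using Proposition~\ref{Proposition 0.10}. You instead build condition (1) in by fiat via $\beta_j = \binom{r}{j} + p^t\delta_j$, support $\delta_j$ at $j = 2$ and $j = p+1$, and solve the resulting $2\times 2$ linear system, whose Vandermonde determinant $p-1$ is a unit. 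This is a cleaner division of labor and, as a bonus, your choice $\{2,p+1\}$ is in range for every $r > p+2$, whereas the paper's $j = 2p$ requires $r > 2p+1$ (a restriction left implicit there). Your boundary-correction computations for $i = 0,1,2,3$ and the resulting target congruences for $\delta_2, \delta_{p+1}$ all check out.

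One justification needs repair. For $i \geq 4$ you write that the boundary term $p^i\binom{r-1}{i}r$ ``has valuation at least $t+i$.'' That bound is not available: $\binom{r-1}{i}$ is merely an integer, so you only get $v\bigl(p^i\binom{r-1}{i}r\bigr) \geq i$, which is insufficient when $t \geq 2$. The correct estimate comes from Lemma~\ref{lemma 0.2} applied with $r$ replaced by $r-1$ (note $r-1 \equiv 2 \pmod{p-1}$ and $v((r-1)-2) = v(r-3) = t$): this gives $p^i\binom{r-1}{i} \equiv 0 \pmod{p^{t+3}}$ for $i\geq 3$, which together with Proposition~\ref{Proposition 0.10}(5) is exactly what you need. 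This is the same device the paper uses; once you invoke it, your argument is complete.
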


\begin{proof}
The proof is similar to that of Lemma 7.2 in \cite{Bhattacharya-Ghate}.
Define \begin{align}
\beta_j & = \binom{r}{j}~\text{for}~j\equiv 2~\text{mod}~(p-1),\, 2<j<r -1,\, j\neq 2p, \label{def beta_j}\\
\beta_2&= -\sum\limits_{\substack{{2< j < r-1}\\{j\equiv 2~\text{mod}~(p-1)}}}b'j\binom{r}{j}~\text{where}~2b'\equiv 1~\text{mod}~p^{t+1}, \label{def beta_2}\\
\beta_{2p}&= - \sum\limits_{\substack{{2< j < r-1}\\{j\equiv 2~\text{mod}~(p-1)}\\{j\neq 2p}}}\binom{r}{j}-\beta_2. \label{def beta_2p}
\end{align}
 Clearly $\beta_j \equiv \binom{r}{j}~\text{mod}~p^t$, when $2<j<r-1$, $ j\equiv 2~\text{mod}~(p-1)$ and $j\neq 2p$.

 Now for $j = 2 $ we have 
\begin{align}
2\beta_2 & = -\sum\limits_{\substack{{2< j < r-1}\\{j\equiv 2~\text{mod}~(p-1)}}}2b'j\binom{r}{j} \nonumber \\
&\equiv -\sum\limits_{\substack{{2< j < r-1}\\{j\equiv 2~\text{mod}~(p-1)}}}j\binom{r}{j}~\text{mod}~p^t, \label{beta_2}
\end{align} 
because $2b'\equiv 1~ \text{mod}~p^{t+1}$. Using Proposition \ref{Proposition 0.10}, part \textit{(2)} in equation \eqref{beta_2}, we get $2\beta_2 \equiv r(r-1)~\text{mod}~p^t$, which is the same as $\beta_2 \equiv \binom{r}{2}~\text{mod}~p^t$.

For $j =2p$ we have to show that
 \begin{align*}
\beta_{2p} = -\sum\limits_{\substack{{2< j < r-1}\\{j\equiv 2~\text{mod}~(p-1)}\\{j\neq 2p}}}\binom{r}{j}-\beta_2 \equiv \binom{r}{2p}~\text{mod}~p^t,
\end{align*}
which is the same as showing 
\begin{align}
-\sum_{\substack{{2<j<r-1}\\{j\equiv 2~\text{mod}~(p-1)}}}\binom{r}{j} \equiv \binom{r}{2}~\text{mod}~p^t, \label{beta_{2p}}
\end{align} because $\beta_2 \equiv\binom{r}{2}$ mod $p^t$.
By Proposition \ref{Proposition 0.10}, part \textit{(1)} mod $p^t$, the left hand side of equation \eqref{beta_{2p}} is 
\begin{align*}
 -3 + \binom{r}{2} + \binom{r}{r-1}  =r-3 + \binom{r}{2} \equiv \binom{r}{2}~\text{mod}~p^t,
\end{align*} 
which is the right hand side of \eqref{beta_{2p}}. Therefore $\beta_{2p}\equiv \binom{r}{2p}~\text{mod}~p^t$. This proves part \textit{(1)} of the proposition.

For $i=0$,
 \begin{align*}
\sum_{\substack{{2\leq j<r-1}\\{j\equiv 2~\text{mod}~(p-1)}}}\beta_j = \beta_2 + \beta_{2p} + \sum_{\substack{{2< j < r-1}\\{j\equiv 2~\text{mod}~(p-1)}\\{j\neq 2p}}}\binom{r}{j} = 0.
\end{align*}
Therefore $\sum\limits_{\substack{{2\leq j< r-1}\\{j\equiv 2~\text{mod}~(p-1)}}}\beta_j \equiv 0~\text{mod}~p^{t+2}$.

For $i =1$,
 \begin{align*}
\sum\limits_{\substack{{2\leq j< r-1}\\{j\equiv 2~\text{mod}~(p-1)}}}j\beta_j &= 2\beta_2 +2p\beta_{2p}+ \sum\limits_{\substack{{2< j< r-1}\\{j\equiv 2~\text{mod}~(p-1)}\\{j\neq 2p}}}j\binom{r}{j}\\
&\equiv -\sum\limits_{\substack{{2<j<r-1}\\{j\equiv 2~\text{mod}~(p-1)}}}j\binom{r}{j} + 2p\beta_{2p} + \sum\limits_{\substack{{2< j< r-1}\\{j\equiv 2~\text{mod}~(p-1)}\\{j\neq 2p}}}j\binom{r}{j} ~\text{mod}~p^{t+1}\\
&= -2p\binom{r}{2p} + 2p\beta_{2p}  \equiv 0~\text{mod}~p^{t+1}.
\end{align*}
For $i=2$,
\begin{align*}
\sum_{\substack{{2\leq j<r-1}\\{j\equiv 2~\text{mod}~(p-1)}}}\binom{j}{2}\beta_j &\equiv \sum_{\substack{{2\leq j<r-1}\\{j\equiv 2~\text{mod}~(p-1)}}}\binom{j}{2}\binom{r}{j} ~\text{mod}~p^{t}.
\end{align*}
Using Proposition \ref{Proposition 0.10}, part \textit{(3)}, we get 
\begin{align*}
\sum_{\substack{{2\leq j<r-1}\\{j\equiv 2~\text{mod}~(p-1)}}}\binom{j}{2}\beta_j &\equiv \binom{r}{2}-r\binom{r-1}{2}~\text{mod}~p^t\\
&\equiv \binom{r}{2}(3-r) \equiv ~0 ~\text{mod}~p^t,
\end{align*}
proving part \textit{(2)} of the proposition.

Since $\beta_j\equiv \binom{r}{j}$ mod $p^t$, we have
\begin{align}
p^3\sum_{\substack{{2\leq j<r-1}\\{j\equiv 2~\text{mod}~(p-1)}}}\binom{j}{3}\beta_j\equiv \sum_{\substack{{2\leq j<r-1}\\{j\equiv 2~\text{mod}~(p-1)}}}\binom{j}{3}\binom{r}{j}~\text{mod}~p^{t+3}. \label{i=3,sum beta_j}
\end{align}
Using Proposition \ref{Proposition 0.10}, part \textit{(4)} in equation \eqref{i=3,sum beta_j}, we get 
\begin{align*}
p^3\sum_{\substack{{2\leq j<r-1}\\{j\equiv 2~\text{mod}~(p-1)}}}\binom{j}{3}\binom{r}{j}\equiv p^3\left(\frac{\binom{r}{3}}{p-1}-r\binom{r-1}{3}\right)~\text{mod}~p^{t+4}.
\end{align*}
Using Lemma \ref{lemma 0.2}, we get that
\begin{align*}
p^3\sum_{\substack{{2\leq j<r-1}\\{j\equiv 2~\text{mod}~(p-1)}}}\binom{j}{3}\beta_j \equiv \frac{p^3\binom{r}{3}}{p-1}~\text{mod}~p^{t+3},
\end{align*}
proving part \textit{(3)}.

By part \textit{(1)} of the proposition we know that $\beta_j \equiv \binom{r}{j}$ mod $p^t$,
hence we have the following 
\begin{align*}
 p^i\sum_{\substack{{2\leq j < r-1}\\{j\equiv 2~\text{mod}~(p-1)}}}\binom{j}{i}\beta_j\equiv p^i\sum_{\substack{{2\leq j < r-1}\\{j\equiv 2~\text{mod}~(p-1)}}}\binom{j}{i}\binom{r}{j}~\text{mod}~p^{t+i}.
\end{align*}
For $i\geq 4$, by Proposition \ref{Proposition 0.10}, part \textit{(5)}, we get that 
\begin{align}
p^i\sum_{\substack{{2\leq j < r-1}\\{j\equiv 2~\text{mod}~(p-1)}}}\binom{j}{i}\binom{r}{j} \equiv -p^ir\binom{r-1}{i}~\text{mod}~p^{t+4}. \label{i>=4, sum beta_j}
\end{align}
Using Lemma \ref{lemma 0.2} in equation \eqref{i>=4, sum beta_j}, we finally get $p^i\sum\limits_{\substack{{2\leq j < r-1}\\{j\equiv 2~\text{mod}~(p-1)}}}\binom{j}{i}\beta_j\equiv 0~\text{mod}~p^{t+3}$ for all $i\geq 4$, proving part \textit{(5)}.
\end{proof}

\section{Telescoping Lemmas}
\label{sectiontelescope}

In this section we study the action of the Hecke operator on some simple functions  supported mod $KZ$ on a line segment in the Bruhat-Tits tree 
whose length is linear in $t = v(3-r)$.  Similar functions were used once in \cite{BG13} and twice in \cite{BGR18}. 
In this paper the importance of these functions will become clear as they
will be used extensively to study the structure of the subquotients $F_1$, $F_2$ and $F_3$ of $\bar\Theta_{k,a_p}$ at `infinity'. 

Define
\begin{eqnarray*}
  c & = & \frac{a_p^2-(r-2)\binom{r-1}{2}p^3}{pa_p},  \\
  \tilde{c} & = & \frac{a_p^2- \binom{r}{3}p^3}{pa_p},
\end{eqnarray*}
and set $\tau = v(c)$ and $\tilde{\tau} = v(\tilde{c})$. These quantities will play a crucial role in everything that follows.
The following lemma explicates the relationship between $\tau$ and $\tilde{\tau}$.

\begin{lemma}{\label{Lemma 5.1}}
Let $p>3$, $r=3+n(p-1)p^t$ with $t =v(r-3)$, and let the slope of $a_p$ be $v(a_p)=\frac{3}{2}$. We have:
\begin{enumerate}[label =(\roman{*})]
\item If $\tilde{\tau} < t+ \frac{1}{2}$, then $\tilde{\tau} = \tau$.
\item If $\tilde{\tau} \geq t+ \frac{1}{2}$, then $\tau \geq t+ \frac{1}{2}$.
\item If $\tau < t+ \frac{1}{2}$, then $\tau = \tilde{\tau}$.
\item If $\tau \geq t+ \frac{1}{2}$, then $\tilde{\tau} \geq t+ \frac{1}{2}$.
\end{enumerate}
\end{lemma}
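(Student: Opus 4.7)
The plan is to prove all four parts uniformly by computing the valuation of the difference $c - \tilde{c}$ and then applying the non-archimedean triangle inequality. First I would compute
\begin{align*}
  c - \tilde{c} \;=\; \frac{p^3}{pa_p}\Bigl(-(r-2)\tbinom{r-1}{2} + \tbinom{r}{3}\Bigr)
               \;=\; \frac{p^2}{a_p}\Bigl(\tfrac{r(r-1)(r-2)}{6} - \tfrac{(r-1)(r-2)^2}{2}\Bigr).
\end{align*}
Factoring $(r-1)(r-2)/6$ out of the parenthesis gives $r - 3(r-2) = -2(r-3)$, so
\begin{align*}
  c - \tilde{c} \;=\; -\frac{p^2}{a_p}\cdot \frac{(r-1)(r-2)(r-3)}{3}.
\end{align*}

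Next I would evaluate the valuation. Since $p > 3$, we have $v(3) = 0$; since $r \equiv 3 \bmod (p-1)$ we get $r - 1 \equiv 2$ and $r - 2 \equiv 1 \bmod (p-1)$, and writing $r = 3 + n(p-1)p^t$ shows $r-1 \equiv 2$ and $r-2 \equiv 1 \bmod p$, so $v(r-1) = v(r-2) = 0$. Combined with $v(a_p) = \tfrac{3}{2}$ and $v(r-3) = t$, this yields
\begin{align*}
  v(c - \tilde{c}) \;=\; 2 - \tfrac{3}{2} + t \;=\; t + \tfrac{1}{2}.
\end{align*}

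With this identity in hand, each of the four parts is a one-line consequence of the ultrametric inequality. For (i), if $\tilde{\tau} < t + \tfrac{1}{2}$, then $\tilde{\tau} < v(c-\tilde{c})$, so $\tau = v(\tilde{c} + (c - \tilde{c})) = \tilde{\tau}$. For (ii), if $\tilde{\tau} \geq t + \tfrac{1}{2}$, then $\tau \geq \min(\tilde{\tau}, v(c-\tilde{c})) \geq t + \tfrac{1}{2}$. Parts (iii) and (iv) follow by the symmetric argument swapping the roles of $c$ and $\tilde{c}$.

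There is no real obstacle here: once the identity for $c - \tilde{c}$ is written down the result is immediate. The only thing to be careful about is verifying that the factors $(r-1)$ and $(r-2)$ are $p$-adic units under our hypotheses, which is where the assumption $p > 3$ combined with $r \equiv 3 \bmod (p-1)$ is used.
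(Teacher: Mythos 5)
Your approach is the same as the paper's: compute the difference $c - \tilde{c}$ in closed form and then apply the ultrametric triangle inequality. The algebra for $c - \tilde{c}$ is correct, and the deduction of (i)--(iv) from a lower bound on $v(c-\tilde{c})$ is also correct. There is, however, a small but genuine error in the intermediate valuation claim. You assert $v(r-1) = v(r-2) = 0$, justifying it by writing $r = 3 + n(p-1)p^t$ and concluding $r-1 \equiv 2$, $r-2 \equiv 1 \bmod p$. This is true when $t \geq 1$, but when $t = 0$ one has $r - 1 \equiv 2 - n$ and $r - 2 \equiv 1 - n \bmod p$, which can vanish (e.g. $n \equiv 2$ or $n \equiv 1 \bmod p$, both compatible with $v(n) = 0$). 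So $v(c-\tilde{c}) = t + \tfrac12$ is not always exact; what is always true is $v(c-\tilde{c}) \geq t + \tfrac12$, since $v(r-1), v(r-2) \geq 0$. Fortunately the four conclusions use only this inequality — (i) and (iii) need $v(c-\tilde{c}) > \tilde{\tau}$ (resp.\ $> \tau$) under the hypothesis $\tilde{\tau} < t+\tfrac12$ (resp.\ $\tau < t+\tfrac12$), and (ii), (iv) need $v(c-\tilde{c}) \geq t+\tfrac12$ — so your proof survives once the claimed equality is weakened to the inequality, which is exactly what the paper does.
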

\begin{proof} Write 
\begin{align*}
c=\frac{a_p^2-(r-2)\binom{r-1}{2}p^3}{pa_p} = \frac{a_p^2 - \binom{r}{3}p^3}{pa_p} + \frac{\binom{r}{3}p^3 - (r-2)\binom{r-1}{2}p^3}{pa_p} = \tilde{c}  + \frac{2(3-r)\binom{r-1}{2}p^2}{3a_p} .
\end{align*}
We use the non-archimedean properties of the valuation $v$. We have $\tau \geq \text{min} (\tilde{\tau}, v(\frac{2(r-3)\binom{r-1}{2}p^2}{a_p} ))$. Moreover, since $v(\frac{(3-r)\binom{r-1}{2}p^2}{a_p}) \geq t + \frac{1}{2}$, we get that if $\tilde{\tau} < t+ \frac{1}{2}$, then $\tau = \tilde{\tau}$ and if $\tilde{\tau} \geq t+ \frac{1}{2}$, then $\tau \geq t+ \frac{1}{2}$, proving part $\textit{(i)}$ and $\textit{(ii)}$. The proof of parts $\textit{(iii)}$ and $\textit{(iv)}$ are similar. 
\end{proof}

We now state and prove several lemmas involving the action of the Hecke operators on simple functions supported mod $KZ$
on a line segment on the tree. All the lemmas are characterized by a simple common property: due to a telescoping
cancellation property, the functions obtained after applying the Hecke operator have a very simple form with support mod $KZ$ on
vertices very close to the origin of the tree.
  
We make some remarks about notation. Let $f$, $g \in \ind_{KZ}^G \> \Sym^r\mathbb{\bar{Q}}_p^2$. By $f = g + O(p^s)$, for some $s\geq 0$, we shall mean that the difference $f-  g \in \ind_{KZ}^G \> \Sym^r\mathbb{\bar{Z}}_p^2$ is integral and is divisible by $p^s$. Also, 
in the proofs we will often compute in `radius $n$'. This means that we are computing the part of a function which is supported mod $KZ$
at distance $n$ from the origin of the tree.
This `$n$'  is not to be confused with the `$n$' occurring in the statements of the lemmas below.

The first lemma will be used in the proof of Proposition \ref{F_0 when tau >= t} and \ref{F_1 when tau <= t}.
\begin{lemma}\label{chi}
Let $p>3$, $r \geq 2p+1$, $r=3 + n(p-1)p^t$, with $t=v(r-3)$, $v(a_p)= \frac{3}{2}$, $\tilde{c} =\frac{a_p^2-\binom{r}{3}p^3}{pa_p}$, $v(\tilde{c}) =\tilde{\tau}$ and let $t_0 =\mathrm{min}\lbrace t,\tilde{\tau}\rbrace$. If 
\begin{align*}
\chi = \sum_{n=0}^{t}a_p^n[g^0_{n,0}, Y^r - X^{r-3}Y^3],
\end{align*}
then\begin{align*}
(T-a_p)\chi= [\alpha, Y^r] + a_p[1, X^{r-3}Y^3] +p^{t+1}h_{\chi} + O(p^{t_0+2}),
\end{align*}
where $h_\chi$ is an integral linear combination of the terms of the form $[g, X^r]$ and $[g,X^{r-1}Y],$ for $g\in G.$
\end{lemma}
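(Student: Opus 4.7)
The plan is to write $T = T^+ + T^-$ and exploit the telescoping structure built into $\chi = \sum_{n=0}^{t}\chi_n$ (with $\chi_n = a_p^n[g^0_{n,0}, Y^r - X^{r-3}Y^3]$) to reduce $(T-a_p)\chi$ to the two leading terms $[\alpha, Y^r] + a_p[\Id, X^{r-3}Y^3]$ modulo $p^{t+1}h_\chi + O(p^{t_0+2})$.

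First, I compute the $T^-$-contribution using $T^-([g^0_{n,0}, v(X,Y)]) = [g^0_{n-1, 0}, v(pX, Y)]$ for $n \geq 1$ (along with $T^-([\Id, v]) = [\alpha, v(pX, Y)]$). Applied to $v = Y^r - X^{r-3}Y^3$, this gives $v(pX, Y) = Y^r - p^{r-3}X^{r-3}Y^3$, whose second piece is negligible since $r \geq 2p+1$. The telescoping cancellation between $T^-\chi_{n+1}$ and $a_p\chi_n$ then yields
\[T^-\chi - a_p\chi \equiv [\alpha, Y^r] + \sum_{n=0}^{t} a_p^{n+1}[g^0_{n,0}, X^{r-3}Y^3] - a_p^{t+1}[g^0_{t,0}, Y^r] \pmod{O(p^{r-3})}.\]
The summand $[\alpha, Y^r]$ and the $n = 0$ term $a_p[\Id, X^{r-3}Y^3]$ of the finite sum are precisely the two stated leading terms. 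The boundary term $-a_p^{t+1}[g^0_{t,0}, Y^r]$ is rewritten as $-a_p^{t+1}[g^0_{t, 0}w, X^r]$, where $w = \matr{0}{1}{1}{0} \in K$ swaps $X^r \leftrightarrow Y^r$; since $v(a_p^{t+1}) = \tfrac{3(t+1)}{2} \geq t+1$, this is $p^{t+1}$ times an integral scalar multiple of $[g, X^r]$ and is absorbed into $h_\chi$.

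Next, I compute $T^+\chi$ by expanding $w_\lambda := (-[\lambda]X + pY)^r - X^{r-3}(-[\lambda]X + pY)^3$ via the binomial theorem. The congruence $r \equiv 3 \pmod{p-1}$ yields $[\lambda]^{r-j} = [\lambda]^{3-j}$ for $\lambda \in \F_p^\times$ and $j = 0, 1, 2, 3$, causing the $X^r$-coefficient of $w_\lambda$ to vanish identically and the $X^{r-j}Y^j$-coefficients for $j = 1, 2, 3$ to simplify to $p(r-3)[\lambda]^2$, $-(\binom{r}{2}-3)[\lambda]p^2$, and $(\binom{r}{3}-1)p^3$ respectively. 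The $j = 1$ piece has valuation exactly $t + 1$ (since $v(r-3) = t$), so it contributes $p^{t+1}$ times an integral linear combination of $[g, X^{r-1}Y]$-terms to $h_\chi$. Using the factorizations $\binom{r}{2}-3 = \tfrac{1}{2}(r-3)(r+2)$ and $\binom{r}{3}-1 = \tfrac{1}{6}(r-3)(r^2+2)$, the $j = 2, 3$ coefficients have valuations $\geq t+2$ and $\geq t+3$ respectively, hence lie in $O(p^{t_0+2})$; for $j \geq 4$, Lemma~\ref{lemma 0.1.} supplies $v(p^j\binom{r}{j}) \geq t + 4$.

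The delicate step is the $\lambda = 0$ contribution of $T^+\chi_n$, which lands at $g^0_{n+1,0}$ with value $a_p^n[p^rY^r - p^3 X^{r-3}Y^3]$. For $n \leq t-1$, the $X^{r-3}Y^3$-piece combines with the leftover $a_p^{n+2}[g^0_{n+1, 0}, X^{r-3}Y^3]$ from the telescoping sum, producing the total coefficient $a_p^n(a_p^2 - p^3)$. This is precisely where the quantity $\tilde c$ enters: directly from its definition,
\[a_p^2 - p^3 \;=\; pa_p\tilde c + \bigl(\tbinom{r}{3}-1\bigr)p^3,\]
so $v(a_p^2 - p^3) \geq \min\!\bigl(\tfrac{5}{2} + \tilde\tau,\ t + 3\bigr)$, whence multiplying by $a_p^n$ gives valuation $\geq t_0 + 2$, absorbing the contribution into $O(p^{t_0+2})$. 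The $n = t$ boundary (only $-a_p^t p^3$) and the $p^rY^r$-piece (valuation $\geq r$) are handled by direct valuation counts, completing the identity. The main obstacle is precisely this $\lambda = 0$ step: the naive valuation of $a_p^2 - p^3$ is just $3$, insufficient for the error bound, and only the identification with $pa_p\tilde c$ plus a piece divisible by $(r-3)$ reveals that $a_p^2-p^3$ is in fact divisible to level $\min(\tfrac{5}{2}+\tilde\tau, t+3)$, exactly what is needed to reach the threshold $t_0 + 2$.
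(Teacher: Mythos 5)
Your proof is correct and follows the paper's argument essentially verbatim: split $T = T^+ + T^-$, telescope the $T^-$/$-a_p$ cancellation, expand $T^+$ via the binomial theorem using $[\lambda]^{r-j} = [\lambda]^{3-j}$, and control the $\lambda = 0$ diagonal through the identity $a_p^2 - p^3 = pa_p\tilde{c} + \bigl(\binom{r}{3} - 1\bigr)p^3$. The only difference is organizational: you compute $T^-\chi - a_p\chi$ and $T^+\chi$ in two global passes, whereas the paper groups all contributions radius by radius on the tree.
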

\begin{proof}
Write $\chi = \sum\limits_{n=0}^{t}\chi_n$, where $\chi_n := a_p^n[g^0_{n,0}, Y^r -X^{r-3}Y^3]$, for $n\geq 0$. By the formula for the Hecke operator, in radius $-1$ we have
\begin{align}
T^-\chi_0 = [\alpha, Y^r-(pX)^{r-3}Y^3]= [\alpha, Y^r] + O(p^{t_0+3}), \label{chi radius -1}
\end{align} because $p>3$, so $r-3 \geq (p-1)p^t\geq 4p^t > t+3\geq t_0 + 3$. 

In radius $0$ we have $-a_p\chi_0 = -a_p[1, Y^r-X^{r-3}Y^3]$,
\begin{align*}
T^-\chi_1&= a_p[g^0_{1,0}\alpha, Y^r- (pX)^{r-3}Y^3] = a_p[1, Y^r] + O(p^{t_0+3}). 
\end{align*}
So finally in radius $0$ we have
\begin{align} -a_p\chi_0 +T^-\chi_1 = a_p[1,X^{r-3}Y^3] + O(p^{t_0+3}). \label{chi radius 0}
\end{align}
Now for radius $n$ with $1\leq n\leq t-1,$ we compute $T^-\chi_{n+1}-a_p\chi_n+ T^+\chi_{n-1}$:
\begin{align*}
T^-\chi_{n+1} &= a_p^{n+1}[g^0_{n+1,0}\alpha, Y^r -(pX)^{r-3}Y^3] = a_p^{n+1}[g^0_{n,0}, Y^r] + O(p^{t_0+3}). 
\end{align*}
Also we have $-a_p\chi_n = -a_p^{n+1}[g^0_{n,0}, Y^r-X^{r-3}Y^3]$ and 
\begin{align*}
T^+\chi_{n-1} &= a_p^{n-1}\Bigg(\sum_{\lambda\in\mathbb{F}_p}[g^{0}_{n-1,0}g^0_{1,[\lambda]}, (-[\lambda]X+pY)^r- X^{r-3}(-[\lambda]X+pY)^3]\Bigg)\\
&= a_p^{n-1}\Bigg(\sum_{\lambda\in\mathbb{F}_p^{\times}}\Bigg[g^0_{n-1,0}g^0_{1,[\lambda]}, (-[\lambda]X)^r +pr(-[\lambda]X)^{r-1}Y+p^2\binom{r}{2}(-[\lambda]X)^{r-2}Y^2\\
&~~~  + p^3\binom{r}{3}(-[\lambda]X)^{r-3}Y^3+ \sum_{j\geq 4}p^j\binom{r}{j}(-[\lambda]X)^{r-j}Y^j- X^{r-3}(-[\lambda]^3X^3+3p[\lambda]^2X^2Y \\
&~~~-3p^2[\lambda]XY^2+p^3Y^3)\Bigg] + [g^0_{n-1,0}g^0_{1,0}, p^rY^r-p^3X^{r-3}Y^3]\Bigg). 
\end{align*}
By Lemma \ref{lemma 0.1.}, we get 
\begin{align*}
T^+\chi_{n-1} &= a_p^{n-1}\Bigg(\sum_{\lambda\in\mathbb{F}_p^{\times}}\Bigg[g^0_{n-1,0}g^0_{1,[\lambda]}, (r-3)p[\lambda]^2X^{r-1}Y+\left(3-\binom{r}{2}\right)p^2[\lambda]X^{r-2}Y^2\\
&~~~+ \left(\binom{r}{3}-1\right)p^3X^{r-3}Y^3\Bigg] - [g^0_{n,0}, p^3X^{r-3}Y^3]\Bigg) +O(p^{t_0+4}).
\end{align*}
Note that $v(3-\binom{r}{2})\geq t$ and $v(\binom{r}{3}-1) \geq t$. So we get
\begin{align*}
T^+\chi_{n-1} = a_p^{n-1}(r-3)p\sum_{\lambda\in\mathbb{F}_p^\times}[\lambda]^2[g^0_{n,p^{n-1}[\lambda]}, X^{r-1}Y] - a_p^{n-1}p^3[g^0_{n,0}, X^{r-3}Y^3]  + O(p^{t_0+2}).
\end{align*}
Therefore in radius $n$, for $1\leq n \leq t-1$, we have
 \begin{align*}
 T^-\chi_{n+1}-a_p\chi_n + T^+\chi_{n-1} = a_p^{n+1}[g^0_{n,0}, X^{r-3}Y^3]-a_p^{n-1}p^3[g^0_{n,0}, X^{r-3}Y^3] + p^{t+1}h_n +O(p^{t_0 +2}),
\end{align*}
where $h_n$ is an integral linear combination of terms of the form $[g, X^{r-1}Y]$ for $g\in G$. As $\tilde{c}=\frac{a_p^2-\binom{r}{3}p^3}{pa_p}$, so $a_p^2-p^3 = \tilde{c}pa_p + p^3\left(\binom{r}{3}-1\right).$ Hence $v(a_p^2-p^3)$= min$\lbrace v(\tilde{c})+\frac{5}{2}, t+3\rbrace > t_0+ 2$. We finally get
\begin{align}
T^-\chi_{n+1} -a_p\chi_n+ T^+\chi_{n-1}= p^{t+1}h_n + O(p^{t_0+ 2}). \label{chi radius 1 to t-1}
\end{align}
Similarly for $n=t\geq 1$, we compute $-a_p\chi_t+ T^+\chi_{t-1}$:

$-a_p\chi_t=-a_p^{t+1}[g^0_{t,0}, Y^r-X^{r-3}Y^3]$. Also
\begin{align*}
T^+\chi_{t-1} &= a_p^{t-1}\Bigg(\sum_{\lambda\in\mathbb{F}_p^{\times}}[g^0_{t-1,0}g^0_{1,[\lambda]}, (-[\lambda]X+pY)^r-X^{r-3}(-[\lambda]X+pY)^3]\\
&~~~+[g^0_{t-1,0}g^0_{1,0}, (pY)^r-X^{r-3}(pY)^3]\Bigg)\\
&= a_p^{t-1}\Bigg(\sum_{\lambda\in\mathbb{F}_p^\times}\left[g^0_{t-1,0}g^0_{1,[\lambda]},(-[\lambda]X)^r+pr(-[\lambda]X)^{r-1}Y + p^2\binom{r}{2}(-[\lambda]X)^{r-2}Y^2\right.\\
&~~~+p^3\binom{r}{3}(-[\lambda]X)^{r-3}Y^3 + \sum_{j\geq 4}p^j\binom{r}{j}(-[\lambda]X)^{r-j}Y^j-X^{r-3}(-[\lambda]^3X^3+3p[\lambda]^2X^2Y\\
&~~~-3p^2[\lambda]XY^2 + p^3Y^3)\bigg] +[g^0_{t,0}, -p^3X^{r-3}Y^3]\Bigg) + O(p^{t_0+4}),
\end{align*}
 because from Lemma \ref{lemma 0.1.}, $p^j\binom{r}{j} \equiv 0~\text{mod}~p^{t+4}, \forall j \geq 4$. As $v\left(3-\binom{r}{2}\right)\geq t$ and $v\left(\binom{r}{3}-1\right)\geq t$, we get\begin{align*}
T^+\chi_{t-1}= a_p^{t-1}(r-3)p\sum_{\lambda\in\mathbb{F}_p^\times}[\lambda]^2[g_{t, p^{t-1}[\lambda]}, X^{r-1}Y] - a_p^{t-1}p^3[g^0_{t,0}, X^{r-3}Y^3] + O(p^{t_0+2}).
\end{align*}
As before, note that $v(a_p^2- p^3) > t_0+2$, so we get
\begin{align}
-a_p\chi_t+ T^+\chi_{t-1}= p^{t+1}h_t +O(p^{t_0+2}),  \label{chi radius t}
\end{align}
where $h_t$ is an integral linear combination of the terms of the form $[g, X^r]$ and $[g, X^{r-1}Y]$, for $g\in G$.

Finally in radius $t+1$, we compute
\begin{align}
T^+\chi_t &= a_p^t\left(\sum_{\lambda\in\mathbb{F}_p^\times}[g^0_{t,1}g^0_{1,[\lambda]}, (-[\lambda]X+pY)^r-X^{r-3}(-[\lambda]X+pY)^3]+[g^0_{t+1,0},(pY)^r-p^3X^{r-3}Y^3]\right) \nonumber \\
&= p^{t+1}h_{t+1} + O(p^{t_0+2}), \label{chi radius t+1}
\end{align}
where $h_{t+1}$ is a linear combination of the terms of the form $[g, X^{r-1}Y]$ for $g\in G.$\\

Using equations \eqref{chi radius -1}, \eqref{chi radius 0}, \eqref{chi radius 1 to t-1}, \eqref{chi radius t}, \eqref{chi radius t+1}, we get
\begin{align*}
(T-a_p)\chi &=T^-\chi_0-a_p\chi_0+T^-\chi_1+\sum_{n=1}^{t-1}(T^-\chi_{n+1}-a_p\chi_n+T^+\chi_{n-1})-a_p\chi_t +T^+\chi_{t-1}+T^+\chi_t\\
&= [\alpha, Y^r]+a_p[1,X^{r-3}Y^3]+ p^{t+1}h_\chi + O(p^{t_0+2}),
\end{align*}
where $h_\chi$ is an integral linear combination of the terms of the form $[g, X^r]$ and $[g, X^{r-1}Y]$, for $g\in G.$
\end{proof}

The following lemma will be used in the proof of Proposition \ref{F_1 when tau > t + 0.5}.

\begin{lemma}\label{lemma 4.2} Let $p>3$, $r\geq 2p+1$, $r=3+n(p-1)p^t$, with $t=v(r-3)$, $v(a_p)= \frac{3}{2}$ and $c=\frac{a_p^2-(r-2)\binom{r-1}{2}p^3}{pa_p}$. Let $\tau =v(c)$ and assume $\tau > t+ \frac{1}{2}$. If
\begin{align*}
\xi = \frac{-1}{p^2(3-r)}\sum_{n = 0}^{2t+1}\left(\frac{a_p}{p}\right)^{n+1}[g^0_{n,0}, X^{r-2}Y^2 + (r-3)X^pY^{r-p}- (r-2)XY^{r-1}],
\end{align*} then
\begin{align*}
(T-a_p)\xi = \frac{a_p(r-2)}{p^2(3-r)}[\alpha, XY^{r-1}] + \frac{a_p^2}{p^3(3-r)}[1, X^{r-2}Y^2+ (r-3)X^pY^{r-p}] + O(\sqrt{p}).
\end{align*}
\end{lemma}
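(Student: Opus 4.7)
The plan is to mirror the telescoping argument used for Lemma~\ref{chi}, now applied to a sum of length $2t+2$ whose geometric ratio $a_p/p$ has valuation $\tfrac12$ (rather than $a_p$ of valuation $\tfrac32$). Write $\xi = \sum_{n=0}^{2t+1}\xi_n$ with $\xi_n := \lambda_n[g_{n,0}^0, P]$, $\lambda_n := \tfrac{-1}{p^2(3-r)}(a_p/p)^{n+1}$, and $P(X,Y) := X^{r-2}Y^2 + (r-3)X^pY^{r-p} - (r-2)XY^{r-1}$. I organize $(T-a_p)\xi$ radius by radius on the Bruhat--Tits tree: radius $-1$ collects only $T^-\xi_0$; radius $0$ collects $-a_p\xi_0 + T^-\xi_1$; each on-axis vertex $g_{n,0}^0$ at radius $1\leq n\leq 2t+1$ collects the telescoping combination $T^-\xi_{n+1} - a_p\xi_n + T^+\xi_{n-1}^{(\lambda=0)}$ (with $T^-\xi_{2t+2}$ absent at $n=2t+1$); and each off-axis vertex $g_{n+1,p^n[\lambda]}^0$, $\lambda\in\mathbb{F}_p^\times$, collects the single term $T^+\xi_n^{(\lambda)}$.

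For the boundary at radius $-1$, only the $-(r-2)p\,XY^{r-1}$ monomial of $P(pX,Y)$ has small enough $p$-valuation to survive after multiplication by $\lambda_0$, producing exactly the claimed $\tfrac{a_p(r-2)}{p^2(3-r)}[\alpha, XY^{r-1}]$. At radius $0$, the combination $\lambda_0[1,\,(a_p/p)P(pX,Y) - a_pP(X,Y)]$ enjoys the explicit cancellation $(r-2)a_p - (r-2)a_p = 0$ on the $XY^{r-1}$ coefficient, leaving $-a_p\lambda_0[1, X^{r-2}Y^2 + (r-3)X^pY^{r-p}] = \tfrac{a_p^2}{p^3(3-r)}[1, X^{r-2}Y^2+(r-3)X^pY^{r-p}]$ modulo $O(\sqrt{p})$.

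The main work is showing that all other vertices contribute only $O(\sqrt{p})$. On the central axis at $g_{n,0}^0$ with $1 \leq n \leq 2t+1$, the bracket $(a_p/p)P(pX,Y) - a_pP(X,Y) + (p/a_p)P(X,pY)$ has $XY^{r-1}$-coefficient $-(r-2)p^r/a_p$ of huge valuation, while the $X^{r-2}Y^2$-coefficient reduces to $(p^3 - a_p^2)/a_p$ plus terms of very high valuation. Using the hypothesis $\tau > t+\tfrac12$, the identity $a_p^2 = pa_pc + (r-2)\binom{r-1}{2}p^3$, and the fact that $(r-2)\binom{r-1}{2} \equiv 1 \pmod{p^t}$ (from evaluation at $r=3$), one gets $v(p^3 - a_p^2) \geq t+3$, so the full contribution has valuation $\geq (n+1)/2 \geq \tfrac12$. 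Off the axis, the polynomial $P$ is designed so that the $X^r$-coefficient of $P(X,-[\lambda]X + pY)$ vanishes identically: since $r \equiv 3 \pmod{p-1}$ both $r-p$ and $r-1$ are congruent to $2$ mod $(p-1)$, hence $[\lambda]^{r-p} = [\lambda]^{r-1} = [\lambda]^2$, and $[\lambda]^2(1 + (r-3) - (r-2)) = 0$ collapses the leading term. For each $i\geq 1$ the $X^{r-i}Y^i$-coefficient carries an additional factor of $(r-3)$ --- directly for $i=1$, via the vanishing of $1 - (r-2)\binom{r-1}{2}$ at $r=3$ for $i=2$, and because $\binom{r-1}{i}$ itself contains the factor $(r-3)$ for $i\geq 3$ --- giving valuation at least $t + \max\{2,i\}$, which multiplied by $\lambda_n$ of valuation $(n+1)/2 - 2 - t$ yields $\geq (n+1)/2 \geq \tfrac12$; higher binomial terms are absorbed by Lemma~\ref{lemma 0.1.}.

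The hardest part is this off-axis analysis: the cutoff $n = 2t+1$ is chosen precisely so that $v(\lambda_{2t+1}) = -1$, a full unit short of integrality, and the $O(\sqrt{p})$ bound relies on a double cancellation --- the vanishing of the $X^r$-term via the Teichm\"uller congruences mod $(p-1)$, and the systematic $(r-3)$-divisibility of every lower monomial coefficient of $P(X, -[\lambda]X + pY)$. Verifying this divisibility at $i=2$ and tracking the $(r-3)X^pY^{r-p}$ middle term of $P$ through the entire expansion are the subtle steps; once those are in place, the boundary radii $2t+1$ and $2t+2$ pose no further difficulty because the truncation of the sum was calibrated to match this balance exactly.
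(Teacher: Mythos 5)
Your proof follows the same telescoping strategy as the paper: split $\xi$ into its radius pieces $\xi_n$, compute $(T-a_p)\xi$ radius by radius on the tree, isolate the boundary contributions at radius $-1$ and $0$, and show that all interior and off-axis contributions are $O(\sqrt{p})$ using the vanishing of the leading $X^r$-coefficient (via the Teichm\"uller congruence $(-[\lambda])^{r-p}=(-[\lambda])^{r-1}=[\lambda]^2$) together with $v(a_p^2-p^3)\geq t+3$, which follows from the hypothesis $\tau>t+\frac12$. The overall structure, the choice of decomposition, and the key cancellations are all the ones the paper uses, so I count this as correct and essentially the same argument.

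Two small imprecisions worth flagging. First, for $i\geq 3$ the blanket claim that ``$\binom{r-1}{i}$ itself contains the factor $(r-3)$'' is not literally sound once $i\geq p$, since $v(i!)>0$ can eat into that factor (e.g.\ $v\bigl(\binom{r-1}{p}\bigr)=t-1$); the correct tool here is Lemma~\ref{lemma 0.2} applied to $r-1\equiv 2\pmod{p-1}$, which gives $v\bigl(p^i\binom{r-1}{i}\bigr)\geq t+3$ for $i\geq 3$, and this closes the estimate uniformly. You cite Lemma~\ref{lemma 0.1.} instead, which is the $r\equiv 3\pmod{p-1}$ version and does not directly apply to $\binom{r-1}{i}$; you should cite Lemma~\ref{lemma 0.2} here. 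Second, the valuation you write for $\lambda_n$ is off by one index from the factor that actually multiplies $T^+\xi_{n-1}$ (whose valuation is $n/2-2-t$, not $(n+1)/2-2-t$), so the on-axis contribution at radius $n$ is $O(p^{n/2})$; this still gives $O(\sqrt{p})$ for $n\geq 1$, so the conclusion is unaffected, but the exponent should be corrected.
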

\begin{proof}
 Let $\xi_n = \frac{-1}{p^2(3-r)}\left(\frac{a_p}{p}\right)^{n+1}[g^0_{n,0}, X^{r-2}Y^2+(r-3)X^pY^{r-p}-(r-2)XY^{r-1}]$. So we can write $\xi = \sum\limits_{n=0}^{2t+1}\xi_n$. We will compute $(T-a_p)\xi$ in several steps.

Computation in radius $-1$:
\begin{align}
T^-\xi_0 &= \frac{-1}{p^2(3-r)}\left(\frac{a_p}{p}\right)[\alpha, (pX)^{r-2}Y^2+ (r-3)(pX)^{p}Y^{r-p}-(r-2)pXY^{r-1}] \nonumber \\
&= \frac{a_p(r-2)}{p^2(3-r)}[\alpha, XY^{r-1}] + O(p^2), \label{xi radius -1}
\end{align}
because $p>3$, so $r-2 \geq (p-1)p^t +1 > t+4$.

Computation in radius $0$:
\begin{align*}
-a_p\xi_0 &= \frac{a_p^2}{p^3(3-r)}[1, X^{r-2}Y^2 + (r-3)X^pY^{r-p} - (r-2)XY^{r-1}],\\
T^-\xi_1 &= \frac{-1}{p^2(3-r)}\left(\frac{a_p}{p}\right)^2[1, (pX)^{r-2}Y^2 + (r-3)(pX)^pY^{r-p} - (r-2)pXY^{r-1}]\\
&= \frac{a_p^2(r-2)}{p^3(3-r)}[1, XY^{r-1}] + O(p^2). 
\end{align*}
Therefore in radius $0$ we get
\begin{align}
-a_p\xi_0 + T^-\xi_1 = \frac{a_p^2}{p^3(3-r)}[1, X^{r-2}Y^2 + (r-3)X^pY^{r-p}] + O(p^2). \label{xi radius 0}
\end{align}

Computation in radius $n$, when $1\leq n \leq 2t$:
\begin{align*}
-a_p\xi_n &= \frac{a_p}{p^2(3-r)}\left(\frac{a_p}{p}\right)^{n+1}[g^0_{n,0}, X^{r-2}Y^2 + (r-3)X^pY^{r-p} -(r-2)XY^{r-1}],\\
T^-\xi_{n+1} &= \frac{-1}{p^2(3-r)}\left(\frac{a_p}{p}\right)^{n+2}[g^0_{n+1,0}\alpha, (pX)^{r-2}Y^2 + (r-3)(pX)^{p}Y^{r-p} - (r-2)pXY^{r-1}]\\
&=\frac{r-2}{p(3-r)}\left(\frac{a_p}{p}\right)^{n+2}[g^0_{n,0}, XY^{r-1}] + O(p^2).
\end{align*} 
So $-a_p\xi_n + T^-\xi_{n+1}= \frac{a_p}{p^2(3-r)}\left(\frac{a_p}{p}\right)^{n+1}[g^0_{n,0}, X^{r-2}Y^2] +  O(\sqrt{p})$.

We will calculate $T^+\xi_{n-1}$, for $1\leq n \leq 2t + 2$:
\begin{align*}
T^+\xi_{n-1} &= \frac{-1}{p^2(3-r)}\left(\frac{a_p}{p}\right)^n\sum_{\lambda \in\mathbb{F}_p}[g^0_{n-1,0}g^0_{1,[\lambda]}, X^{r-2}(-[\lambda]X+pY)^2 + (r-3)X^p(-[\lambda]X + pY)^{r-p}\\
&~~~ - (r-2)X(-[\lambda]X+pY)^{r-1}]\\
&= \frac{-1}{p^2(3-r)}\left(\frac{a_p}{p}\right)^n\bigg(\sum_{\lambda\in\mathbb{F}_p^\times}[g^0_{n,p^{n-1}[\lambda]}, X^{r-2}(-[\lambda]X+pY)^2 + (r-3)X^p(-[\lambda]X + pY)^{r-p}\\
&~~~ - (r-2)X(-[\lambda]X+pY)^{r-1}] + [g^0_{n,0}, p^2X^{r-2}Y^2]\bigg) + O(p^3).
\end{align*}
The coefficient of $[g^0_{n,p^{n-1}[\lambda]}, X^r]$ in $T^+\xi_{n-1}$ for $\lambda\neq 0$ is $0$.
The coefficient of $[g^0_{n,p^{n-1}[\lambda]}, X^{r-1}Y]$ in $T^+\xi_{n-1}$ for $\lambda\neq 0$ is
\begin{align*}
\frac{[\lambda]}{p^2(3-r)}\left(\frac{a_p}{p}\right)^n(2p+p(r-3)(r-p)-p(r-2)(r-1))= [\lambda]\left(\frac{a_p}{p}\right)^n = O(\sqrt{p}).
\end{align*}
The coefficient of $[g^0_{n,p^{n-1}[\lambda]}, X^{r-2}Y^2]$ in $T^+\xi_{n-1}$ for $\lambda\neq 0$ is 
\begin{align*}
\frac{-1}{p^2(3-r)}\left(\frac{a_p}{p}\right)^n\left(p^2 + p^2(r-3)\binom{r-p}{2} - p^2(r-2)\binom{r-1}{2}\right) = O(\sqrt{p}).
\end{align*}
Finally using Lemma \ref{lemma 0.2} for the remaining terms in $T^+\xi_{n-1}$, we get 
\begin{align}
T^+\xi_{n-1}= \frac{-1}{(3-r)}\left(\frac{a_p}{p}\right)^n[g^0_{n,0}, X^{r-2}Y^2] + O(\sqrt{p}). \label{xi T plus n}
\end{align}
Therefore in radius $n$ for $1\leq n \leq 2t$, we have
\begin{align*}
T^-\xi_{n+1}-a_p\xi_n + T^+\xi_{n-1} = \frac{1}{(3-r)}\left(\frac{a_p}{p}\right)^n\left(\frac{a_p^2-p^3}{p^3}\right)[g^0_{n,0}, X^{r-2}Y^2] + O(\sqrt{p}).
\end{align*}
Now $a_p^2-p^3 = pa_pc + (r-2)\binom{r-1}{2}p^3- p^3$, so we get 
\begin{align}
v(a_p^2-p^3)\geq~\text{min}\left(\tau + \frac{5}{2}, t + 3\right). \label{v(a_p^2-p^3)}
\end{align}
 As $\tau > t+\frac{1}{2}$, we get $v(a_p^2-p^3) = t+3$. Therefore
\begin{align}
T^-\xi_{n+1}-a_p\xi_n + T^+\xi_{n-1} = O(\sqrt{p}). \label{xi radius n}
\end{align}

Computation in radius $2t+1$:
\begin{align*}
-a_p\xi_{2t+1} = \frac{a_p}{p^2(3-r)}\left(\frac{a_p}{p}\right)^{2t+2}[g^0_{2t+1,0}, X^{r-2}Y^2 + (r-3)X^pY^{r-p} - (r-2)XY^{r-1}] = O(\sqrt{p}).
\end{align*}
Using equation \eqref{xi T plus n}, we get 
\begin{align*}
T^+\xi_{2t} = \frac{-1}{(3-r)}\left(\frac{a_p}{p}\right)^{2t+1}[g^0_{2t+1,0}, X^{r-2}Y^2] + O(\sqrt{p}) = O(\sqrt{p}).
\end{align*}
Therefore in radius $2t+1$, we have
\begin{align}
-a_p\xi_{2t+1} + T^+\xi_{2t} = O(\sqrt{p}). \label{xi radius 2t+1}
\end{align}

Computation in radius $2t+2$:

By equation \eqref{xi T plus n}, we get
\begin{align}
T^+\xi_{2t+1} = O(p). \label{xi radius 2t+2}
\end{align}

Finally, by equations \eqref{xi radius -1}, \eqref{xi radius 0}, \eqref{xi radius n}, \eqref{xi radius 2t+1}, \eqref{xi radius 2t+2}, we get
\begin{align*}
(T-a_p)\xi = \frac{a_p(r-2)}{p^2(3-r)}[\alpha, XY^{r-1}] + \frac{a_p^2}{p^3(3-r)}[1, X^{r-2}Y^2+ (r-3)X^pY^{r-p}] + O(\sqrt{p}).
\end{align*}
\end{proof}

The next two lemmas will be used in the proof of Proposition \ref{F_2 when tau =< t}.

\begin{lemma}{\label{chi prime}}
Let $p>3$, $ r \geq 2p+1$, $r = 3+n(p-1)p^t$, with $t=v(r-3)$, $v(a_p)=\frac{3}{2}$, $c=\frac{a_p^2-(r-2)\binom{r-1}{2}p^3}{pa_p}$. Let $\tau =v(c)$ and assume $\tau \leq t$.  If \begin{align*}
\chi' = \sum_{n = 0}^{t}a_p^n[g^0_{n,0}, [\lambda]^{-1}(Y^r- X^{r-3}Y^3)],~\text{for some}~\lambda\neq 0,
\end{align*}  then 
\begin{align*}
(T-a_p)\chi' = [\alpha, [\lambda]^{-1}Y^r] + a_p[1,[\lambda]^{-1}X^{r-3}Y^3] + p^{t+1}h_{\chi'} + O(p^{\tau+2}),
\end{align*}
where $h_{\chi'}$ is an integral linear combination of terms of the form $[g, X^r]$ and $[g, X^{r-1}Y]$, for $g\in G.$
\end{lemma}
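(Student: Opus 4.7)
The plan is to reduce this lemma directly to Lemma \ref{chi}, of which it is essentially a scaled restatement. Since $\lambda \in \F_p^\times$, its Teichm\"uller lift $[\lambda]$ lies in $\bar{\Z}_p^\times$, so $[\lambda]^{-1}$ is a $p$-adic unit. The bracket $[g,v]$ is linear in $v$, hence $\chi' = [\lambda]^{-1}\chi$, where $\chi = \sum_{n=0}^{t}a_p^n[g^0_{n,0}, Y^r - X^{r-3}Y^3]$ is precisely the function treated in Lemma \ref{chi}. Because the Hecke operator $T$ and multiplication by $a_p$ are both linear, we obtain $(T-a_p)\chi' = [\lambda]^{-1}(T-a_p)\chi$ at the level of functions on $G$.

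Applying Lemma \ref{chi} and pulling the scalar $[\lambda]^{-1}$ back inside each bracket immediately produces the three claimed terms $[\alpha, [\lambda]^{-1}Y^r]$, $a_p[1, [\lambda]^{-1}X^{r-3}Y^3]$, and $p^{t+1}h_{\chi'}$, where $h_{\chi'} := [\lambda]^{-1} h_\chi$ is still an integral linear combination of terms of the form $[g, X^r]$ and $[g, X^{r-1}Y]$ (integrality is preserved since $[\lambda]^{-1}$ is a unit in $\bar{\Z}_p$). In particular, no new Hecke-operator computation has to be carried out: the entire radius-by-radius telescoping analysis from the proof of Lemma \ref{chi} transfers verbatim.

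The only point requiring genuine input is to reconcile the error: Lemma \ref{chi} produces $O(p^{t_0+2})$ with $t_0 = \min\{t, \tilde{\tau}\}$, whereas here we claim $O(p^{\tau+2})$. Under the hypothesis $\tau \leq t$ we have $\tau < t + \tfrac{1}{2}$, and so Lemma \ref{Lemma 5.1}(iii) gives $\tau = \tilde{\tau}$; therefore $t_0 = \min\{t,\tau\} = \tau$ and the two error terms coincide. The main (and really only) point of the lemma is psychological: it packages Lemma \ref{chi} in a form with an explicit scalar $[\lambda]^{-1}$, so that in later proofs (notably Proposition \ref{F_2 when tau =< t}) one can form sums such as $\sum_{\lambda \in \F_p^\times} \chi'$ and profit from character-sum cancellation. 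There is no substantive obstacle beyond invoking Lemma \ref{Lemma 5.1} to identify $\tau$ with $\tilde{\tau}$ in the regime $\tau \leq t$.
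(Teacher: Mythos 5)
Your proposal is correct and follows essentially the same route as the paper's proof: scale $\chi$ by the unit $[\lambda]^{-1}$, push the scalar through $T - a_p$ by linearity, and invoke Lemma~\ref{Lemma 5.1} to identify $\tau$ with $\tilde{\tau}$ under $\tau \leq t$, so that $t_0 = \min\{t,\tilde{\tau}\} = \tau$ and the error terms agree. The paper's write-up is terser (it leaves the scaling and linearity implicit), but the substance is identical.
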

\begin{proof}
Since $\tau \leq t$, by Lemma \ref{Lemma 5.1}, we have $\tau = \tilde{\tau}$. Let $\lambda\in \mathbb{F}_p^\times$, by Lemma \ref{chi} we have the following:
\begin{align*}
(T-a_p)\chi' = [\alpha, [\lambda]^{-1}Y^r] + a_p[1, [\lambda]^{-1}X^{r-3}Y^3] + p^{t+1}h_{\chi'} + O(p^{\tau +2}),
\end{align*} 
where $h_{\chi'}$ is an integral linear combination of terms of the form $[g, X^r]$ and $[g, X^{r-1}Y]$, for some $g\in G$.
\end{proof}

\begin{lemma}{\label{phi}}
 Let $p>3$, $r \geq 2p+1$, $r=3+n(p-1)p^t$, with $t=v(r-3)$, $v(a_p)=\frac{3}{2}$ and $c=\frac{a_p^2-(r-2)\binom{r-1}{2}p^3}{pa_p}$. Let $\tau =v(c)$ and assume $\tau \leq t$. If
 \begin{align*}
\phi = \sum_{n = 0}^{2t+1}\left(\frac{p^2}{a_p}\right)^n[g^0_{n,0}, X^{r-2}Y^2 - XY^{r-1}],
\end{align*} then
\begin{align*}
(T-a_p)\phi = -[\alpha, pXY^{r-1}] -a_p[1, X^{r-2}Y^2] + p^{\tau+1}h_{\phi} + O(p^{t + 2}),
\end{align*} where $h_{\phi}$ is an integral linear combination of the terms of the form $[g, X^{r-1}Y]$ and $[g, XY^{r-1}]$, for some $g\in G$.
\end{lemma}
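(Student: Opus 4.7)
The plan is to mirror the telescoping argument used in Lemma~\ref{lemma 4.2}. Write $\phi = \sum_{n=0}^{2t+1}\phi_n$ with $\phi_n = (p^2/a_p)^n [g^0_{n,0}, X^{r-2}Y^2 - XY^{r-1}]$, and compute $(T-a_p)\phi = \sum_n(T^-\phi_n - a_p\phi_n + T^+\phi_n)$ radius by radius on the tree. Using the identities $g^0_{n+1,0}\alpha = p\cdot g^0_{n,0}$ in $G/Z$ and $g^0_{n-1,0}g^0_{1,[\lambda]} = g^0_{n,p^{n-1}[\lambda]}$, the contributions group naturally by vertex: at each interior $g^0_{n,0}$ we collect $T^-\phi_{n+1}$, $-a_p\phi_n$, and the $\lambda=0$ piece of $T^+\phi_{n-1}$; at $g^0_{n,p^{n-1}[\lambda]}$ for $\lambda \neq 0$, only $T^+\phi_{n-1}$ contributes.

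First I would handle the boundary radii. In radius $-1$ only $T^-\phi_0$ survives, and since $r-2 \geq 2p-1$ the $(pX)^{r-2}Y^2$ part is negligible, leaving $-[\alpha,pXY^{r-1}]$. In radius $0$ the coefficient of $X^{r-2}Y^2$ is dominated by $-a_p$ (the $T^-\phi_1$ contribution carries a factor $p^{r-2}$), yielding $-a_p[1,X^{r-2}Y^2]$; the coefficient of $[1,XY^{r-1}]$ is $(a_p^2-p^3)/a_p$, whose valuation, via $a_p^2-p^3 = pa_pc + ((r-2)\binom{r-1}{2}-1)p^3$ together with $(r-2)\binom{r-1}{2}\equiv 1 \bmod p^t$ and the hypothesis $\tau \leq t$, is exactly $\tau+1$, contributing an integral multiple of $p^{\tau+1}$ at an $[g, XY^{r-1}]$ term.

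Next I would handle the middle radii $1 \leq n \leq 2t$. Expanding $X^{r-2}(-[\lambda]X+pY)^2 - X(-[\lambda]X+pY)^{r-1}$ for $\lambda \neq 0$ and using that $r$ is odd with $[\lambda]^{p-1}=1$, the $X^r$ coefficient vanishes; the $X^{r-1}Y$ coefficient is $p(r-3)[\lambda]$ of valuation $\geq t+1 \geq \tau+1$; the $X^{r-2}Y^2$ coefficient is $p^2(1-\binom{r-1}{2})$ of valuation $\geq t+2$ since $\binom{r-1}{2}\equiv 1 \bmod p^t$; and the higher order tail $\sum_{j\geq 3}\binom{r-1}{j}p^j(\ldots)$ is controlled by Lemma~\ref{lemma 0.2} applied to $s = r-1 \equiv 2 \bmod (p-1)$. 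At $g^0_{n,0}$ the $\lambda=0$ piece of $T^+\phi_{n-1}$ cancels the $X^{r-2}Y^2$ part of $-a_p\phi_n$, while $T^-\phi_{n+1}$ and $-a_p\phi_n$ combine on $XY^{r-1}$ to give the coefficient $p^{2n}(a_p^2-p^3)/a_p^{n+1}$ of valuation $n/2+\tau+1 \geq \tau+1$. All surviving pieces are thus either $O(p^{t+2})$ (the $X^{r-2}Y^2$ pieces and the Lemma~\ref{lemma 0.2} tail) or integral multiples of $p^{\tau+1}$ at terms of the form $[g,X^{r-1}Y]$ and $[g,XY^{r-1}]$. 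The extremal radii $2t+1$ and $2t+2$ are handled by the same local computation, with every surviving coefficient of valuation at least $t+2$.

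The main technical obstacle will be tracking the $[g^0_{n,0}, XY^{r-1}]$ coefficient uniformly: the hypothesis $\tau \leq t$ is precisely what forces $v(a_p^2-p^3)= \tau+5/2$, so that the surviving scalar is a genuine multiple of $p^{\tau+1}$ and not absorbed by the $O(p^{t+2})$ error. Combined with the telescoping cancellation on $X^{r-2}Y^2$ at each interior vertex, this pins down the precise form of the remainder asserted in the lemma.
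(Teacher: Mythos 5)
Your proposal reproduces the paper's proof essentially verbatim: the same decomposition $\phi = \sum_{n=0}^{2t+1}\phi_n$, the same radius-by-radius telescoping computation, the cancellation of the $X^{r-2}Y^2$ coefficients between $-a_p\phi_n$ and the $\lambda=0$ piece of $T^+\phi_{n-1}$, the observation that $\tau\leq t$ forces $v(a_p^2-p^3)=\tau+\tfrac{5}{2}$ to pin down the $[g,XY^{r-1}]$ contribution, and the appeal to Lemma~\ref{lemma 0.2} applied to $r-1\equiv 2\bmod(p-1)$ to kill the higher-order tail. The only minor imprecision is at the extremal radii $2t+1$, $2t+2$, where the $[g,X^{r-1}Y]$ contributions have valuation only $\geq t+1$ rather than $\geq t+2$; but since $\tau\leq t$ these are absorbed into $p^{\tau+1}h_\phi$, so the conclusion still holds.
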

\begin{proof}
Let $\phi_n = \left(\frac{p^2}{a_p}\right)^n[g^0_{n,0}, X^{r-2}Y^2- XY^{r-1}]$, so we can write $\phi= \sum\limits_{n=0}^{2t+1}\phi_n$. We will compute $(T-a_p)\phi$ in several steps.

Computation in radius $-1$:
\begin{align}
T^-\phi_0 = [\alpha, (pX)^{r-2}Y^2 - pXY^{r-1}] = [\alpha, -pXY^{r-1}] + O(p^{t+4}), \label{phi radius -1}
\end{align}
because $p>3$, so $r-2 \geq (p-1)p^t + 1 > t + 4$.

Computation in radius $0$:
\begin{align*}
-a_p\phi_0  &= -a_p[1, X^{r-2}Y^2-XY^{r-1}], \\
T^-\phi_1 &= \frac{p^2}{a_p}[1, (pX)^{r-2}Y^2 -pXY^{r-1}] = \frac{-p^3}{a_p}[1, XY^{r-1}] + O(p^{t+4}).
\end{align*}
So in radius $0$ we have 
\begin{align*}
-a_p\phi_0 + T^-\phi_1 = -a_p[1, X^{r-2}Y^2] + \left(\frac{a_p^2-p^3}{a_p}\right)[1, XY^{r-1}] + O(p^{t+4}).
\end{align*}
As $\tau \leq t$, by equation \eqref{v(a_p^2-p^3)}, we have $v(a_p^2 - p^3)= \tau + \frac{5}{2}$. 

Therefore
\begin{align}
-a_p\phi_0 + T^-\phi_1 = -a_p[1, X^{r-2}Y^2] + p^{\tau + 1}[1, XY^{r-1}] + O(p^{t+4}). \label{phi radius 0}
\end{align}

Computation in radius $n$ with $1\leq n \leq 2t$:
\begin{align*}
 -a_p\phi_n &= -a_p\left(\frac{p^2}{a_p}\right)^n[g^0_{n,0}, X^{r-2}Y^2- XY^{r-1}], \\
T^-\phi_{n+1}&= \left(\frac{p^2}{a_p}\right)^{n+1}[g^0_{n,0}, (pX)^{r-2}Y^2 - pXY^{r-1}] = \left(\frac{p^2}{a_p}\right)^{n+1}[g^0_{n,0}, -pXY^{r-1}] + O(p^{t+4}).
\end{align*}
So we get
\begin{align*}
-a_p\phi_n + T^-\phi_{n+1} &= -a_p\left(\frac{p^2}{a_p}\right)^n[g^0_{n,0}, X^{r-2}Y^2] + \left(\frac{p^2}{a_p}\right)^n\left(\frac{a_p^2-p^3}{a_p}\right)[g^0_{n,0}, XY^{r-1}] + O(p^{t+4}) \\
&= -a_p\left(\frac{p^2}{a_p}\right)^n[g^0_{n,0}, X^{r-2}Y^2] + p^{\tau + \frac{3}{2}}[g^0_{n,0}, XY^{r-1}] + O(p^{t+4}).
\end{align*}
We will compute $T^+\phi_{n-1}$ for $1\leq n \leq 2t+2$:
\begin{align*}
T^+\phi_{n-1} &= \left(\frac{p^2}{a_p}\right)^{n-1}\sum_{\lambda\in\mathbb{F}_p}[g^0_{n-1,0}g^0_{1,[\lambda]}, X^{r-2}(-[\lambda]X+ pY)^2 - X(-[\lambda]X+pY)^{r-1}]\\
&= \left(\frac{p^2}{a_p}\right)^{n-1}\left(\sum_{\lambda\neq 0}[g^0_{n, p^{n-1}[\lambda]}, X^{r-2}(-[\lambda]X+ pY)^2 - X(-[\lambda]X+pY)^{r-1}] + [g^0_{n,0}, p^2X^{r-2}Y^2]\right)\\
&~~~ + O(p^{t+5}).
\end{align*} 
Coefficient of $[g^0_{n,p^{n-1}[\lambda]}, X^r]$ in $T^+\phi_{n-1}$ for $\lambda\neq 0$ is $0$.

\noindent Coefficient of $[g^0_{n,p^{n-1}[\lambda]}, X^{r-1}Y]$ in $T^+\phi_{n-1}$ for $\lambda\neq 0$ is $[\lambda]p\left(\frac{p^2}{a_p}\right)^{n-1}(r-3) = O(p^{t+1})$. 

\noindent Coefficient of $[g^0_{n,p^{n-1}[\lambda]}, X^{r-2}Y^2]$ in $T^+\phi_{n-1}$ for $\lambda \neq 0$ is $p^2\left(\frac{p^2}{a_p}\right)^{n-1}\left(1-\binom{r-1}{2}\right) = O(p^{t+2})$.
 
Using Lemma \ref{lemma 0.2}, we have
\begin{align}
T^+\phi_{n-1} = p^2\left(\frac{p^2}{a_p}\right)^{n-1}[g^0_{n,0}, X^{r-2}Y^2] + p^{t+1}h'_n + O(p^{t+2}), \label{phi T plus radius n}
\end{align}
where $h'_n$ is an integral linear combination of the terms of the form $[g, X^{r-1}Y]$, for some $g\in G$.

Therefore in radius $n$, for $1\leq n \leq 2t$, we have 
\begin{align}
T^-\phi_{n+1} -a_p\phi_n + T^+\phi_{n-1} = p^{\tau + \frac{3}{2}}[g^0_{n,0}, XY^{r-1}] + p^{t+1}h'_n + O(p^{t+2}). \label{phi radius n}
\end{align}

Computation in radius $2t+1$:
\begin{align*}
-a_p\phi_{2t+1} &= -a_p\left(\frac{p^2}{a_p}\right)^{2t+1}[g^0_{2t+1,0}, X^{r-2}Y^2 - XY^{r-1}] = O(p^{t+2}).
\end{align*}
>From equation \eqref{phi T plus radius n} we get that $T^+\phi_{2t} = p^{t+1}h'_{2t+1} + O(p^{t+2})$, where $h'_{2t+1}$ is an integral linear combination of the terms of the form $[g, X^{r-1}Y]$, for some $g\in G$. Therefore in radius $2t+1$ we get
\begin{align}
-a_p\phi_{2t+1} + T^+\phi_{2t} =p^{t+1}h'_{2t+1} + O(p^{t+2}). \label{phi radius 2t+1}
\end{align}

Computation in radius $2t+2$:

By equation \eqref{phi T plus radius n}, we get 
\begin{align}
T^+\phi_{2t+1} = p^{t+1}h'_{2t+2} + O(p^{t+ \frac{5}{2}}), \label{phi radius 2t+2}
\end{align}
where $h'_{2t+2}$ is an integral linear combination of the terms of the form $[g, X^{r-1}Y]$, for some $g\in G$.

Finally by equations \eqref{phi radius -1}, \eqref{phi radius 0}, \eqref{phi radius n}, \eqref{phi radius 2t+1}, \eqref{phi radius 2t+2}, we get
\begin{align*}
(T-a_p)\phi = -[\alpha, pXY^{r-1}] -a_p[1, X^{r-2}Y^2] + p^{\tau+1}h_\phi + O(p^{t+2}),
\end{align*}
where $h_\phi$ is an integral linear combination of the terms of the form $[g, X^{r-1}Y]$ and $[g, XY^{r-1}]$, for some $g\in G$.
\end{proof}

The next two lemmas will be used in the proof of Proposition \ref{F_2 when tau = > t + 1}.

\begin{lemma}{\label{lemma 0.18}} Let $p> 3$, $r \geq 2p+1$, $r = 3 + n(p-1)p^t$, with $t = v(r-3)$, $v(a_p) =\frac{3}{2}$ and $c = \frac{a_p^2-(r-2)\binom{r-1}{2}p^3}{pa_p}$. Let $\tau = v(c)$ and assume $\tau > t + \frac{1}{2}$. If
 \begin{align*}
\xi' = \frac{1}{p^2}\sum_{n =1}^{2t+2}\left(\frac{a_p}{p}\right)^{n}[g^0_{n,0}, X^{r-2}Y^2 + (r-3)X^pY^{r-p} - (r-2)XY^{r-1}],
\end{align*} then
\begin{align*}
(T-a_p)\xi' =  \frac{a_p}{p^2}[1, (2-r)XY^{r-1}] -\frac{a_p^2}{p^3}[g^0_{1,0}, X^{r-2}Y^2 + (r-3)X^pY^{r-p}] + O(p^{t+\frac{1}{2}}).
\end{align*}\end{lemma}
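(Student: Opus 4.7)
The plan is to write $\xi' = \sum_{n=1}^{2t+2}\xi'_n$ with $\xi'_n = \frac{1}{p^2}\left(\frac{a_p}{p}\right)^n[g^0_{n,0},P]$ and $P = X^{r-2}Y^2+(r-3)X^pY^{r-p}-(r-2)XY^{r-1}$, and then to compute $(T-a_p)\xi'$ radius by radius on the Bruhat-Tits tree, exactly as in the proof of Lemma~\ref{lemma 4.2}. The present $\xi'$ differs from the $\xi$ of that lemma only by an overall scalar rescaling $\xi'_n = \frac{(r-3)p}{a_p}\xi_n$ and a shift of the summation range from $[0,2t+1]$ to $[1,2t+2]$, so the same telescoping pattern will apply, but with the two main terms now appearing at radii $0$ and $1$ (rather than $-1$ and $0$) and the error contributions picking up an extra factor of roughly $\frac{(r-3)p}{a_p}$ of valuation $t-\frac{1}{2}$.

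For the two inner boundary radii: at radius $0$ only $T^-\xi'_1$ contributes, and since $g^0_{1,0}\alpha = p\cdot\mathrm{Id}$ the support lands at $[1,\cdot]$; among the three monomials of $P(pX,Y)$ only the $XY^{r-1}$ piece has small enough $p$-valuation to survive modulo $p^{t+\frac{1}{2}}$, producing $\frac{a_p(2-r)}{p^2}[1,XY^{r-1}]$. At radius $1$, the sum $T^-\xi'_2-a_p\xi'_1$ features the crucial $XY^{r-1}$ cancellation (the same cancellation that drives the middle-radii telescope in Lemma~\ref{lemma 4.2}), leaving $-\frac{a_p^2}{p^3}[g^0_{1,0},X^{r-2}Y^2+(r-3)X^pY^{r-p}]$. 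These are the two main terms appearing in the statement.

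For the middle radii $2\le n\le 2t+1$, the sum $T^-\xi'_{n+1}-a_p\xi'_n+T^+\xi'_{n-1}$ telescopes just as in Lemma~\ref{lemma 4.2}: the $XY^{r-1}$ coefficients cancel; the $X^{r-2}Y^2$ coefficients at $[g^0_{n,0},\cdot]$ combine to $\left(\frac{a_p}{p}\right)^{n-1}\cdot\frac{p^3-a_p^2}{p^3}$, which has valuation $\ge t+\frac{n-1}{2}$ using $v(a_p^2-p^3)\ge t+3$ (from \eqref{v(a_p^2-p^3)} and the hypothesis $\tau>t+\frac{1}{2}$); the leftover $X^pY^{r-p}$ piece of $-a_p\xi'_n$ has valuation $t+\frac{n-1}{2}$; and the $\lambda\neq 0$ parts of $T^+\xi'_{n-1}$ are handled via the same coefficient computations as in Lemma~\ref{lemma 4.2}, combined with Lemma~\ref{lemma 0.2} to kill the higher-monomial remainders. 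Since $n\ge 2$, every middle-radius contribution is uniformly $O(p^{t+\frac{1}{2}})$. Finally, the outer boundary radii $2t+2$ and $2t+3$ involve prefactors $\left(\frac{a_p}{p}\right)^{2t+1}$ and $\left(\frac{a_p}{p}\right)^{2t+2}$ of valuation at least $t+\frac{1}{2}$, so these contributions are $O(p^{t+\frac{1}{2}})$ automatically, with any nontrivial monomials (such as $X^{r-2}Y^2$ and $X^{r-1}Y$) improving the bound further once the remaining $T^-$ or $-a_p\xi'$ cancellation is taken into account.

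The main obstacle is bookkeeping: verifying the $XY^{r-1}$ cancellation at radius $1$ and the various monomial-by-monomial valuation estimates in the middle radii, taking careful account of the shifted summation range (which makes $n=1$ a boundary radius for $\xi'$ while it was an interior radius for $\xi$). No new ideas beyond those of Lemma~\ref{lemma 4.2} are needed; the crucial input is the hypothesis $\tau>t+\frac{1}{2}$ which ensures $v(a_p^2-p^3)\ge t+3$.
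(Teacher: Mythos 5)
Your proposal is correct, but it takes a substantially longer route than the paper's proof. The paper's argument is a one-liner: it observes that
$\xi' = (r-3)\,\xi_{g^0_{1,0}}$, where $\xi_{g^0_{1,0}} = g^0_{1,0}\cdot\xi$ (just relabel $n \mapsto n+1$ and use $g^0_{1,0}g^0_{n,0} = g^0_{n+1,0}$), and then invokes $G$-equivariance of $T-a_p$ to get $(T-a_p)\xi' = (r-3)\,g^0_{1,0}\,(T-a_p)\xi$, so the whole lemma is a direct consequence of Lemma~\ref{lemma 4.2}. In particular the error $O(p^{t+\frac{1}{2}})$ is transparent: $g^0_{1,0}$ preserves the integral lattice, so $g^0_{1,0}\,O(\sqrt{p}) = O(\sqrt{p})$, and multiplying by $r-3$ (of valuation $t$) gives $O(p^{t+\frac{1}{2}})$. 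Your radius-by-radius reconstruction works as well — the cancellation at radius $1$ between $T^-\xi'_2$ and $-a_p\xi'_1$ does leave exactly $-\frac{a_p^2}{p^3}[g^0_{1,0}, X^{r-2}Y^2+(r-3)X^pY^{r-p}]$, and the interior radii, being $n\ge 2$, do contribute only $O(p^{t+\frac{1}{2}})$ — but one should flag that your heuristic ``picking up an extra factor of roughly $\frac{(r-3)p}{a_p}$ of valuation $t-\frac{1}{2}$'' would, applied naively to Lemma~\ref{lemma 4.2}'s $O(\sqrt{p})$ bound, only give $O(p^t)$, which is weaker than needed. The missing half power of $p$ comes precisely from the shift: your interior radii start at $n=2$ rather than $n=1$, so the dominant error coefficients are of the form $[\lambda](r-3)\left(\frac{a_p}{p}\right)^{n-1}$ with valuation $t + \frac{n-1}{2}\ge t+\frac{1}{2}$. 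You do state this estimate correctly later, so your proof goes through, but the $G$-equivariance formulation $\xi' = (r-3)\,g^0_{1,0}\xi$ makes the source of the improved error bound immediate and avoids repeating the telescoping computation.
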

\begin{proof}
We can write \begin{align*}
\xi' &= \frac{a_p}{p^3}\sum_{n =1}^{2t+2}\left(\frac{a_p}{p}\right)^{n-1}[g^0_{1,0}g^0_{n-1,0}, X^{r-2}Y^2 + (r-3)X^pY^{r-p} - (r-2)XY^{r-1}]\\
&= \frac{a_p}{p^3}\sum_{n = 0}^{2t+1}\left(\frac{a_p}{p}\right)^n[g^0_{1,0}g^0_{n,0}, X^{r-2}Y^2 + (r-3)X^pY^{r-p} - (r-2)XY^{r-1}].
\end{align*}
>From the definition of $\xi$ in Lemma \ref{lemma 4.2}, we can write 
\begin{align*}
\xi = \frac{-a_p}{p^3(3-r)}\sum_{n = 0}^{2t+1}\left(\frac{a_p}{p}\right)^n[g^0_{n,0}, X^{r-2}Y^2 + (r-3)X^pY^{r-p} - (r-2)XY^{r-1}].
\end{align*} So we can rewrite $\xi'$ in terms of $\xi$ as  
\begin{align*}
\xi' &= \frac{a_p}{p^3}\left(\frac{p^3(r-3)\xi_{g^0_{1,0}}}{a_p}\right)= (r-3)\xi_{g^0_{1,0}},
\end{align*} 
where $\xi_{g^0_{1,0}} = g^0_{1,0}\xi$.

So $(T-a_p)\xi' = (r-3)(T-a_p)\xi_{g^0_{1,0}} =(r-3) g^0_{1,0}(T-a_p)\xi$. By Lemma \ref{lemma 4.2} we get 
\begin{align*}
(T-a_p)\xi' &= (r-3)\left(\frac{a_p(r-2)}{p^2(3-r)}\right)[g^0_{1,0}\alpha, XY^{r-1}] + \frac{a_p^2(r-3)}{p^3(3-r)}[1, X^{r-2}Y^2 + (r-3)X^pY^{r-p}] + O(p^{t + \frac{1}{2}})\\
&= \frac{a_p}{p^2}[1,(2-r)XY^{r-1}]- \frac{a_p^2}{p^3}[1, X^{r-2}Y^2 + (r-3)X^pY^{r-p}] + O(p^{t+\frac{1}{2}}). \qedhere
\end{align*}
\end{proof}

\begin{lemma}{\label{Lemma 0.17.}}Let $p>3$, $r\geq 2p+1$, $r = 3 + n(p-1)p^t$, with $t = v(r-3)$, $v(a_p)=\frac{3}{2}$ and $c =\frac{a_p^2-(r-2)\binom{r-1}{2}p^3}{pa_p}$. Let $\tau = v(c)$ and assume $\tau \geq t+ 1$. If
 \begin{align*}
\psi = \frac{1}{p^2}\sum_{n=1}^{t+1}a_p^n[g^0_{n,[\mu]},[\mu]^{-1}(Y^r-X^{r-3}Y^3)], ~\text{for some}~\mu\in\mathbb{F}_p^\times,
\end{align*} then\begin{align*}
(T-a_p)\psi = \frac{a_p}{p^2}[1,[\mu]^{-1}([\mu]X+Y)^r]+\frac{a_p^2}{p^2}[g^0_{1,[\mu]}, [\mu]^{-1}X^{r-3}Y^3] + O(p^{t+\frac{1}{2}}).
\end{align*}
\end{lemma}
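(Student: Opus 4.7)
My plan is to recognize $\psi$ as a scalar multiple of a $G$-translate of the function $\chi$ from Lemma~\ref{chi}, and then use the $G$-equivariance of the Hecke operator to reduce the entire computation to a direct invocation of that earlier lemma.

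First, a direct matrix computation shows $g^0_{1,[\mu]}\cdot g^0_{n,0} = g^0_{n+1,[\mu]}$. Using the identity $g\cdot[g_0,v] = [gg_0,v]$ and reindexing the sum defining $\chi$, we obtain
\[
g^0_{1,[\mu]}\cdot\chi \;=\; \sum_{m=1}^{t+1} a_p^{m-1}\bigl[g^0_{m,[\mu]},\, Y^r - X^{r-3}Y^3\bigr].
\]
Comparing with the definition of $\psi$ (and moving the scalar $[\mu]^{-1}$ into the second slot of $[\,\cdot\,,\,\cdot\,]$), we arrive at the key identification
\[
\psi \;=\; \frac{a_p\,[\mu]^{-1}}{p^2}\cdot g^0_{1,[\mu]}\cdot\chi.
\]
Since $T - a_p$ commutes with the $G$-action, this yields $(T-a_p)\psi = \frac{a_p[\mu]^{-1}}{p^2}\,g^0_{1,[\mu]}\,(T-a_p)\chi$. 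By hypothesis $\tau\geq t+1$, so Lemma~\ref{Lemma 5.1}(iv) gives $\tilde\tau\geq t+\tfrac{1}{2}$; consequently $t_0 := \min\{t,\tilde\tau\} = t$ in the notation of Lemma~\ref{chi}, and invoking that lemma produces
\[
(T-a_p)\chi \;=\; [\alpha,Y^r] + a_p[1, X^{r-3}Y^3] + p^{t+1}h_\chi + O(p^{t+2}),
\]
with $h_\chi$ an integral combination of terms of the form $[g,X^r]$ and $[g,X^{r-1}Y]$.

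It remains to translate and rescale. A direct computation gives $g^0_{1,[\mu]}\alpha = p\cdot\matr{1}{[\mu]}{0}{1}$; since $p\in Z$ acts trivially and $\matr{1}{[\mu]}{0}{1}$ sends $Y^r$ to $([\mu]X+Y)^r$, we obtain $[g^0_{1,[\mu]}\alpha,Y^r] = [1,([\mu]X+Y)^r]$. This converts the first two terms of $(T-a_p)\chi$, after multiplication by $\tfrac{a_p[\mu]^{-1}}{p^2}$ and translation by $g^0_{1,[\mu]}$, into precisely $\tfrac{a_p}{p^2}[1,[\mu]^{-1}([\mu]X+Y)^r]$ and $\tfrac{a_p^2}{p^2}[g^0_{1,[\mu]},[\mu]^{-1}X^{r-3}Y^3]$, matching the claim. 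Finally, the prefactor has valuation $-\tfrac{1}{2}$, so the $p^{t+1}h_\chi$ contribution becomes an integral function scaled by $p^{t+1/2}$ while the $O(p^{t+2})$ remainder becomes $O(p^{t+3/2})$; both are absorbed into the stated error $O(p^{t+1/2})$. No step presents a genuine obstacle: the whole argument is a one-line reduction to Lemma~\ref{chi} via $G$-equivariance, and the loss of $p^{-1/2}$ incurred by the $\tfrac{1}{p^2}$ prefactor is exactly compensated by the strengthened hypothesis $\tau\geq t+1$, which shifts $\tilde\tau$ up by a half-integer and correspondingly tightens the error in Lemma~\ref{chi}.
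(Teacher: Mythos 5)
Your proof is correct. The identification $\psi = \tfrac{a_p[\mu]^{-1}}{p^2}\,g^0_{1,[\mu]}\cdot\chi$ checks out (the matrix identity $g^0_{1,[\mu]}g^0_{n,0}=g^0_{n+1,[\mu]}$ is immediate, and the scalar $[\mu]^{-1}$ passes into the second slot), $(T-a_p)$ is $G$-equivariant, and the valuation bookkeeping is right: the prefactor has valuation $-\tfrac12$, so $p^{t+1}h_\chi$ becomes $O(p^{t+\frac12})$ and $O(p^{t_0+2})=O(p^{t+2})$ becomes $O(p^{t+\frac32})$, with $t_0=t$ guaranteed by Lemma~\ref{Lemma 5.1}(iv) since $\tau\geq t+1>t+\tfrac12$.

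This is genuinely different from the paper's proof, which establishes Lemma~\ref{Lemma 0.17.} by a direct radius-by-radius computation from radius $0$ to radius $t+2$, verifying the telescoping cancellation from scratch (including the re-derivation of $v(a_p^2-p^3)=t+3$). Curiously, the paper \emph{does} use exactly your trick one lemma earlier, in Lemma~\ref{lemma 0.18}, where $\xi'=(r-3)\,g^0_{1,0}\xi$ is recognized as a scalar multiple of a $G$-translate of the $\xi$ from Lemma~\ref{lemma 4.2} and the Hecke computation is pulled through by equivariance. Your observation is that $\psi$ admits the same kind of description in terms of $\chi$, which the authors did not exploit. The gain is a much shorter proof and one fewer independent telescoping argument to verify; the cost, if any, is that the direct computation in the paper makes the radius-by-radius support of $(T-a_p)\psi$ completely explicit, but since Proposition~\ref{F_2 when tau = > t + 1} only needs the two main terms (in radii $0$ and $1$) plus the $O(p^{t+\frac12})$ error bound, nothing is lost. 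One small imprecision in your closing remark: the hypothesis $\tau\geq t+1$ doesn't so much ``shift $\tilde\tau$ up by a half-integer'' as rule out $\tilde\tau<t$ (via Lemma~\ref{Lemma 5.1}), which is what fixes $t_0=t$ and makes the $O(p^{t_0+2})$ term subdominant to the $p^{t+1}h_\chi$ term after rescaling.
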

\begin{proof}
Let $\mu \in \mathbb{F}_p^\times$ and let $\psi_n = \frac{a_p^n}{p^2}[g^0_{n,[\mu]}, [\mu]^{-1}(Y^r - X^{r-3}Y^3)]$, for $n\geq 1$. So we can write $\psi = \sum\limits_{n=1}^{t+1}\psi_n$.

Computation in radius $0$:
\begin{align}
T^-\psi_1 = \frac{a_p}{p^2}[g^0_{1,[\mu]}\alpha, [\mu]^{-1}(Y^r -(pX)^{r-3}Y^3]
= \frac{a_p}{p^2}[1, [\mu]^{-1}([\mu]X+Y)^r] + O(p^{t+ \frac{5}{2}}), \label{psi radius 0}
\end{align} 
because $p>3$, so $r-3 \geq (p-1)p^t > t+3$.

Computation in radius $1$: 
\begin{align*}
-a_p\psi_1 &= -\frac{a_p^2}{p^2}[g^0_{1,[\mu]}, [\mu]^{-1}(Y^r - X^{r-3}Y^3)],\\
T^-\psi_2 &= \frac{a_p^2}{p^2}[g^0_{2,[\mu]}\alpha , [\mu]^{-1}(Y^r - (pX)^{r-3}Y^3)] \\
&= \frac{a_p^2}{p^2}[g^0_{1,[\mu]}, [\mu]^{-1}Y^r] + O(p^{t+4}).
\end{align*} 
Therefore in radius $1$ we get 
\begin{align}
-a_p\psi_1 + T^-\psi_2 = \frac{a_p^2}{p^2}[g^0_{1,[\mu]}, [\mu]^{-1}X^{r-3}Y^3] + O(p^{t+4}). \label{psi radius 1}
\end{align} 

Computation in radius $n$, with $2\leq n \leq t$:
\begin{align*}
- a_p\psi_n &= -\frac{a_p^{n+1}}{p^2}[g^0_{n,[\mu]}, [\mu]^{-1}(Y^r - X^{r-3}Y^3)],\\
T^-\psi_{n+1} & = \frac{a_p^{n+1}}{p^2}[g^0_{n+1,[\mu]}\alpha, [\mu]^{-1}(Y^r - (pX)^{r-3}Y^3)]\\
&= \frac{a_p^{n+1}}{p^2}[g^0_{n,[\mu]}, [\mu]^{-1}Y^r] + O(p^{t+\frac{11}{2}}).
\end{align*}
So $-a_p\psi_n + T^-\psi_{n+1} = \frac{a_p^{n+1}}{p^2}[g^0_{n,[\mu]}, [\mu]^{-1}X^{r-3}Y^3] + O(p^{t+\frac{11}{2}})$. 

We will compute $T^+ \psi_{n-1}$, with $2\leq n \leq t+2$.
\begin{align*}
T^+\psi_{n-1} &= \frac{a_p^{n-1}}{p^2}\sum_{\lambda\in\mathbb{F}_p}[g^0_{n-1, [\mu]}g^0_{1,[\lambda]}, [\mu]^{-1}((-[\lambda]X+pY)^r-X^{r-3}(-[\lambda]X+pY)^3)]\\
&= \frac{a_p^{n-1}}{p^2}\bigg(\sum_{{\lambda}\neq 0}[g^0_{n, [\mu]+ p^{n-1}[\lambda]}, [\mu]^{-1}((-[\lambda]X + pY)^r - X^{r-3}(-[\lambda]X + pY)^3) \\
&~~~+ [g^0_{n, [\mu]}, - [\mu]^{-1}p^3X^{r-3}Y^3]\bigg) + O(p^{t+ \frac{11}{2}}).
\end{align*}
Coefficient of $[g^0_{n, [\mu] + p^{n-1}[\lambda]}, X^r]$ in $T^+\psi_{n-1}$ for $\lambda\neq 0$ is $0$.

\noindent Coefficient of $[g^0_{n, [\mu] + p^{n-1}[\lambda]}, X^{r-1}Y]$ in $T^+\psi_{n-1}$ for $\lambda\neq 0$ is $[\mu]^{-1}[\lambda]^2\left(\frac{a_p^{n-1}}{p}\right)(r-3) = O(p^{t+ \frac{1}{2}})$.

\noindent Coefficient of $[g^0_{n, [\mu] + p^{n-1}[\lambda]}, X^{r-2}Y^2]$ in $T^+\psi_{n-1}$ for $\lambda\neq 0$ is $-[\mu]^{-1}[\lambda]a_p^{n-1}\left(\binom{r}{2} - 3\right) = O(p^{t+ \frac{3}{2}})$.

\noindent Coefficient of $[g^0_{n, [\mu] + p^{n-1}[\lambda]}, X^{r-3}Y^3]$ in $T^+\psi_{n-1}$ for $\lambda\neq 0$ is $[\mu]^{-1}pa_p^{n-1}\left(\binom{r}{3} - 1\right) = O(p^{t+ \frac{5}{2}})$.

Using Lemma \ref{lemma 0.1.} for the remaining terms in $T^+\psi_{n-1}$, we finally get 
\begin{align}
T^+\psi_{n-1} = -pa_p^{n-1}[g^0_{n,[\mu]}, [\mu]^{-1}X^{r-3}Y^3] + O(p^{t+ \frac{1}{2}}). \label{psi T plus computation}
\end{align}

So we have
\begin{align*}
T^-\psi_{n+1} -a_p\psi_n + T^+\psi_{n-1} = a_p^{n-1}\left(\frac{a_p^2 - p^3}{p^2}\right)[g^0_{n,[\mu]}, [\mu]^{-1}X^{r-3}Y^3] + O(p^{t+ \frac{1}{2}}).
\end{align*}
As $\tau \geq t+1$, by equation \eqref{v(a_p^2-p^3)}, we have $v(a_p^2-p^3) = t +3$.
Therefore in radius $n$, with $2\leq n \leq t$, we get 
\begin{align}
T^-\psi_{n+1} -a_p\psi_n + T^+\psi_{n-1} = O(p^{t+\frac{1}{2}}). \label{psi radius n}
\end{align}

Computation in radius $t+1$:
\begin{align*}
-a_p\psi_{t+1} &= -\frac{a_p^{t+2}}{p^2}[g^0_{t+1,[\mu]}, [\mu]^{-1}(Y^r-X^{r-3}Y^3)] = O(p^{t+1}).
\end{align*}
>From equation \eqref{psi T plus computation} we get that $T^+\psi_{t} = O(p^{t+\frac{1}{2}})$. Therefore in radius $t+1$ we have 
\begin{align}
-a_p\psi_{t+1} + T^+\psi_{t} = O(p^{t+\frac{1}{2}}). \label{psi radius t+1}
\end{align}

Computation in radius $t+2$:

>From equation \eqref{psi T plus computation} we get 
\begin{align}
T^+\psi_{t+1} = -pa_p^{t+1}[g^0_{t+2,[\mu]}, [\mu]^{-1}X^{r-3}Y^3] + O(p^{t+\frac{1}{2}})= O(p^{t+\frac{1}{2}}). \label{psi radius t+2}
\end{align}

Finally, by equations \eqref{psi radius 0}, \eqref{psi radius 1}, \eqref{psi radius n}, \eqref{psi radius t+1}, \eqref{psi radius t+2}, we get 
\begin{align*}
(T-a_p)\psi = \frac{a_p}{p^2}[1, [\mu]^{-1}([\mu]X + Y)^r] + \frac{a_p^2}{p^2}[g^0_{1,[\mu]},[\mu]^{-1}X^{r-3}Y^3] + O(p^{t+ \frac{1}{2}}).
\end{align*} 
\end{proof}

The last two lemmas of this section will be used in the proof of Proposition \ref{F_2 when tau < t + 1}.

\begin{lemma}{\label{xi''}}
Let $p>3$, $r\geq 2p+1$, $r= 3+ n(p-1)p^t$, with $t = v(r-3)$, $v(a_p)=\frac{3}{2}$ and $c =\frac{a_p^2- (r-2)\binom{r-1}{2}p^3}{pa_p}$. Let $\tau = v(c)$ and assume $\tau < t+1$. If $$\xi''= \frac{1}{pc}\sum\limits_{n =1}^{2t+2}\left(\frac{a_p}{p}\right)^n[g^0_{n,0}, X^{r-2}Y^2+(r-3)X^pY^{r-p}-(r-2)XY^{r-1}],$$ then \begin{align*}
(T-a_p)\xi'' = \frac{a_p}{pc}[1,(2-r)XY^{r-1}]-\frac{a_p^2}{p^2c}[g^0_{1,0}, X^{r-2}Y^2] +O(p^\epsilon), 
\end{align*}
where $\epsilon =$ \emph{min}$\lbrace t+1-\tau, \frac{1}{2}\rbrace$.
\end{lemma}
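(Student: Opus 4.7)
The plan is to reduce to Lemma~\ref{lemma 4.2} via the observation that $\xi'' = \tfrac{p}{c}\xi' = \tfrac{p(r-3)}{c}\, g^0_{1,0}\xi$, where $\xi$ is as in Lemma~\ref{lemma 4.2} and the second identity is precisely the one exploited in the proof of Lemma~\ref{lemma 0.18}. Consequently $(T-a_p)\xi'' = \tfrac{p(r-3)}{c}\, g^0_{1,0}(T-a_p)\xi$. Since Lemma~\ref{lemma 0.18} itself requires $\tau > t+\tfrac{1}{2}$, whereas here we have only $\tau < t+1$, I will rerun the proof of Lemma~\ref{lemma 4.2} with more care in the overlapping regime $\tau \leq t+\tfrac{1}{2}$.

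The only step where the hypothesis $\tau > t+\tfrac{1}{2}$ entered Lemma~\ref{lemma 4.2} was the radius-$n$ computation for $1 \leq n \leq 2t$, which after cancellations read
\[
T^-\xi_{n+1} - a_p\xi_n + T^+\xi_{n-1} \;=\; \tfrac{1}{(3-r)p^3}\bigl(\tfrac{a_p}{p}\bigr)^n(a_p^2 - p^3)[g^0_{n,0}, X^{r-2}Y^2] \;+\; O(\sqrt p),
\]
and $v(a_p^2-p^3)\ge t+3$ was used to absorb the leading term into $O(\sqrt p)$. In our setting I will instead decompose $a_p^2 - p^3 = pa_p c + \bigl((r-2)\binom{r-1}{2}-1\bigr)p^3$ and note that $(r-2)\binom{r-1}{2} - 1 = \tfrac{(r-3)((r-1)^2+1)}{2}$ is divisible by $r-3$ (with integral quotient for odd $p$). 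Substituting this decomposition splits the leading term into two pieces: the $pa_pc$ piece will cancel the $c$ in the denominator of $\xi''$, while the other piece becomes an integral multiple of $(a_p/p)^n[g^0_{n,0}, X^{r-2}Y^2]$ of valuation $\geq n/2$, hence a harmless $O(\sqrt p)$ contribution.

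After applying $\tfrac{p(r-3)}{c}\, g^0_{1,0}$, the surviving radius-$n$ piece becomes $-(a_p/p)^{n+1}[g^0_{n+1,0}, X^{r-2}Y^2]$, of valuation $(n+1)/2\ge 1$. The residual $O(\sqrt p)$ of Lemma~\ref{lemma 4.2} (which is independent of $\tau$) scales up to $O(p^{3/2+t-\tau})$, and one checks $3/2+t-\tau \geq \epsilon = \min(t+1-\tau, \tfrac{1}{2})$ whenever $\tau < t+1$. Finally the two main terms of $(T-a_p)\xi$ transform as follows: using that $g^0_{1,0}\alpha = pI$ acts trivially mod $Z$, the first becomes $\tfrac{a_p}{pc}[1, (2-r)XY^{r-1}]$; the second yields the claimed $-\tfrac{a_p^2}{p^2c}[g^0_{1,0}, X^{r-2}Y^2]$ together with an extra term $-\tfrac{a_p^2(r-3)}{p^2c}[g^0_{1,0}, X^pY^{r-p}]$ of valuation $1+t-\tau \geq \epsilon$, which is absorbed into $O(p^\epsilon)$. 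The only real subtlety is the factorization $a_p^2 - p^3 = pa_pc + (r-3)\cdot(\mathrm{integer})\cdot p^3$: it is precisely this identity that simultaneously kills the $c$ in the denominator of the coefficient $\tfrac{p(r-3)}{c}$ and leaves an integral residue, producing the uniform error $O(p^\epsilon)$ across both subcases $\tau \le t+\tfrac{1}{2}$ and $\tfrac{1}{2} < \tau < t+1$.
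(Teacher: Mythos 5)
Your proposal correctly identifies a genuine gap in the paper's own argument: the paper's proof is a two-liner that substitutes the conclusion of Lemma~\ref{lemma 0.18}, but Lemma~\ref{lemma 0.18} (and Lemma~\ref{lemma 4.2}, on which it rests) carries the hypothesis $\tau > t+\tfrac12$, which is not implied by the present hypothesis $\tau < t+1$. As literally written, the paper's proof therefore only covers the sub-range $t+\tfrac12 < \tau < t+1$, whereas the lemma is needed (via Proposition~\ref{F_2 when tau < t + 1}) also when $\tau \le t+\tfrac12$.

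Your fix is sound and follows the same skeleton as the paper: write $\xi'' = \tfrac{p(r-3)}{c}\,g^0_{1,0}\xi$ and re-examine the one place in Lemma~\ref{lemma 4.2} that used $\tau > t+\tfrac12$, namely the radius-$n$ leading coefficient $\tfrac{1}{(3-r)p^3}\bigl(\tfrac{a_p}{p}\bigr)^n(a_p^2-p^3)$. Your decomposition $a_p^2-p^3 = pa_pc + \bigl((r-2)\binom{r-1}{2}-1\bigr)p^3$ together with the identity $(r-2)\binom{r-1}{2}-1 = \tfrac{(r-3)\bigl((r-1)^2+1\bigr)}{2}$ (where $\tfrac{(r-1)^2+1}{2}$ is $p$-integral for odd $p$) cleanly shows that, after multiplication by $\tfrac{p(r-3)}{c}\,g^0_{1,0}$, the $pa_pc$ piece leaves $-(a_p/p)^{n+1}[g^0_{n+1,0},X^{r-2}Y^2]$ of valuation $\ge 1$ while the remaining piece picks up the extra factor $1+t-\tau > 0$; both are $O(p^\epsilon)$. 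The bookkeeping of the two principal terms, including the absorbed $-\tfrac{a_p^2(r-3)}{p^2c}[g^0_{1,0},X^pY^{r-p}]$ of valuation $t+1-\tau \ge \epsilon$, is correct.

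One small observation: the decomposition, while illuminating (it explains structurally why the $c$ in the denominator is harmless), is more than is strictly needed. The radius-$n$ coefficient of $(T-a_p)\xi''$ equals $-\tfrac{p}{c}\bigl(\tfrac{a_p}{p}\bigr)^n\tfrac{a_p^2-p^3}{p^3}$, whose valuation is $\tfrac{n}{2}-2-\tau+v(a_p^2-p^3) \ge \tfrac{n}{2} + \min\bigl(\tfrac12, t+1-\tau\bigr) = \tfrac{n}{2}+\epsilon > \epsilon$ directly from the estimate $v(a_p^2-p^3)\ge\min(\tau+\tfrac52,t+3)$, without any algebraic rewriting. Either route closes the gap.
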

\begin{proof}
We can write $\xi'' = \frac{p}{c}\xi'$, where $\xi'$ is as in Lemma \ref{lemma 0.18}. Therefore by Lemma \ref{lemma 0.18}, we get
\begin{align*}
(T-a_p)\xi'' &= \frac{p}{c}(T-a_p)\xi'\\
&= \frac{a_p}{pc}[1, (2-r)XY^{r-1}] - \frac{a_p^2}{p^2c}[1, X^{r-2}Y^2] + O(p^\epsilon),
\end{align*} 
because $\tau < t+1$, so $\frac{p}{c}O(p^{t+\frac{1}{2}})= O(p^\epsilon)$, where $\epsilon =$ min$\lbrace t+1 -\tau, \frac{1}{2} \rbrace$.
\end{proof}


\begin{lemma}{\label{psi'}}
Let $p>3$, $r\geq 2p+1$, $r = 3 + n(p-1)p^t$, with $t=v(r-3)$, $v(a_p)=\frac{3}{2}$ and $c=\frac{a_p^2-(r-2)\binom{r-1}{2}p^3}{pa_p}$. Let $\tau = v(c)$ and assume $\tau < t+1$. If $$\psi'= \frac{1}{pc}\sum\limits_{n =1}^{t+1}a_p^n[g^0_{n,[\mu]},[\mu]^{-1}(Y^r-X^{r-3}Y^3)],~\text{for some}~\mu\in\mathbb{F}_p^\times,$$  then
\begin{align*}
(T-a_p)\psi' = \frac{a_p}{pc}[1,[\mu]^{-1}([\mu]X+Y)^r]+\frac{a_p^2}{pc}[g^0_{1,[\mu]},[\mu]^{-1}X^{r-3}Y^3] + O(\sqrt{p}).
\end{align*}
\end{lemma}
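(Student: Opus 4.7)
The key observation is that $\psi'=\tfrac{p}{c}\,\psi$, where $\psi$ is the function of Lemma~\ref{Lemma 0.17.}. By linearity of the Hecke operator, $(T-a_p)\psi'=\tfrac{p}{c}(T-a_p)\psi$, and multiplying the two main terms in Lemma~\ref{Lemma 0.17.} by $p/c$ produces exactly the two main terms claimed here. The entire argument thus reduces to verifying that the scaled error remains $O(\sqrt p)$; this is exactly the strategy used in Lemma~\ref{xi''} to pass from $\xi'$ to $\xi''=\tfrac{p}{c}\xi'$.

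\textbf{Handling the hypothesis mismatch.} Lemma~\ref{Lemma 0.17.} is stated under $\tau\ge t+1$, which is excluded by our assumption $\tau<t+1$. The only place this hypothesis enters is in the evaluation $v(a_p^2-p^3)=t+3$, used to bound the intermediate-radius contributions at radii $2\le n\le t$. Without it, the unconditional inequality $v(a_p^2-p^3)\ge\min(\tau+\tfrac{5}{2},\,t+3)$ from~\eqref{v(a_p^2-p^3)} is still available, and substituting it into the computation of Lemma~\ref{Lemma 0.17.} gives an error of size $O\!\left(p^{\min(v(a_p^2-p^3)-1/2,\,t+1/2)}\right)$ for $(T-a_p)\psi$; all other error sources in that proof (radii $0,1,t+1,t+2$, and the $X^{r-1}Y$, $X^{r-2}Y^2$, $X^{r-3}Y^3$ cross terms controlled by Lemmas~\ref{lemma 0.1.} and~\ref{lemma 0.2}) are $\tau$-independent and already accounted for by $O(p^{t+1/2})$.

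\textbf{Rescaling and conclusion.} Scaling by $p/c$, of valuation $1-\tau$, turns the error into
\[
O\!\left(p^{\min(v(a_p^2-p^3)+1/2-\tau,\,t+3/2-\tau)}\right).
\]
Using $v(a_p^2-p^3)+\tfrac{1}{2}-\tau\ge\min(3,\,t+\tfrac{7}{2}-\tau)$, and noting that $\tau<t+1$ forces $t+\tfrac{3}{2}-\tau>\tfrac{1}{2}$ and $t+\tfrac{7}{2}-\tau>\tfrac{5}{2}$, both entries of the min exceed $\tfrac{1}{2}$; hence the total error is $O(\sqrt p)$. Combining with the two scaled main terms yields the statement. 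The main (and only) point of care is precisely this two-line book-keeping of $v(a_p^2-p^3)$: the destabilising factor $p/c$ has negative valuation $1-\tau$, so one must check the surviving margin in~\eqref{v(a_p^2-p^3)} is enough, which it is exactly because $\tau<t+1$. The structural content of the proof — the telescoping at intermediate radii and the identification of the main terms — is borrowed directly from Lemma~\ref{Lemma 0.17.} and requires no new ideas.
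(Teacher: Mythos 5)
Your proof is the paper's own: write $\psi' = \frac{p}{c}\psi$ with $\psi$ from Lemma~\ref{Lemma 0.17.}, and observe that the rescaled error is $O(\sqrt{p})$ because $\tau < t+1$. You are more careful than the paper on one real point: Lemma~\ref{Lemma 0.17.} is stated under $\tau \geq t+1$, which is disjoint from the present hypothesis $\tau < t+1$, and your explicit tracking of the $v(a_p^2-p^3)$-dependent contribution at radii $2 \leq n \leq t$ — showing it too rescales to $O(\sqrt{p})$ via the unconditional bound $v(a_p^2-p^3)\geq \min(\tau+\tfrac{5}{2}, t+3)$ — is precisely what is needed to justify the paper's direct invocation of that lemma outside its stated range.
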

\begin{proof}
We can write $\psi' = \frac{p}{c}\psi$, where $\psi$ is as in Lemma \ref{Lemma 0.17.}. Therefore, by Lemma \ref{Lemma 0.17.}, we get
\begin{align*}
(T-a_p)\psi' &= \frac{p}{c}(T-a_p)\psi\\
&= \frac{a_p}{pc}[1,[\mu]^{-1}([\mu]X + Y)^r] + \frac{a_p^2}{pc}[g^0_{1,[\mu]}, [\mu]^{-1}X^{r-3}Y^3] + O(\sqrt{p}),
\end{align*}
because $\tau < t+1$, so $\frac{p}{c}O(p^{t+\frac{1}{2}})= O(\sqrt{p})$.
\end{proof}

\section{Behaviour of $F_1$}
\label{sectionF1}

We now begin to compute the structure of the subquotients of $\bar\Theta_{k,a_p}$.
In this section we compute the structure of the quotient $F_1$.  Recall that $\ind_{KZ}^G J_1 \twoheadrightarrow F_1$, where
$J_1 = V_{p-4} \otimes D^3$. 

In the next three sections we use the following notation:
\begin{itemize}
\item For a function $f\in$ ind$_{KZ}^G$Sym$^r\bar{\mathbb{Q}}_p^2$, we say that ``$f$ dies mod $p$''  if $f\in O(p^\epsilon)$, for some $\epsilon >0$, so that the reduction $\bar{f} \in$ ind$_{KZ}^G$Sym$^r\bar{\mathbb{F}}_p^2$ is identically zero.
\item For $g\in G$ and $\chi$, $\xi$, $\phi$, etc. functions in ind$_{KZ}^G$Sym$^r\bar{\mathbb{Q}}_p^2$ as in Section~\ref{sectiontelescope}, we let $\chi_g = g\cdot \chi$, $\xi_g = g \cdot \xi$, $\phi_g = g \cdot \phi$, etc. in ind$_{KZ}^G$Sym$^r\bar{\mathbb{Q}}_p^2$.\\
\end{itemize}

\begin{prop}{\label{F_0 when tau >= t}}
Let $p>3$, $r\geq 2p+1$, with $r\equiv 3~\emph{mod}~(p-1)$ and assume $v(a_p)= \frac{3}{2}$. Let $t= v(r-3)$, $c =\frac{a_p^2-(r-2)\binom{r-1}{2}p^3}{pa_p}$ and $\tau = v(c) $.
\begin{enumerate}[label =(\roman{*})]
\item If $\tau > t,$ then $F_1 =0.$ 
\item If $\tau =t,$ then $F_1$ is a quotient of $\pi(p-4,\lambda^{-1},\omega^3),$ where $\lambda =\overline{\frac{3}{3-r}\left(\frac{a_p^2-(r-2)\binom{r-1}{2}p^3}{pa_p}\right)}\in\mathbb{F}_p^\times.$
\end{enumerate}
\end{prop}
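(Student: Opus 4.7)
The strategy is to exploit the telescoping Hecke identity of Lemma~\ref{chi},
\[
(T-a_p)\chi = [\alpha, Y^r] + a_p[1, X^{r-3}Y^3] + p^{t+1}h_\chi + O(p^{t_0+2}),
\]
together with its $\Gamma$-translates, to extract relations on the generator $[1, X^{p-4}]$ of $\ind_{KZ}^G J_1$ in the quotient $F_1$. Since $(T-a_p)\chi \in X_{k,a_p}$ vanishes in $\bar\Theta_{k,a_p}$ and hence in $F_1$, the identity becomes a constraint on the remaining terms. After projecting through $\ind_{KZ}^G V_r \twoheadrightarrow F_1$: by Lemma~\ref{generator}(i) the monomials $X^r$ and $X^{r-1}Y$ vanish in $J_1$, so $p^{t+1}h_\chi$ contributes nothing; and by $\Gamma$-equivariance applied to the swap matrix $w = \matr{0}{1}{1}{0}$ together with $X^r \mapsto 0$ in $J_1$, the monomial $Y^r = w\cdot X^r$ also vanishes in $J_1$, so the $[\alpha, Y^r]$ term contributes nothing. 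Only $a_p[1, X^{r-3}Y^3] \mapsto a_p\cdot [1, X^{p-4}]$ survives.

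For case (i), $\tau > t$: Lemma~\ref{Lemma 5.1}(iv) gives $t_0 = t$, so after dividing the surviving identity by $a_p$ (the quotient being integral thanks to the valuation analysis) and reducing modulo $p$, the error $a_p^{-1}\cdot O(p^{t+2})$ has valuation at least $t+\tfrac12 > 0$ and vanishes. This forces $[1, X^{p-4}] = 0$ in $F_1$, and since $[1, X^{p-4}]$ generates $\ind_{KZ}^G J_1$ as a $G$-representation, $F_1 = 0$.

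For case (ii), $\tau = t$: the error term now contributes on the same scale as the main term, and its coefficient must be tracked precisely. To produce the distance-one contribution $T[1, X^{p-4}] = \sum_{\mu\in\F_p}[g^0_{1,[\mu]}, X^{p-4}]$ needed for a Hecke eigenvalue relation, I would supplement $\chi$ with a weighted sum over its $\Gamma$-translates by the diagonal matrices $\mathrm{diag}([\mu], 1)$ for $\mu \in \F_p^\times$, using all parts of Proposition~\ref{proposition 0.4.} to evaluate the Teichm\"uller-weighted sums at full precision. The ratio of leading coefficients then simplifies, using $v(c) = t$, to $\overline{\tfrac{3}{3-r}\cdot c} = \lambda \in \bar\F_p^\times$, and the resulting relation in $F_1$ is $(T-\lambda^{-1})[1, X^{p-4}] = 0$, exhibiting $F_1$ as a quotient of $\pi(p-4, \lambda^{-1}, \omega^3)$. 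The main obstacle will be exactly this precision analysis: one must expand Lemma~\ref{chi}'s error one $p$-adic order beyond its stated bound and align it carefully with the combinatorial cancellations from Proposition~\ref{proposition 0.4.} so that the coefficient $\tfrac{3}{3-r}\cdot c$ emerges exactly, rather than one of the other natural combinations that formally appear in the computation.
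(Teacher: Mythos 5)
Your argument for case (i) has a genuine gap at the crucial step. You propose to divide the identity $(T-a_p)\chi = [\alpha, Y^r] + a_p[1, X^{r-3}Y^3] + p^{t+1}h_\chi + O(p^{t_0+2})$ by $a_p$, claiming the quotient is integral. It is not: the term $a_p^{-1}[\alpha, Y^r]$ has valuation $-\tfrac{3}{2}$. The only way one produces a relation in $\bar\Theta_{k,a_p}$ is by exhibiting an element of $(T-a_p)\bigl(\ind_{KZ}^G\Sym^r\bar\Q_p^2\bigr)$ that lands inside $\ind_{KZ}^G\Sym^r\bar\Z_p^2$ and then reducing mod $p$; the vanishing of $Y^r$ in $J_1$ cannot be invoked before integrality is established, since without integrality there is no mod-$p$ reduction to project. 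And without dividing, the ``surviving'' term $a_p[1,X^{r-3}Y^3]$ has valuation $\tfrac{3}{2}>0$ and dies mod $p$, while all the remaining integral terms map to $0$ in $J_1$: you learn $0=0$.

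The paper's proof requires two ingredients your sketch does not supply. First, one uses $G$-translates of $\chi$ by $g^0_{1,[\lambda]}=\left(\begin{smallmatrix}p&[\lambda]\\0&1\end{smallmatrix}\right)\notin KZ$, which shift the support outward by one step on the tree; these are neither $\Gamma$-translates nor translates by $\mathrm{diag}([\mu],1)$ (the latter lie in $K$ and leave the radius unchanged, only twisting the polynomial, as in Lemma~\ref{chi prime}). Second, and decisively, one adds an auxiliary radius-zero function $f_0$. The translated $\chi$'s are scaled by $\frac{1}{p(3-r)}$, of valuation $-(t+1)$, so $(T-a_p)f_\infty$ is itself non-integral: its radius-zero part, collected from the $[\alpha,Y^r]$ terms via \eqref{sum of roots of 1}, equals $\frac{p-1}{p(3-r)}\bigl[1,\sum_j\binom{r}{j}X^{r-j}Y^j\bigr]$ with a negative-valuation coefficient. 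The function $f_0$ is built precisely so that $-a_pf_0$ cancels this, making $(T-a_p)(f_0+f_\infty)$ integral; the congruences in all five parts of Proposition~\ref{proposition 0.4.} then collapse the remaining radius-one contribution to $\tfrac{1}{3}(\tilde\lambda\,T-1)[1,X^{p-4}]$ in $\ind_{KZ}^G J_1$, from which both cases (i) and (ii) drop out simultaneously. Without $f_0$ there is no integral Hecke coboundary to exploit, which is what your proposal is missing.
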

\begin{proof} The proof is a variant of the proof of Proposition 6.4 in \cite{BGR18}. 

Recall $\tilde{c} = \frac{a_p^2-\binom{r}{3}p^3}{pa_p}$ and $\tilde{\tau} = v(\tilde{c})$.
We claim that it is sufficient to prove the proposition with $\tau$ replaced by $\tilde{\tau}$ in \textit{(i)} and \textit{(ii)}.
Indeed, by Lemma~\ref{Lemma 5.1}, we see that if $\tilde{\tau} = t$, then $\tilde{\tau} < t+ \frac{1}{2}$, so $\tau = \tilde{\tau}$. Similarly, if $\tilde{\tau} > t$, then $\tau =\tilde{\tau}$ (when $t<\tilde{\tau} < t + \frac{1}{2}$) or $\tau \geq t+ \frac{1}{2}$ (when $\tilde{\tau} \geq t +0.5$), so in either case we have $\tau > t$.  

Let $S_{i} = \sum\limits_{\substack{{0<j<r}\\{j\equiv 3~\text{mod}~(p-1)}}}\binom{j}{i}\binom{r}{j}$,  for $i=0$, $1$, $2$. Note that $S_i = S_{i,r}$ in Proposition~ \ref{proposition 0.4.}.

Define the functions
\begin{align*}
f_0 &= \frac{p-1}{pa_p(3-r)}\left[1, \sum_{\substack{{0<j<r}\\{j\equiv 3~\text{mod}~(p-1)}}}\binom{r}{j}X^{r-j}Y^j- (AX^{r-3}Y^3+BX^{r-p-2}Y^{p+2})\right],\\
f_\infty &= \frac{1}{p(3-r)}\left(\sum_{\lambda\in\mathbb{F}_p^\times}\chi_{g^0_{1,[\lambda]}} +(1-p)\chi_{g^0_{1,0}}\right),
\end{align*}
where $A= \frac{(p+2)S_0-S_1}{p-1}$, $B = \frac{S_1-3S_0}{p-1}$ and $\chi$ is as in Lemma \ref{chi}.
Let $f=f_0+f_\infty. $ 

We will compute $(T-a_p)f$ in several steps.

Computation in radius $-1$:
\begin{align*}
T^-f_0 &= \frac{p-1}{pa_p(3-r)}\left[\alpha , \sum_{\substack{{0<j<r}\\{j\equiv 3~\text{mod}~(p-1)}}}\binom{r}{j}(pX)^{r-j}Y^j - (A(pX)^{r-3}Y^3+B(pX)^{r-p-2}Y^{p+2})\right]\\
           &= \frac{p-1}{pa_p(3-r)}\left[\alpha, \sum_{\substack{{0<j<r}\\{j\equiv 3~\text{mod}~(p-1)}}}p^{r-j}\binom{r}{r-j}X^{r-j}Y^j\right] + O(p),
\end{align*}
because $v(A)$, $v(B) \geq t+1$ and $r-3>r-p-2 \geq p-1>v(pa_p)= \frac{5}{2}$ as $p>3$ and $r\geq 2p+1$.
Using Lemma \ref{lemma 0.1.}, we get 
\begin{align}
T^-f_0 = O(p), \label{F_0 T minus f_0}
\end{align}
because $r-j \geq p-1 \geq 4$, for all $j$ as above. Now
\begin{align}
-a_pf_0 = \frac{1-p}{p(3-r)}\left[1, \sum_{\substack{{0<j<r}\\{j\equiv 3~\text{mod}~(p-1)}}}\binom{r}{j}X^{r-j}Y^j - (AX^{r-3}Y^3 + BX^{r-p-2}Y^{p+2})\right].\label{F_0 -a_pf_0}
\end{align}
Also
\begin{align*}
  T^+f_0 &= \frac{p-1}{pa_p(3-r)}\Bigg(\sum_{\lambda\in\mathbb{F}_p}\Bigg[g^0_{1,[\lambda]}, 
 \sum_{\substack{{0<j<r}\\{j\equiv 3~\text{mod}~(p-1)}}}\binom{r}{j}X^{r-j}(-[\lambda]X+pY)^j-(AX^{r-3}  
 (-[\lambda]X+pY)^3\\
&~~~+ BX^{r-p-2}([-\lambda]X+pY)^{p+2})\Bigg]\Bigg)\\
& = \frac{p-1}{pa_p(3-r)}\Bigg(\sum_{\lambda\in\mathbb{F}_p^\times}\Bigg[g^0_{1,[\lambda]}, \sum_{\substack{{0<j<r}\\{j\equiv 3~\text{mod}~(p-1)}}}\binom{r}{j}X^{r-j}\sum_{i=0}^{j}\binom{j}{i}(-[\lambda]X)^{j-i}(pY)^i \\
&~~~ - \bigg(AX^{r-3}\sum_{i=0}^3\binom{3}{i}(-[\lambda]X)^{3-i}(pY)^i + BX^{r-p-2}\sum_{i=0}^{p+2}\binom{p+2}{i}(-[\lambda]X)^{p+2-i}(pY)^i\bigg)\Bigg] \\
&~~~ + \Bigg[g^0_{1,0}, \sum_{\substack{{0<j<r}\\{j\equiv 3~\text{mod}~(p-1)}}}p^j\binom{r}{j}X^{r-j}Y^j - (Ap^3X^{r-3}Y^3 + Bp^{p+2}X^{r-p-2}Y^{p+2})\Bigg]\Bigg)\\
&=\frac{p-1}{pa_p(3-r)}\Bigg(\sum_{\lambda\in\mathbb{F}_p^\times}\Bigg[g^0_{1,[\lambda]}, \sum_{i=0}^rp^i(-[\lambda])^{3-i}\sum_{\substack{{0<j<r}\\{j\equiv 3~\text{mod}~(p-1)}}}\left(\binom{r}{j}\binom{j}{i}-A\binom{3}{i}-B\binom{p+2}{i}\right)X^{r-i}Y^i\Bigg]\\
&~~~+\Bigg[g^0_{1,0}, \sum_{\substack{{0<j<r}\\{j\equiv 3~\text{mod}~(p-1)}}}p^j\binom{r}{j}X^{r-j}Y^j-(Ap^3X^{r-3}Y^3+Bp^{p+2}X^{r-p-2}Y^{p+2})\Bigg]\Bigg).
\end{align*}
For $\lambda\neq 0$, the coefficients of $[g^0_{1,[\lambda]}, X^r]$, $[g^0_{1,[\lambda]}, X^{r-1}Y]$, $[g^0_{1,[\lambda]}, X^{r-2}Y^2]$ in $T^+f_0$ are given by 
\begin{align*}
&-[\lambda]^3\frac{p-1}{pa_p(3-r)}\left(\sum\limits_{{\substack{{0<j<r}\\{j\equiv 3~\text{mod}~(p-1)}}}}\binom{r}{j}- (A+B)\right),\\
& [\lambda]^2\frac{p-1}{a_p(3-r)}\left(\sum\limits_{{\substack{{0<j<r}\\{j\equiv 3~\text{mod}~(p-1)}}}}j\binom{r}{j}-(3A+(p+2)B)\right),\\ &-[\lambda]\frac{p(p-1)}{a_p(3-r)}\left(\sum\limits_{{\substack{{0<j<r}\\{j\equiv 3~\text{mod}~(p-1)}}}}\binom{j}{2}\binom{r}{j}-(3A+\binom{p+2}{2}B)\right),
\end{align*}
 respectively. From the way we defined $A$ and $B$, we get $A+B=S_0$, $3A+(p+2)B=S_1$ and $3A+\binom{p+2}{2}B \equiv 0~\text{mod}~p^{t+1}$ as $v(A)\geq t+1$, $v(B)\geq t+1$, by parts \textit{(1)} and \textit{(2)} of Proposition \ref{proposition 0.4.}. Hence using Proposition \ref{proposition 0.4.} part \textit{(3)} we get that the coefficients of $[g^0_{1,[\lambda]}, X^r]$, $[g^0_{1,[\lambda]}, X^{r-1}Y]$ and $[g^0_{1,[\lambda]}, X^{r-2}Y^2]$ in $T^+f_0$ die mod $p$.
 
For $\lambda\neq 0$, $i\geq 4$, the coefficient of $[g^0_{1,[\lambda]}, X^{r-i}Y^i]$ in $T^+f_0$ is given by\begin{align*}
(-[\lambda])^{3-i}\frac{p^i(p-1)}{pa_p(3-r)}\left(\sum_{\substack{{0<j<r}\\{j\equiv 3~\text{mod}~(p-1)}}}\binom{j}{i}\binom{r}{j}- B\binom{p+2}{i}\right),
\end{align*} which dies mod $p$ by Proposition \ref{proposition 0.4.} part \textit{(5)} and the fact that $v(pa_p(3-r)) = t+\frac{5}{2} $ and $v(B)\geq t+1$.

For $j > 3$, the coefficient of $[g^0_{1,0}, X^{r-j}Y^j]$ dies mod $p$ again by using Lemma \ref{lemma 0.1.}.

So finally using part \textit{(4)} of Proposition \ref{proposition 0.4.}, we get
\begin{align}
T^+f_0&= \frac{p-1}{pa_p(3-r)}\left(\sum_{\lambda\in\mathbb{F}_p^\times}\left[g^0_{1,[\lambda]},\frac{p^3\binom{r}{3}}{1-p}X^{r-3}Y^3\right]+\left[g^0_{1,0},p^3\binom{r}{3}X^{r-3}Y^3\right]\right) +O(p). \label{F_0 T plus f_0}
\end{align}
Now we do a small computation which will be used in computing $(T-a_p)f_\infty$ just below:
\begin{align}
\frac{1}{p(3-r)}&\left(\sum_{\lambda\in\mathbb{F}_p^\times}[g^0_{1,[\lambda]}\alpha, Y^r]+(1-p)[g^0_{1,0}\alpha, Y^r]\right) \nonumber \\ 
&= \frac{1}{p(3-r)}\left(\sum_{\lambda\in\mathbb{F}_p^\times}\left[p\begin{pmatrix}
1 & [\lambda]\\
0 & 1
\end{pmatrix}, Y^r\right]+(1-p)[1,Y^r]\right) \nonumber \\ 
& = \frac{1}{p(3-r)}\left(\sum_{\lambda\in\mathbb{F}_p^\times}[1, ([\lambda]X + Y)^r + (1-p)[1, Y^r]\right) \nonumber \\
&= \frac{p-1}{p(3-r)}\left[1,\sum_{\substack{{0<j<r}\\{j\equiv 3~\text{mod}~(p-1)}}}\binom{r}{j}X^{r-j}Y^j\right], \label{F_0 T minus f_1}
\end{align}
using the fact \eqref{sum of roots of 1}. 

Now from the $f_\infty$ part of $f$, we get
\begin{align*}
(T-a_p)f_\infty &= \frac{1}{p(3-r)}\left(\sum_{\lambda\in\mathbb{F}_p^\times}[g^0_{1,[\lambda]}\alpha, Y^r]+a_p[g^0_{1,[\lambda]}, X^{r-3}Y^3]\right.\\
&~~~+(1-p)([g^0_{1,0}\alpha, Y^r] +a_p[g^0_{1,0}, X^{r-3}Y^3])\bigg)+ h + O(p),
\end{align*}
by Lemma \ref{chi}, where $h$ is an integral linear combination of functions of the form $[g, X^r]$ and $[g, X^{r-1}Y]$, for some $g\in G$. By equation \eqref{F_0 T minus f_1}, we get
 \begin{align}
(T-a_p)f_\infty &= \frac{p-1}{p(3-r)}\left[1,\sum_{\substack{{0<j<r}\\{j\equiv 3~\text{mod}~(p-1)}}}\binom{r}{j}X^{r-j}Y^j\right] + \frac{a_p}{p(3-r)}\sum_{\lambda\in\mathbb{F}_p^\times}[g^0_{1,[\lambda]}, X^{r-3}Y^3] \nonumber \\
&~~~+ \frac{a_p(1-p)}{p(3-r)}[g^0_{1,0}, X^{r-3}Y^3] + h +O(p). \label{(T-a_p)f_infty}
\end{align}
Using equations \eqref{F_0 T minus f_0}, \eqref{F_0 -a_pf_0}, \eqref{F_0 T plus f_0} and \eqref{(T-a_p)f_infty}, we get
\begin{align*}
(T-a_p)(f_0+f_\infty) &= \left(\frac{a_p}{p(3-r)}+\frac{-p^2\binom{r}{3}}{(3-r)a_p}\right)\sum_{\lambda\in\mathbb{F}_p^\times}[g^0_{1,[\lambda]}, X^{r-3}Y^3]\\
&~~~+ \left(\frac{a_p(1-p)}{p(3-r)}+\frac{(p-1)p^2\binom{r}{3}}{a_p(3-r)}\right)[g^0_{1,0}, X^{r-3}Y^3]\\
&~~~+ \frac{p-1}{p(3-r)}[1,(AX^{r-3}Y^3 + BX^{r-p-2}Y^{p+2}] + h + O(p).
\end{align*}
By part (i) of Lemma~\ref{generator}, 
we get that $X^r$, $X^{r-1}Y$ map to $0$ inside $J_1 = V_{p-4} \otimes D^3$ and $X^{r-3}Y^3$ maps to $X^{p-4}\in J_1$. Since $\theta = X^pY- XY^p$ divides $X^{r-3}Y^3 - X^{r-p-2}Y^{p+2}$, we see $X^{r-p-2}Y^{p+2}$ also maps to $X^{p-4}$ in $J_1$. Thus the image of $(T-a_p)f$ in ind$_{KZ}^GJ_1$ is 
\begin{align*}
\overline{(T-a_p)f}&= \overline{\frac{a_p^2-\binom{r}{3}p^3}{pa_p(3-r)}}\sum_{\lambda\in\mathbb{F}_p}[g^0_{1,[\lambda]}, X^{p-4}] + \overline{\frac{(A+B)(p-1)}{p(3-r)}}[1,X^{p-4}].
\end{align*}
By Proposition \ref{proposition 0.4.}, part \textit{(1)}, 
\begin{align*}
\frac{(A+B)(p-1)}{p(3-r)} &= \frac{S_0(p-1)}{p(3-r)}\\
&\equiv -\frac{1}{6}\left[6p+5 -\frac{3}{p}\binom{2p+1}{p-1}\right] +\frac{3-r}{6(p-1)}\left[-3p -3 +\frac{3}{p}\binom{2p+1}{p-1}\right]~\text{mod}~p\\  
&\equiv -\left(\frac{5}{6}-\frac{1}{2}\right)\equiv -\frac{1}{3}~\text{mod}~p,
\end{align*}
 since $\frac{1}{p}\binom{2p+1}{p-1}\equiv 1~\text{mod}~p$. Therefore the image of $(T-a_p)f$ in ind$_{KZ}^GJ_1$ is 
\begin{align}
\frac{1}{3}(\tilde{\lambda} T-1)[1,X^{p-4}], \label{F_0 (T-a_p)f with tilde(lambda)}
\end{align}
where $\tilde{\lambda} = \overline{\frac{3}{3-r}\left(\frac{a_p^2-\binom{r}{3}p^3}{pa_p}\right)}$.
Recall $\lambda =\overline{\frac{3}{3-r}\left(\frac{a_p^2-(r-2)\binom{r-1}{2}p^3}{pa_p}\right)}$. Since
$$\frac{3}{3-r}\left(\frac{a_p^2 - (r-2)\binom{r-1}{2}p^3}{pa_p}\right) = \frac{3}{3-r}\left(\frac{a_p^2-\binom{r}{3}p^3}{pa_p}\right) + \frac{3}{3-r}\left(\frac{\binom{r}{3}p^3 - (r-2)\binom{r-1}{2}p^3}{pa_p}\right),$$ 
and as $v\left(\frac{3}{3-r}\left(\frac{\binom{r}{3}p^3 - (r-2)\binom{r-1}{2}p^3}{pa_p}\right)\right) \geq \frac{1}{2}$, we get that 
\begin{align}
\lambda = \tilde{\lambda}. \label{lambda = tilde(lambda)}
\end{align}
 
If $\tilde{\tau} > t$, then $\tilde{\lambda}= 0$, so we have that $F_1 = 0$ because $\overline{(T-a_p)f} = \frac{-1}{3}[1, X^{p-4}]$ is zero in $\bar{\Theta}_{k,a_p}$ and  $[1,X^{p-4}]$ generates ind$_{KZ}^GJ_1$. 

If $\tilde{\tau} = t$, then by \eqref{F_0 (T-a_p)f with tilde(lambda)}, $F_1$ is a quotient of $\pi(p-4,\tilde{\lambda}^{-1},\omega^3)$ which is the same as saying that $F_1$ is a quotient of $\pi(p-4,\lambda^{-1},\omega^3)$, by \eqref{lambda = tilde(lambda)}.
\end{proof}

\section{Behaviour of $F_2$}
\label{sectionF2}

Next we study the structure of the sub $F_2$ of $\bar\Theta_{k,a_p}$. Recall  $\ind_{KZ}^G J_2 \twoheadrightarrow F_2$, where
$J_2 = V_1 \otimes D$.

\begin{prop}{\label{F_1 when tau <= t}}
Let $ p>3, r\geq 2p+1$, $r = 3 + n(p-1)p^t$ with $t =v(r-3)$ and $v(a_p) = \frac{3}{2}$. Let $c=\frac{a_p^2-(r-2)\binom{r-1}{2}p^3}{pa_p}$ and $\tau = v(c)$.
\begin{enumerate}[label=(\roman{*})]
\item If $\tau < t$, then $F_2 =0.$
\item If $\tau = t$, then $F_2$ is a quotient of $\pi(1,\lambda,\omega)$, where $\lambda = \overline{\frac{3}{3-r}\left(\frac{a_p^2 - (r-2)\binom{r-1}{2}p^3}{pa_p}\right)}\in\mathbb{F}_p^\times.$
\end{enumerate}
\end{prop}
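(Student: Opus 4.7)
The plan is to adapt the template of Proposition~\ref{F_0 when tau >= t} to produce a Hecke relation inside $\mathrm{ind}_{KZ}^G J_2$ rather than $\mathrm{ind}_{KZ}^G J_1$. By Proposition~\ref{Structure of Q}(2), the factor $J_2 = V_1 \otimes D$ appears in $Q$ only when $t \geq 1$, so the hypothesis $\tau \leq t$ automatically forces $t \geq 1$. By Lemma~\ref{generator}(ii) together with the inclusion $X^{r-1}Y, XY^{r-1} \in X_{r-1}$, the generators $X, Y \in J_2$ are represented in $Q$ by the classes of $-X^{r-p}Y^p$ and $X^pY^{r-p}$ respectively. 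In particular, every monomial of the form $X^r$, $X^{r-1}Y$, $XY^{r-1}$, $Y^r$ maps to zero in $J_2$ --- an observation that will absorb all ``tail'' contributions produced by the telescoping lemmas of Section~\ref{sectiontelescope}.

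First I would set $f = f_0 + f_\infty$ with
\[
f_\infty = \frac{c_\infty}{p(3-r)} \Big(\sum_{\lambda \in \mathbb{F}_p^\times} \chi'_{g^0_{1,[\lambda]}} + (1-p)\, \chi'_{g^0_{1,0}}\Big) + (\text{translates of } \phi),
\]
where $\chi'$ is the function of Lemma~\ref{chi prime} and $\phi$ that of Lemma~\ref{phi}. The $\chi'$-piece produces, after the $\mathbb{F}_p$-summation~\eqref{sum of roots of 1}, a radius-$1$ contribution $\sum_\lambda [g^0_{1,[\lambda]}, X^{r-3}Y^3]$ with $p^{t+1} h_{\chi'}$-error supported on $[g, X^r]$ and $[g, X^{r-1}Y]$ (both trivial in $J_2$), while the $\phi$-piece deposits the essential radius-$0$ term $-a_p[1, X^{r-2}Y^2]$ and radius-$(-1)$ term $-[\alpha, pXY^{r-1}]$, with error $p^{\tau+1}h_\phi$ again absorbed in $X_{r-1}$. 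For $f_0$ I would take a function supported at the identity of shape
\[
f_0 = \frac{c_0}{p a_p (3-r)} \Big[1,\; \sum_{\substack{1 < j \leq r-2 \\ j \equiv 1 \bmod (p-1)}} \binom{r}{j}\, X^{r-j} Y^j - A\, X^{r-p} Y^p - B\, X^{r-2p+1} Y^{2p-1}\Big],
\]
with $c_0, A, B$ determined by Propositions~\ref{propositon 0.5} and~\ref{proposition 0.6} so that $T^+ f_0$ annihilates the coefficients of $[g^0_{1,[\lambda]}, X^r]$, $[g^0_{1,[\lambda]}, X^{r-1}Y]$, $[g^0_{1,[\lambda]}, X^{r-2}Y^2]$ modulo the required powers of $p$, while producing the desired multiple of $[g^0_{1,[\lambda]}, X^{r-p}Y^p]$.

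Assembling $T^- f$, $-a_p f$, and $T^+ f$ radius-by-radius and projecting to $\mathrm{ind}_{KZ}^G J_2$, the intermediate-degree monomials all die by the observation above, and what remains reduces modulo $p$ to an identity of the form
\[
\overline{(T - a_p) f} \;\equiv\; \overline{\tfrac{c}{3-r}}\, \Big(T \cdot [1, X] \;-\; \lambda\, [1, X]\Big) \quad \text{in } \mathrm{ind}_{KZ}^G J_2,
\]
with $\lambda = \overline{(3/(3-r))\, c}$. When $\tau = t$, the prefactor $\overline{c/(3-r)}$ is a unit and we conclude $(T - \lambda) F_2 = 0$, so $F_2$ is a quotient of $\pi(1, \lambda, \omega)$. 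When $\tau < t$, rescaling $f$ by $(3-r)/c$ (multiplying through by a unit times $p^{t - \tau}$) yields the relation $T \cdot [1, X] = 0$ in $F_2$; a companion test function built by varying only the $\phi$-piece of $f_\infty$ produces the relation $[1, X] = 0$, and together these force $F_2 = 0$.

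\textbf{Main obstacle.} The heart of the proof is choosing the correction constants $A, B, c_0$ in $f_0$ together with the weightings in $f_\infty$ so that --- after the lengthy radius-by-radius computation --- all ``scrap'' monomials (those contributing to $J_1$ or $J_3$ rather than $J_2$) vanish modulo the required powers of $p$, and so that the surviving coefficient of $(T - \lambda)[1, X]$ in $J_2$ equals exactly $\overline{c/(3-r)}$. The combinatorial input is supplied by the sums over $j \equiv 1 \bmod (p-1)$ controlled by Propositions~\ref{propositon 0.5} and~\ref{proposition 0.6}, whereas the sharpness of the $p^{\tau + 1} h_\phi$ error in Lemma~\ref{phi} is crucial and hinges on $\tau \leq t$, through the estimate $v(a_p^2 - p^3) = \tau + 5/2$ used in that lemma --- precisely the regime covered by the present proposition.
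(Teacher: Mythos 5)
You've correctly identified the overall ``building-block plus telescoping tail'' template and the relevant combinatorial inputs (Propositions~\ref{propositon 0.5} and~\ref{proposition 0.6}), and your observation that $X^r$, $X^{r-1}Y$, $XY^{r-1}$, $Y^r$ all lie in $X_{r-1}$ and hence die in $Q$ is right. But you have reached for the wrong auxiliary functions and your decomposition is missing a layer. The functions $\chi'$ (Lemma~\ref{chi prime}) and $\phi$ (Lemma~\ref{phi}) are the paper's tools for Proposition~\ref{F_2 when tau =< t}, which concerns $F_3$, not $F_2$. After the $\F_p$-summation, $\sum_{\lambda}\chi'_{g^0_{1,[\lambda]}}$ produces $\sum_\lambda[\lambda]^{-1}([\lambda]X+Y)^r$, and \eqref{sum of roots of 1} then picks out the residue class $j\equiv r-1\equiv 2\pmod{p-1}$ --- the congruence that feeds $J_3=V_{p-2}\otimes D^2$, not $J_2=V_1\otimes D$; likewise $\phi$ manufactures $[1,X^{r-2}Y^2]$, and $X^{r-2}Y^2$ projects nontrivially onto $J_3$ (see \eqref{image of X^{r-2}Y^2 in J_2}), so it cannot be confined to $\ind_{KZ}^G J_2$. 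For $J_2$ one needs the class $j\equiv 1\pmod{p-1}$, which is obtained from a $[\mu]^{-2}$-weighted average of the \emph{unprimed} $\chi$ of Lemma~\ref{chi}, i.e.\ the paper's $\Psi_g=\sum_{\mu\neq 0}[\mu]^{-2}\chi_{gg^0_{1,[\mu]}}$. Moreover, the $X^{r-3}Y^3$ terms that $\chi$ deposits in radius $2$ project to $J_1$; they must be cancelled against $T^+$ of a middle-radius piece $f_1$ built from translates of $\Phi_g$. The three-layer structure $f=f_0+f_1+f_\infty$ is essential, and your two-layer $f=f_0+f_\infty$ has nothing available to kill these $J_1$-contributions.

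There is also a problem with the claimed final identity $\overline{(T-a_p)f}\equiv\overline{c/(3-r)}\,(T-\lambda)[1,X]$: since $v(c/(3-r))=\tau-t\leq 0$, the prefactor is a unit when $\tau=t$ but is not $p$-integral when $\tau<t$, so $\overline{c/(3-r)}$ is undefined precisely in the case you need, and the ``rescale by $(3-r)/c$, then build a companion function'' patch is not an argument. The paper sidesteps this by putting $\tilde c$ (rather than $3-r$) in the denominators of all three pieces, arriving at $\overline{(T-a_p)f}=\bigl(\overline{\tfrac{3-r}{3\tilde c}}\,T-1\bigr)[1,X]$ whose prefactor has valuation $t-\tilde\tau\geq 0$: it vanishes mod $p$ when $\tilde\tau<t$, giving $[1,X]=0$ and hence $F_2=0$ outright, and is a unit when $\tilde\tau=t$, giving the $\pi(1,\lambda,\omega)$-quotient --- both cases follow from a single function with no case split in the construction. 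Lemma~\ref{Lemma 5.1} then converts the $\tilde\tau$-statements into the stated $\tau$-statements.
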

\begin{proof} The proof is a variant of the proof of Proposition 6.7 in \cite{BGR18}.

As in the proof of Proposition \ref{F_0 when tau >= t}, we claim that it is sufficient to prove the above proposition with $\tau$ replaced by $\tilde{\tau}$ in $(i)$ and $(ii)$, where $\tilde{\tau} = v(\tilde{c})$ and $\tilde{c} = \frac{a_p^2-\binom{r}{3}p^3}{pa_p}$.  Indeed, by Lemma \ref{Lemma 5.1}, we see that if $\tilde{\tau} < t$, then $\tau = \tilde{\tau}$, so $\tau < t$. Similarly if $\tilde{\tau} = t$, then $\tau = \tilde{\tau}$, so $\tau = t$.

We first define `building block' functions as follows.
  
Let $A=[1, X^{r-1}Y]$, $B=\left[1,\sum\limits_{\substack{{1<j<r-2}\\{j\equiv 1~\text{mod}~(p-1)}}}\binom{r-2}{j}X^{r-j}Y^j\right]$ and $C=[1, X^{r-p}Y^p]$.

Also for $g\in G$, let 
\begin{align*}
\Phi_g=\left[g, \sum_{\substack{{1<j\leq r-2}\\{j\equiv 1~\text{mod}~(p-1)}}}\binom{r}{j}X^{r-j}Y^j-S_0X^{r-p}Y^p\right] ~\text{and}~\Psi_g=\sum_{\mu\in\mathbb{F}_p^\times}[\mu]^{-2}\chi_{gg^0_{1,[\mu]}},
\end{align*} 
where $S_0 = \sum\limits_{\substack{{1<j\leq r-2}\\{j\equiv 1~\text{mod}~(p-1)}}}\binom{r}{j} $ and $\chi$ is as in Lemma \ref{chi}.

Now consider the function $f=f_0+f_1+f_\infty,$ with 
\begin{align*}
f_0 &= \frac{1-p}{p\tilde{c}}\left(A+\frac{\binom{r}{2}p^3}{3a_p^2}B\right) + \frac{p-1}{a_p}C,\\
f_1&= \frac{-1}{3a_p\tilde{c}}\left(\sum_{\lambda\in\mathbb{F}_p^\times}\Phi_{g^0_{1,[\lambda]}} +(1-p)\Phi_{g^0_{1,0}}\right),\\
f_\infty &= \frac{1}{3\tilde{c}}\left(\frac{1}{1-p}\sum_{\lambda\in\mathbb{F}_p^\times}\Psi_{g^0_{1,[\lambda]}} + \Psi_{g^0_{1,0}}\right).
\end{align*}
We now compute $(T-a_p)f$ in several steps.\\
In radius $-1$ we have 
\begin{align*}
T^-A &= [\alpha, (pX)^{r-1}Y] = O(p^{t+5}),\\
T^-B&= \left[\alpha,\sum\limits_{\substack{{1<j<r-2}\\{j\equiv 1~\text{mod}~(p-1)}}}\binom{r-2}{j}(pX)^{r-j}Y^j \right] = O(p^{t+4}),
\end{align*}
since $p^{r-j}\binom{r-2}{j} = p^2\left(p^{r-j-2}\binom{r-2}{r-2-j}\right)$, and $ r-j-2 \geq p-1>3$ for all $j$ as above, so by Lemma \ref{lemma 0.3.}, we get $p^{r-j-2}\binom{r-2}{r-j-2} \equiv 0~\text{mod}~p^{t+2}$. Also
\begin{align*}
&\mkern-18mu \mkern-18mu \mkern-18mu \mkern-18mu \mkern-18mu \mkern-18mu \mkern-18mu \mkern-18mu \mkern-18mu  T^-C= [\alpha,(pX)^{r-p}Y^p]= O(p^{r-p}).
\end{align*}
As $v(\tilde{c})\leq t$, we get 
\begin{align}
T^-f_0 = O(p^3), \label{F_1, tau <= t, T minus f_0}
\end{align}
 hence it dies mod $p$.

In radius $0$ we compute $-a_pf_0 + T^-f_1$. Now
\begin{align*}
T^-\Phi_g &= \left[g\alpha,\sum_{\substack{{1<j\leq r-2}\\{j\equiv 1~\text{mod}~(p-1)}}}\binom{r}{j}(pX)^{r-j}Y^j- S_0(pX)^{r-p}Y^p \right].
\end{align*}
For all $j$ with $1<j<r-2$ and $j\equiv 1~\text{mod}~(p-1)$, we have $r-j >p>4$, so by Lemma~ \ref{lemma 0.1.} we get that $p^{r-j}\binom{r}{j}\equiv 0$ mod $p^{t+4}$. Also by Proposition \ref{proposition 0.6}, part \textit{(1)}, we have $v(S_0)\geq t$. So we  get
\begin{align}
T^-\Phi_g &= \left[g\alpha, \binom{r}{r-2}p^2X^2Y^{r-2}\right] + O(p^{t+4}). \label{F_1 T minus phi_g}
\end{align}
For $\lambda\neq 0$, by substituting $g^0_{1,[\lambda]}$ for $g$ in \eqref{F_1 T minus phi_g}, we get
\begin{align}
T^-\Phi_{g^0_{1,[\lambda]}} &= \left[g^0_{1,[\lambda]}\alpha, \binom{r}{r-2}p^2X^2Y^{r-2}\right] + O(p^{t+4}) \nonumber \\
&= \left[1, \binom{r}{2}p^2X^2([\lambda]X+Y)^{r-2}\right] + O(p^{t+4}) \nonumber\\
&= \left[1, \binom{r}{2}p^2\sum_{i=0}^{r-2}\binom{r-2}{i}[\lambda]^{r-2-i}X^{r-i}Y^i\right] + O(p^{t+4}). \label{F_1 T minus phi_g_{0, lambda}}
\end{align}
Substituting $g^0_{1,0}$ for $g$ in \eqref{F_1 T minus phi_g}, we get 
\begin{align}
T^-\Phi_{g^0_{1,0}} = \left[1, p^2\binom{r}{2}X^{r-2}Y^2\right] + O(p^{t+4}). \label{F_1 T minus phi_g_{0,1}}
\end{align}
By equation \eqref{F_1 T minus phi_g_{0, lambda}} and \eqref{F_1 T minus phi_g_{0,1}}, we get
\begin{align}
 T^-f_1 &= \frac{-p^2\binom{r}{2}}{3a_p\tilde{c}}\left(\sum_{\lambda\in\mathbb{F}_p^\times}\left[1, \sum_{i=0}^{r-2}\binom{r-2}{i}[\lambda]^{r-2-i}X^{r-i}Y^i\right]+(1-p)[1,X^2Y^{r-2}]\right) + O(p^{\frac{5}{2}}) \nonumber \\
&= \frac{p^2(1-p)\binom{r}{2}}{3a_p\tilde{c}}\left[1, \sum_{\substack{{1\leq j< r-2}\\{j\equiv 1~\text{mod}~(p-1)}}}\binom{r-2}{j}X^{r-j}Y^j\right] + O(p^{\frac{5}{2}}). \label{F_1 T minus f_1}
\end{align}
We also have \begin{align}
-a_pf_0&= \frac{-a_p(1-p)}{p\tilde{c}}[1,X^{r-1}Y]-\frac{p^2(1-p)\binom{r}{2}}{3a_p\tilde{c}}\left[1,\sum_{\substack{{1< j< r-2}\\{j\equiv 1~\text{mod}~(p-1)}}}\binom{r-2}{j}X^{r-j}Y^j\right] \nonumber \\
&~~~- (p-1)[1,X^{r-p}Y^p]. \label{F_1 -a_pf_0}
\end{align}
So finally by equations \eqref{F_1 T minus f_1} and \eqref{F_1 T minus f_1}, in radius $0$ we get
\begin{align}
-a_pf_0+T^-f_1 = -[1, X^{r-1}Y-X^{r-p}Y^p] + O(p), \label{F_1, tau <= t, radius 0}
\end{align}
because the coefficient of $[1, X^{r-1}Y]$ term from $-a_pf_0$ and $T^-f_1$ is 
\begin{align*}
-\frac{a_p(1-p)}{p\tilde{c}} +\frac{p^2(1-p)\binom{r}{2}(r-2)}{3a_p\tilde{c}}
 = \frac{p-1}{\tilde{c}}\left(\frac{a_p^2-\binom{r}{3}p^3}{pa_p}\right) \equiv -1 ~\text{mod}~p. 
\end{align*}

In radius $1$, we compute $T^+f_0-a_pf_1+h_{\infty,1}$, where $h_{\infty,1}$ denotes the part of $(T-a_p)f_\infty$ that lives in radius $1$. We have
\begin{align}
T^+A &= \sum_{\lambda\in\mathbb{F}_p}[g^0_{1,[\lambda]}, X^{r-1}(-[\lambda]X+pY)] \label{F_1 T plus A}.
\end{align}
Also
\begin{align*}
T^+B &=\sum_{\lambda\in\mathbb{F}_p}\left[g^0_{1,[\lambda]}, \sum_{\substack{{1<j<r-2}\\{j\equiv 1 ~\text{mod}~(p-1)}}}\binom{r-2}{j}X^{r-j}(-[\lambda]X+pY)^j\right]\\
&= \sum_{\lambda\in\mathbb{F}_p^\times}\left[g^0_{1,[\lambda]}, \sum_{\substack{{1<j<r-2}\\{j\equiv 1 ~\text{mod}~(p-1)}}}\binom{r-2}{j}X^{r-j}\sum_{i=0}^j\binom{j}{i}(-[\lambda]X)^{j-i}(pY)^i\right] \\
&~~~+ \left[g^0_{1,0}, \sum_{\substack{{1<j<r-2}\\{j\equiv 1 ~\text{mod}~(p-1)}}}p^j\binom{r-2}{j}X^{r-j}Y^j\right]\\
&= \sum_{\lambda\in\mathbb{F}_p^\times}\left[g^0_{1,[\lambda]}, \sum_{i=0}^rp^i(-[\lambda])^{1-i}\sum_{\substack{{1<j<r-2}\\{j\equiv 1 ~\text{mod}~(p-1)}}}\binom{j}{i}\binom{r-2}{j}X^{r-j}Y^j\right] + O(p^{t+2}),
\end{align*}
because by Lemma \ref{lemma 0.3.}, the coefficient of $[g^0_{1,0}, X^{r-j}Y^j]$ is $p^j\binom{r-2}{j} \equiv 0$ mod $p^{t+2}$, for all $1<j<r-2$ and $j\equiv 1$ mod $(p-1)$.

By Proposition \ref{propositon 0.5} parts \textit{(1)}, \textit{(2)}, \textit{(3)} and Lemma \ref{lemma 0.3.}, the coefficients of $[g^0_{1,[\lambda]}, X^r]$, $[g^0_{1,[\lambda]}, X^{r-1}Y]$, $[g^0_{1,[\lambda]}, X^{r-i}Y^i]$ for $\lambda\neq 0$ and $i\geq 2$ in $T^+B$ are 
 \begin{align*}
 -[\lambda]\sum_{\substack{{1<j<r-2}\\{j\equiv 1~\text{mod}~(p-1)}}}\binom{r-2}{j}&\equiv -[\lambda](-(r-2) -1 +1 +np^{t+1})\\
 &\equiv -[\lambda](2-r+np^{t+1})~\text{mod}~p^{t+2},\\
  p\sum_{\substack{{1<j<r-2}\\{j\equiv 1 ~\text{mod}~(p-1)}}}j\binom{r-2}{j} &\equiv p\left(-2(r-2) + \frac{(r-2)(p-2)}{p-1}\right)\\
 & \equiv \frac{-p^2(r-2)}{p-1}~\text{mod}~p^{t+2},\\
  (-[\lambda])^{1-i}p^i\sum_{\substack{{1<j<r-2}\\{j\equiv 1 ~\text{mod}~(p-1)}}}\binom{j}{i}\binom{r-2}{j} &\equiv 0 ~\text{mod}~p^{t+2},
\end{align*}
respectively. Therefore we can write 
\begin{align}
T^+B &= \sum_{\lambda\in\mathbb{F}_p^\times}\left[g^0_{1,[\lambda]},-[\lambda](2-r+np^{t+1})X^r+\frac{p^2(r-2)}{1-p}X^{r-1}Y\right] + O(p^{t+2}). \label{F_1 T plus B}
\end{align}
Finally
\begin{align}
T^+C &= \sum_{\lambda\in\mathbb{F}_p}[g^0_{1,[\lambda]}, X^{r-p}(-[\lambda]X+pY)^p]
= \sum_{\lambda\in\mathbb{F}_p}[g^0_{1,[\lambda]}, -[\lambda]X^r] + O(p^2).\label{F_1 T plus C}
\end{align}
So from equations \eqref{F_1 T plus A}, \eqref{F_1 T plus B} and \eqref{F_1 T plus C}, we get
\begin{align*}
T^+f_0 &= \frac{1-p}{p\tilde{c}}\sum_{\lambda\in\mathbb{F}_p}[g^0_{1,[\lambda]}, X^{r-1}(-[\lambda]X + pY)]\\
&~~~ + \left(\frac{1-p}{p\tilde{c}}\right)\left(\frac{\binom{r}{2}p^3}{3a_p^2}\right)\sum_{\lambda\in\mathbb{F}_p^\times}\left[g^0_{1,[\lambda]}, -[\lambda](2-r + np^{t+1})X^r + \frac{p^2(r-2)}{1-p}X^{r-1}Y\right]\\
&~~~+ \frac{p-1}{a_p}\sum_{\lambda\in\mathbb{F}_p^\times}[g^0_{1,[\lambda]}, -[\lambda]X^r] + O(\sqrt{p}). 
	\end{align*}
Now, the coefficient of $[g^0_{1,0}, X^r]$ in $T^+f_0$ is $0$.

\noindent Coefficient of $[g^0_{1,[\lambda]}, X^r]$ for $\lambda\neq 0$ in $T^+f_0$ is 
\begin{align*}
&-[\lambda]\left(\frac{1-p}{p\tilde{c}}\right) - [\lambda]\left(\frac{1-p}{p\tilde{c}}\right)\left(\frac{\binom{r}{2}p^3}{3a_p^2}\right)(2-r+np^{t+1}) -[\lambda]\left(\frac{p-1}{a_p}\right)\\
&~~~= (1-p)[\lambda]\left(\left(\frac{-a_p^2+\binom{r}{3}p^3}{pa_p^2\tilde{c}}\right)+ \frac{1}{a_p}\right) - \frac{n[\lambda](1-p)\binom{r}{2}p^{t+3}}{3a_p^2\tilde{c}}\\
&~~~ = - \frac{n[\lambda](1-p)\binom{r}{2}p^{t+3}}{3a_p^2\tilde{c}},
\end{align*}
which is integral because $v(\tilde{c}) \leq t$.

\noindent Coefficient of $[g^0_{1,0}, X^{r-1}Y]$ in $T^+f_0$ is $\left(\frac{1-p}{p\tilde{c}}\right)p = \frac{1-p}{\tilde{c}}$.

\noindent Coefficient of $[g^0_{1,[\lambda]}, X^{r-1}Y]$ for $\lambda\neq 0 $ in $T^+f_0$ is 
\begin{align*}
& p\left(\frac{1-p}{p\tilde{c}}\right) + \left(\frac{1-p}{p\tilde{c}}\right)\left(\frac{\binom{r}{2}p^3}{3a_p^2}\right)\left(\frac{p^2(r-2)}{1-p}\right) \\
& = \frac{1-p}{\tilde{c}} +\frac{p}{\tilde{c}}\left(\frac{\binom{r}{3}p^3}{a_p^2}\right) 
= \frac{1}{\tilde{c}} + \frac{p}{\tilde{c}}\left(\frac{pa_p(-\tilde{c})}{a_p^2}\right)  = \frac{1}{\tilde{c}} -\frac{p^2}{a_p}.
\end{align*}
So we get \begin{align*}
T^+f_0 = \left(\frac{1-p}{\tilde{c}}\right)[g^0_{1,0}, X^{r-1}Y] + \frac{1}{\tilde{c}}\sum_{\lambda\in\mathbb{F}_p^\times}[g^0_{1,[\lambda]}, X^{r-1}Y] + h + O(\sqrt{p}), 
\end{align*}
where $h$ is an integral linear combination of terms of the form $[g, X^r]$, for some $g\in G$.
We can rewrite this as 
\begin{align}
T^+f_0 &= \frac{1-p}{3\tilde{c}}\left[g^0_{1,0}, \binom{r}{1}X^{r-1}Y\right] + \frac{1}{3\tilde{c}}\sum_{\lambda\in\mathbb{F}_p^\times}\left[g^0_{1,[\lambda]},\binom{r}{1}X^{r-1}Y\right] + \frac{3-r}{3\tilde{c}}\sum_{\lambda\in\mathbb{F}_p}[g^0_{1,[\lambda]}, X^{r-1}Y]  \nonumber\\ &~~~ + h +O(\sqrt{p}). \label{F_1 T plus f_0}
\end{align}
We also have \begin{align*}
-a_pf_1 &= \frac{1-p}{3\tilde{c}}\left[g^0_{1,0}, \sum_{\substack{{1<j \leq r-2}\\{j\equiv 1~\text{mod}~(p-1)}}}\binom{r}{j}X^{r-j}Y^j-S_0X^{r-p}Y^p\right]\\
&~~~+ \frac{1}{3\tilde{c}}\sum_{\lambda\in\mathbb{F}_p^\times}\left[g^0_{1,[\lambda]}, \sum_{\substack{{1<j\leq r-2}\\{j\equiv 1 ~\text{mod}~(p-1)}}}\binom{r}{j}X^{r-j}Y^j -S_0X^{r-p}Y^p\right].
\end{align*}
By Proposition \ref{proposition 0.6} part \textit{(1)}, we have $S_0 \equiv 3-r$ mod $p^{t+1}$. As $v(\tilde{c})\leq t$, we can write
\begin{align}
-a_pf_1&= \frac{1-p}{3\tilde{c}}\left[g^0_{1,0}, \sum_{\substack{{1<j \leq r-2}\\{j\equiv 1~\text{mod}~(p-1)}}}\binom{r}{j}X^{r-j}Y^j\right] +\frac{1}{3\tilde{c}}\sum_{\lambda\in\mathbb{F}_p^\times}\left[g^0_{1,[\lambda]}, \sum_{\substack{{1<j\leq r-2}\\{j\equiv 1 ~\text{mod}~(p-1)}}}\binom{r}{j}X^{r-j}Y^j \right]\nonumber \\
&~~~ -\frac{3-r}{3\tilde{c}}\sum_{\lambda\in\mathbb{F}_p}[g^0_{1,[\lambda]}, X^{r-p}Y^p] + O(p). \label{F_1 tau <= t -a_pf_1}
\end{align}
Now combining the radius $1$ terms from equations \eqref{F_1 T plus f_0} and \eqref{F_1 tau <= t -a_pf_1}, we get
\begin{align}
T^+f_0 -a_pf_1 &= \frac{1-p}{3\tilde{c}}\left[g^0_{1,0}, \sum_{\substack{{1\leq j\leq r-2}\\{j\equiv 1~\text{mod}~(p-1)}}}\binom{r}{j}X^{r-j}Y^j\right] + \frac{1}{3\tilde{c}}\sum_{\lambda\in\mathbb{F}_p^\times}\left[g^0_{1,[\lambda]}, \sum_{\substack{{1\leq j\leq r-2}\\{j\equiv 1~\text{mod}~(p-1)}}} \binom{r}{j}X^{r-j}Y^j\right]\nonumber \\
&~~~+\frac{3-r}{3\tilde{c}}\sum_{\lambda\in\mathbb{F}_p}[g^0_{1,[\lambda]}, X^{r-1}Y- X^{r-p}Y^p]+ h + O(\sqrt{p}). \label{F_1 T plus f_0 + a_pf_1}
\end{align}
We now compute $(T-a_p)\Psi_g.$ Using Lemma \ref{chi}, as $\tilde{\tau} \leq t$, we have
\begin{align*}
(T-a_p)\Psi_g &= \sum_{\mu\in\mathbb{F}_p^\times}[\mu]^{-2}\left([gg^0_{1,[\mu]}\alpha, Y^r]+a_p[gg^0_{1,[\mu]}, X^{r-3}Y^3]+p^{t+1}h_\chi+ O(p^{\tilde{\tau} +2})\right)\\
&= \sum_{\mu\in\mathbb{F}_p^\times}[\mu]^{-2}[g, ([\mu]X+ Y)^r] + a_p\sum_{\mu\in\mathbb{F}_p^\times}[\mu]^{-2}[gg^0_{1,[\mu]}, X^{r-3}Y^3] + O(p^{\tilde{\tau} +1})\\
&= (p-1)\left[g, \sum_{\substack{{1\leq j\leq r-2}\\{j\equiv 1~\text{mod}~(p-1)}}}\binom{r}{j}X^{r-j}Y^j\right] + a_p\sum_{\mu\in\mathbb{F}_p^\times}[\mu]^{-2}[gg^0_{1,[\mu]}, X^{r-3}Y^3] + O(p^{\tilde{\tau} +1}),
\end{align*}
by \eqref{sum of roots of 1}.
Using the above computation, we have
\begin{align*}
&(T-a_p)f_\infty = \frac{1}{3\tilde{c}(1-p)}\sum_{\lambda\in\mathbb{F}_p^\times}\Bigg((p-1)\Bigg[g^0_{1,[\lambda]}, \sum_{\substack{{1\leq j\leq r-2}\\{j\equiv 1~\text{mod}~(p-1)}}}\binom{r}{j}X^{r-j}Y^j\Bigg]\\
&+  a_p\sum_{\lambda\in\mathbb{F}_p^\times}[\mu]^{-2}[g^0_{1,[\lambda]}g^0_{1,[\mu]}, X^{r-3}Y^3]  \Bigg) + \frac{1}{3\tilde{c}}\Bigg((p-1)\Bigg[g^0_{1,0}, \sum_{\substack{{1\leq j\leq r-2}\\{j\equiv 1~\text{mod}~(p-1)}}}\binom{r}{j}X^{r-j}Y^j\Bigg]\\
& + a_p\sum_{\mu\in\mathbb{F}_p^\times}[\mu]^{-2}[g^0_{1,0}g^0_{1,[\mu]}, X^{r-3}Y^3]  \Bigg) + O(p).
\end{align*}
Rearranging the terms, we finally have
\begin{align}
(T- &a_p)f_\infty \nonumber \\
&= \frac{-1}{3\tilde{c}}\left((1-p)\left[g^0_{1,0},\sum_{\substack{{1\leq j\leq r-2}\\{j\equiv 1~\text{mod}~(p-1)}}}\binom{r}{j}X^{r-j}Y^j\right] + \sum_{\lambda\in\mathbb{F}_p^\times}\left[g^0_{1,[\lambda]}, \sum_{\substack{{1\leq j\leq r-2}\\{j\equiv 1~\text{mod}~(p-1)}}}\binom{r}{j}X^{r-j}Y^j\right]\right) \nonumber \\
& ~~~+ \frac{a_p}{3\tilde{c}}\left(\sum_{\mu\in\mathbb{F}_p^\times}[\mu]^{-2}[g^0_{1,0}g^0_{1,[\mu]}, X^{r-3}Y^3]+ \frac{1}{1-p}\sum_{\lambda\in\mathbb{F}_p^\times}\sum_{\mu\in\mathbb{F}_p^\times}[\mu]^{-2}[g^0_{1,[\lambda]}g^0_{1,[\mu]}, X^{r-3}Y^3]\right) + O(p). \label{F_1 (T-a_p)f_infty}
\end{align}
Combining everything together, from equations \eqref{F_1 T plus f_0 + a_pf_1} and \eqref{F_1 (T-a_p)f_infty}, in radius $1$ we have
\begin{align}
T^+f_0 - a_pf_1 + h_{\infty,1} = \frac{3-r}{3\tilde{c}}\sum_{\lambda\in\mathbb{F}_p}[g^0_{1,[\lambda]}, X^{r-1}Y - X^{r-p}Y^p] + h + O(\sqrt{p}). \label{F_1 tau <= t radius 1}
\end{align}

Finally, in radius $2$ we compute $T^+f_1 + h_{\infty, 2}$, where $h_{\infty, 2}$ is the radius $2$ part of $(T-a_p)f_\infty$. We have
\begin{align}
T^+\Phi_g &= \sum_{\mu\in\mathbb{F}_p}\left[gg^0_{1,[\mu]}, \sum_{\substack{{1<j\leq r-2}\\{j\equiv 1~\text{mod}~p-1}}}\binom{r}{j}X^{r-j}(-[\mu]X+ pY)^j - S_0X^{r-p}(-[\mu]X+ pY)^p\right] \nonumber \\
&= \sum_{\mu\in\mathbb{F}_p^\times}\left[gg^0_{1,[\mu]}, \sum_{i= 0}^r p^i(-[\mu])^{1-i}\sum_{\substack{{1< j\leq r-2}\\{j\equiv 1~\text{mod}~(p-1)}}}\binom{j}{i}\binom{r}{j}X^{r-j}Y^j + [\mu]S_0X^r\right]\nonumber \\
&~~~+ \left[gg^0_{1,0}, \sum_{\substack{{1<j\leq r-2}\\{j\equiv 1~\text{mod}~(p-1)}}}p^j\binom{r}{j}X^{r-j}Y^j\right] + O(p^{t+2}), \label{F_1 T plus psi_g}
\end{align}
since $v(S_0) \geq t$, by Proposition \ref{proposition 0.6} part \textit{(1)}.

By using Lemma \ref{lemma 0.1.} for the $\mu = 0$ terms in \eqref{F_1 T plus psi_g}, we get 
\begin{align*}
T^+\Phi_g = \sum_{\mu\in\mathbb{F}_p^\times}\left[gg^0_{1,[\mu]}, \sum_{i= 0}^r p^i(-[\mu])^{1-i}\sum_{\substack{{1< j\leq r-2}\\{j\equiv 1~\text{mod}~(p-1)}}}\binom{j}{i}\binom{r}{j}X^{r-j}Y^j + [\mu]S_0X^r\right] + O(p^{t+2}).
\end{align*}
By the definition of $S_0$, the coefficient of $[gg^0_{1,[\mu]}, X^r]$ for $\mu \neq 0$ in $T^+\Phi_g$ vanishes since
\begin{align*}
 -[\mu]\sum_{\substack{{1<j\leq r-2}\\{j\equiv 1~\text{mod}~(p-1)}}}\binom{r}{j} + [\mu]S_0 = 0.
\end{align*}

By Proposition \ref{proposition 0.6}, parts \textit{(2)}, \textit{(3)}, \textit{(4)} and \textit{(5)}, the coefficients of $[gg^0_{1[\mu]}, X^{r-1}Y]$, $[gg^0_{1,[\mu]}, X^{r-2}Y^2]$, $[g^0_{1,[\mu]}, X^{r-3}Y^3]$, $[g^0_{1,[\mu]}, X^{r-i}Y^i]$ (for $i\geq 4$) in $T^+\Phi_g$, for $\mu\neq 0$, are
\begin{align*}
p \sum_{\substack{{1<j\leq r-2}\\{j\equiv 1~\text{mod}~(p-1)}}}j\binom{r}{j} &\equiv 0~\text{mod}~p^{t+2},\\
 -[\mu]^{-1}p^2\sum_{\substack{{1<j\leq r-2}\\{j\equiv 1~\text{mod}~(p-1)}}}\binom{j}{2}\binom{r}{j}& \equiv 0 ~\text{mod}~p^{t+2},\\
 [\mu]^{-2}p^3\sum_{\substack{{1<j\leq r-2}\\{j\equiv 1~\text{mod}~(p-1)}}}\binom{j}{3}\binom{r}{j}&\equiv \frac{[\mu]^{-2}\binom{r}{3}p^3}{1-p}~\text{mod}~p^{t+2},\\
(-[\mu])^{1-i}p^i\sum_{\substack{{1<j\leq r-2}\\{j\equiv 1~\text{mod}~(p-1)}}}\binom{j}{i}\binom{r}{j} &\equiv 0 ~\text{mod}~p^{t+4}, \forall i \geq 4,
\end{align*}
respectively.
As $v(a_p\tilde{c})\leq t+\frac{3}{2}$, we get that 
\begin{align}
 \frac{-1}{3a_p\tilde{c}}T^+\Phi_g &= \frac{-p^3\binom{r}{3}}{3a_p\tilde{c}(1-p)}\sum_{\mu\in\mathbb{F}_p^\times}[\mu]^{-2}[gg^0_{1,[\mu]}, X^{r-3}Y^3] + O(\sqrt{p}) \nonumber\\
 &= \frac{-a_p}{3\tilde{c}(1-p)}\sum_{\mu\in\mathbb{F}_p^\times}[\mu]^{-2}[gg^0_{1,[\mu]}, X^{r-3}Y^3] + O(\sqrt{p}), \label{F_1 tau <= t T plus Phi_g }
\end{align}
since \begin{align*}
\frac{-p^3\binom{r}{3}}{3a_p\tilde{c}(1-p)} = \frac{pa_p\tilde{c}-a_p^2}{3a_p\tilde{c}(1-p)}\equiv \frac{-a_p}{3\tilde{c}(1-p)} ~\text{mod}~p.
\end{align*}
So by equation \eqref{F_1 tau <= t T plus Phi_g }, we get
 \begin{align}
T^+f_1 = \frac{-a_p}{3\tilde{c}}\left(\frac{1}{1-p}\sum_{\lambda\in\mathbb{F}_p^\times}\sum_{\mu\in\mathbb{F}_p^\times}[\mu]^{-2}[g^0_{1,[\lambda]}g^0_{1,[\mu]}, X^{r-3}Y^3]+ \sum_{\mu\in\mathbb{F}_p^\times}[\mu]^{-2}[g^0_{1,0}g^0_{1,[\mu]}, X^{r-3}Y^3]\right) + O(\sqrt{p}). \label{F_1 T plus f_1}
\end{align}
Therefore from equations \eqref{F_1 (T-a_p)f_infty} and \eqref{F_1 T plus f_1}, in radius $2$ we have \begin{align}
T^+f_1 + h_{\infty, 2} = O(\sqrt{p}). \label{F_1 tau <= t radius 2}
\end{align}

So putting everything together by equations \eqref{F_1, tau <= t, T minus f_0}, \eqref{F_1, tau <= t, radius 0}, \eqref{F_1 tau <= t radius 1} and \eqref{F_1 tau <= t radius 2}, we have 
\begin{align*}
(T-a_p)f &= \frac{3-r}{3\tilde{c}}\sum_{\lambda\in\mathbb{F}_p}[g^0_{1,[\lambda]}, X^{r-1}Y - X^{r-p}Y^p]- [1, X^{r-1}Y - X^{r-p}Y^p] + h + O(\sqrt{p})\\
&= \frac{3-r}{3\tilde{c}}\sum_{\lambda\in\mathbb{F}_p}[g^0_{1,[\lambda]}, \theta X^{r-p-1}]- [1,\theta X^{r-p-1}] + h + O(\sqrt{p}),
\end{align*}
where $h$ is an integral combination of terms of the form $[g, X^{r}],$ for some $g\in G$. Now the term $h$ dies in ind$_{KZ}^{G}Q$. Moreover, 
by part (ii) of Lemma~\ref{generator},
we see that $\theta X^{r-p-1}$ is identified with $X\in J_2 = V_1\otimes D$. So the image of $(T-a_p)f$ in ind$_{KZ}^GQ$ actually lies in ind$_{KZ}^GJ_2$ and is given by
\begin{align*}
\overline{\frac{3-r}{3\tilde{c}}}\sum_{\lambda\in\mathbb{F}_p}[g^0_{1,[\lambda]}, X] - [1, X] = \left(\frac{1}{\tilde{\lambda}}T - 1 \right)[1, X],
\end{align*} 
where $\tilde{\lambda} = \overline{\frac{3}{3-r}\left(\frac{a_p^2 - \binom{r}{3}p^3}{pa_p}\right)}$. 

If $\tilde{\tau} < t$, then $\tilde{\lambda}^{-1} =0$, so we have that $F_2 = 0$ because $\overline{(T-a_p)f} = [1, X]$ which is zero in $\bar{\Theta}_{k,a_p}$ and $[1, X]$ generates ind$_{KZ}^GJ_2$. 

If $\tilde{\tau} = t$, then $F_2$ is a quotient of $\pi(1,\tilde{\lambda},\omega)$  which is the same as saying that $F_2$ is a quotient of $\pi(1,\lambda,\omega)$, by \eqref{lambda = tilde(lambda)}.
\end{proof}

We now prove a lemma which will be used to show that $F_2 = 0 $, when $\tau > t + \frac{1}{2}$ (see Proposition~\ref{F_1 when tau > t + 0.5}).


\begin{lemma}{\label{Technical lemma about F_1}}
Let $T$ be the Hecke operator and $J_2 = V_1\otimes D$. If $h = \sum\limits_{\lambda\in\mathbb{F}_p}[g^0_{1,[\lambda]}, Y] \in\emph{ind}_{KZ}^GJ_2$, then $h\not\in T(\emph{ind}_{KZ}^GJ_2)$. 
\end{lemma}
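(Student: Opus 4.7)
The plan is to assume for contradiction that $h = Tf$ for some finitely supported $f \in \ind_{KZ}^G J_2$, and to argue by descending induction on the maximum distance $n \geq 0$ from the origin of the Bruhat–Tits tree $KZ\backslash G$ at which $f$ is supported. The key observation is that in characteristic $p$, the polynomial substitutions appearing in $T$ collapse drastically: for $v = aX + bY \in J_2$,
\[
T^+[1,\,aX+bY] \;=\; \sum_{\mu\in\F_p}\bigl[g^0_{1,[\mu]},\,(a-b[\mu])X\bigr], \qquad T^-[1,\,aX+bY] \;=\; [\alpha,\,bY],
\]
so $T^+$ produces only $X$-components and $T^-$ only $Y$-components.

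For the inductive step, suppose $n \geq 1$ and $\mathbf{v}$ is any vertex at distance $n$ in $\mathrm{supp}(f)$; write $f_\mathbf{v} = [g_\mathbf{v}, v_\mathbf{v}]$. Each of the $p$ children of $\mathbf{v}$ lies at distance $n+1 \geq 2$, is not a neighbor of any other vertex of $\mathrm{supp}(f)$, and is a point where $h$ vanishes; hence $Tf_\mathbf{v} = 0$ at each child of $\mathbf{v}$. Using the $G$-equivariance of $T$ to act by $g_\mathbf{v}^{-1}$, this becomes the assertion that $T[1,v_\mathbf{v}]$ vanishes at the $p$ neighbors of the origin that are the images of the children of $\mathbf{v}$. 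The $p+1$ neighbors of the origin are the $g^0_{1,[\mu]}$ ($\mu \in \F_p$) together with $\alpha = g^1_{0,0}$, and the (unique) parent of $\mathbf{v}$ is sent to exactly one of them; the remaining $p$ vanishing conditions, read off from the explicit formulas above, force $a = b = 0$ by direct inspection in either case (parent sent to some $g^0_{1,[\mu_0]}$, or parent sent to $\alpha$). Hence $f_\mathbf{v} = 0$, contradicting the choice of $\mathbf{v}$. Iterating, $f$ must be supported only at the origin.

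Finally, in the base case $n=0$, write $f = [1,\,aX+bY]$ and compute $Tf = [\alpha,\,bY] + \sum_{\mu\in\F_p}[g^0_{1,[\mu]},\,(a-b[\mu])X]$. Equating with $h = \sum_{\lambda\in\F_p}[g^0_{1,[\lambda]},\,Y]$, the $[\alpha,\cdot]$-component forces $b=0$, but then the distance-$1$ components demand $aX = Y$ in $V_1$ at each vertex $g^0_{1,[\mu]}$, which is impossible since $X$ and $Y$ are linearly independent. The main obstacle is the careful identification, in the inductive step, of the images of the children of $\mathbf{v}$ under the tree isometry induced by $g_\mathbf{v}^{-1}$; this is a short coset-rep computation using the explicit forms $g^0_{m,\lambda}$ and $g^1_{m,\lambda}$, but once in hand, the rest of the argument is forced by the rigidity of the formulas for $T^\pm$ on $J_2$ in characteristic $p$.
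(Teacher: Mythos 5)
Your proof is correct, and it takes a genuinely different route from the paper's. The paper first splits $f = f_+ + f_-$ according to which ``side'' of the tree (cosets of $g^0_{n,\lambda}$ versus $g^1_{m,\mu}$) the support lies on, then proves by explicit valuation estimates on the matrix entries of $(g^0_{n,\gamma})^{-1} g^1_{m,\mu}g^0_{1,[\lambda]}$ and $(g^0_{n,\gamma})^{-1}g^1_{m,\mu}\alpha$ that $\mathrm{supp}(Tf_-)$ avoids all $g^0_{n,\lambda}$ with $n\ge 1$; only then can it run the maximal-radius argument on $f_+$ alone. You bypass both the $\pm$ decomposition and the valuation computations entirely by invoking the $G$-equivariance of $T$: translating an extremal vertex $\mathbf v$ to the origin by $g_\mathbf{v}^{-1}$ reduces everything to a uniform local statement about $T[1,aX+bY]$, which you solve by the same observation the paper uses at the very end, namely that on $J_2 = V_1\otimes D$ in characteristic $p$ the operator $T^+$ produces only $X$-components and $T^-$ only $Y$-components. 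The net effect is that you control the full support of $f$ (not just $f_+$), with less bookkeeping. One small stylistic remark: your closing ``main obstacle'' comment overstates what is needed. As you yourself point out, both cases (parent of $\mathbf v$ sent to some $g^0_{1,[\mu_0]}$, or to $\alpha$) force $a=b=0$, so no actual identification of which neighbor the parent lands on is required; likewise ``iterating'' is not necessary, since the argument kills the maximal radius in a single step. Neither affects correctness.
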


\begin{proof}

Recall that the vertices of the Bruhat-Tits tree for $G=$ GL$_2(\mathbb{Q}_p)$ are represented by the matrices $g^0_{n,\lambda}$ and $g^1_{m,\mu}$ for $n$, $m \geq 0$, $\lambda \in I_n$ and $\mu\in I_m$ (cf. Section~\ref{Hecke}).
Let $f \in \ind_{KZ}^G J_2$. If $f$ is supported mod $KZ$ on matrices of the first kind, then we say that $f$ is supported on the positive side of the tree and if $f$ is supported mod $KZ$ on matrices of the second kind, then we say that $f$ is supported on the negative side of the tree. 

Suppose there exists an $f \in$ ind$_{KZ}^G J_2$ such that $Tf =h$. Let $f = f_+ + f_-$, where $f_+$ and $f_-$ are the parts of $f$ supported mod $KZ$ on the positive and the negative side of the tree respectively.

We first show that  supp$(Tf_-)$ mod $KZ$ does not contain matrices of the form $g^0_{n,\lambda}$, for $n\geq 1$ and $\lambda \in I_n$. As $T = T^+ + T^-$, we first claim that supp$(T^+f_-)$ mod $KZ$ does not contain matrices of the form $g^0_{n,\lambda}$, for $n \geq 1$ and $\lambda \in I_n$.
Indeed, if supp$(T^+f_-)$ mod $KZ$ contains some matrix $g^0_{n, \gamma}$, 
then we have, by \eqref{T^+},
\begin{align}\begin{pmatrix}
1 & 0\\
p\mu & p^{m+1}
\end{pmatrix}
\begin{pmatrix}
p & [\lambda]\\
0 & 1
\end{pmatrix}
= \begin{pmatrix}
p^n & \gamma\\
0 & 1
\end{pmatrix}kz, \label{T plus of f minus}
\end{align}
for some $m\geq 0$, $n \geq 1$, $\mu\in I_m$, $\lambda\in\mathbb{F}_p$, $\gamma\in I_n$, $k\in K = \GL_2(\mathbb{Z}_p)$ and $z= \left(\begin{smallmatrix}
z & 0\\
0 & z
\end{smallmatrix} \right) \in Z$. Comparing the valuations of the determinants in equation \eqref{T plus of f minus}, we get $v(\det(z)) = 2v(z)= m-n+2$, so 
\begin{align}v(z) = \frac{m-n+2}{2}. \label{v(z)}
\end{align}
 Multiplying by $(g^0_{n,\gamma})^{-1}$ on both sides of equation \eqref{T plus of f minus} and further multiplying out the matrices, we get
\begin{align}
p^{-n}\begin{pmatrix}
p-p^2\gamma\mu & [\lambda] - p[\lambda]\gamma\mu - \gamma p^{m+1}\\
p^{n+2}\mu & p^{n+1}\mu [\lambda] + p^{m+n+1}
\end{pmatrix} &= kz. \label{kz}
\end{align}
Multiplying equation \eqref{kz} by $z^{-1}$ and comparing the valuation of the $(1,1)$-th entry, we get
\begin{align*}
v(k_{(1,1)}) = -n + 1 - \frac{m-n+2}{2} = \frac{-m-n}{2},
\end{align*}
by \eqref{v(z)}. As $m\geq 0$ and $n \geq 1$, we get that $v(k_{(1,1)}) < 0$, which is a contradiction since $k_{(1,1)}\in \mathbb{Z}_p$, proving the claim.

Next we claim that supp$(T^-f_-)$ mod $KZ$ does not contain matrices of the form $g^0_{n,\gamma}$, for $n \geq 0$ and $\gamma\in I_n$. Indeed if supp$(T^-f_-)$ mod $KZ$ does contain such a matrix, then we have, by \eqref{T^-},
\begin{align}
\begin{pmatrix}
1 & 0\\
p\mu & p^{m+1}
\end{pmatrix}
\begin{pmatrix}
1 & 0\\
0 & p
\end{pmatrix} & = \begin{pmatrix}
p^n & \gamma\\
0 & 1 
\end{pmatrix}kz, \label{T of f minus}
\end{align} 
where $m$, $n\geq 0$, $\mu \in I_m$, $\gamma\in I_n$, $k \in$ GL$_2(\mathbb{Z}_p)$ and $z\in Z$.
Comparing the valuations of the determinants in equation \eqref{T of f minus}, we get
\begin{align}
v(z) = \frac{m-n+2}{2}. \label{v((z))}
\end{align}
Multiplying both sides of equation \eqref{T of f minus} by $(g^0_{n, \gamma})^{-1}$ and further multiplying out the matrices, we get 
\begin{align}
p^{-n}\begin{pmatrix}
1-p\gamma\mu & -\gamma p^{m+2}\\
p^{n+1}\mu & p^{m+n+2}  \label{(kz)}
\end{pmatrix} & = kz.
\end{align}
Multiplying equation \eqref{(kz)} by $z^{-1}$ and comparing the valuation of the $(1,1)$-th entry, we get
\begin{align*}
v(k_{(1,1)}) = -n - \frac{m-n+2}{2} = \frac{-m-n}{2} -1,
\end{align*}
by equation \eqref{v((z))}. As $m$, $n \geq 0$, we get that $v(k_{(1,1)}) < 0$, which is a contradiction since $k_{(1,1)} \in \mathbb{Z}_p$, proving the claim.
Therefore supp$(Tf_-)$ mod $KZ$ does not contain matrices of the form $g^0_{n,\mu}$, for $n\geq 1$.

Let $f_{n,+}$ denote the radius $n$ part of $f_+$. We now claim that $f_+ = f_{0,+}$.
Suppose not, then there exists an $m\geq 1$ such that $f_{m,+} \neq 0$ and $f_{l,+} = 0$, for all $l>m$. From our initial assumption we know that
\begin{align}
Tf_- + Tf_+ = h. \label{Tf- plus Tf+ = h}
\end{align}
As $m$ is the non-zero maximum radius that appears in $f_+$, the radius $m+1$ part of the left hand
side of \eqref{Tf- plus Tf+ = h} is $T^+f_{m,+}$, since supp$(Tf_-)$ mod $KZ$ does not contain 
$g^0_{n,\lambda}$ for $n\geq 1$.  Comparing the radius  $m+1 \geq 2$ parts of 
\eqref{Tf- plus Tf+ = h}, we get
\begin{align}
T^+f_{m,+} = 0, \label{T plus of f(m,+) = 0 }
\end{align}
since $h$ lies in radius $1 < 2$.
 Write
$f_{m,+} = \sum\limits_{\mu}[g^0_{m,\mu}, A_{\mu}X + B_{\mu}Y]$, for some $A_\mu$, $B_\mu \in \bar{\mathbb{F}}_p$. Now
\begin{align*}
T^+f_{m,+} & =  \sum_{\lambda\in\mathbb{F}_p}\sum_{\mu}[g^0_{m,\mu}g^0_{1,[\lambda]}, A_\mu X + B_\mu (-[\lambda]X +pY)] \\
&= \sum_{\lambda\in\mathbb{F}_p}\sum_{\mu}[g^0_{m+1,\mu + p^m[\lambda]}, (A_\mu -[\lambda]B_\mu)X ].
\end{align*}
By \eqref{T plus of f(m,+) = 0 }, we have
\begin{align*}
A_\mu - [\lambda]B\mu = 0,
\end{align*}
for all $\mu\in I_m$ and $\lambda\in\mathbb{F}_p$. Substituting $\lambda = 0$, $1$ in the above equation we get $A_{\mu} = B_{\mu} = 0$, for all $\mu\in I_m$, implying $f_{m,+} = 0$, which is a contradiction, proving out claim that $f_+ = f_{0,+}$. 

Write $f_{0,+} = [1, AX + BY]$, for some $A$, $B \in \bar{\mathbb{F}}_p$. From \eqref{Tf- plus Tf+ = h}, we get in radius $1$ that
\begin{align*}
T^+f_{0,+} = h,
\end{align*}
since supp$(Tf_-)$ mod $KZ$ does not contain vertices of the form $g^0_{1,\lambda}$, for $\lambda \in I_1$. So we have 
\begin{align*}
T^+f_{0,+} &= \sum_{\lambda\in\mathbb{F}_p}[g^0_{1,[\lambda]}, AX + B(-[\lambda]X + pY)]\\
&= \sum_{\lambda\in\mathbb{F}_p}[g^0_{1,[\lambda]}, (A - [\lambda]B)X].
\end{align*}
As $T^+f_{0,+} = h$, we get a contradiction because $h$ is supported on functions of the form $[g, v]$ where $v$ has only $Y$ terms. Therefore $h \not\in T(\text{ind}_{KZ}^GJ_2)$.
\end{proof}

\begin{prop}
{\label{F_1 when tau > t + 0.5}} 
Let $p>3$, $r \geq 2p+1$, $r = 3 + n(p-1)p^t $, with $t = v(r-3)$ and $v(a_p) = \frac{3}{2}$. Let $c=\frac{a_p^2-(r-2)\binom{r-1}{2}p^3}{pa_p}$ and $\tau = v(c)$. If $\tau > t + \frac{1}{2}$, then $F_2 = 0$. 
\end{prop}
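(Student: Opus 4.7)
The plan is to leverage the function $\xi$ from Lemma \ref{lemma 4.2} (applicable since $\tau > t + \tfrac{1}{2}$) to show that $[1, \theta Y^{r-p-1}] \in \ind_{KZ}^G V_r$, the canonical lift of $[1, Y] \in \ind_{KZ}^G J_2$ under the identification in Lemma \ref{generator}(ii), dies in $\bar{\Theta}_{k,a_p}$. Since $\ind_{KZ}^G J_2$ is generated as a $G$-module by $[1, X]$ and $[1, Y] = s \cdot [1, X]$ where $s = \matr{0}{1}{1}{0}$, this forces $F_2 = 0$. Equivalently, one can aim to place $h = \sum_{\lambda\in\F_p}[g^0_{1,[\lambda]}, Y]$ in $\ker(\ind_{KZ}^G J_2 \twoheadrightarrow F_2)$ and conclude via Lemma \ref{Technical lemma about F_1}, which asserts $h \notin T(\ind_{KZ}^G J_2)$, combined with the irreducibility of $\ind_{KZ}^G J_2/T \cong \pi(1,0,\omega)$.

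Starting from Lemma \ref{lemma 4.2} and using $X^pY^{r-p} = XY^{r-1} + \theta Y^{r-p-1}$ together with $(r-3)/(3-r) = -1$, the identity $(T-a_p)\xi = 0$ in $\Pi_{k,a_p}$ rearranges to
\[
\frac{a_p(r-2)}{p^2(3-r)}[\alpha, XY^{r-1}] + \frac{a_p^2}{p^3(3-r)}[1, X^{r-2}Y^2] - \frac{a_p^2}{p^3}\bigl([1, \theta Y^{r-p-1}] + [1, XY^{r-1}]\bigr) + O(\sqrt{p}) = 0.
\]
The decisive observation is that $-a_p^2/p^3$ is a unit in $\bar{\Z}_p^\times$, so the target $[1, \theta Y^{r-p-1}]$ already enters with an integral unit coefficient; the two $XY^{r-1}$ contributions lie in $\ind_{KZ}^G X_{r-1}$ and hence die in $\bar{\Theta}_{k,a_p}$.

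The remaining obstacle is the term $\frac{a_p^2}{p^3(3-r)}[1, X^{r-2}Y^2]$, whose scalar has valuation $-t$ and whose polynomial, via $X^{r-2}Y^2 = \theta X^{r-p-2}Y + X^{r-p-1}Y^{p+1}$ and Lemma \ref{generator}(iii), has nontrivial image in $J_3 = V_{p-2}\otimes D^2$. I would follow the template of the proof of Proposition \ref{F_0 when tau >= t} and augment $\xi$ with auxiliary functions $f_0, f_1$ (supported near the identity vertex) whose $(T-a_p)$-images supply the requisite cancellations: they should simultaneously (a) absorb the non-integrality of the $[1, X^{r-2}Y^2]$ scalar and (b) cancel its $J_3$-contribution, while leaving the unit-coefficient $[1, \theta Y^{r-p-1}]$ term intact. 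The telescoping lemmas of Section \ref{sectiontelescope}, the combinatorial sums of Section \ref{sectioncomb} (especially those involving residue classes $j \equiv 2 \bmod (p-1)$, which govern the $[1, X^{r-2}Y^2]$-type behavior at each radius), and the estimate $v(a_p^2 - p^3) = t+3$ coming from $\tau > t + \tfrac{1}{2}$ are expected to furnish the building blocks.

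With the auxiliary cancellations in place, reducing mod $p$ yields $[1, \theta Y^{r-p-1}] \equiv 0$ in $\bar{\Theta}_{k,a_p}$, whence $[1, Y]$ dies in $F_2$, so $F_2 = 0$. The hardest step will be the explicit engineering of the auxiliary function: it must simultaneously remove a scalar singularity of order $p^{-t}$ and eliminate a nontrivial $J_3$-contribution, without introducing any spurious $J_2$-term. This parallels the delicate constructions in the proofs of Propositions \ref{F_0 when tau >= t} and \ref{F_1 when tau <= t}, but with the added twist that we now work in the $F_2$ regime at the upper transition $\tau > t + \tfrac{1}{2}$, rather than at $\tau \leq t$.
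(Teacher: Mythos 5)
Your toolkit is the right one — Lemma~\ref{lemma 4.2}, auxiliary low-radius functions, and Lemma~\ref{Technical lemma about F_1} together with the irreducibility of $\ind_{KZ}^G J_2/T$ — and your rearrangement of $(T-a_p)\xi$ via $X^pY^{r-p} = XY^{r-1} + \theta Y^{r-p-1}$ is correct. But your primary route, killing the elementary function $[1, \theta Y^{r-p-1}]$ at radius $0$, cannot be carried out as stated. The term $\frac{a_p(r-2)}{p^2(3-r)}[\alpha, XY^{r-1}]$ has coefficient of valuation $-\tfrac12 - t < 0$: a non-integral function does not ``die in $\bar{\Theta}_{k,a_p}$'' just because its polynomial lies in $X_{r-1}$ — the whole expression $(T-a_p)f$ must first be made integral before it can be reduced, so this term must be \emph{cancelled}, not ignored. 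Moreover it lives at radius $-1$, which auxiliaries ``supported near the identity vertex'' (radii $0$, $1$) cannot reach without their $-a_p$- and $T^+$-parts introducing new contributions at radii $0$ and $1$ that must in turn be controlled.

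That is precisely what the paper's choice $f_\infty = \sum_{\lambda\in\F_p^\times}\xi_{g^0_{1,[\lambda]}} + (1-p)\xi_{g^0_{1,0}}$, rather than $\xi$ itself, is engineered to do. Since $g^0_{1,[\lambda]}\alpha \in KZ$, translation moves the $[\alpha, XY^{r-1}]$-term to radius $0$; the zero-sum weights $(1,\ldots,1,1-p)$ convert the averaged polynomial into $\sum_{j\equiv 2,\, j<r-1}\binom{r-1}{j}X^{r-j}Y^j$; and $f_0$ is built around exactly that binomial sum (calibrated via Proposition~\ref{Proposition 0.11}). The price is that the surviving contribution unavoidably lands at radius $1$, as $\frac{-a_p^2}{p^3}\sum_{\lambda\in\F_p}[g^0_{1,[\lambda]}, Y]$, which is \emph{not} an elementary function; so the Technical Lemma is indispensable rather than an ``equivalent'' alternative as you suggest (killing $[1,Y]$ directly would be strictly stronger and would not need it, but the construction cannot land there). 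Finally, the identity $\frac{a_p^2}{p^3(3-r)} - \frac{(r-2)\binom{r-1}{2}}{3-r} = \frac{a_p c}{p^2(3-r)}$, of valuation $\tau - \tfrac12 - t$, is positive exactly because $\tau > t+\tfrac12$; this is the place where the hypothesis is actually used and should be the centerpiece of the argument, not an item in the auxiliary-function to-do list.
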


\begin{proof}
Define the function $f=f_0 + f_\infty$, with
\begin{align*}
f_0 &=\frac{(p-1)(r-2)}{p^2(3-r)}\left[1,\sum_{\substack{{2\leq j< r-1}\\{j\equiv 2~\text{mod}~(p-1)}}}\binom{r-1}{j}X^{r-j}Y^j\right] + \frac{r-2}{2p}[1, X^{r-2}Y^2],\\
f_\infty &= \sum_{\lambda\in \mathbb{F}_p^\times} \xi_{g^0_{1,[\lambda]}} + (1-p)\xi_{g^0_{1,0}}, 
\end{align*}
where $\xi$ is as in Lemma~\ref{lemma 4.2}.

We compute $(T-a_p)f$ in several steps.

Computation in radius $-1$:
\begin{align}
T^-f_0 &= \frac{(p-1)(r-2)}{p^2(3-r)}\left[\alpha, \sum_{\substack{{2\leq j < r-1}\\{j \equiv 2~\text{mod}~(p-1)}}}\binom{r-1}{j}(pX)^{r-j}Y^j\right] + \frac{r-2}{2p}[\alpha, (pX)^{r-2}Y^2] \nonumber\\
& = O(p^2), \label{F_1 tau > t + 0.5 radius -1}
\end{align}
since $p^{r-j}\binom{r-1}{j} = p\left(p^{r-j-1}\binom{r-j}{r-j-1}\right)$, and $r-j-1 \geq p-1 > 3$ for all $j$ as above, so by Lemma~\ref{lemma 0.2}, we get $p^{r-j-1}\binom{r-1}{r-j-1} \equiv 0$ mod $p^{t+3}$. 

Computation in radius $0$:
\begin{align}
-a_p f_0 &= \frac{-a_p(p-1)(r-2)}{p^2(3-r)}\left[1, \sum_{\substack{{2\leq j < r-1}\\{j \equiv 2~\text{mod}~(p-1)}}}\binom{r-1}{j}X^{r-j}Y^j\right] + O(\sqrt{p}). \label{F_1 tau > t+ 0.5 -a_pf_0}
\end{align}
Let $h_{0,\infty}$ be the radius $0$ part of $(T-a_p)f_\infty$. By Lemma \ref{lemma 4.2}, we get
\begin{align}
h_{0,\infty} &= \frac{a_p(r-2)}{p^2(3-r)}\left(\sum_{\lambda\in\mathbb{F}_p^\times}[g^0_{1,[\lambda]}\alpha, XY^{r-1}] + (1-p)[g^0_{1,0}\alpha, XY^{r-1}]\right) \nonumber\\
&= \frac{a_p(r-2)}{p^2(3-r)}\left((p-1)\left[1, \sum_{\substack{{2\leq j \leq r-1}\\{j \equiv 2~\text{mod}~(p-1)}}}\binom{r-1}{j}(X)^{r-j}Y^j\right] + (1-p)[1, XY^{r-1}]\right) \nonumber \\
&= \frac{a_p(r-2)(p-1)}{p^2(3-r)}\left[1, \sum_{\substack{{2\leq j < r-1}\\{j \equiv 2~\text{mod}~(p-1)}}}\binom{r-1}{j}X^{r-j}Y^j\right]. \label{F_1 tau > t + 0.5 T minus f_1}
\end{align} 
So in radius $0$, from equations \eqref{F_1 tau > t+ 0.5 -a_pf_0} and \eqref{F_1 tau > t + 0.5 T minus f_1}, we get
\begin{align}
-a_pf_0 + h_{0,\infty} &= O(\sqrt{p}). \label{F_1 tau > t + 0.5 radius 0}
\end{align}

Computation in radius $1$:
\begin{align*}
T^+f_0 &= \frac{(p-1)(r-2)}{p^2(3-r)}\sum_{\lambda\in\mathbb{F}_p}\left[g^0_{1,[\lambda]}, \sum_{\substack{{2\leq j < r-1}\\{j \equiv 2~\text{mod}~(p-1)}}}\binom{r-1}{j}X^{r-j}(-[\lambda]X + pY)^j\right] \\
&~~~+ \frac{r-2}{2p}\sum_{\lambda\in\mathbb{F}_p}[g^0_{1,[\lambda]}, X^{r-2}(-[\lambda]X + pY)^2]\\
&= \frac{(p-1)(r-2)}{p^2(3-r)}\sum_{\lambda\in\mathbb{F}_p^\times}\left[g^0_{1,[\lambda]}, \sum_{\substack{{2\leq j < r-1}\\{j \equiv 2~\text{mod}~(p-1)}}}\binom{r-1}{j}X^{r-j}\sum_{i=0}^j\binom{j}{i}(-[\lambda]X)^{j-i}(pY)^i\right] \\
&~~~+ \frac{(p-1)(r-2)}{p^2(3-r)}\left[g^0_{1,0}, \sum_{\substack{{2\leq j < r-1}\\{j\equiv 2 ~\text{mod}~(p-1)}}}p^j\binom{r-1}{j}X^{r-j}Y^j\right]\\
&~~~ + \frac{r-2}{2p}\sum_{\lambda\in\mathbb{F}_p^\times}[g^0_{1,[\lambda]}, [\lambda]^2X^r - 2[\lambda]pX^{r-1}Y] + O(p)\\
 &= \frac{(p-1)(r-2)}{p^2(3-r)}\sum_{\lambda\in\mathbb{F}_p^\times}\left[g^0_{1,[\lambda]},\sum_{i = 0}^r(-[\lambda])^{2-i}p^i\sum_{\substack{{2\leq j < r-1}\\{j\equiv 2~\text{mod}~(p-1)}}}\binom{j}{i}\binom{r-1}{j}X^{r-i}Y^i\right]\\
&~~~+\frac{(p-1)(r-2)}{(3-r)}\left[g^0_{1,0},\binom{r-1}{2}X^{r-2}Y^2\right] + \frac{r-2}{2p}\sum_{\lambda\in\mathbb{F}_p^\times}[g^0_{1,[\lambda]}, [\lambda]^2X^r-2[\lambda]pX^{r-1}Y] + O(p),
\end{align*}
since by Lemma \ref{lemma 0.2}, $p^j\binom{r-1}{j} \equiv 0$ mod $p^{t+ 3}$ for all $2 < j< r-1$.

By Proposition \ref{Proposition 0.11}, part \textit{(1)}, we can write 
\begin{align}
\sum_{\substack{{2\leq j < r-1}\\{j\equiv 2 ~\text{mod}~(p-1)}}}\binom{r-1}{j}\equiv \frac{p(3-r)}{2} + \frac{p^2r(3-r)}{2} ~\text{mod}~p^{t+3}. \label{Propostion 3.7 part 1'}
\end{align}

Now by \eqref{Propostion 3.7 part 1'} and Proposition \ref{Proposition 0.11}, parts \textit{(2)}, \textit{(3)} and \textit{(4)}, the coefficients of $[g^0_{1,[\lambda]}, X^r]$, $[g^0_{1,[\lambda]}, X^{r-1}Y]$, $[g^0_{1,[\lambda]}, X^{r-2}Y^2]$ and $[g^0_{1,[\lambda]}, X^{r-i}Y^i]$ for $i\geq 3$ and $\lambda\neq  0$ in $T^+f_0$ are
\begin{align*}
\frac{[\lambda]^2(p-1)(r-2)}{p^2(3-r)}\sum_{\substack{{2 \leq j < r-1}\\{j \equiv 2 ~\text{mod}~(p-1)}}}\binom{r-1}{j} + \frac{[\lambda]^2(r-2)}{2p} 
&\equiv \frac{[\lambda]^2(p-1)(r-2)}{p^2(3-r)}\left(\frac{p(3-r)}{2} + \frac{p^2r(3-r)}{2}\right) \\
&~~~+ \frac{[\lambda]^2(r-2)}{2p} ~\text{mod}~p\\
& \equiv \frac{[\lambda]^2(r-2)(1-r)}{2}~\text{mod}~p,\\
\frac{-[\lambda](p-1)(r-2)}{p(3-r)}\sum_{\substack{{2 \leq j < r-1}\\{j \equiv 2 ~\text{mod}~(p-1)}}}j\binom{r-1}{j} - [\lambda](r-2) &\equiv \frac{-[\lambda](p-1)(r-2)}{p(3-r)}\left(\frac{p(r-1)(3-r)}{1-p}\right)\\
&~~~- [\lambda](r-2) ~\text{mod}~ p\\
&= [\lambda](r-2)^2 ~\text{mod}~p,\\
\frac{(p-1)(r-2)}{3-r}\sum_{\substack{{2 \leq j < r-1}\\{j \equiv 2 ~\text{mod}~(p-1)}}}\binom{j}{2}\binom{r-1}{j} &\equiv \frac{(p-1)(r-2)}{3-r}\left(\frac{\binom{r-1}{2}}{1-p}\right) ~\text{mod}~p\\
&=  \frac{-(r-2)\binom{r-1}{2}}{3-r}~\text{mod}~p,\\
\frac{(-[\lambda])^{2-i}(p-1)(r-2)p^i}{p^2(3-r)} \sum_{\substack{{2 \leq j < r-1}\\{j \equiv 2 ~\text{mod}~(p-1)}}} \binom{j}{i}\binom{r-1}{j} &\equiv 0 ~\text{mod}~p,
\end{align*}
respectively. Note that the coefficients of $[g^0_{1,[\lambda]}, X^r]$ and $[g^0_{1,[\lambda]}, X^{r-1}Y]$ are integral. So we get 
\begin{align}
T^+f_0 & = \frac{-(r-2)\binom{r-1}{2}}{3-r}\left(\sum_{\lambda\in\mathbb{F}_p^\times}[g^0_{1,[\lambda]}, X^{r-2}Y^2] + (1-p)[g^0_{1,0}, X^{r-2}Y^2]\right) + h + O(p), \label{F_1 tau > t+ 0.5 T plus f_0}
\end{align}
where $h$ is an integral linear combination of the terms of the form $[g, X^r]$ and $[g, X^{r-1}Y]$, for some $g\in G$.

Let $h_{1,\infty}$ be the radius $1$ part of $(T-a_p)f_\infty$. By Lemma \ref{lemma 4.2}, we get
\begin{align}
h_{1,\infty} &=  \frac{a_p^2}{p^3(3-r)}\left(\sum_{\lambda\in\mathbb{F}_p^\times}[g^0_{1,[\lambda]}, X^{r-2}Y^2 + (r-3)X^pY^{r-p}] + (1-p)[g^0_{1,0}, X^{r-2}Y^2 + (r-3)X^pY^{r-p}]\right). \label{F_1 tau > t+ 0.5 T h_infty}
\end{align}
>From equations \eqref{F_1 tau > t+ 0.5 T plus f_0} and \eqref{F_1 tau > t+ 0.5 T h_infty}, in radius $1$ we get
\begin{align}
T^+f_0 + h_{1,\infty} &= \left(\frac{a_p^2-(r-2)\binom{r-1}{2}p^3}{p^3(3-r)}\right)\left(\sum_{\lambda\in\mathbb{F}_p^\times}[g^0_{1,[\lambda]}, X^{r-2}Y^2] + (1-p)[g^0_{1,0}, X^{r-2}Y^2]\right)\nonumber\\
& ~~~ -\frac{a_p^2}{p^3}\sum_{\lambda\in\mathbb{F}_p}[g^0_{1,[\lambda]}, X^pY^{r-p}] + h + O(p). \label{F_1 tau > t+ 0.5 radius 1}
\end{align}
So from equations \eqref{F_1 tau > t + 0.5 radius -1}, \eqref{F_1 tau > t + 0.5 radius 0} and \eqref{F_1 tau > t+ 0.5 radius 1}, we get
\begin{align*}
(T-a_p)f & = \frac{ca_p}{p^2(3-r)}\left(\sum_{\lambda\in\mathbb{F}_p^\times}[g^0_{1,[\lambda]}, X^{r-2}Y^2] + (1-p)[g^0_{1,0}, X^{r-2}Y^2]\right) -\frac{a_p^2}{p^3}\sum_{\lambda\in\mathbb{F}_p}[g^0_{1,[\lambda]}, X^{r-p}Y^p]  \nonumber\\
&~~~+ h + O(\sqrt{p}). \label{F_1 tau > t + 0.5 (T-a_p)f}
\end{align*}

Now, the image of $h$ in ind$_{KZ}^GQ$ is $0$. Also as $\tau > t + \frac{1}{2}$, the image of $(T-a_p)f$ in ind$_{KZ}^GQ$ is 
\begin{align}
\overline{(T-a_p)f} & = \frac{-a_p^2}{p^3}\sum_{\lambda\in\mathbb{F}_p}[g^0_{1,[\lambda]}, X^{r-p}Y^p]\quad\text{mod}~ X_{r-1}  \nonumber \\
&= \frac{-a_p^2}{p^3}\sum_{\lambda\in\mathbb{F}_p}[g^0_{1,[\lambda]}, X^{r-p}Y^p - XY^{r-1}]\quad\text{mod}~ X_{r-1},
\end{align}
since $XY^{r-1}\in X_{r-1}$. Now it follows from part (ii) of Lemma~\ref{generator}, that 
$\theta Y^{r-p-1} = X^pY^{r-p} - XY^{r-1}$ is identified with $Y\in J_2 = V_1\otimes D$. 
So the image of $\overline{(T-a_p)f}$ in ind$_{KZ}^GQ$ actually lies in ind$_{KZ}^GJ_2$ and is given by
\begin{align*}
\overline{(T-a_p)f} &= \frac{-a_p^2}{p^3}\sum_{\lambda\in\mathbb{F}_p}[g^0_{1,[\lambda]}, Y].
\end{align*}
By Lemma \ref{Technical lemma about F_1}, we get that $\overline{(T-a_p)f}\not\in T(\text{ind}_{KZ}^GJ_2)$, so it is non-zero in $\frac{\text{ind}_{KZ}^GJ_2}{T}$. But $\overline{(T-a_p)f}$ maps to zero in $\bar{\Theta}_{k,a_p}$. So the kernel of the map $\frac{\text{ind}_{KZ}^GJ_2}{T} \twoheadrightarrow F_2$ is not zero. Since $\frac{\text{ind}_{KZ}^GJ_2}{T}$ is irreducible, the kernel is all of $ \frac{\text{ind}_{KZ}^GJ_2}{T}$. Therefore $F_2 = 0$.
\end{proof}

We remark that when $t=0$ we know $F_2 =0$ always, by Proposition \ref{Structure of Q}, part \textit{(1)}, whereas  Proposition \ref{F_1 when tau > t + 0.5} only shows that $F_2 = 0$ when $\tau >\frac{1}{2}$.  

\vspace{5mm}

\section{Behaviour of $F_3$}
\label{sectionF3}

Finally, we study the structure of the subquotient $F_3$ of $\bar\Theta_{k,a_p}$ which is the hardest to treat.
Recall that $\ind_{KZ}^G J_3 \twoheadrightarrow F_3$, where $J_3 = V_{p-2} \otimes D^2$.

One might expect that $F_3 =0$ for $\tau < t + \frac{1}{2}$, but have not been able to prove this. However, 
for our purposes it will suffice to show that $F_3 = 0$ for $\tau \leq t$.

\begin{prop}{\label{F_2 when tau =< t}}
Let $p >3$, $r \geq 2p+1$, $r = 3+n(p-1)p^t$, with $t =v(r-3)$ and $v(a_p) = \frac{3}{2}$. Let $c =\frac{a_p^2-(r-2)\binom{r-1}{2}p^3}{pa_p}$ and $\tau = v(c)$. If $\tau \leq t$, then $F_3 = 0$. 
\end{prop}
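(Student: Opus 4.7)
The strategy, paralleling the proofs of Propositions~\ref{F_0 when tau >= t} and \ref{F_1 when tau <= t}, is to construct an integral function $f\in\ind_{KZ}^G\Sym^r\bar{\Z}_p^2$ such that $\overline{(T-a_p)f}$, projected via $\ind_{KZ}^G V_r\twoheadrightarrow\ind_{KZ}^G Q$ and then to $\ind_{KZ}^G J_3$, equals a nonzero scalar multiple of the generator $[1,X^{p-2}]$. By Lemma~\ref{generator}(iii) this $J_3$-generator lifts, modulo $X_{r-1}+V_r^{**}$, to $[1,\theta X^{r-p-2}Y]=[1,X^{r-1}Y^2-X^{r-p-1}Y^{p+1}]$. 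Since $(T-a_p)f$ dies in $\bar\Theta_{k,a_p}$, the generator of $\ind_{KZ}^G J_3/T$ would then be killed inside $F_3$, forcing $F_3=0$ as $\ind_{KZ}^G J_3/T$ is irreducible.

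I would take $f=f_0+f_1+f_\infty$ in the same spirit as Proposition~\ref{F_1 when tau <= t}. The ``infinity'' piece $f_\infty$ is to be a linear combination of translates $\phi_{g^0_{1,[\lambda]}}$ (Lemma~\ref{phi}) together with doubly-indexed $\chi'_{g^0_{1,[\lambda]}g^0_{1,[\mu]}}$ (Lemma~\ref{chi prime}), weighted by appropriate Teichm\"uller characters $[\lambda]^a[\mu]^b$, so that $(T-a_p)f_\infty$ telescopes to explicit residues of the form $-a_p[g,X^{r-2}Y^2]$ and $a_p[gg^0_{1,[\mu]},[\mu]^{-1}X^{r-3}Y^3]$ concentrated at radii $0$ and $1$, with error at most $O(\sqrt p)$ under the hypothesis $\tau\le t$. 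The central piece $f_0$ is a polynomial supported at the identity of the form
\[
\Bigl[1,\;\sum_{\substack{2\le j<r-1\\ j\equiv 2\bmod(p-1)}}\beta_j\,X^{r-j}Y^j\;+\;\alpha\,X^{r-1}Y+\gamma\,X^{r-p-1}Y^{p+1}\Bigr],
\]
where the $\beta_j$ are the adjusted coefficients from Proposition~\ref{beta} and $\alpha,\gamma\in\bar\Q_p$ are chosen so that the residues in $X^{r-2}Y^2$, $Y^r$, and $X^{r-3}Y^3$ at the origin are cancelled. The intermediate piece $f_1$, supported on the immediate neighbors $g^0_{1,[\lambda]}$, is built from $\sum_j\binom{r}{j}X^{r-j}Y^j$ (over the same residue class) and serves to kill the radius-$1$ residue of $T^+f_0$ and the radius-$2$ residue of $h_{2,\infty}$, using the moment identities of Proposition~\ref{Proposition 0.10}.

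The verification would proceed radius by radius. Radius $-1$ dies mod $p$ because $p^{r-j}\binom{r-1}{j}\equiv 0\pmod{p^{t+3}}$ by Lemma~\ref{lemma 0.2}. At radii $0$ and $1$, the low-moment vanishing $\sum_j\binom{j}{i}\beta_j\equiv 0\pmod{p^{t+2-i}}$ for $i=0,1,2$ (Proposition~\ref{beta}(2)), together with the nontrivial $p^3\sum_j\binom{j}{3}\beta_j\equiv p^3\binom{r}{3}/(p-1)\pmod{p^{t+3}}$ (Proposition~\ref{beta}(3)), would force contributions to $J_1$ (from $X^{r-3}Y^3$-type terms) and to $J_2$ (from $\theta X^{r-p-1}$ and $\theta Y^{r-p-1}$) to cancel modulo $p$, leaving only a nonzero multiple of $\theta X^{r-p-2}Y$ in $J_3$. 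Radii $\ge 2$ vanish from the telescoping structure of $\phi$, $\chi'$ and Proposition~\ref{Proposition 0.10}(5). The principal obstacle is this simultaneous triple cancellation: one cannot use the unperturbed $\binom{r}{j}$ in $f_0$ because its $i=2$ moment survives mod $p$ and contaminates the $J_2$-image; the perturbed $\beta_j$ of Proposition~\ref{beta}, obtained by adjusting only $\beta_2$ and $\beta_{2p}$, are engineered precisely to absorb this defect while preserving the $i=3$ moment that produces the nonzero generator of $J_3$. A secondary difficulty is that the estimate $v(a_p^2-p^3)=\tau+\tfrac{5}{2}$ can be as small as $\tfrac{5}{2}$ when $\tau=0$, so every error term appearing in the telescoping lemmas must be tracked sharply to prevent leakage into the reduction mod~$p$.
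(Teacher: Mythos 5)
Your high-level strategy is correct: construct an $f$ so that $\overline{(T-a_p)f}$ projects to a unit multiple of an elementary generator of $\ind_{KZ}^G J_3$, forcing the surjection $\ind_{KZ}^G J_3 \twoheadrightarrow F_3$ to vanish. But the construction you describe diverges from the paper's in ways that are both unnecessarily heavy and, more importantly, miss the mechanism that actually produces the unit coefficient.

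Structurally, the paper's $f$ here has only two pieces, $f = f_0 + f_\infty$, with $f_0 = \frac{p-1}{a_p c}\bigl[1, \sum_j \beta_j X^{r-j}Y^j\bigr]$ and $f_\infty = \frac{1}{c}\sum_{\lambda\neq 0}\chi'_{g^0_{1,[\lambda]}} + \frac{r(p-1)}{pc}\phi_{g^0_{1,0}}$, i.e.\ singly-indexed $\chi'$-translates and one $\phi$-translate at $g^0_{1,0}$. You instead propose a three-piece decomposition with an intermediate $f_1$, doubly-indexed $\chi'_{g^0_{1,[\lambda]}g^0_{1,[\mu]}}$ translates (which push residues out to radius two and demand extra cancellations), and $\phi_{g^0_{1,[\lambda]}}$ for all $\lambda$. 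You also perturb $f_0$ by $\alpha X^{r-1}Y + \gamma X^{r-p-1}Y^{p+1}$; but $X^{r-1}Y \in X_{r-1}$ already dies in $Q$, so the $\alpha$-term contributes nothing to the image in $J_3$.

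The substantive gap is in what you expect the radius-zero computation to achieve. You want $\alpha, \gamma$ chosen so that the $X^{r-2}Y^2$, $Y^r$ and $X^{r-3}Y^3$ residues at the origin all cancel, leaving a residual $\theta$-type term as the nonzero generator. The paper does \emph{not} cancel $X^{r-2}Y^2$ at the origin: the radius-$0$ contribution is $\frac{r-3}{2c}\bigl[1, 2X^{r-2}Y^2 + \theta X^{r-p-2}Y + \theta X^{r-2p-1}Y^p\bigr]$, and this dies in $J_3$ because the three monomials map there to $-2X^{p-2}$, $X^{p-2}$, $X^{p-2}$ (using $r\equiv 3\bmod p$, forced by $t\geq 1$), not because $X^{r-2}Y^2$ vanishes as a polynomial. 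The actual nonzero image is produced at radius \emph{one}, by the coefficient $3\bigl(\frac{a_p^2-p^3}{pa_pc}\bigr)$ of $[g^0_{1,0}, X^{r-2}Y^2]$, which is a unit precisely because $v(a_p^2-p^3)=\tau+\frac{5}{2}$ when $\tau\leq t$. You mention this valuation identity only as a bookkeeping concern about error sizes, but it is the source of the nonzero scalar; your sketch never identifies where the unit coefficient would come from once the $X^{r-2}Y^2$ residue is cancelled at the origin. Two smaller errors: since $v(a_p)=\frac{3}{2}$ one has $\tau\geq\frac{1}{2}$, so $v(a_p^2-p^3)\geq 3$, not $\frac{5}{2}$; and $\theta X^{r-p-2}Y = X^{r-2}Y^2-X^{r-p-1}Y^{p+1}$, not $X^{r-1}Y^2-X^{r-p-1}Y^{p+1}$.
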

\begin{proof} The reader should compare the proof with the proof of Theorem 8.6 in \cite{Bhattacharya-Ghate}.

Note that $\tau \leq t$ forces $t$ to be at least $1$, since $\tau \geq \frac{1}{2}$.

 Define 
\begin{align*}
f_0 &= \frac{p-1}{a_pc}\left[1, \sum_{\substack{{2\leq j<r-1}\\{j\equiv 2~\text{mod}~(p-1)}}}\beta_jX^{r-j}Y^j\right],\\
f_\infty &= \frac{1}{c}\sum_{\lambda\in\mathbb{F}_p^\times}\chi_{g^0_{1,[\lambda]}}' + \frac{r(p-1)}{pc}\phi_{g^0_{1,0}},
\end{align*} 
where the $\beta_j$ are defined in the proof of Proposition \ref{beta}, and $\chi'$, $\phi$ are as in Lemma \ref{chi prime} and Lemma \ref{phi}, respectively. Let $f = f_0 + f_\infty$.

Computation in radius $-1$:
\begin{align}
T^-f_0 &= \frac{p-1}{a_pc}\left[\alpha, \sum_{\substack{{2\leq j<r-1}\\{j\equiv 2~\text{mod}~(p-1)}}}\beta_j(pX)^{r-j}Y^j\right] \nonumber\\
& = O(p^2), \label{F_2 tau <= t radius -1}
\end{align} 
since $r-j \geq p > 4$, for all $j$ as above, so by Proposition \ref{beta}, part \textit{(1)}, we have $p^{r-j}\beta_j \equiv p^{r-j}\binom{r}{j} ~\text{mod}~p^{t+4}$ and by Lemma \ref{lemma 0.1.}, we have $p^{r-j}\binom{r}{j} \equiv 0$ mod $p^{t+4}$.

Computation in radius $0$:
\begin{align}
-a_pf_0 = \frac{1-p}{c}\left[1, \sum_{\substack{{2\leq j < r-1}\\{j\equiv 2~\text{mod}~ (p-1)}}}\beta_jX^{r-j}Y^j\right]. \label{F_2 tau < = t  -a_pf_0}
\end{align}
Let $h_{0,\infty}$ be the radius $0$ part of $(T-a_p)f_{\infty}$. By Lemma \ref{chi prime} and Lemma \ref{phi}, we get
\begin{align}
h_{0,\infty}&= \frac{1}{c}\sum_{\lambda\in\mathbb{F}_p^\times}\left[g^0_{1,[\lambda]}\alpha, [\lambda]^{-1}Y^r\right] - \frac{r(p-1)}{pc}[g^0_{1,0}\alpha, pXY^{r-1}] + h_{\phi} + O(p)\nonumber\\
&= \frac{1}{c}\sum_{\lambda\in\mathbb{F}_p^\times}\left[1, [\lambda]^{-1}([\lambda]X + Y)^r\right]-\frac{r(p-1)}{c}[1, XY^{r-1}]  + h_{\phi} + O(p)\nonumber\\
&= \frac{p-1}{c}\left[1,  \sum_{\substack{{2\leq j< r-1}\\{j\equiv 2~\text{mod}~(p-1)}}}\binom{r}{j}X^{r-j}Y^j\right] + h_{\phi} + O(p), \label{F_2 tau < = t h_(0, infinity)}
\end{align}
by \eqref{sum of roots of 1}, where $h_{\phi}$ is an integral linear combination of the terms of the form $[g, X^{r-1}Y]$ and $[g, XY^{r-1}]$. So in radius $0$, by equations \eqref{F_2 tau < = t  -a_pf_0}, \eqref{F_2 tau < = t h_(0, infinity)} and the definition of $\beta_j$ in \eqref{def beta_j} for $j \neq 2$, $2p$, we get
\begin{align}
-a_pf_0 + h_{0, \infty} = \frac{1-p}{c}\left[1, \left(\beta_2 - \binom{r}{2}\right)X^{r-2}Y^2 + \left(\beta_{2p}-\binom{r}{2p}\right)X^{r-2p}Y^{2p}\right] + h_{\phi} + O(p). \label{F_2 tau < = t -a_pf_0 + h_(0, infinity)}
\end{align}
 By the definition of $\beta_2$ in \eqref{def beta_2}, we have
\begin{align*}
\beta_2-\binom{r}{2} &= \frac{-1}{2}\left(\sum_{\substack{{2<j<r-1}\\{j\equiv 2~\text{mod}~(p-1)}}}2b'j\binom{r}{j} + r(r-1)\right)\\
&\equiv -\frac{1}{2}\left(\sum_{\substack{{2<j\leq r-1}\\{j\equiv 2~\text{mod}~(p-1)}}}j\binom{r}{j}\right)~ \text{mod}~p^{t+1},
\end{align*} 
since $2b'\equiv 1$ mod $p^{t+1}$. By Proposition \ref{Proposition 0.10}, part \textit{(2)}, we get
\begin{align}
\beta_2 -\binom{r}{2} &\equiv \frac{r(r-3)}{2} ~\text{mod}~p^{t+1}. \label{beta_2 - r choose 2}
\end{align}
By the definition of $\beta_{2p}$ in \eqref{def beta_2p}, we get
\begin{align*}
\beta_{2p}-\binom{r}{2p} & =-\sum_{\substack{{2<j<r-1}\\{j\equiv 2~\text{mod}~(p-1)}\\{j\neq 2p}}}\binom{r}{j} -\beta_2-\binom{r}{2p}\\
&\equiv -\sum_{\substack{{2<j\leq r-1}\\{j\equiv 2~\text{mod}~(p-1)}}}\binom{r}{j} + r-\binom{r}{2}-\frac{r(r-3)}{2}~\text{mod}~p^{t+1},
\end{align*}
by \eqref{beta_2 - r choose 2}. Now, by Proposition \ref{Proposition 0.10}, part \textit{(1)}, we get  
\begin{align}
\beta_{2p} -\binom{r}{2p} \equiv r-3 -\frac{r(r-3)}{2} =\frac{(r-3)(2-r)}{2} \equiv \frac{3-r}{2}~\text{mod}~p^{t+1}, \label{beta_2p -  r choose 2p}
\end{align} since $2-r \equiv -1$ mod $p$ as $t \geq 1$. Since $v(c)\leq t$, by using equations \eqref{beta_2 - r choose 2} and \eqref{beta_2p -  r choose 2p} in \eqref{F_2 tau < = t -a_pf_0 + h_(0, infinity)}, in radius $0$, we have
\begin{align}
-a_pf_0 + h_{0,\infty} & = \frac{r-3}{2c}[1, rX^{r-2}Y^2- X^{r-2p}Y^{2p}] + h_{\phi} + O(p) \nonumber\\
&= \frac{r-3}{2c}[1, 3X^{r-2}Y^2 - X^{r-2p}Y^{2p}] + h_{\phi} + O(p) \nonumber\\
&= \frac{r-3}{2c}[1, 2X^{r-2}Y^2 + \theta X^{r-p-2}Y + \theta X^{r-2p -1}Y^p] + h_{\phi} + O(p), \label{F_2 tau < =t radius 0}
\end{align}
since $r\equiv 3$ mod $p$. 

Computation in radius $1$:
\begin{align*}
T^+f_0 &= \frac{p-1}{a_pc}\sum_{\lambda\in\mathbb{F}_p}\left[g^0_{1,[\lambda]}, \sum_{\substack{{2\leq j< r-1}\\{j\equiv 2~\text{mod}~(p-1)}}}\beta_jX^{r-j}(-[\lambda]X+pY)^j\right]\\ 
 &= \frac{p-1}{a_pc}\sum_{\lambda\in\mathbb{F}_p^\times}\left[g^0_{1,[\lambda]}, \sum_{i = 0}^r(-[\lambda])^{2-i}p^i \sum_{\substack{{2\leq j< r-1}\\{j\equiv 2~\text{mod}~(p-1)}}}\beta_j\binom{j}{i}X^{r-i}Y^i\right]\\
&~~~+ \frac{p-1}{a_pc}\left[g^0_{1,0}, \sum_{\substack{{2\leq j< r-1}\\{j\equiv 2~\text{mod}~(p-1)}}}p^j\beta_jX^{r-j}Y^j\right].
\end{align*}

By Proposition \ref{beta}, parts \textit{(2)}, \textit{(3)} and \textit{(4)}, the coefficients of $[g^0_{1,[\lambda]}, X^r]$, $[g^0_{1,[\lambda]}, X^{r-1}Y]$, $[g^0_{1,[\lambda]}, X^{r-2}Y^2]$, $[g^0_{1,[\lambda]}, X^{r-3}Y^3]$ and $[g^0_{1,[\lambda]}, X^{r-i}Y^i]$ for $i\geq 4$ and $\lambda\neq 0$ in $T^+f_0$ are
\begin{align*}
\frac{[\lambda]^2(p-1)}{a_pc}\sum_{\substack{{2\leq j< r-1}\\{j\equiv 2~\text{mod}~(p-1)}}}\beta_j &\equiv 0 ~\text{mod}~\sqrt{p},\\
\frac{-p[\lambda](p-1)}{a_pc}\sum_{\substack{{2\leq j< r-1}\\{j\equiv 2~\text{mod}~(p-1)}}}j\beta_j &\equiv 0~\text{mod}~\sqrt{p},\\
\frac{p^2(p-1)}{a_pc}\sum_{\substack{{2\leq j< r-1}\\{j\equiv 2~\text{mod}~(p-1)}}}\binom{j}{2}\beta_j &\equiv 0~\text{mod}~\sqrt{p},\\
\frac{p^3(-[\lambda])^{-1}(p-1)}{a_pc}\sum_{\substack{{2\leq j< r-1}\\{j\equiv 2~\text{mod}~(p-1)}}}\binom{j}{3}\beta_j &\equiv \frac{-[\lambda]^{-1}p^3\binom{r}{3}}{a_pc}~\text{mod}~p,\\
\frac{p^i(-[\lambda])^{2-i}(p-1)}{a_pc}\sum_{\substack{{2\leq j< r-1}\\{j\equiv 2~\text{mod}~(p-1)}}}\binom{j}{i}\beta_j &\equiv 0~ \text{mod}~p,
\end{align*}
respectively. 
The coefficient of $[g^0_{1,0}, X^{r-j}Y^j]$ for $2<j<r-1$ and $j\equiv 2$ mod $(p-1)$ in $T^+f_0$ is
\begin{align*}
\left(\frac{p-1}{a_pc}\right)p^j\beta_j \equiv 0~\text{mod}~p^2,
\end{align*}
since by Proposition \ref{beta}, part \textit{(1)} and Lemma \ref{lemma 0.1.}, $p^j\beta_j \equiv p^j \binom{r}{j} \equiv 0$ mod $p^{t+ 4}$.

The coefficient of $[g^0_{1,0}, X^{r-2}Y^2]$ in $T^+f_0$ is 
\begin{align*}
\left(\frac{p-1}{a_pc}\right)p^2\beta_2 \equiv \frac{p^2(p-1)\binom{r}{2}}{a_pc}~\text{mod}~\sqrt{p},
\end{align*}
by Proposition \ref{beta}, part \textit{(1)}.
So we can write
\begin{align*}
T^+f_0 =   - \frac{p^3\binom{r}{3}}{a_pc}\sum_{\lambda\in\mathbb{F}_p^\times}\left[g^0_{1,[\lambda]}, [\lambda]^{-1}X^{r-3}Y^3\right] + \frac{p^2(p-1)\binom{r}{2}}{a_pc}[g^0_{1,0}, X^{r-2}Y^2] + O(\sqrt{p}).
\end{align*}

Let $h_{1,\infty}$ be the radius $1$ part of $(T-a_p)f_{\infty}$. By Lemma \ref{chi prime} and Lemma \ref{phi}, we get
\begin{align*}
h_{1,\infty} &= \frac{a_p}{c}\sum_{\lambda\neq 0}[g^0_{1,[\lambda]}, [\lambda]^{-1}X^{r-3}Y^3]- \frac{a_pr(p-1)}{pc}[g^0_{1,0}, X^{r-2}Y^2] + h'_{\phi} + O(p),
\end{align*} where $h'_{\phi}$ is an integral linear combination of the terms of the form $[g, X^{r-1}Y]$ and $[g, XY^{r-1}]$, for some $g\in G$.

\noindent The coefficient of $[g^0_{1,[\lambda]}, X^{r-3}Y^3]$ for $\lambda\neq 0$ in $T^+f_0 + h_{1,\infty}$ is 
\begin{align*}
[\lambda]^{-1}\left(\frac{a_p}{c}-\frac{\binom{r}{3}p^3}{a_pc}\right) = \frac{[\lambda]^{-1}}{c}\left(\frac{a_p^2-\binom{r}{3}p^3}{a_p}\right) =\frac{[\lambda]^{-1}p\tilde{c}}{c} \equiv 0 ~\text{mod}~p,
\end{align*}
since $\tau = \tilde{\tau}$ by Lemma \ref{Lemma 5.1}, part \textit{(iii)}.

\noindent The coefficient of $[g^0_{1,0}, X^{r-2}Y^2]$ in  $T^+f_0 + h_{1,\infty}$ is
\begin{align*}
\frac{p^2(p-1)\binom{r}{2}}{a_pc}-\frac{a_pr(p-1)}{pc} &= \frac{p-1}{c}\left(\frac{p^3\binom{r}{2}-a_p^2r}{pa_p}\right)
\equiv \frac{3(p-1)}{c}\left(\frac{p^3-a_p^2}{pa_p}\right) \equiv 3\left(\frac{a_p^2-p^3}{pa_pc}\right)~\text{mod}~p,
\end{align*} 
since $r\equiv \binom{r}{2}\equiv 3$ mod $p^t$. Note that, by \eqref{v(a_p^2-p^3)} and our hypothesis $\tau \leq t$ we have $v(a_p^2-p^3) = \tau + \frac{5}{2}$, so the constant $3\left(\frac{a_p^2-p^3}{pa_pc}\right)$ is a unit.

\noindent So finally in radius $1$ we get 
\begin{align}
 T^+f_0 + h_{1,\infty} &= 3\left(\frac{a_p^2-p^3}{pa_pc}\right)[g^0_{1,0}, X^{r-2}Y^2] + h'_{\phi} + O(\sqrt{p}). \label{F_2 tau < = t radius 1}
\end{align} 

>From equations \eqref{F_2 tau <= t radius -1}, \eqref{F_2 tau < =t radius 0} and \eqref{F_2 tau < = t radius 1}, we get
\begin{equation}
\begin{gathered}
(T-a_p)f  =  \frac{r-3}{2c}[1, 2X^{r-2}Y^2 +\theta X^{r-p-2}Y + \theta X^{r-2p-1}Y^p] + 3\left(\frac{a_p^2 - p^3}{pa_pc}\right)[g^0_{1,0}, X^{r-2}Y^2] \\
                + \> h + O(\sqrt{p}), 
\end{gathered}
\label{F_2 tau < = t (T-a_p)f}
\end{equation}
where $h = h_{\phi} + h'_{\phi}$.

Now, $h$ dies in ind$_{KZ}^GQ$. The image of $X^{r-2}Y^2$ in $Q$ is the same as that of $X^{r-2}Y^2-XY^{r-1}$, which is 
\begin{align}
\theta (X^{r-p-2}Y + \ldots + Y^{r-p-1}) \equiv \theta\left(\frac{r-p-2}{p-1}X^{r-p-2}Y + Y^{r-p-1}\right)\quad \text{mod}~V_r^{**}. \label{image of X^{r-2}Y^2 mod V_r**}
\end{align}
By parts (iii) and (ii)  of Lemma~\ref{generator}, 
$\theta X^{r-p-2}Y$ and $\theta Y^{r-p-1}$ map to $X^{p-2}$ and $0$ in $J_3 = V_{p-2}\otimes D^2$, respectively. 
Therefore 
\begin{align}
X^{r-2}Y^2 \mapsto (2-r)X^{p-2} \label{image of X^{r-2}Y^2 in J_2}
\end{align}
in $J_3$. As $t\geq 1$, we get that $X^{r-2}Y^2$ maps to $-X^{p-2}$ in $J_3$. Also $\theta X^{r-2p-1}Y^p$ maps to $X^{p-2}$ in $J_3$ because $\theta X^{r-2p-1}Y^p \equiv \theta X^{r-p-2}Y$ mod $\theta ^2$, so $2X^{r-2}Y^2 +\theta X^{r-p-2}Y + \theta X^{r-2p-1}Y^p $ maps to $0$ in $J_3$.
So the image of $\overline{(T-a_p)f}$ in ind$_{KZ}^GJ_3$ is given by
\begin{align*}
\overline{3\left(\frac{p^3-a_p^2}{pa_pc}\right)}[g^0_{1,0}, X^{p-2}].
\end{align*}
 But $\overline{(T-a_p)f}$ maps to zero in $\bar{\Theta}_{k,a_p}$. Therefore $F_3 = 0$ since $[g^0_{1,0}, X^{p-2}]$ generates ind$_{KZ}^GJ_3$.
\end{proof}

\begin{remark} We make some remarks to show that our computations in the proof of   Proposition \ref{F_2 when tau =< t} are consistent with previous results. 
By Proposition \ref{Structure of Q}, part \textit{(2)}, as $t\geq 1$, we have $\frac{V_r^*}{V_r^{**}} \subset Q$, where $\frac{V_r^*}{V_r^{**}}$ is the non-trivial extension of $J_3 = V_{p-2}\otimes D^2$ by $J_2 = V_1\otimes D$.

Suppose $\tau < t$. Then the first term on the right hand side of  \eqref{F_2 tau < = t (T-a_p)f} dies mod $p$, so by equation \eqref{image of X^{r-2}Y^2 mod V_r**}, we have $\overline{(T-a_p)f} = [g^0_{1,0}, v]$, where $v = \overline{3\left(\frac{a_p^2-p^3}{pa_pc}\right)}\theta(-X^{r-p-2}Y + Y^{r-p-1}) \in \frac{V_r^*}{V_r^{**}}$. 
As $\frac{V_r^*}{V_r^{**}}$ is the non-trivial extension of $J_3$ by $J_2$ and $v$ projects to a non-zero vector in $J_3$, we see that $v$ generates $\frac{V_r^*}{V_r^{**}}$,  so $[g^0_{1,0}, v]$ generates $\ind_{KZ}^G\frac{V_r^*}{V_r^{**}}$, since  $[g^0_{1,0}, v]$ is an elementary function (i.e., supported mod $KZ$ on one matrix). This shows that $\ind_{KZ}^G\frac{V_r^*}{V_r^{**}} \twoheadrightarrow F_{2,3}$ is the zero map. In particular $F_2 = 0$ because $F_2 \subset F_{2,3}$, and we recover Proposition \ref{F_1 when tau <= t}, part \textit{(i)}, showing $F_2$ vanishes for $\tau < t$. 

This argument does not work for $\tau = t$. Indeed when $\tau = t$, by \eqref{F_2 tau < = t (T-a_p)f}, we see that $\overline{(T-a_p)f}$ is supported 
mod $KZ$ in radii $0$ and $1$, so is not an elementary function and so does not necessarily generate $\ind_{KZ}^G\frac{V_r^*}{V_r^{**}}$. So $F_2$ does not necessarily vanish when $\tau = t$. This is consistent with Proposition \ref{F_1 when tau <= t}, part \textit{(ii)}.
\end{remark}


\begin{prop}\label{F_2 when tau < t + 1}
Let $p>3$, $r \geq 2p+1$, $r = 3+n(p-1)p^t$, with $t = v(r-3)$ and $v(a_p)= \frac{3}{2}$. Let $c=\frac{a_p^2-(r-2)\binom{r-1}{2}p^3}{pa_p}$ and $\tau = v(c)$. If $\tau < t+1$, then the map from $\mathrm{ind}_{KZ}^GJ_3$ to $F_3$ factors through the image of $T$, i.e., $\pi(p-2, 0, \omega^2)\twoheadrightarrow F_3$.
\end{prop}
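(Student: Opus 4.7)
The plan follows the template set in Propositions~\ref{F_0 when tau >= t} and \ref{F_1 when tau <= t}: I will construct a function $f = f_0 + f_\infty \in \ind_{KZ}^G \Sym^r \bar{\Q}_p^2$ with integral reduction such that the image of $(T-a_p)f$ in $\ind_{KZ}^G J_3$ is a non-zero scalar multiple of $T[1, X^{p-2}]$ modulo $O(p^\epsilon)$ errors, with any spurious projections to $J_1, J_2$ controlled. Since $(T-a_p)f$ vanishes in $\bar{\Theta}_{k,a_p}$, this forces $T[1, X^{p-2}] = 0$ in $F_3$; by $G$-equivariance, $T$ annihilates the image of $\ind_{KZ}^G J_3$ in $F_3$, and the desired factorization $\pi(p-2, 0, \omega^2) \twoheadrightarrow F_3$ follows. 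Note that the case $\tau \leq t$ is already covered by Proposition~\ref{F_2 when tau =< t}, so the real content is for $\tau \in (t, t+1)$.

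The ``infinity'' piece $f_\infty$ will be built from the telescoping functions $\xi''$ (Lemma~\ref{xi''}) and $\psi'_\mu$ (Lemma~\ref{psi'}), namely
\[
f_\infty = \xi'' + \sum_{\mu \in \F_p^\times} a_\mu\, \psi'_\mu,
\]
with weights $a_\mu \in \bar{\Q}_p$ to be determined. The hypothesis $\tau < t+1$ is exactly what makes these building blocks valid: their definitions include a $p/c$ rescaling that is admissible only in this range, and the resulting error terms are $O(p^\epsilon)$ where $\epsilon = \min\{t+1-\tau, \frac{1}{2}\}$. When $(T-a_p)$ is applied, each of $\xi''$ and $\psi'_\mu$ produces output only at radii $0$ and $1$. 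The ``local'' piece $f_0$ is an elementary function of the form $[1, P(X,Y)]$, with $P$ a linear combination of monomials $X^{r-j}Y^j$ for $j \equiv 2 \pmod{p-1}$, designed so that the radius-$0$ residue of $(T-a_p) f_\infty$ is cancelled by $-a_p f_0$, while $T^+ f_0$ produces the correct radius-$1$ contribution. The combinatorial identities of Propositions~\ref{Proposition 0.11} and \ref{Proposition 0.10} will be the tools for evaluating $T^+ f_0$ modulo the relevant powers of $p$.

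Once the computation has been carried out in $\ind_{KZ}^G Q$, the identifications from Lemma~\ref{generator} (notably $X^{r-2}Y^2 \mapsto -X^{p-2}$ in $J_3$ when $t\geq 1$, as used in the proof of Proposition~\ref{F_2 when tau =< t}, together with $X^r, X^{r-1}Y$ vanishing in $Q$) allow me to read off the $J_3$-projection of the surviving radius-$1$ terms. The coefficients $a_\mu$ and the polynomial $P$ should be chosen so that this projection reads $C \cdot \sum_{\lambda \in \F_p}[g^0_{1,[\lambda]}, X^{p-2}]$ for a non-zero $C \in \bar\F_p$, which is exactly $T[1, X^{p-2}]$ in $\ind_{KZ}^G J_3$ (since $T^-[1, X^{p-2}]$ contributes $p^{p-2}[\alpha, X^{p-2}] \equiv 0$ in $J_3$ as $p-2 \geq 3$).

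The main obstacle will be the delicate bookkeeping of constants. The weights $a_\mu$ and the polynomial $P$ must be balanced so that (a) the $J_1$ and $J_2$ components of $\overline{(T-a_p)f}$ are either absent or land in the image of $\ind_{KZ}^G J_2$ (hence disappear from $F_3$ after quotienting $F_{2,3}$ by $F_2$), (b) all radius-$0$ and radius-$\geq 2$ terms die to precision $O(p^\epsilon)$, and (c) the remaining radius-$1$ expression projects to a non-zero multiple of $\sum_{\lambda}[g^0_{1,[\lambda]}, X^{p-2}]$ in $J_3$. The $\tau < t+1$ hypothesis is what gives the error tolerance $O(p^{t+1-\tau})$ needed in Lemmas~\ref{xi''} and \ref{psi'}, and is essential for the argument to close. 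This is the same flavor of balancing done in Proposition~\ref{F_2 when tau =< t}, but now with finer control since the error margins shrink as $\tau \uparrow t+1$.
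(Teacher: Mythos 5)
Your proposal aims for the right target (exhibit $f$ with $\overline{(T-a_p)f} = T[1,X^{p-2}]$ in $\ind_{KZ}^G J_3$), and correctly identifies that $\xi''$ and $\psi'_\mu$ are the relevant telescopes since the $p/c$ normalization is exactly what the hypothesis $\tau<t+1$ makes admissible. But the two-piece decomposition $f = f_0 + f_\infty$ with \emph{untranslated} telescopes is structurally too rigid, and I believe it does not close up. The paper's proof uses a genuinely three-piece $f = f_0 + f_1 + f_\infty$, with $f_1$ an explicit radius-$1$ function and with $f_\infty$ built from the \emph{translates} $\xi''_{g^0_{1,[\lambda]}}$ and $\psi'_{g^0_{1,[\lambda]}}$ over all $\lambda\in\F_p$; this pushes the telescope output out to radii $1$ and $2$, and $f_1$ then simultaneously feeds the radius-$0$ cancellation via $T^-f_1$, contributes to radius $1$ via $-a_pf_1$, and kills the dangerous $X^{r-3}Y^3$ terms at radius $2$ via $T^+f_1$.

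Here is where your plan concretely gets stuck. The radius-$1$ output of your untranslated $f_\infty$ is $-\frac{a_p^2}{p^2c}[g^0_{1,0},X^{r-2}Y^2]$ (from $\xi''$, concentrated at $\lambda=0$ with a coefficient of valuation $1-\tau$, non-integral whenever $\tau>1$) plus $\sum_\mu \frac{a_\mu a_p^2}{pc}[g^0_{1,[\mu]},[\mu]^{-1}X^{r-3}Y^3]$ (from $\psi'_\mu$, which projects to $J_1$, not to $V_r^*/V_r^{**}$, and hence must be completely cancelled for the $F_3$ conclusion to make sense). The only other source of radius-$1$ terms is $T^+f_0$. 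For $f_0 = [1,\sum_{j\equiv 2}c_jX^{r-j}Y^j]$, the $X^{r-3}Y^3$ coefficient of $T^+f_0$ at $g^0_{1,[\lambda]}$ is $-[\lambda]^{-1}p^3\sum_j c_j\binom{j}{3}$, so cancellation forces $a_\mu = p^4c\,\bigl(\sum_j c_j\binom{j}{3}\bigr)/a_p^2$ independent of $\mu$. With $a_\mu$ constant, the radius-$0$ cancellation $-a_pf_0 + h_{0,\infty} = O(p^\epsilon)$ forces $c_j = \frac{a(p-1)}{pc}\binom{r}{j}$, and substituting back into the $a$-relation and using Proposition~\ref{Proposition 0.10}(4) yields $a\bigl(1 - \frac{p^3\binom{r}{3}}{a_p^2}\bigr) = a\cdot\frac{p\tilde c}{a_p} \equiv 0$. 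Since $\frac{p\tilde c}{a_p}\neq 0$, this forces $a=0$, hence $c_j=0$, trivializing the construction. You could try non-constant $a_\mu$, but then $h_{0,\infty}$ has contributions at monomials $X^{r-j}Y^j$ for $j\not\equiv 2\bmod(p-1)$ that $f_0$ (as you defined it) cannot cancel, and enlarging $P$ to a general polynomial re-introduces the non-integrality at radius $1$. The upshot is that there is no room: the paper introduces $f_1$ precisely to break this deadlock, letting $-a_pf_1$ absorb the unwanted $\sum_j\binom{r}{j}X^{r-j}Y^j$ radius-$1$ piece that appears when $f_\infty$ is translated, while $T^+f_1$ cancels the $X^{r-3}Y^3$ radius-$2$ terms coming from the translated $\psi'$. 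Finally, note that the paper first dispatches the degenerate case $v(2-r)>0$ (which forces $t=0$, $\tau=\frac12$) by citing Bhattacharya--Ghate; you should do the same, as the explicit $f_0,f_1$ all involve division by $2-r$.
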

\begin{proof}
If $v(2-r) > 0$, then $t = v(r-3) = 0$ and $\tau = \frac{1}{2}$, i.e., $v\left(a_p^2-\binom{r-1}{2}(r-2)p^3\right) = 3$. So condition $(\star)$ holds in the main result (Theorem 1.1) of \cite{Bhattacharya-Ghate} and the proposition follows from the first part of the proof of Theorem 9.1 in \cite{Bhattacharya-Ghate}. Note that $J_3$ for us is $J_1$ in \cite{Bhattacharya-Ghate}.

So throughout this proof we will assume that $v(2-r) = 0$.

Let
\begin{equation} 
\begin{gathered}
S_{r-1} = \sum\limits_{\substack{{2\leq j < r-1}\\{j\equiv 2~\text{mod}~(p-1)}}}\binom{r-1}{j}, \\
 T_{r-1} =  \sum\limits_{\substack{{2\leq j < r-1}\\{j\equiv 2~\text{mod}~(p-1)}}}j\binom{r-1}{j}. 
 \end{gathered} \label{S(r-1) and T(r-1)}
 \end{equation}
 
Define   
\begin{align*}
f_0 &= \frac{1-p}{ca_p}\left[1, \sum_{\substack{{2 \leq j <r-1}\\{j\equiv 2  ~\text{mod}~(p-1)}}}\binom{r-1}{j}X^{r-j}Y^j\right] + \left(\frac{p-1}{ca_p}\right)S_{r-1}[1,X^{r-2}Y^2]\\
&~~~+ \left(\frac{2S_{r-1}-T_{r-1}}{ca_p}\right)[1, X^{r-2}Y^2-X^{r-p-1}Y^{p+1}],\\
f_1 &= \frac{-1}{cp(2-r)}\left(\sum_{\lambda \in\mathbb{F}_p^\times}[g^0_{1,[\lambda]},X^{r-2}Y^2-(r-2)XY^{r-1}] +(1-p)[g^0_{1,0},X^{r-2}Y^2-(r-2)XY^{r-1}]\right)\\
&~~~+ \frac{1-p}{cp(2-r)}\left(\sum_{\lambda\in\mathbb{F}_p^\times}\left[g^0_{1,[\lambda]}, \sum_{\substack{{2\leq j\leq r-p}\\{j\equiv 2~\text{mod}~(p-1)}}}\mkern- 16 mu\binom{r}{j}X^{r-j}Y^j\right] +(1-p)\left[g^0_{1,0}, \sum_{\substack{{2\leq j\leq r-p}\\{j\equiv 2~\text{mod}~(p-1)}}}\mkern- 16 mu\binom{r}{j}X^{r-j}Y^j\right]\right)\\
&~~~ -\frac{3(3-r)}{2c(2-r)}\left(\sum_{\lambda\in\mathbb{F}_p^\times }[g^0_{1,[\lambda]}, X^{r-2}Y^2] +(1-p)[g^0_{1,0}, X^{r-2}Y^2]\right),\\
f_\infty &= \frac{pr-2}{(2-r)^2}\left(\sum_{\lambda\in\mathbb{F}_p^\times}\xi''_{g^0_{1,[\lambda]}} +(1-p)\xi''_{g^0_{1,0}}\right) +\frac{1}{r-2}\sum_{\mu\in\mathbb{F}_p^\times}\left(\sum_{\lambda\in\mathbb{F}_p^\times}\psi'_{g^0_{1,[\lambda]}}+(1-p)\psi'_{g^0_{1,0}}\right),
\end{align*}
where $\xi''$ and $\psi'$ are as in Lemma \ref{xi''} and Lemma \ref{psi'}. Let $f=f_0+f_1+f_\infty$.

Computation in radius $-1$: 
\begin{align}
T^-f_0 &= \frac{(1-p)}{ca_p}\left[\alpha, \sum_{\substack{{2 \leq j <r-1}\\{j\equiv 2  ~\text{mod}~(p-1)}}}\binom{r-1}{j}(pX)^{r-j}Y^j\right] + \left(\frac{p-1}{ca_p}\right)S_{r-1}[\alpha,(pX)^{r-2}Y^2]
\nonumber \\
&~~~+ \left(\frac{2S_{r-1}-T_{r-1}}{ca_p}\right)[\alpha, (pX)^{r-2}Y^2-(pX)^{r-p-1}Y^{p+1}] \nonumber \\
&= O(p), \label{F_2  tau < t+1 radius -1 }
\end{align}
since $r-j-1 \geq p-1 > 3$ for all $j$ as above so by Lemma \ref{lemma 0.2}, $p\left(p^{r-j-1}\binom{r-1}{r-j-1}\right)\equiv 0~\text{mod}~ p^{t+4}$ and by Proposition \ref{Proposition 0.11}, part \textit{(1)} and \textit{(2)}, we have $v(S_{r-1}) = v(T_{r-1}) = t +1$, so $p^{r-2}S_{r-1} \equiv p^{r-p-1}(2S_{r-1} - T_{r-1})\equiv0 ~\text{mod}~p^{t+4}$.

Computation in radius $0$:
\begin{align}
-a_pf_0 & =\frac{p-1}{c}\left[1, \sum_{\substack{{2 \leq j <r-1}\\{j\equiv 2  ~\text{mod}~(p-1)}}}\binom{r-1}{j}X^{r-j}Y^j\right] + \left(\frac{1-p}{c}\right)S_{r-1}[1,X^{r-2}Y^2] \nonumber\\
&~~~ - \left(\frac{2S_{r-1}-T_{r-1}}{c}\right)[1, X^{r-2}Y^2-X^{r-p-1}Y^{p+1}] \nonumber\\
& = \frac{p-1}{c}\left[1, \sum_{\substack{{2 \leq j <r-1}\\{j\equiv 2  ~\text{mod}~(p-1)}}}\binom{r-1}{j}X^{r-j}Y^j\right] + O(p^\delta), \label{F_2 tau < t+1 -a_pf_0}
\end{align}
for $\delta :=t+1-\tau > 0$, because $v(S_{r-1}) = v(T_{r-1}) = t +1$ by Proposition \ref{Proposition 0.11}, part \textit{(1)} and \textit{(2)} and the fact that $v(c) =\tau < t+1$.
Now
\begin{align*}
T^-f_1 &= \frac{-1}{cp(2-r)}\bigg(\sum_{\lambda \in\mathbb{F}_p^\times}[g^0_{1,[\lambda]}\alpha,(pX)^{r-2}Y^2-(r-2)pXY^{r-1}]  \\
&~~~+(1-p)[g^0_{1,0}\alpha,(pX)^{r-2}Y^2-(r-2)pXY^{r-1}]\bigg)\\
&~~~+ \frac{1-p}{cp(2-r)}\left(\sum_{\lambda\in\mathbb{F}_p^\times}\left[g^0_{1,[\lambda]}\alpha, \sum_{\substack{{2\leq j\leq r-p}\\{j\equiv 2~\text{mod}~(p-1)}}}\binom{r}{j}(pX)^{r-j}Y^j\right]\right.\\&~~~\left. +(1-p)\left[g^0_{1,0}\alpha, \sum_{\substack{{2\leq j\leq r-p}\\{j\equiv 2~\text{mod}~(p-1)}}}\binom{r}{j}(pX)^{r-j}Y^j\right]\right)\\
&~~~-\frac{3(3-r)}{2c(2-r)}\left(\sum_{\lambda\in\mathbb{F}_p^\times}[g^0_{1,[\lambda]}\alpha, (pX)^{r-2}Y^2] +(1-p)[g^0_{1,0}\alpha, (pX)^{r-2}Y^2]\right)\\
&= -\frac{1}{c}\left(\sum_{\lambda\in\mathbb{F}_p^\times}[1, X([\lambda]X + Y)^{r-1}]+(1-p)[1, XY^{r-1}]\right) + O(p^2),
\end{align*} 
since $r-2 > t +4$, by Lemma \ref{lemma 0.1.}, $p^{r-j}\binom{r}{j} \equiv 0 ~\text{mod}~p^{t+4}$ for all $j$ as above, and $v(c) < t+1$. 
By \eqref{sum of roots of 1}, we have
\begin{align}
T^-f_1 = \frac{1-p}{c}\left[1, \sum_{\substack{{2\leq j<r-1}\\{j\equiv 2~\text{mod}~(p-1)}}}\binom{r-1}{j}X^{r-j}Y^j\right] + O(p^2). 
\label{F_2 tau < t+1 T minus f_1}
\end{align}
So in radius $0$ by equations \eqref{F_2 tau < t+1 -a_pf_0} and \eqref{F_2 tau < t+1 T minus f_1}, we get
\begin{align}
-a_pf_0 +T^-f_1 = O(p^{\delta'}), \label{F_2 tau < t+1 radius 0}
\end{align}
with $\delta':= \min(\delta, 2)$.

Computation in radius $1$:
\begin{align*}
T^+f_0 &=\left( \frac{1-p}{ca_p}\right)\sum_{\lambda \in\mathbb{F}_p}\left[g^0_{1,[\lambda]}, \sum_{\substack{{2\leq j < r-1}\\{j \equiv 2~\text{mod}~(p-1)}}}\binom{r-1}{j}X^{r-j}(-[\lambda]X+pY)^j\right]\\
 &~~~+\left(\frac{p-1}{ca_p}\right)S_{r-1}\sum_{\lambda\in\mathbb{F}_p}[g^0_{1,[\lambda]}, X^{r-2}(-[\lambda]X+pY)^2]\\
 &~~~+\left( \frac{2S_{r-1}-T_{r-1}}{ca_p}\right)\sum_{\lambda\in\mathbb{F}_p}[g^0_{1,[\lambda]}, X^{r-2}(-[\lambda]X+pY)^2-X^{r-p-1}(-[\lambda]X+pY)^{p+1}]\\
 &= \left(\frac{1-p}{ca_p}\right)\sum_{\lambda \in\mathbb{F}_p^\times}\left[g^0_{1,[\lambda]},\sum_{i =0}^{r} (-[\lambda])^{2-i}p^i \sum_{\substack{{2\leq j < r-1}\\{j \equiv 2~\text{mod}~(p-1)}}}\binom{j}{i}\binom{r-1}{j}X^{r-i}Y^i\right]\\
 &~~~ + \left(\frac{1-p}{ca_p}\right)\left[g^0_{1,0}, p^2\binom{r-1}{2}X^{r-2}Y^2\right] + \left(\frac{p-1}{ca_p}\right)S_{r-1}\sum_{\lambda\in\mathbb{F}_p^\times}[g^0_{1,[\lambda]}, [\lambda]^2X^r - 2p[\lambda]X^{r-1}Y]\\
 &~~~ + \left(\frac{2S_{r-1}-T_{r-1}}{ca_p}\right)\sum_{\lambda\in\mathbb{F}_p^\times}[g^0_{1,[\lambda]}, [\lambda]p(p-1)X^{r-1}Y] + O(\sqrt{p}),
\end{align*}
by Lemma \ref{lemma 0.2}, $p^j\binom{r-1}{j} \equiv 0$ mod $p^{t+3}$ for $2< j < r-1$ and using the fact that $v(S_{r-1}) = v(T_{r-1}) =t + 1$. 

By the definition of $S_{r-1}$, $T_{r-1}$ and Proposition \ref{Proposition 0.11}, part \textit{(3)}, part \textit{(4)} and the fact that $v(S_{r-1}) = v(T_{r-1}) = t+1$, the coefficients of $[g^0_{1,[\lambda]}, X^r]$, $[g^0_{1,[\lambda]}, X^{r-1}Y]$, $[g^0_{1,[\lambda]}, X^{r-2}Y^2]$ and $[g^0_{1,[\lambda]}, X^{r-i}Y^i]$ for $i\geq 3$ and $\lambda \neq 0$ in $T^+f_0$ are
\begin{align*}
\left(\frac{1-p}{ca_p}\right)[\lambda]^2(S_{r-1} - S_{r-1}) &= 0,\\
\frac{p(p-1)}{ca_p}[\lambda](T_{r-1} - 2S_{r-1} + 2S_{r-1} - T_{r-1}) &= 0,\\
\left(\frac{1-p}{ca_p}\right)p^2\sum_{\substack{{2\leq j < r-1}\\{j\equiv 2~\text{mod}~(p-1)}}}\binom{j}{2}\binom{r-1}{j} &\equiv \frac{p^2\binom{r-1}{2}}{ca_p}~\text{mod}~\sqrt{p},\\
\left(\frac{1-p}{ca_p}\right)(-[\lambda])^{2-i}p^i \sum_{\substack{{2\leq j < r-1}\\{j\equiv 2~\text{mod}~(p-1)}}}\binom{j}{i}\binom{r-1}{j} &\equiv 0~\text{mod}~\sqrt{p},
\end{align*}
respectively. Therefore we get 
\begin{align}
T^+f_0 = \frac{p^2\binom{r-1}{2}}{ca_p}\left(\sum_{\lambda\in\mathbb{F}_p^\times}[g^0_{1,[\lambda]}, X^{r-2}Y^2+(1-p)[g^0_{1,0},X^{r-2}Y^2]\right) + O(\sqrt{p}). \label{F_2 tau < t+1 T plus f_0}
\end{align}
Now 
\begin{align}
-a_pf_1  &= \frac{a_p}{cp(2-r)}\left(\sum_{\lambda \in\mathbb{F}_p^\times}[g^0_{1,[\lambda]},X^{r-2}Y^2-(r-2)XY^{r-1}] +(1-p)[g^0_{1,0},X^{r-2}Y^2-(r-2)XY^{r-1}]\right) \nonumber\\
&~~~+ \frac{(p-1)a_p}{cp(2-r)}\left(\sum_{\lambda\in\mathbb{F}_p^\times}\left[g^0_{1,[\lambda]}, \sum_{\substack{{2\leq j\leq r-p}\\{j\equiv 2~\text{mod}~(p-1)}}}\binom{r}{j}X^{r-j}Y^j\right] \right.\nonumber\\
&~~~\left. +(1-p)\left[g^0_{1,0}, \sum_{\substack{{2\leq j\leq r-p}\\{j\equiv 2~\text{mod}~(p-1)}}}\binom{r}{j}X^{r-j}Y^j\right]\right)  + O(\sqrt{p}), \label{F_2 tau < t +1 -a_pf_1 }
\end{align}
since $v\left(\frac{3(3-r)a_p}{2c(2-r)}\right) > \frac{1}{2}$ as $(2-r)$ is a unit.

Let $h_{1, \infty}$ be the radius $1$ part of $(T-a_p)f_\infty$. By Lemma \ref{xi''} and Lemma \ref{psi'}, we get 
\begin{align*}
h_{1,\infty} &=\frac{pr-2}{(2-r)^2}\left(\frac{a_p}{pc}\right)\left(\sum_{\lambda\in\mathbb{F}_p^\times}[g^0_{1,[\lambda]},(2-r)XY^{r-1}] +(1-p)[g^0_{1,0},(2-r)XY^{r-1}]\right)\\
&~~~+ \frac{1}{r-2}\left(\frac{a_p}{pc}\right)\sum_{\mu\in\mathbb{F}_p^\times}\left(\sum_{\lambda\in\mathbb{F}_p^\times}[g^0_{1,[\lambda]},[\mu]^{-1}([\mu]X+Y)^r]+(1-p)[g^0_{1,0},[\mu]^{-1}([\mu]X+Y)^r]\right) + O(p^\epsilon),
\end{align*}
for $\epsilon = \min(\delta, \frac{1}{2}) > 0$.
By \eqref{sum of roots of 1}, we can rewrite the above as
\begin{align}
h_{1,\infty} &=\frac{pr-2}{(2-r)^2}\left(\frac{a_p}{pc}\right)\left(\sum_{\lambda\in\mathbb{F}_p^\times}[g^0_{1,[\lambda]},(2-r)XY^{r-1}] +(1-p)[g^0_{1,0},(2-r)XY^{r-1}]\right) \nonumber \\
&~~~+ \frac{p-1}{r-2}\left(\frac{a_p}{pc}\right)\left(\sum_{\lambda\in\mathbb{F}_p^\times}\left[g^0_{1,[\lambda]},\sum_{\substack{{2\leq j \leq r-1}\\{j\equiv 2~\text{mod}~(p-1)}}}\binom{r}{j}X^{r-j}Y^j\right]\right. \nonumber\\
&~~~\left.+(1-p)\left[g^0_{1,0}, \sum_{\substack{{2\leq j \leq r-1}\\{j\equiv 2~\text{mod}~(p-1)}}}\binom{r}{j}X^{r-j}Y^j\right]\right) + O(p^\epsilon) \nonumber\\
&=-\frac{a_p}{pc}\left(\sum_{\lambda\in\mathbb{F}_p^\times}[g^0_{1,[\lambda]}, XY^{r-1}]+(1-p)[g^0_{1,0}, XY^{r-1}]\right)\nonumber \\
&~~~+\frac{a_p(p-1)}{(r-2)pc}\left(\sum_{\lambda\in\mathbb{F}_p^\times}\left[g^0_{1,[\lambda]}, \sum_{\substack{{2\leq j\leq r-p}\\{j\equiv 2 ~\text{mod}~(p-1)}}}\mkern- 18 mu\binom{r}{j}X^{r-j}Y^j\right]+(1-p)\left[g^0_{1,0},\sum_{\substack{{2\leq j\leq r-p}\\{j\equiv 2 ~\text{mod}~(p-1)}}}\mkern -18 mu\binom{r}{j}X^{r-j}Y^j\right]\right) \nonumber\\
&~~~+O(p^\epsilon). \label{F_2 tau < t+1 h_{1,infty}}
\end{align}
So finally in radius $1$ by \eqref{F_2 tau < t+1 T plus f_0}, \eqref{F_2 tau < t +1 -a_pf_1 } and \eqref{F_2 tau < t+1 h_{1,infty}}, we get 
\begin{align}
T^+f_0 -a_pf_1 + h_{1,\infty} & =\left(\frac{p^2\binom{r-1}{2}}{ca_p}+\frac{a_p}{cp(2-r)}\right)\left(\sum_{\lambda\in\mathbb{F}_p^\times}[g^0_{1,[\lambda]}, X^{r-2}Y^2]+(1-p)[g^0_{1,0}, X^{r-2}Y^2]\right) \mkern-7 mu +\mkern-5 mu O(p^\epsilon)\nonumber\\
&=\frac{1}{2-r}\left(\sum_{\lambda\in\mathbb{F}_p^\times}[g^0_{1,[\lambda]}, X^{r-2}Y^2]+(1-p)[g^0_{1,0}, X^{r-2}Y^2]\right) + O(p^\epsilon)\nonumber\\
&= \frac{1}{2-r}\sum_{\lambda\in \mathbb{F}_p}[g^0_{1,[\lambda]}, X^{r-2}Y^2] +O(p^\epsilon). \label{F_2 tau < t+1 radius 1}
\end{align}
Computation in radius $2$:
\begin{align*}
T^+f_1 &= -\frac{1}{cp(2-r)}\sum_{\lambda \in\mathbb{F}_p^\times}\sum_{\mu\in\mathbb{F}_p}[g^0_{2, [\lambda]+p[\mu]}, X^{r-2}(-[\mu]X+pY)^2 - (r-2)X(-[\mu]X+pY)^{r-1}]\\
&~~~-\frac{(1-p)}{cp(2-r)}\sum_{\mu\in\mathbb{F}_p}[g^0_{2,p[\mu]}, X^{r-2}(-[\mu]X+pY)^2 - (r-2)X(-[\mu]X+pY)^{r-1}]\\
&~~~+\frac{(1-p)}{cp(2-r)}\sum_{\lambda\in\mathbb{F}_p^\times}\sum_{\mu\in\mathbb{F}_p}\left[g^0_{2,[\lambda]+p[\mu]}, \sum_{\substack{{2\leq j\leq r-p}\\{j\equiv 2(p-1)}}}\binom{r}{j}X^{r-j}(-[\mu]X+pY)^j\right]\\
&~~~+\frac{(1-p)^2}{cp(2-r)}\sum_{\mu\in\mathbb{F}_p}\left[g^0_{2, p[\mu]}, \sum_{\substack{{2\leq j\leq r-p}\\{j\equiv 2~\text{mod}~(p-1)}}}\binom{r}{j}X^{r-j}(-[\mu]X+pY)^j\right]\\
&~~~ -\frac{3(3-r)}{2c(2-r)}\sum_{\lambda\in\mathbb{F}_p^\times}\sum_{\mu\in\mathbb{F}_p}[g^0_{2,[\lambda]+p[\mu]}, X^{r-2}(-[\mu]X+pY)^2]\\
&~~~ -\frac{3(3-r)(1-p)}{2c(2-r)}\sum_{\mu\in\mathbb{F}_p}[g^0_{2,p[\mu]}, X^{r-2}(-[\mu]X+pY)^2].
\end{align*}
By using Lemma \ref{lemma 0.2} in the first sum, Lemma \ref{lemma 0.1.} for $\mu =0$ in the third  and fourth sums and by our assumption $v(c) < t+1$, we get
\begin{align*}
T^+f_1 &= \frac{-1}{cp(2-r)}\sum_{\lambda\in\mathbb{F}_p^\times}\sum_{\mu\in\mathbb{F}_p^\times}\left[g^0_{2,[\lambda] + p[\mu]}, [\mu]^2(3-r)X^r -[\mu]pr(3-r)X^{r-1}Y \right. \\
&~~~ \left. + \> p^2\left(1- (r-2)\binom{r-1}{2}\right)X^{r-2}Y^2\right] \\
&~~~ +\frac{(p-1)}{cp(2-r)}\sum_{\mu\in\mathbb{F}_p^\times}\left[g^0_{2, p[\mu]}, [\mu]^2(3-r)X^r -[\mu]pr(3-r)X^{r-1}Y \right. \\
&~~~ \left. + \> p^2\left(1- (r-2)\binom{r-1}{2}\right)X^{r-2}Y^2\right] \\
&~~~ -\frac{1}{cp(2-r)}\left(\sum_{\lambda\in\mathbb{F}_p^\times}[g^0_{2,[\lambda]}, p^2X^{r-2}Y^2] + (1-p)[g^0_{2,0}, p^2X^{r-2}Y^2]\right)\\
&~~~+ \frac{(1-p)}{cp(2-r)}\sum_{\lambda\in\mathbb{F}_p^\times}\sum_{\mu\in\mathbb{F}_p^\times}\left[g^0_{2, [\lambda] + p[\mu]}, \sum_{i=0}^r(-[\mu])^{2-i}p^i\sum_{\substack{{2\leq j \leq r-p}\\{j\equiv 2~\text{mod}~(p-1)}}}\binom{j}{i}\binom{r}{j}X^{r-i}Y^i\right]\\
&~~~+ \frac{(1-p)^2}{cp(2-r)}\sum_{\mu\in\mathbb{F}_p^\times}\left[g^0_{2,  p[\mu]}, \sum_{i=0}^r(-[\mu])^{2-i}p^i\sum_{\substack{{2\leq j \leq r-p}\\{j\equiv 2~\text{mod}~(p-1)}}}\binom{j}{i}\binom{r}{j}X^{r-i}Y^i\right]\\
&~~~+ \frac{(1-p)}{cp(2-r)}\left(\sum_{\lambda\in\mathbb{F}_p^\times}\left[g^0_{2,[\lambda]}, p^2\binom{r}{2}X^{r-2}Y^2\right] + (1-p)\left[g^0_{2, 0}, p^2\binom{r}{2}X^{r-2}Y^2\right]\right)\\
&~~~- \frac{3(3-r)}{2c(2-r)}\sum_{\mu\in\mathbb{F}_p^\times}\left(\sum_{\lambda\in\mathbb{F}_p^\times}[g^0_{2, [\lambda] + p[\mu]},[\mu]^2X^r] + (1-p)[g^0_{2, p[\mu]}, [\mu]^2X^r]\right) + O(p).
\end{align*}
Recall $\delta = t+1 -\tau > 0$. By Proposition \ref{Proposition 0.10}, part \textit{(1)}, \textit{(2)}, \textit{(3)}, \textit{(4)}, \textit{(5)} and Lemma \ref{lemma 0.2}, the coefficients of $[g^0_{2,[\lambda]+p[\mu]}, X^r]$, $[g^0_{2,[\lambda]+p[\mu]}, X^{r-1}Y]$, $[g^0_{2,[\lambda]+p[\mu]}, X^{r-2}Y^2]$, $[g^0_{2,[\lambda]+p[\mu]}, X^{r-3}Y^3]$ and $[g^0_{2,[\lambda]+p[\mu]}, X^{r-i}Y^i]$ for $i\geq 4$, $\lambda \neq 0$, $\mu\neq 0$ in $T^+f_1$ are
\begin{align*}
\frac{[\mu]^2(r-3)}{cp(2-r)} &+ \frac{[\mu]^2(1-p)}{cp(2-r)}\sum_{\substack{{2\leq j \leq r-p}\\{j\equiv 2~\text{mod}~(p-1)}}}\binom{r}{j} -\frac{3[\mu]^2(3-r)}{2c(2-r)}\\
&\equiv \frac{[\mu]^2(r-3)}{cp(2-r)} + \frac{[\mu]^2(1-p)}{cp(2-r)}\left((3-r) + \frac{5(3-r)p}{2(1-p)}\right) -\frac{3[\mu]^2(3-r)}{2c(2-r)} ~\text{mod}~p^\delta \\
&= \frac{[\mu]^2(r-3)}{cp(2-r)}(1-1) + \frac{[\mu]^2(3-r)}{c(2-r)}\left(-1 + \frac{5}{2} - \frac{3}{2}\right)~\text{mod}~p^\delta\\
&= 0 ~\text{mod}~p^\delta,\\
\frac{[\mu]r(3-r)}{c(2-r)} & -\frac{[\mu](1-p)}{c(2-r)}\sum_{\substack{{2\leq j \leq r-p}\\{j\equiv 2~\text{mod}~(p-1)}}}j\binom{r}{j} \\
& \equiv \frac{[\mu]r(3-r)}{c(2-r)} -\frac{[\mu](1-p)}{c(2-r)}\left(2\binom{r}{2} + r(3-r) - (r-1)\binom{r}{r-1}\right)~\text{mod}~p^\delta\\
&\equiv 0~\text{mod}~p^\delta, \\
\frac{-p}{c(2-r)} &\left(1- (r-2)\binom{r-1}{2}\right)  + \frac{p(1-p)}{c(2-r)}\sum_{\substack{{2\leq j \leq r-p}\\{j\equiv 2~\text{mod}~(p-1)}}}\binom{j}{2}\binom{r}{j}\\
&\equiv \frac{-p}{c(2-r)} \left(1- (r-2)\binom{r-1}{2}\right) + \frac{p(1-p)}{c(2-r)}\left(\binom{r}{2} - \binom{r-1}{2}\binom{r}{r-1}\right)~\text{mod}~p^{\delta +1}\\
&\equiv \frac{-p}{c(2-r)}\left(\frac{(3-r)(r^2- 2r + 2)}{2}\right) + \frac{p(1-p)}{c(2-r)}\left(\frac{r(r-1)(3-r)}{2}\right)~\text{mod}~p^{\delta + 1}\\
& \equiv 0~\text{mod}~p^\delta,\\
\frac{[\mu]^{-1}p^2(p-1)}{c(2-r)}&\sum_{\substack{{2\leq j \leq r-p}\\{j\equiv 2~\text{mod}~(p-1)}}}\binom{j}{3}\binom{r}{j}\\
& \equiv \frac{[\mu]^{-1}p^2(p-1)}{c(2-r)}\left(\frac{\binom{r}{3}}{p-1}-\binom{r-1}{3}\binom{r}{r-1}\right)~\text{mod}~p\\
&\equiv \frac{[\mu]^{-1}p^2\binom{r}{3}}{c(2-r)}~\text{mod}~p,\\
\frac{(1-p)}{cp(2-r)}& (-[\mu])^{2-i}p^i\left(-\binom{r-1}{i}\binom{r}{r-1}\right)\\
&\equiv 0~\text{mod}~p,
\end{align*}
respectively.

The coefficients of $[g^0_{2,p[\mu]}, X^{r}]$, $[g^0_{2,p[\mu]}, X^{r-1}Y]$, $[g^0_{2,p[\mu]}, X^{r-2}Y^2]$, $[g^0_{2,p[\mu]}, X^{r-3}Y^3]$ and $[g^0_{2,p[\mu]}, X^{r-i}Y^i]$ for $i\geq 4$, $\mu\neq 0$ in $T^+f_1$ are $(1-p)$ times the coefficients above. 
So we have 
\begin{align*}
T^+f_1 &= \left(\frac{(1-p)p\binom{r}{2}}{c(2-r)}-\frac{p}{c(2-r)}\right)\left(\sum_{\lambda\in\mathbb{F}_p^\times}[g^0_{2,[\lambda]}, X^{r-2}Y^2]+(1-p)[g^0_{2,0}, X^{r-2}Y^2]\right) \\
&~~~+\frac{p^2\binom{r}{3}}{c(2-r)}\sum_{\mu\in\mathbb{F}_p^\times}\left(\sum_{\lambda\in\mathbb{F}_p^\times}[g^0_{2,[\lambda]+p[\mu]}, [\mu]^{-1}X^{r-3}Y^3]+(1-p)[g^0_{2,p[\mu]}, [\mu]^{-1}X^{r-3}Y^3]\right) 
+ O(p^{\tilde{\delta}}),
\end{align*}
where $\tilde{\delta} =\min(\delta, 1) > 0$.

Let $h_{2,\infty}$ be the radius $2$ part of $(T-a_p)f_{\infty}$. By Lemma \ref{xi''} and Lemma \ref{psi'}, we get\begin{align*}
h_{2,\infty} &=\frac{2-pr}{(2-r)^2}\left(\frac{a_p^2}{p^2c}\right)\left(\sum_{\lambda\in\mathbb{F}_p^\times}[g^0_{2,[\lambda]}, X^{r-2}Y^2] +(1-p)[g^0_{2,0}, X^{r-2}Y^2]\right)\\
&~~~ +\frac{1}{r-2}\left(\frac{a_p^2}{pc}\right)\sum_{\mu\in\mathbb{F}_p^\times}\left(\sum_{\lambda\in\mathbb{F}_p^\times}[g^0_{[\lambda]+p[\mu]},[\mu]^{-1}X^{r-3}Y^3]+(1-p)[g^0_{2,p[\mu]},[\mu]^{-1}X^{r-3}Y^3]\right) + O(p^\epsilon),
\end{align*}
where $\epsilon = \min(\delta, \frac{1}{2}) > 0$.

The coefficient of $[g^0_{2,[\lambda]}, X^{r-2}Y^2]$ for $\lambda\neq 0$ in $T^+f_1 + h_{2,\infty}$ is 
\begin{align*}
\frac{(1-p)p\binom{r}{2}}{c(2-r)} &- \frac{p}{c(2-r)} +\frac{2-pr}{(2-r)^2}\left(\frac{a_p^2}{p^2c}\right)\\
&= \frac{(1-p)p\binom{r}{2}}{c(2-r)} -\frac{p}{c(2-r)}  +\frac{a_p^2}{cp^2(2-r)} +\frac{(1-p)ra_p^2}{cp^2(2-r)^2}\\
&=  \frac{(1-p)r}{p^2c(2-r)^2}\left(a_p^2 - \binom{r-1}{2}p^3\right) + \frac{a_p^2-p^3}{p^2c(2-r)} \\
& \equiv 0 ~\text{mod}~p^\epsilon,
\end{align*}
for $\epsilon = \min(\delta, \frac{1}{2}) > 0$, since $v(a_p^2-p^3)\geq \min(\tau+ \frac{5}{2} , t+3)$ by \eqref{v(a_p^2-p^3)} and $v\left(a_p^2 - \binom{r-1}{2}p^3\right)\geq \min(\tau+ \frac{5}{2}, t+3)$ by a similar argument, and $\tau < t+1$.

\noindent Similarly, the coefficient of $[g^0_{2,0}, X^{r-2}Y^2]$ in $T^+f_1 + h_{1,\infty}$ is $(1-p)$ times the computation above, so is $0$ mod $p^\epsilon$.

\noindent The coefficient of $[g^0_{2,[\lambda]+p[\mu]}, X^{r-3}Y^3]$ for $\lambda\neq 0$, $\mu\neq 0$ in $T^+f_1 + h_{2,\infty}$ is 
  \begin{align*}
 [\mu]^{-1}\left(\frac{p^2\binom{r}{3}}{c(2-r)}+\frac{a_p^2}{(r-2)pc}\right)
 = \frac{[\mu]^{-1}}{pc(r-2)}\left(a_p^2-p^3\binom{r}{3}\right) \equiv 0~\text{mod}~p,
\end{align*}
since as before $v\left(a_p^2-\binom{r}{3}p^3\right)\geq \min(\tau + \frac{5}{2}, t+3)$ and $\tau < t+1$.

\noindent Similarly, the coefficient of $[g^0_{2,p[\mu]}, X^{r-3}Y^3]$ for $\mu\neq 0$ in $T^+f_1 + h_{2,\infty}$ is $(1-p)$ times the computation above, so is $0$ mod $p$.
Therefore  in radius $2$ we get 
\begin{align}
T^+f_1 + h_{2,\infty} = O(p^\epsilon),\label{F_2 tau < t+1 radius 2}
\end{align}
with $\epsilon > 0$.

By \eqref{F_2  tau < t+1 radius -1 }, \eqref{F_2 tau < t+1 radius 0}, \eqref{F_2 tau < t+1 radius 1} and \eqref{F_2 tau < t+1 radius 2}, we get
 \begin{align*}
(T-a_p)f = \frac{1}{2-r}\sum_{\lambda\in\mathbb{F}_p}[g^0_{1,[\lambda]}, X^{r-2}Y^2] + O(p^\epsilon),
\end{align*}
with $\epsilon > 0$.

By \eqref{image of X^{r-2}Y^2 in J_2}, $X^{r-2}Y^2$ maps to $(2-r)X^{p-2}$ in $J_3 = V_{p-2} \otimes D^2$. So the image $\overline{(T-a_p)f}$ inside ind$_{KZ}^GJ_3$ is 
\begin{align*}
\sum_{\lambda\in\mathbb{F}_p}[g^0_{1,[\lambda]}, X^{p-2}] = T[1,X^{p-2}].
\end{align*}
As $\overline{(T-a_p)f}$ maps to zero in $\bar{\Theta}_{k,a_p}$ and $[1, X^{p-2}]$ generates ind$_{KZ}^GJ_3$, we get that $T(\text{ind}_{KZ}^GJ_3) \subset$ kernel$\left(\text{ind}_{KZ}^GJ_3 \twoheadrightarrow F_3\right)$.
Therefore, if $\tau < t+1$, then $ F_3$ is a quotient of $\frac{\text{ind}^G_{KZ}J_3}{T}$. 
\end{proof} 

Finally, we give the structure of $F_3$ for $\tau \geq t+1$. 

\begin{prop} \label{F_2 when tau = > t + 1}
Let $p>3$, $r\geq 2p+1$, $r = 3 + n(p-1)p^t$, with $t=v(r-3)$ and $v(a_p) =\frac{3}{2}$. Let $c =\frac{a_p^2-(r-2)\binom{r-1}{2}p^3}{pa_p}$ and $\tau =v(c)$.
\begin{enumerate}[label =(\roman{*})]
\item  If $\tau = t+1$, then $F_3$ is a quotient of 
$$\frac{\mathrm{ind}^G_{KZ}J_3}{T^2-\bar{d}T+1},$$
where $$d =\frac{2c}{(2-r)(3-r)p}.$$
\item  If $ \tau > t+1$, then $F_3$ is a quotient of \begin{align*}
\frac{\mathrm{ind}^G_{KZ}J_3}{T^2+1}.
\end{align*}
\end{enumerate}
Thus in both cases $F_3$ is a quotient of $\pi(p-2,\lambda,\omega^2) \oplus \pi(p-2,\lambda^{-1}, \omega^2)$, where $$\lambda + \frac{1}{\lambda}= \bar{d}.$$
\end{prop}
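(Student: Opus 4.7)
The plan is to construct a trial function $f \in \ind_{KZ}^G \Sym^r \bar\Q_p^2$ whose image $\overline{(T-a_p)f}$ in $\ind_{KZ}^G J_3$ equals, up to a unit scalar, the element $(T^2 - \bar d T + 1)[1, X^{p-2}]$. Since $\overline{(T-a_p)f}$ must vanish in $\bar\Theta_{k,a_p}$ and $[1, X^{p-2}]$ generates $\ind_{KZ}^G J_3$, this forces $F_3$ to be a quotient of $\ind_{KZ}^G J_3 / (T^2 - \bar d T + 1)$, proving part (i). Part (ii) is then immediate, since $v(d) = \tau - t - 1 > 0$ when $\tau > t+1$, so $\bar d = 0$ and the polynomial degenerates to $T^2+1$. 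The final statement about $\pi(p-2,\lambda,\omega^2) \oplus \pi(p-2,\lambda^{-1},\omega^2)$ follows from the factorization $T^2 - \bar d T + 1 = (T-\lambda)(T-\lambda^{-1})$ with $\lambda + \lambda^{-1} = \bar d$ and the classification of irreducible smooth mod $p$ representations of $G$ recalled in Section \ref{subsectionLLC}.

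The function $f$ will take the form $f = f_0 + f_1 + f_2 + f_\infty$, extending the strategy of Proposition \ref{F_2 when tau < t + 1} by one additional level. The piece $f_\infty$ is built from the telescoping functions $\xi'_{g^0_{1,[\lambda]}}$ of Lemma \ref{lemma 0.18} and $\psi_{g^0_{1,[\lambda]}}$ of Lemma \ref{Lemma 0.17.}, summed over $\lambda \in \F_p$ (with a weight of $(1-p)$ at $\lambda=0$, as in previous proofs) and, for $\psi$, over $\mu \in \F_p^\times$; both lemmas are available because $\tau \geq t+1$. These telescoping ingredients contribute in radii $0$, $1$, and $2$ of the Bruhat--Tits tree, with leading coefficients of orders $a_p/p^2$, $a_p^2/p^2$, and $a_p^2/p^3$ respectively. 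The finite pieces $f_0, f_1, f_2$, supported on vertices of radii $0$, $1$, and $2$, are explicit integral linear combinations of elementary functions $[g, X^{r-j}Y^j]$ with $j \equiv 2 \pmod{p-1}$, chosen to cancel the non-$J_3$ components of $(T-a_p)f_\infty$ via Propositions \ref{Proposition 0.11} and \ref{Proposition 0.10}.

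The computation then proceeds radius by radius. Contributions in radii $-1$ and $3$ die mod $p$ via the binomial estimates of Lemmas \ref{lemma 0.1.}--\ref{lemma 0.2}. In radii $0$, $1$, and $2$, the surviving terms, reduced via Lemma \ref{generator} and the identities $X^{r-2}Y^2 \mapsto (2-r)X^{p-2}$, $\theta X^{r-p-2}Y \mapsto X^{p-2}$, $\theta Y^{r-p-1} \mapsto 0$ in $J_3$, produce scalar multiples of $[1, X^{p-2}]$, $\sum_\lambda [g^0_{1,[\lambda]}, X^{p-2}]$, and $\sum_{\lambda,\mu}[g^0_{2,[\lambda]+p[\mu]}, X^{p-2}]$, which coincide with $[1, X^{p-2}]$, $T[1, X^{p-2}]$, and $T^2[1, X^{p-2}]$ inside $\ind_{KZ}^G J_3$. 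The coefficient $\bar d$ at radius $1$ emerges from the decomposition $a_p^2 - p^3 = p a_p c + p^3(\binom{r-1}{2}(r-2)-1)$: the second summand, of exact valuation $t+3$, feeds the constant and $T^2$ terms with coefficient $1$, while the first summand $pa_pc$ furnishes the linear $-\bar d T$ term, with $\bar d = \overline{2c/((2-r)(3-r)p)}$ appearing as the correct ratio prescribed by Proposition \ref{Proposition 0.11} parts (1) and (2).

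The main obstacle will be the simultaneous coordination of cancellations across three radii with the prescribed relative ratio $1 : -\bar d : 1$. Unlike the proof of Proposition \ref{F_2 when tau < t + 1}, where a single surviving term at radius $1$ sufficed, here we must push the congruences of Proposition \ref{Proposition 0.11} one order of $p$ further, using the full mod $p^{t+3}$ information in part (1) rather than only its leading $p(2-r)/2$ term; similarly the higher-order contributions from $\psi$-translates must be tracked with care in the radius $2$ computation. Beyond this more delicate combinatorial bookkeeping, the argument is a natural extension of the techniques already developed in Sections \ref{sectiontelescope} and \ref{sectionF2}.
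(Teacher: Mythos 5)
Your high-level plan correctly identifies the goal, the lemmas ($\xi'$ from Lemma~\ref{lemma 0.18}, $\psi$ from Lemma~\ref{Lemma 0.17.}, both available because $\tau\geq t+1$), the weight $(1-p)$ at $\lambda=0$, the extra sum over $\mu\in\F_p^\times$ for the $\psi$-piece, and the final structure: contributions at radii $0,1,2$ projecting in $\ind_{KZ}^G J_3$ to $[1,X^{p-2}]$, $T[1,X^{p-2}]$, $T^2[1,X^{p-2}]$ with coefficients $1,-\bar d,1$. But the proposed construction of $f$ is heavier than necessary and misses the paper's central simplifying observation.

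The paper does \emph{not} introduce a fourth piece $f_2$ at radius $2$. It uses only $f=f_0+f_1+f_\infty$, exactly as in Proposition~\ref{F_2 when tau < t + 1}, and in fact the function here is literally $\tfrac{-2c}{(2-r)(3-r)p}$ times the function used there (noting $\xi''=\tfrac{p}{c}\xi'$ and $\psi'=\tfrac{p}{c}\psi$). This scaling factor has valuation $\tau-t-1\geq 0$ precisely in the present regime, so terms that were swallowed into the $O(p^\epsilon)$ error when $\tau<t+1$ become the dominant radius-$0$ and radius-$2$ contributions; a single $T^+f_1$ (plus $h_{2,\infty}$) already supplies everything in radius $2$, and $-a_pf_0+T^-f_1$ supplies the radius-$0$ piece. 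Adding an $f_2$ would not only be redundant but would create a new radius-$3$ contribution from $T^+f_2$ that you would then have to kill separately, and would disturb the delicate cancellations at radius $2$ between $T^+f_1$ and $h_{2,\infty}$ that produce the clean coefficient $\tfrac{1}{2-r}$. Your ``coordination of cancellations across three radii with prescribed ratio $1:-\bar d:1$'' is precisely what the rescaling trick makes automatic, and trying to engineer it by hand with an extra block is the hard road.

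A smaller point: the way $\bar d$ actually arises is not via the splitting $a_p^2-p^3=pa_pc+p^3(\binom{r-1}{2}(r-2)-1)$ feeding different radii. Rather, the radius-$1$ coefficient is computed directly as
\[
\frac{-2p\binom{r-1}{2}}{(2-r)(3-r)a_p}-\frac{2a_p}{(2-r)^2(3-r)p^2}
=\frac{-2\left(a_p^2-(r-2)\binom{r-1}{2}p^3\right)}{(2-r)^2(3-r)p^2a_p}
=\frac{-2c}{(2-r)^2(3-r)p},
\]
i.e.\ $c$ appears at once from its defining identity $a_p^2-(r-2)\binom{r-1}{2}p^3=pa_pc$. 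The radius-$0$ and radius-$2$ coefficients are each separately $\pm\tfrac{1}{2-r}$, independently of $c$, rather than being the ``other half'' of an $a_p^2-p^3$ decomposition.

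So: right target, right lemmas, right final picture, but a structurally different (and riskier) construction, and the one observation that makes the paper's proof go through cleanly — reuse of the $\tau<t+1$ functions scaled by $\tfrac{-2c}{(2-r)(3-r)p}$ — is absent.
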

\begin{proof} The idea of the proof is similar to the proof of Proposition~\ref{F_2 when tau < t + 1}.

Note that $\tau \geq t +1$ forces $v(2-r) = 0$. Indeed, $v(2-r) > 0$ implies $\tau = \frac{1}{2}$.

Let $S_{r-1} = \sum\limits_{\substack{{2\leq j <r-1}\\{j\equiv 2~\text{mod}~(p-1)}}}\binom{r-1}{j} $ and $T_{r-1} = \sum\limits_{\substack{{2\leq j <r-1}\\{j\equiv 2~\text{mod}~(p-1)}}}j\binom{r-1}{j}$ be as in \eqref{S(r-1) and T(r-1)}.
 
Define 
\begin{align*}
f_0 &= \frac{2(p-1)}{(2-r)(3-r)pa_p}\left(\left[1,\sum_{\substack{{2\leq j < r-1}\\{j \equiv 2~\text{mod}~(p-1)}}} \binom{r-1}{j}X^{r-j}Y^j\right] -S_{r-1}[1, X^{r-2}Y^2]\right)\\ 
&~~~-\frac{2(2S_{r-1} - T_{r-1})}{(2-r)(3-r)pa_p}[1, X^{r-2}Y^2 - X^{r-p-1}Y^{p+1}],\\
f_1 &= \frac{2}{(2-r)^2(3-r)p^2}\Bigg(\mkern-3mu\sum_{\lambda \in\mathbb{F}_p^\times}[g_{1,[\lambda]}^0, X^{r-2}Y^2\mkern -5mu-(r-2)XY^{r-1}]+(1-p)[g^0_{1,0}, X^{r-2}Y^2\mkern-5mu - (r-2)XY^{r-1}]\Bigg) \\
&~~~-\frac{2(1-p)}{(2-r)^2(3-r)p^2}\left(\sum_{\lambda\in\mathbb{F}_p^\times}\left[g^0_{1,[\lambda]}, \mkern-18mu\sum_{\substack{{2\leq j\leq r-p}\\{j\equiv 2~\text{mod}~(p-1)}}}\mkern-18mu\binom{r}{j}X^{r-j}Y^j\right]+(1-p)\left[g^0_{1,0}, \mkern-18mu\sum_{\substack{{2\leq j\leq r-p}\\{j\equiv 2~\text{mod}~(p-1)}}}\mkern-18mu\binom{r}{j}X^{r-j}Y^j\right]\right)\\
&~~~+ \frac{3}{(2-r)^2p}\left(\sum_{\lambda \in\mathbb{F}_p^\times}[g^0_{1,[\lambda]}, X^{r-2}Y^2] +(1-p)[g^0_{1,0}, X^{r-2}Y^2]\right),\\
f_\infty &= \frac{2(2-pr)}{(2-r)^3(3-r)}\left(\sum_{\lambda\in\mathbb{F}_p^\times} \xi'_{g^0_{1,[\lambda]}}+ (1-p)\xi'_{g^0_{1,0}}\right) + \frac{2}{(2-r)^2(3-r)}\sum_{\mu \in\mathbb{F}_p^\times}\left(\sum_{\lambda\in\mathbb{F}_p^\times}\psi_{g^0_{1,[\lambda]}} + (1-p)\psi_{g^0_{1,0}}\right),
\end{align*}
where $\xi'$ and $\psi$ are as in Lemma \ref{lemma 0.18} and Lemma \ref{Lemma 0.17.}, respectively.
Let $f = f_0 + f_1 + f_\infty$. 

The careful reader will notice that the functions $f_0$, $f_1$, $f_\infty$ and therefore $f$ above are $\frac{-2c}{(2-r)(3-r)p}$ times the corresponding functions defined in the proof of Proposition \ref{F_2 when tau < t + 1}, noting $\xi'' = \frac{p}{c}\xi'$ and $\psi' =\frac{p}{c}\psi$. Essentially, we have replaced
the constant $c$ by $(3-r)p$, which allows us to investigate the case
$\tau \geq t+1$ using the functions for $\tau < t + 1$. Thus, many
of the computations in this proof are similar to those in the proof of  Proposition~\ref{F_2 when tau < t + 1}. However, the computations must necessarily 
be more complicated, since now  
$\overline{(T-a_p)f}$  contributes in radii $0$ and $2$ and not just in radius $1$.

Computation in radius $-1$:
\begin{align}
T^-f_0 &= \frac{2(p-1)}{(2-r)(3-r)pa_p}\left(\left[\alpha,\sum_{\substack{{2\leq j < r-1}\\{j \equiv 2~\text{mod}~(p-1)}}} \binom{r-1}{j}(pX)^{r-j}Y^j\right] - S_{r-1}[\alpha, (pX)^{r-2}Y^2]\right) \nonumber\\
 &~~~ -\frac{2}{(2-r)(3-r)pa_p}(2S_{r-1} - T_{r-1})[\alpha, (pX)^{r-2}Y^2 - (pX)^{r-p-1}Y^{p+1}] \nonumber\\
 & = O(p), \label{F_2 tau > = t+1 radius -1}
\end{align}
since $r-j-1 \geq p-1 > 3$ for all $j$ as above, so by Lemma \ref{lemma 0.2}, $p\left(p^{r-j-1}\binom{r-1}{r-j-1}\right) \equiv 0 ~\text{mod}~p^{t+4}$ and by Proposition \ref{Proposition 0.11}, parts \textit{(1)} and \textit{(2)}, we have $v(S_{r-1}) = v(T_{r-1}) = t + 1$, so $p^{r-2}S_{r-1} \equiv p^{r-p-1}(2S_{r-1} - T_{r-1}) \equiv 0~\text{mod}~p^{t+4}$.

Computation in radius $0$:
\begin{align}
-a_pf_0 &= \frac{-2(p-1)}{(2-r)(3-r)p}\left(\left[1,\sum_{\substack{{2\leq j < r-1}\\{j \equiv 2~\text{mod}~(p-1)}}} \binom{r-1}{j}X^{r-j}Y^j\right] -S_{r-1}[1, X^{r-2}Y^2]\right) \nonumber\\ 
&~~~+\frac{2(2S_{r-1} - T_{r-1})}{(2-r)(3-r)p}[1, X^{r-2}Y^2 - X^{r-p-1}Y^{p+1}].\label{F_2 tau > = t+1 -a_pf_0}
\end{align}
Now
\begin{align*}
T^-f_1 &= \frac{2}{(2-r)^2(3-r)p^2}\left(\sum_{\lambda\in\mathbb{F}_p^\times}[g^0_{1,[\lambda]}\alpha, -(r-2)pXY^{r-1}] +(1-p)[g^0_{1,0},-p(r-2)XY^{r-1}]\right) \\
&~~~ -\frac{2(1-p)}{(2-r)^2(3-r)p^2}\left(\sum_{\lambda\in\mathbb{F}_p^\times}\left[g^0_{1,[\lambda]}\alpha, \sum_{\substack{{2\leq j\leq r-p}\\{j\equiv 2~\text{mod}~(p-1)}}}\binom{r}{j}(pX)^{r-j}Y^j\right] \right.\\
&~~~ +\left.(1-p)\left[g^0_{1,0}\alpha, \sum_{\substack{{2\leq j\leq r-p}\\{j\equiv 2~\text{mod}~(p-1)}}}\binom{r}{j}(pX)^{r-j}Y^j\right]\right) + O(p)\\
&= \frac{2}{(2-r)(3-r)p}\left(\sum_{\lambda\in\mathbb{F}_p^\times}[1, X([\lambda]X + Y)^{r-1}]+ (1-p)[1, XY^{r-1}]\right) + O(p),
\end{align*}
since $r-2 \geq t+4$ and by Lemma \ref{lemma 0.1.}, $p^{r-j}\binom{r}{r-j}\equiv 0$ mod $p^{t+4}$ since $r-j \geq 4$ for $2\leq j \leq r-p$.
By \eqref{sum of roots of 1}, we have
\begin{align}
T^-f_1 & = \frac{2(p-1)}{(2-r)(3-r)p}\left[1, \sum_{\substack{{2\leq j < r-1}\\{j\equiv 2~\text{mod}~(p-1)}}}\binom{r-1}{j}X^{r-j}Y^j\right] + O(p). \label{F_2 tau >= t+1 T minus f_1}
\end{align}
By equations \eqref{F_2 tau > = t+1 -a_pf_0}, \eqref{F_2 tau >= t+1 T minus f_1}, we get
\begin{align*}
-a_pf_0 + T^-f_1 &= \frac{2(p-1)S_{r-1}}{(2-r)(3-r)p}[1, X^{r-2}Y^2] +\frac{2(2S_{r-1} - T_{r-1})}{(2-r)(3-r)p}[1, X^{r-2}Y^2 - X^{r-p-1}Y^{p+1}] + O(p).
\end{align*}
By Proposition \ref{Proposition 0.11}, parts \textit{(1)} and \textit{(2)} mod $p^{t+2}$, in radius $0$, we get
\begin{align}
 -a_pf_0 + T^-f_1 &= \frac{-1}{2-r}[1, X^{r-2}Y^2] + \frac{2}{2-r}\left(1-\frac{r-1}{1-p}\right)[1, X^{r-2}Y^2 - X^{r-p-1}Y^{p+1}] +O(p) \nonumber \\
&= \frac{-1}{2-r}[1,X^{r-2}Y^2] + 2[1, X^{r-2}Y^2 - X^{r-p-1}Y^{p+1}] +O(p) \nonumber \\
&= \frac{-1}{2-r}[1, X^{r-2}Y^2] + 2[1, \theta X^{r-p-2}Y] + O(p). \label{F_2 tau >= t+1 radius 0}
\end{align}
 Computations in radius $1$:
 \begin{align*}
 T^+f_0 &= \frac{2(p-1)}{(2-r)(3-r)pa_p}\sum_{\lambda\in\mathbb{F}_p}\left[g^0_{1,[\lambda]}, \sum_{\substack{{2\leq j < r-1}\\{j \equiv 2~\text{mod}~(p-1)}}}\binom{r-1}{j}X^{r-j}(-[\lambda]X+pY)^j\right]\\
 &~~~-\frac{2(p-1)S_{r-1}}{(2-r)(3-r)pa_p}\sum_{\lambda\in\mathbb{F}_p}[g^0_{1,[\lambda]}, X^{r-2}(-[\lambda]X+pY)^2]\\
 &~~~-\frac{2(2S_{r-1}-T_{r-1})}{(2-r)(3-r)pa_p}\sum_{\lambda\in\mathbb{F}_p}[g^0_{1,[\lambda]}, X^{r-2}(-[\lambda]X+pY)^2-X^{r-p-1}(-[\lambda]X+pY)^{p+1}]\\
 &= \frac{2(p-1)}{(2-r)(3-r)pa_p}\sum_{\lambda\in\mathbb{F}_p^\times}\left[g^0_{1,[\lambda]}, \sum_{i=0}^r(-[\lambda])^{2-i}p^i\sum_{\substack{{2\leq j < r-1}\\{j\equiv 2~\text{mod}~(p-1)}}}\binom{j}{i}\binom{r-1}{j}X^{r-i}Y^i\right]\\
 &~~~ + \frac{2(p-1)}{(2-r)(3-r)pa_p}\left[g^0_{1,0}, p^2\binom{r-1}{2}X^{r-2}Y^2\right]\\
 &~~~ - \frac{2(p-1)S_{r-1}}{(2-r)(3-r)pa_p}\sum_{\lambda\in\mathbb{F}_p^\times}[g^0_{1,[\lambda]}, [\lambda]^2X^r -2p[\lambda]X^{r-1}Y] \\
 &~~~ - \frac{2(2S_{r-1}-T_{r-1})}{(2-r)(3-r)pa_p}\sum_{\lambda\in\mathbb{F}_p^\times}[g^0_{1,[\lambda]}, [\lambda]p(p-1)X^{r-1}Y] + O(\sqrt{p}),
 \end{align*}
 since, by Lemma~\ref{lemma 0.2}, for $2< j< r-1$, we have $p^j\binom{r-1}{j}\equiv 0~ \text{mod}~ p^{t+3}$, and by Proposition~ \ref{Proposition 0.11}, parts \textit{(1)} and \textit{(2)}, we have $v(S_{r-1}) = v(T_{r-1}) = t+1$.
 
By Proposition \ref{Proposition 0.11}, parts \textit{(3)} and \textit{(4)}, the coefficients of $[g^0_{1,[\lambda]}, X^r]$, $[g^0_{1,[\lambda]}, X^{r-1}Y]$, $[g^0_{1,[\lambda]}, X^{r-2}Y^2]$ and $[g^0_{1,[\lambda]}, X^{r-i}y^i]$ for $i \geq 3$ and $\lambda\neq 0$ in $T^+f_0$ are 
\begin{align*}
\frac{2[\lambda]^2(p-1)}{(2-r)(3-r)pa_p}(S_{r-1} - S_{r-1}) &= 0,\\
\frac{-2[\lambda](p-1)}{(2-r)(3-r)a_p}(T_{r-1} -2S_{r-1} + 2S_{r-1} - T_{r-1}) & = 0,\\
\frac{2(p-1)}{(2-r)(3-r)pa_p}\left(p^2\sum_{\substack{{2\leq j < r-1}\\{j\equiv 2~\text{mod}~(p-1)}}}\binom{j}{2}\binom{r-1}{j}\right) &\equiv \frac{-2p\binom{r-1}{2}}{(2-r)(3-r)a_p}~\text{mod}~\sqrt{p},\\
\frac{2(-[\lambda])^{2-i}(p-1)}{(2-r)(3-r)pa_p}\left(p^i\sum_{\substack{{2\leq j < r-1}\\{j \equiv 2~\text{mod}~(p-1)}}}\binom{j}{i}\binom{r-1}{j}\right) &\equiv 0 ~\text{mod}~\sqrt{p},
\end{align*} 
respectively. Therefore we get
 \begin{align}
T^+f_0 & = \frac{-2p\binom{r-1}{2}}{(2-r)(3-r)a_p}\left(\sum_{\lambda\in\mathbb{F}_p^\times}[g^0_{1,[\lambda]}, X^{r-2}Y^2] + (1-p)[g^0_{1,0}, X^{r-2}Y^2]\right) + O(\sqrt{p}). \label{F_2 tau >=  t+1 T plus f_0}
\end{align}
Now 
\begin{align}
-a_pf_1 &= \frac{-2a_p}{(2-r)^2(3-r)p^2}\Bigg(\sum_{\lambda\in\mathbb{F}_p^\times}[g^0_{1,[\lambda]}, X^{r-2}Y^2 - (r-2)XY^{r-1}]\nonumber\\
&~~~+ (1-p)[g^0_{1,0}, X^{r-2}Y^2 -(r-2)XY^{r-1}]\Bigg)\nonumber\\
&~~~+ \frac{2a_p(1-p)}{(2-r)^2(3-r)p^2}\left(\sum_{\lambda\in\mathbb{F}_p^\times}\left[g^0_{1,[\lambda]}, \sum_{\substack{{2\leq j \leq r-p}\\{j\equiv 2~\text{mod}~(p-1)}}}\binom{r}{j}X^{r-j}Y^j\right]\right.\nonumber\\
&~~~ + \left. (1-p)\left[g^0_{1,0}, \sum_{\substack{{2\leq j \leq r-p}\\{j\equiv 2~\text{mod}~(p-1)}}}\binom{r}{j}X^{r-j}Y^j\right]\right) + O(\sqrt{p}), \label{F_2 tau >= t+1 -a_pf_1 }
\end{align}
since $v\left(\frac{a_p}{p}\right) = \frac{1}{2} > 0$.

Let $h_{1,\infty}$ be the radius $1$ part of $(T-a_p)f_{\infty}$. By Lemma \ref{lemma 0.18} and Lemma \ref{Lemma 0.17.}, we get
\begin{align*}
h_{1,\infty} &= \frac{2(2-pr)}{(2-r)^3(3-r)}\left(\frac{a_p}{p^2}\right)\left(\sum_{\lambda \in\mathbb{F}_p^\times}[g^0_{1,[\lambda]}, (2-r)XY^{r-1} ] + (1-p)[g^0_{1,0}, (2-r)XY^{r-1}]\right) \\
&~~+ \frac{2}{(2-r)^2(3-r)}\left(\frac{a_p}{p^2}\right)\sum_{\mu \in\mathbb{F}_p^\times}\left(\sum_{\lambda \in\mathbb{F}_p^\times}[g^0_{1,[\lambda]},[\mu]^{-1}([\mu]X+Y)^r] +(1-p)[g^0_{1,0}, [\mu]^{-1}([\mu]X+Y)^r]\right)\\
&~~ + O(\sqrt{p}).
\end{align*}
By \eqref{sum of roots of 1}, we get
\begin{align*}
h_{1,\infty} &=\frac{2a_p(2-pr)}{(2-r)^2(3-r)p^2}\left(\sum_{\lambda \in\mathbb{F}_p^\times}[g^0_{1,[\lambda]}, XY^{r-1}] + (1-p)[g^0_{1,0}, XY^{r-1}]\right)\\
&~~+ \frac{2a_p(p-1)}{(2-r)^2(3-r)p^2}\left(\sum_{\lambda \in\mathbb{F}_p^\times}\left[g^0_{1,[\lambda]}, \mkern-18mu\sum_{\substack{{2\leq j \leq r-1}\\{j\equiv 2~\text{mod}~(p-1)}}}\mkern-18mu\binom{r}{j}X^{r-j}Y^j\right] + (1-p)\left[g^0_{1,0}, \mkern-18mu\sum_{\substack{{2\leq j \leq r-1}\\{j\equiv 2~\text{mod}~(p-1)}}}\mkern-18mu\binom{r}{j}X^{r-j}Y^j\right]\right)\\
&~~+ O(\sqrt{p}).
\end{align*}
Now combining the coefficient of $[g^0_{1,[\lambda]}, XY^{r-1}]$ from the first and second sums, we get
\begin{align}
h_{1,\infty} &=\frac{2a_p}{(2-r)(3-r)p^2}\left(\sum_{\lambda \in\mathbb{F}_p^\times}[g^0_{1,[\lambda]}, XY^{r-1}]+ (1-p)[g^0_{1,0}, XY^{r-1}]\right)\nonumber \\
&~~ + \frac{2a_p(p-1)}{(2-r)^2(3-r)p^2}\left(\sum_{\lambda \in\mathbb{F}_p^\times}\left[g^0_{1,[\lambda]}, \mkern -18mu\sum_{\substack{{2\leq j \leq r-p}\\{j\equiv 2~\text{mod}~(p-1)}}}\mkern-18mu\binom{r}{j}X^{r-j}Y^j\right] + (1-p)\left[g^0_{1,0}, \mkern-18mu\sum_{\substack{{2\leq j \leq r-p}\\{j\equiv 2~\text{mod}~(p-1)}}}\mkern-18mu\binom{r}{j}X^{r-j}Y^j\right]\right)\nonumber\\
&~~ + O(\sqrt{p}). \label{F_2 tau >= t+1 h_(1,infty)}
\end{align}
So in radius $1$, by \eqref{F_2 tau >=  t+1 T plus f_0}, \eqref{F_2 tau >= t+1 -a_pf_1 }, \eqref{F_2 tau >= t+1 h_(1,infty)}, we get
\begin{align}
&T^+f_0 -a_pf_1 + h_{1,\infty} \nonumber\\
& =\left(\frac{-2p\binom{r-1}{2}}{(2-r)(3-r)a_p}-\frac{2a_p}{(2-r)^2(3-r)p^2}\right)\left(\sum_{\lambda \in\mathbb{F}_p^\times}[g^0_{1,[\lambda]}, X^{r-2}Y^2]+ (1-p)[g^0_{1,0}, X^{r-2}Y^2]\right) + O(\sqrt{p}) \nonumber\\
&=\frac{-2c}{(2-r)^2(3-r)p}\left(\sum_{\lambda \in\mathbb{F}_p}[g^0_{1,[\lambda]}, X^{r-2}Y^2]\right) + O(\sqrt{p}). \label{F_2 tau >= t+1 radius 1}
\end{align}

Computation in radius $2$:
\begin{align*}
T^+f_1 &= \frac{2}{(2-r)^2(3-r)p^2}\sum_{\lambda \in\mathbb{F}_p^\times}\sum_{\mu\in\mathbb{F}_p}[g^0_{2, [\lambda]+p[\mu]}, X^{r-2}(-[\mu]X+pY)^2 - (r-2)X(-[\mu]X+pY)^{r-1}]\\
&~~~+ \frac{2(1-p)}{(2-r)^2(3-r)p^2}\sum_{\mu\in\mathbb{F}_p}[g^0_{2,p[\mu]}, X^{r-2}(-[\mu]X+pY)^2 - (r-2)X(-[\mu]X+pY)^{r-1}]\\
&~~~- \frac{2(1-p)}{(2-r)^2(3-r)p^2}\sum_{\lambda\in\mathbb{F}_p^\times}\sum_{\mu\in\mathbb{F}_p}\left[g^0_{2,[\lambda]+p[\mu]}, \sum_{\substack{{2\leq j\leq r-p}\\{j\equiv 2~\text{mod}~(p-1)}}}\binom{r}{j}X^{r-j}(-[\mu]X+pY)^j\right]\\
&~~~-\frac{2(1-p)^2}{(2-r)^2(3-r)p^2}\sum_{\mu\in\mathbb{F}_p}\left[g^0_{2, p[\mu]}, \sum_{\substack{{2\leq j\leq r-p}\\{j\equiv 2~\text{mod}~(p-1)}}}\binom{r}{j}X^{r-j}(-[\mu]X+pY)^j\right]\\
&~~~ +\frac{3}{(2-r)^2p}\sum_{\lambda\in\mathbb{F}_p^\times}\sum_{\mu\in\mathbb{F}_p}[g^0_{2,[\lambda]+p[\mu]}, X^{r-2}(-[\mu]X+pY)^2] \\
&~~~+ \frac{3(1-p)}{(2-r)^2p}\sum_{\mu\in\mathbb{F}_p}[g^0_{2,p[\mu]}, X^{r-2}(-[\mu]X+pY)^2].
\end{align*}
By using Lemma \ref{lemma 0.2} and the fact that $r-1 \geq t +5$ in the first and second sums, Lemma~ \ref{lemma 0.1.} for $\mu =0$ in the third and fourth sums, we get
\begin{align*}
T^+f_1&= \frac{2}{(2-r)^2(3-r)p^2}\sum_{\lambda\in\mathbb{F}_p^\times}\sum_{\mu\in\mathbb{F}_p^\times}\bigg[g^0_{2,[\lambda] + p[\mu]}, [\mu]^2(3-r)X^r - [\mu]pr(3-r)X^{r-1}Y \\
&~~~+ p^2\left(1-(r-2)\binom{r-1}{2}\right)X^{r-2}Y^2\bigg] \\
&~~~+\frac{2(1-p)}{(2-r)^2(3-r)p^2}\sum_{\mu\in\mathbb{F}_p^\times}\bigg[g^0_{2, p[\mu]},  [\mu]^2(3-r)X^r - [\mu]pr(3-r)X^{r-1}Y \\
&~~~+ p^2\left(1-(r-2)\binom{r-1}{2}\right)X^{r-2}Y^2 \bigg]\\
&~~~+ \frac{2}{(2-r)^2(3-r)p^2}\left(\sum_{\lambda\in\mathbb{F}_p^\times}[g^0_{2,[\lambda]}, p^2X^{r-2}Y^2] + (1-p)[g^0_{2,0}, p^2X^{r-2}Y^2]\right)\\
&~~~ - \frac{2(1-p)}{(2-r)^2(3-r)p^2}\sum_{\lambda\in\mathbb{F}_p^\times}\sum_{\mu\in\mathbb{F}_p^\times}\left[g^0_{2,[\lambda]+ p[\mu]}, \sum_{i=0}^r(-[\mu])^{2-i}p^i\sum_{\substack{{2\leq j\leq r-p}\\{j\equiv 2~\text{mod}~(p-1)}}}\binom{j}{i}\binom{r}{j}X^{r-i}Y^i\right]\\
&~~~ - \frac{2(1-p)^2}{(2-r)^2(3-r)p^2}\sum_{\mu\in\mathbb{F}_p^\times}\left[g^0_{2,p[\mu]}, \sum_{i=0}^r(-[\mu])^{2-i}p^i\sum_{\substack{{2\leq j\leq r-p}\\{j\equiv 2~\text{mod}~(p-1)}}}\binom{j}{i}\binom{r}{j}X^{r-i}Y^i\right]\\
&~~~ - \frac{2(1-p)}{(2-r)^2(3-r)p^2}\left(\sum_{\lambda\in\mathbb{F}_p^\times}\bigg[g^0_{2,[\lambda]}, p^2\binom{r}{2}X^{r-2}Y^2\bigg] + (1-p)[g^0_{2,0}, p^2\binom{r}{2}X^{r-2}Y^2]\right)\\
&~~~ + \frac{3}{(2-r)^2p}\sum_{\lambda\in\mathbb{F}_p^\times}\sum_{\mu\in\mathbb{F}_p^\times}[g^0_{2,[\lambda] + p[\mu]}, [\mu]^2X^r - 2p[\mu]X^{r-1}Y]\\
&~~~ + \frac{3(1-p)}{(2-r)^2p}\sum_{\mu\in\mathbb{F}_p^\times}[g^0_{2,p[\mu]}, [\mu]^2X^r -2p[\mu]X^{r-1}Y] + O(p).
\end{align*}
By Proposition \ref{Proposition 0.10}, parts \textit{(1)}, \textit{(2)}, \textit{(3)}, \textit{(4)}, \textit{(5)} and Lemma \ref{lemma 0.2}, the coefficients of $[g^0_{2,[\lambda] +p[\mu]}, X^r]$, $[g^0_{2,[\lambda] +p[\mu]}, X^{r-1}Y]$, $[g^0_{2,[\lambda] +p[\mu]}, X^{r-2}Y^2]$, $[g^0_{2,[\lambda] +p[\mu]}, X^{r-3}Y^3]$, $[g^0_{2,[\lambda] +p[\mu]}, X^{r-i}Y^i]$ for $i\geq 4$, $\lambda\neq 0$ and $\mu\neq 0$ in 
$T^+f_1$ are
\begin{align*}
\frac{2[\mu]^2}{(2-r)^2p^2} &-\frac{2[\mu]^2(1-p)}{(2-r)^2(3-r)p^2}\left(\sum_{\substack{{2\leq j\leq r-p}\\{j \equiv 2~\text{mod}~(p-1)}}}\binom{r}{j}\right) +\frac{3[\mu]^2}{(2-r)^2p}\\
&\equiv\frac{2[\mu]^2}{(2-r)^2p^2}-\frac{2[\mu]^2(1-p)}{(2-r)^2(3-r)p^2}\left(3-r + \frac{5(3-r)p}{2(1-p)}\right) + \frac{3[\mu]^2}{(2-r)^2p}~\text{mod}~\mathbb{Z}_p\\
&=\frac{2[\mu]^2}{(2-r)^2p^2}(1-1) - \frac{2[\mu]^2}{(2-r)^2p}(-1 + \frac{5}{2}- \frac{3}{2}) ~\text{mod}~\mathbb{Z}_p\\
&= 0 ~\text{mod}~\mathbb{Z}_p,\\
\frac{-2[\mu]r}{(2-r)^2p} &+\frac{2[\mu](1-p)}{(2-r)^2(3-r)p}\left(\sum_{\substack{{2\leq j \leq r-p}\\{j \equiv 2~\text{mod}~(p-1)}}}j\binom{r}{j}\right) -\frac{6[\mu]}{(2-r)^2}\\
&\equiv \frac{-2[\mu]r}{(2-r)^2p} + \frac{2[\mu](1-p)}{(2-r)^2(3-r)p}(r(3-r))  ~\text{mod}~\mathbb{Z}_p\\
&\equiv 0~\text{mod}~\mathbb{Z}_p,\\
\frac{2\left(1-(r-2)\binom{r-1}{2}\right)}{(2-r)^2(3-r)} &-\frac{2(1-p)}{(2-r)^2(3-r)}\left(\sum_{\substack{{2\leq j\leq r-p}\\{j\equiv 2~\text{mod}~(p-1)}}}\binom{j}{2}\binom{r}{j}\right)\\
&\equiv \frac{-r^3+ 5r^2 -8r + 6}{(2-r)^2(3-r)} - \frac{2(1-p)}{(2-r)^2(3-r)}\left(\binom{r}{2} - \binom{r-1}{2}r\right)~\text{mod}~p\\
& =\frac{(3-r)(r^2 - 2r + 2)}{(2-r)^2(3-r)} -\frac{2\binom{r}{2}(1-p)}{(2-r)^2(3-r)}\left(1- (r-2)\right)~\text{mod}~p\\
&\equiv\frac{r^2-2r+2}{(2-r)^2} - \frac{r(r-1)}{(2-r)^2}~\text{mod}~p\\
&=\frac{1}{2-r}~\text{mod}~p,\\
 \frac{2[\mu]^{-1}(1-p)p}{(2-r)^2(3-r)}&\left(\sum_{\substack{{2\leq j \leq r-p}\\{j\equiv 2~\text{mod}~(p-1)}}}\binom{j}{3}\binom{r}{j}\right)\\
 &\equiv \frac{2[\mu]^{-1}(1-p)p}{(2-r)^2(3-r)}\left(\frac{\binom{r}{3}}{p-1} -\binom{r-1}{3}r\right)~\text{mod}~p\\
 &\equiv \frac{-2[\mu]^{-1}p\binom{r}{3}}{(2-r)^2(3-r)}~\text{mod}~p,\\
 \frac{-2(-[\mu])^{2-i}(1-p)}{(2-r)^2(3-r)p^2}&\left(p^i\sum_{\substack{{2\leq j \leq r-p}\\{j\equiv 2~\text{mod}~(p-1)}}}\binom{j}{i}\binom{r}{j}\right)\\
 &\equiv \frac{-2(-[\mu])^{2-i}(1-p)}{(2-r)^2(3-r)p^2}\left(-p^i\binom{r-1}{i}r\right)~\text{mod}~p\\
 & \equiv 0~\text{mod}~p,
\end{align*}
respectively. 

The coefficients of $[g^0_{2,p[\mu]}, X^r]$, $[g^0_{2,p[\mu]}, X^{r-1}Y]$, $[g^0_{2,p[\mu]}, X^{r-2}Y^2]$, $[g^0_{2,p[\mu]}, X^{r-3}Y^3]$ and $[g^0_{2,p[\mu]}, X^{r-i}Y^i]$ for $i\geq 4$, $\mu\neq 0$ in $T^+f_1$ is $(1-p)$ times the coefficients above. 
So, finally we get
\begin{align}
T^+f_1 &= \frac{1}{2-r}\sum_{\lambda\in\mathbb{F}_p}\sum_{\mu\in\mathbb{F}_p^\times}[g^0_{2,[\lambda]+p[\mu]}, X^{r-2}Y^2] \nonumber\\
 &~~~+ \frac{2\left(1-(1-p)\binom{r}{2}\right)}{(2-r)^2(3-r)}\left(\sum_{\lambda\in\mathbb{F}_p^\times}[g^0_{2,[\lambda]}, X^{r-2}Y^2] + (1-p)[g^0_{2,0}, X^{r-2}Y^2]\right)\nonumber\\
 &~~~-\frac{2p\binom{r}{3}}{(2-r)^2(3-r)}\sum_{\mu\in\mathbb{F}_p^\times}\left(\sum_{\lambda\in\mathbb{F}_p^\times}[g^0_{2,[\lambda]+p[\mu]}, [\mu]^{-1}X^{r-3}Y^3] + (1-p)[g^0_{2,p[\mu]}, [\mu]^{-1}, X^{r-3}Y^3]\right)\nonumber\\
 &~~~ + h + O(p), \label{F_2 tau > = t+1 T plus f_1}
\end{align}
where $h$ is an integral linear combination of the terms of the form $[g, X^r]$ and $[g, X^{r-1}Y]$, for some $g\in G$.

Let $h_{2,\infty}$ be the radius $2$ part of $(T-a_p)f_\infty$. By Lemma \ref{lemma 0.18} and  Lemma \ref{Lemma 0.17.}, we have
\begin{align}
h_{2,\infty} &= \frac{2(2-pr)}{(2-r)^3(3-r)}\left(\frac{-a_p^2}{p^3}\right)\Bigg(\sum_{\lambda\in\mathbb{F}_p^\times}[g^0_{2,[\lambda]}, X^{r-2}Y^2 + (r-3)X^pY^{r-p}] \nonumber \\
&~~~+ (1-p)[g^0_{2,0}, X^{r-2}Y^2 + (r-3)X^pY^{r-p}] \Bigg)\nonumber \\
&~~~+ \frac{2}{(2-r)^2(3-r)}\left(\frac{a_p^2}{p^2}\right)\sum_{\mu \in\mathbb{F}_p^\times}\left(\sum_{\lambda\in\mathbb{F}_p^\times}[g^0_{2,[\lambda]+ p[\mu]}, [\mu]^{-1}X^{r-3}Y^3]+(1-p)[g^0_{2,p[\mu]}, [\mu]^{-1}X^{r-3}Y^3]\right)\nonumber\\
&~~~ + O(\sqrt{p}). \label{F_2 tau >= t+1 h(2,infty)}
\end{align}
By \eqref{F_2 tau > = t+1 T plus f_1} and \eqref{F_2 tau >= t+1 h(2,infty)}, the coefficient of $[g^0_{2,[\lambda]}, X^{r-2}Y^2]$ for $\lambda\neq 0$ in $T^+f_1 + h_{2,\infty}$ is 
\begin{align*}
&\frac{2\left(1-(1-p)\binom{r}{2}\right)}{(2-r)^2(3-r)} - \frac{2(2-pr)a_p^2}{(2-r)^3(3-r)p^3}\\
&~~~= \frac{2\left(p^3-(pa_pc + (r-2)\binom{r-1}{2}p^3)\right)}{(2-r)^2(3-r)p^3} - \frac{2(1-p)r\left(pa_pc + (r-2)\binom{r-1}{2}p^3-\binom{r-1}{2}p^3\right)}{(2-r)^3(3-r)p^3}\\
&~~~ \equiv\frac{2\left(1-(r-2)\binom{r-1}{2}\right)}{(2-r)^2(3-r)} -\frac{2(1-p)r\binom{r-1}{2}(r-2-1)}{(2-r)^3(3-r)}~\text{mod}~\sqrt{p}\\
&~~~= \frac{1}{2-r}~\text{mod}~\sqrt{p},
\end{align*}
since $v(c) \geq t+1$.

\noindent Similarly, the coefficient of $[g^0_{2,0}, X^{r-2}Y^2]$ in $T^+f_1+ h_{2,\infty}$ is $(1-p)$ times the coefficient above, i.e., 
is still  $\frac{1}{2-r}$ mod $\sqrt{p}$.

\noindent By \eqref{F_2 tau > = t+1 T plus f_1} and \eqref{F_2 tau >= t+1 h(2,infty)}, the coefficient of $[g^0_{[\lambda]+p[\mu]}, [\mu]^{-1}X^{r-3}Y^3]$ for $\lambda\neq 0$ and $\mu\neq 0$ is 
\begin{align*}
&\frac{-2p\binom{r}{3}}{(2-r)^2(3-r)} + \frac{2a_p^2}{(2-r)^2(3-r)p^2}
  = \frac{2\left(pa_pc + (r-2)\binom{r-1}{2}p^3 - \binom{r}{3}p^3\right)}{(2-r)^2(3-r)p^2} 
 \equiv 0 ~\text{mod}~p.
\end{align*}
Similarly the coefficient of $[g^0_{2,p[\mu]}, [\mu]^{-1}X^{r-3}Y^3]$ for $\mu\neq 0$ in $T^+f_1 + h_{2,\infty}$ is $(1-p)$ times the coefficient above, i.e., is still $0$ mod $p$.

\noindent Therefore by \eqref{F_2 tau > = t+1 T plus f_1} and \eqref{F_2 tau >= t+1 h(2,infty)}, in radius $2$, we get
\begin{align}
T^+f_1 + h_{2,\infty} &= \frac{1}{2-r}\sum_{\lambda\in\mathbb{F}_p}\sum_{\mu\in\mathbb{F}_p}[g^0_{2,[\lambda]+p[\mu]}, X^{r-2}Y^2] \nonumber \\ 
&~~~ + \frac{2(2-pr)a_p^2}{(2-r)^3p^3}\sum_{\lambda\in\mathbb{F}_p}[g^0_{2,[\lambda]}, X^pY^{r-p}] + h + O(\sqrt{p}). \label{F_2 tau >= t+1 radius 2}
\end{align}

Putting everything together, from \eqref{F_2 tau > = t+1 radius -1}, \eqref{F_2 tau >= t+1 radius 0}, \eqref{F_2 tau >= t+1 radius 1} and \eqref{F_2 tau >= t+1 radius 2}, we finally get
\begin{align}
(T-a_p)f & = \frac{-1}{2-r}[1, X^{r-2}Y^2] + 2[1, \theta X^{r-p-2}Y] - \frac{2c}{(2-r)^2(3-r)p}\left(\sum_{\lambda \in\mathbb{F}_p}[g^0_{1,[\lambda]}, X^{r-2}Y^2]\right)\nonumber\\
&~~~ + \frac{1}{2-r}\sum_{\lambda\in\mathbb{F}_p}\sum_{\mu\in\mathbb{F}_p}[g^0_{2,[\lambda]+p[\mu]}, X^{r-2}Y^2]
 + \frac{2(2-pr)a_p^2}{(2-r)^3p^3}\sum_{\lambda\in\mathbb{F}_p}[g^0_{2,[\lambda]}, X^pY^{r-p}] + h + O(\sqrt{p}). \label{ F_2 tau >= t+1 (T-a_p)f}
\end{align}

We project to $Q = \frac{V_r}{X_{r-1} + V_r^{**}}$. All the polynomials occurring in \eqref{ F_2 tau >= t+1 (T-a_p)f} are in fact in the image of $V_r^*$ in $Q$. Indeed in $Q$, we have $X^{r-2}Y^2 = X^{r-2}Y^2-XY^{r-1} = \theta ((2-r)X^{r-p-2} + Y^{r-p-1})$, by \eqref{image of X^{r-2}Y^2 mod V_r**}, and $X^pY^{r-p} =  X^pY^{r-p} -XY^{r-1} = \theta Y^{r-p-1}$. 

By Proposition \ref{Structure of Q}, part \textit{(1)}, if $t =0$, then the image of $V_r^*$ in $Q$ is $J_3 = V_{p-2}\otimes D^2$, and by part \textit{(2)}, if $t>0$, then the image of $V_r^{*}$ in $Q$ is $\frac{V_r^*}{V_r^{**}}$ which has $J_3$ as a further quotient.
So in either case we may project to $J_3 = V_{p-2}\otimes D^2$.
Now $h$ maps to zero in ind$_{KZ}^G Q$ 
and therefore to zero in ind$_{KZ}^G J_3$.  Similarly $X^pY^{r-p}$ maps to zero in $J_3$, since $\theta Y^{r-p-1}$ maps to $0$ in $J_3$,
by part (ii) of Lemma~\ref{generator}. Also, by part (iii) of the same lemma, $\theta X^{r-p-2}Y$ maps to $X^{p-2}$ in $J_3$. 
Finally, by \eqref{image of X^{r-2}Y^2 in J_2}, $X^{r-2}Y^2$ maps to $(2-r)X^{p-2}$ in $J_3$. 

Therefore, 
the image of $(T-a_p)f$ in ind$_{KZ}^GJ_3$ is 
\begin{align*}
\overline{(T-a_p)f} & = (-1+2) [1, X^{p-2}] - \overline{\frac{2c}{(2-r)(3-r)p}}\sum_{\lambda\in\mathbb{F}_p}[g^0_{1,[\lambda]}, X^{p-2}] + \sum_{\lambda\in\mathbb{F}_p}\sum_{\mu\in\mathbb{F}_p}[g^0_{2,[\lambda] + p[\mu]}, X^{p-2}] \nonumber\\
&= (T^2-\bar{d}T + 1)[1, X^{p-2}], \label{F_2 tau >= t+1 (T-a_p)f}
\end{align*}
where $d =\frac{2c}{(2-r)(3-r)p}$. 
As $\overline{(T-a_p)f}$ maps to zero in $\bar{\Theta}_{k,a_p}$ and $[1, X^{p-2}]$ generates ind$_{KZ}^GJ_3$, we get that $(T^2-\bar{d}T + 1)(\text{ind}_{KZ}^GJ_3) \subset \ker (\text{ind}_{KZ}^GJ_3\twoheadrightarrow F_3)$.

Therefore if $\tau = t+1 $, then $F_3$ is a quotient of $\frac{\text{ind}_{KZ}^GJ_3}{T^2-\bar{d}T+1}$ and if $\tau > t+1$, then $F_3$ is a quotient of $\frac{\text{ind}_{KZ}^GJ_3}{T^2+1}$, since $\bar{d} = 0$.
\end{proof}

{\noindent \bf Acknowledgments:} The first author thanks Institut de Math\'ematiques de Jussieu, Max Planck Institute and Institut de Math\'ematiques de Bordeaux
for their hospitality in 2017 and 2018. The second author thanks Tata Institute of Fundamental Research for its support.
Both authors thank Shalini Bhattacharya and Sandra Rozensztajn for useful discussions.


\begin{thebibliography}{BLZ04}
\providecommand{\bysame}{\leavevmode\hbox to3em{\hrulefill}\thinspace}
\providecommand{\MR}{\relax\ifhmode\unskip\space\fi MR }
\providecommand{\MRhref}[2]{%
  \href{http://www.ams.org/mathscinet-getitem?mr=#1}{#2}
}
\providecommand{\href}[2]{#2}





%
\bibitem[BL94]{BL94}
Laure~Barthel and Ron~Livn{\'e}, \emph{Irreducible modular representations of
  {${\rm GL}_2$} of a local field}, Duke Math. J. \textbf{75} (1994), no.~2,
  261--292. \MR{1290194 (95g:22030)}

%
\bibitem[BL95]{BL95}
\bysame, \emph{Modular representations of {${\rm GL}_2$} of
  a local field: the ordinary, unramified case}, J. Number Theory \textbf{55}
  (1995), no.~1, 1--27. \MR{1361556 (96m:22036)}

\bibitem[Ber10]{B10}
Laurent Berger, \emph{Repr\'esentations modulaires de {${\rm GL}\sb 2(\mathbf{Q}\sb p)$} et repr\'esentations galoisiennes de dimension 2}, Ast\'erisque
  (2010), no.~330, 263--279. \MR{2642408 (2011g:11104)}





\bibitem[BLZ04]{BLZ}
Laurent Berger, Hanfeng Li and Hui~June Zhu, \emph{Construction of some
  families of 2-dimensional crystalline representations}, Math. Ann.
  \textbf{329} (2004), no.~2, 365--377. \MR{2060368 (2005k:11104)}

\bibitem[BG15]{Bhattacharya-Ghate}
Shalini Bhattacharya and Eknath Ghate, \emph{Reductions of {G}alois
  representations for slopes in $(1,2)$}, Doc. Math. \textbf{20} (2015), 943--987.

\bibitem[BGR18]{BGR18}
Shalini Bhattacharya, Eknath Ghate and Sandra Rozensztajn, \emph{Reductions of {G}alois
  representations of slope $1$}, J. Algebra \textbf{508} (2018), 98--156.

\bibitem[Bre03a]{Breuil03a}
Christophe Breuil,
\emph{Sur quelques repr\'esentations modulaires et $p$-adiques de ${\mathrm {GL}}_2({\mathbf Q}_p)$. I.},
Compositio Math. \textbf{138} (2003), no.~2, 165--188. \MR{2018825} 


\bibitem[Bre03b]{Br03}
\bysame, \emph{Sur quelques repr\'esentations modulaires et
  {$p$}-adiques de {${\rm GL}_2(\mathbf{Q}_p)$}. {II}.}, J. Inst. Math. Jussieu
  \textbf{2} (2003), no.~1, 23--58. \MR{1955206 (2005d:11079)}


\bibitem[BG09]{BG09}
Kevin Buzzard and Toby Gee, \emph{Explicit reduction modulo {$p$} of certain
  two-dimensional crystalline representations}, Int. Math. Res. Not. IMRN
  (2009), no.~12, 2303--2317. \MR{2511912 (2010g:11201)}

\bibitem[BG13]{BG13}
\bysame, \emph{Explicit reduction modulo {$p$} of certain 2-dimensional
  crystalline representations, {II}}, Bull. Lond. Math. Soc. \textbf{45}
  (2013), no.~4, 779--788. \MR{3081546}


\bibitem[Col10]{Col}
Pierre Colmez, \emph{Repr\'esentations de {${\rm GL}_2(\mathbf{Q}_p)$} et
  {$(\varphi,\Gamma)$}-modules}, Ast\'erisque (2010), no.~330, 281--509.
  \MR{2642409 (2011j:11224)}

\bibitem[Edi92]{Edixhoven92}
Bas Edixhoven,
\emph{  The weight in Serre's conjectures on modular forms},
   Invent. Math. \textbf{109} (1992), no.~3, 563--594.  \MR{1176206 (93h:11124)}

\bibitem[GG15]{GG15}
Abhik Ganguli and Eknath Ghate,
\newblock {\em Reductions of Galois representations via the mod $p$ Local Langlands Correspondence},
\newblock {J. Number Theory}  \textbf{147} (2015), 250--286. \MR{3276325}

\bibitem[Gha19]{G19}
Eknath Ghate,
\newblock {\em A zig-zag conjecture and local constancy for Galois representations},
\newblock \newblock \href{http://www.math.tifr.res.in/%7Eeghate/bessatsu.pdf}{bessatsu.pdf}, to appear in {RIMS K\^oky\^uroku Bessatsu}.

\bibitem[Glo78]{G78}
D.~J. Glover, \emph{A study of certain modular representations}, J. Algebra
  \textbf{51} (1978), no.~2, 425--475. \MR{0476841 (57 \#16392)}




\bibitem[Roz16]{Roz16}
Sandra Rozensztajn, 
\emph{An algorithm for computing the reduction of 2-dimensional crystalline representations of $\mathrm{Gal}(\overline{\Q}_p/\Q_p)$}, 
\newblock{Int. J. Number Theory} \textbf{14} (2018), no.~7, 1857--1894.






\end{thebibliography}
\end{document}